\DeclareSymbolFont{otone}{OT1}{cmr}{m}{n}
\DeclareMathSymbol{\Gamma}{\mathalpha}{otone}{0}
\DeclareMathSymbol{\Delta}{\mathalpha}{otone}{1}
\DeclareMathSymbol{\Theta}{\mathalpha}{otone}{2}
\DeclareMathSymbol{\Lambda}{\mathalpha}{otone}{3}
\DeclareMathSymbol{\Xi}{\mathalpha}{otone}{4}
\DeclareMathSymbol{\Pi}{\mathalpha}{otone}{5}
\DeclareMathSymbol{\Sigma}{\mathalpha}{otone}{6}
\DeclareMathSymbol{\Upsilon}{\mathalpha}{otone}{7}
\DeclareMathSymbol{\Phi}{\mathalpha}{otone}{8}
\DeclareMathSymbol{\Psi}{\mathalpha}{otone}{9}
\DeclareMathSymbol{\Omega}{\mathalpha}{otone}{10}
\long\def\comment#1\endcomment{}
\theoremstyle{plain}
\newtheorem{theorem}{{\sc Theorem}}[section]
\newtheorem{lemma}[theorem]{\sc Lemma}
\newtheorem{prop}[theorem]{\sc Proposition}
\newtheorem{coroll}[theorem]{\sc Corollary}
\newtheorem{klemma}[theorem]{\sc Key-lemma}
\newtheorem{sublemma}[theorem]{\sc Sub-lemma}
\newcommand{\appsection}[1]{\let\oldthesection\thesection
  \renewcommand{\thesection}{\sc{Appendix \oldthesection}}
  \section{#1}\let\thesection\oldthesection}
\newcommand{\appsubsection}[1]{\let\oldthesubsection\thesubsection
  \renewcommand{\thesubsection}{\sc\oldthesubsection}
  \subsection{#1}\let\thesubsection\oldthesubsection}
\newcommand{\apptheorem}{
\let\oldthetheorem\thetheorem
\renewcommand{\thetheorem}{\sc \oldthetheorem}
\theorem\let\thetheorem\oldthetheorem}
\renewcommand{\thesection}{\sc \arabic {section}}
\renewcommand{\thetheorem}{\thesection.{\sc \arabic {theorem}}}
\theoremstyle{plain}
\newtheorem{defn}[theorem]{\sc Definition}
\theoremstyle{exercise}
\newtheorem{remark}[theorem]{\sc Remark}
\makeatletter \@addtoreset{equation}{section} \makeatother
\def\eqref#1{\thetag{\ref{#1}}}
\let\latexref=\ref
\def\ref#1{{\normalfont{\latexref{#1}}}}
\newcommand{\udot}{{\:\raisebox{3pt}{\text{\circle*{1.5}}}}}
\newcommand{\mb}{{\:\raisebox{1pt}{\text{\circle*{1.5}}}}}
\def\dlim_#1{{\displaystyle\lim_{#1}}^\hdot}
\newcommand{\Ker}{\operatorname{{\rm Ker}}}
\newcommand{\id}{\operatorname{\rm id}}
\newcommand{\Mor}{\mathrm{Mor}}
\newcommand{\Ob}{\mathrm{Ob}}
\newcommand{\opp}{\mathrm{opp}}
\newcommand{\Hom}{\mathrm{Hom}}
\newcommand{\Bimod}{\mathscr{B}\mathrm{imod}}
\newcommand{\RHom}{\mathrm{RHom}}
\newcommand{\fin}{\mathrm{fin}}
\newcommand{\Mon}{\boldsymbol{\mathrm{Mon}}}
\newcommand{\dg}{\mathrm{dg}}
\newcommand{\Ho}{\mathrm{Ho}}
\newcommand{\colim}{\mathrm{colim}}
\newcommand{\Id}{\mathrm{Id}}
\newcommand{\mon}{\mathrm{mon}}
\newcommand{\Sets}{\mathbf{Sets}}
\newcommand{\pretr}{\mathrm{pre-tr}}
\newcommand{\nlim}{\textsc{lim}}
\newcommand{\ncolim}{\textsc{colim}}
\def\wtilde#1{\widetilde{#1}\vphantom{#1}}
\title{\sc{Monoidal cofibrant resolutions of dg algebras}}
\author{\sc{Boris Shoikhet}}
\date{}
\begin{document}\maketitle
{\footnotesize
\begin{center}{\parbox{4,5in}{{\sc Abstract.} Let $k$ be a field of any characteristic. In this paper, we construct a functorial cofibrant resolution $\mathfrak{R}(A)$ for the $\mathbb{Z}_{\le 0}$-graded dg algebras $A$ over $k$, such that the functor $A\rightsquigarrow \mathfrak{R}(A)$ is colax-monoidal with quasi-isomorphisms as the colax maps. More precisely, there are maps of bifunctors $\mathfrak{R}(A\otimes B)\to \mathfrak{R}(A)\otimes \mathfrak{R}(B)$, compatible with the projections to $A\otimes B$, and obeying the colax-monoidal axiom.

The main application of such resolution (which we consider in our next paper) is the existence of a colax-monoidal dg localization of pre-triangulated dg categories, such that the localization is a genuine dg category, whose image in the homotopy category of dg categories is isomorphic to the To\"{e}n's dg localization. }}
\end{center}
}

\bigskip
\bigskip

\section*{\sc Introduction}
We formulate our main result (Theorem \ref{theor1_intro} below) in Section \ref{section01}, discuss its applications to higher-monoidal Deligne conjecture in Section \ref{applications}, and overview our proof of Theorem \ref{theor1_intro} in Section \ref{section03}.
\subsection{\sc The Main Theorem}\label{section01}
Let $k$ be a field of any characteristic.

It is known that any associative algebra $A$ over $k$ admits a {\it free resolution}, that is, a free associative dg algebra $\mathfrak{R}(A)$ endowed with a quasi-isomorphism of algebras $p_A\colon \mathfrak{R}(A)\to A$. Here we specify this claim as follows. Having two associative algebras $A$ and $B$ over $k$, $A\otimes B$ is an associative algebra again. We want to choose a functorial free resolution $\mathfrak{R}(A)$ for all algebras $A$ over $k$, such that $\mathfrak{R}(A\otimes B)$ and $\mathfrak{R}(A)\otimes \mathfrak{R}(B)$ are related by a functorial quasi-isomorphism, defined over $A\otimes B$.

More precisely, we want to find a functor $A\rightsquigarrow\mathfrak{R}(A)$ with $\mathfrak{R}(A)$ a free associative dg algebra, such that there exists a morphism of bifunctors
$$
\beta\colon\mathfrak{R}(A\otimes B)\to\mathfrak{R}(A)\otimes\mathfrak{R}(B)
$$
such that for any two algebras $A,B$ the map $\beta(A,B)$ is a {\it quasi-isomorphism}, the diagram
\begin{equation}
\xymatrix{
\mathfrak{R}(A\otimes B)\ar[rr]^{\beta}\ar[rd]_{p_{A\otimes B}}&&\mathfrak{R}(A)\otimes \mathfrak{R}(B)\ar[ld]^{p_A\otimes p_B}\\
&A\otimes B
}
\end{equation}
is commuative, and such that for any three algebras $A,B,C$, the diagram
\begin{equation}\label{newintro_1}
\xymatrix{
\mathfrak{R}(A\otimes B\otimes C)\ar[r]\ar[d]&\mathfrak{R}(A\otimes B)\otimes \mathfrak{R}(C)\ar[d]\\
\mathfrak{R}(A)\otimes\mathfrak{R}(B\otimes C)\ar[r]&\mathfrak{R}(A)\otimes\mathfrak{R}(B)\otimes\mathfrak{R}(C)
}
\end{equation}
is commutative. A functor $F\colon\mathscr{M}_1\to\mathscr{M}_2$ between two monoidal categories, with a map of bifunctors $\beta\colon F(A\otimes B)\to F(A)\otimes F(B)$ for which the diagram \eqref{newintro_1} commutes, is called {\it colax-monoidal} (see Definition \ref{colax_intro} below for the precise definition of a colax-monoidal functor).

More generally, we construct such a resolution $A\rightsquigarrow\mathfrak{R}(A)$ on the category of $\mathbb{Z}_{\le 0}$-graded differential graded algebras. We think that such a functor hardly exists on the category of all $\mathbb{Z}$-graded dg algebras.

Throughout the paper, we use the technique of {\it closed model categories}, introduced by Quillen in [Q].

Roughly speaking, the theory of closed model categories is a non-abelian pattern of classical homological algebra. A closed model category is a category $\mathscr{C}$ with three classes of morphisms, called {\it weak equivalences, fibrations, and cofibrations}. The class of weak equivalences is the most essential. The goal is to define non-abelian analogues of derived functors, defined on the localized by weak equivalences categories. In the abelian setting, the underlying category is the category of complexes, and the weak equivalences are the quasi-isomorphisms of complexes.

In this paper, we assume the reader has some familiarity with the foundations of closed model categories. We use [Hir], [GS], and [DS] as the references.

The closed model structure on the category of associative dg algebras $\mathscr{A}lg_k$ over a field $k$ of any characteristic (and as well, for the category of dg algebras over any operad, over a field of characteristic 0) was constructed by Hinich in [Hi].
For this structure, the weak equivalences are the quasi-isomorphisms of dg algebras, the fibrations are the component-wise surjective maps of dg algebras. The cofibrations are uniquely defined from the weak equivalences and the fibrations, by the {\it left lifting property}.

Recall the description of the cofibrant objects in the Hinich's closed model category of associative dg algebras.

A dg associative algebra $A$ is called {\it standard cofibrant} if there is an increasing filtration on $A$, $A=\colim\ A_i$,
$$
A_0\subset A_1\subset A_2\subset\dots
$$
such that the underlying graded algebra $A_i$ is obtained from $A_{i-1}$ by adding the free generators graded space $V_i$,
$$
A_i=A_{i-1}\ast T(V_i)
$$
such that
$$
d(V_i)\subset A_{i-1}
$$
for $i\ge 1$, and $A_0$ has zero differential.
Here $\ast$ is the free product of algebras, which is the same as the categorical coproduct of algebras, and $T(V_i)$ is the free graded algebra on generators $V_i$.

The general description (valid as well for any operad in characteristic 0) is:
{\it any cofibrant dg associative algebra is a retract of a standard cofibrant dg algebra.}
For the category of associative dg algebras in any characteristic, the class of retracts of standard cofibrant objects coincides with the class of standard cofibrant objects itself.

As follows from this description, when restricted to $\mathbb{Z}_{\le 0}$-graded dg algebras, the concepts of free dg algebras and of cofibrant dg algebras
{\it coincide}. For general $\mathbb{Z}$-graded dg algebras, the two concepts are different: a cofibrant $\mathbb{Z}$-graded dg algebra is free, with the differential of some special ``triangular'' form.

Denote the category of $\mathbb{Z}_{\le 0}$-graded associative algebras by $\mathscr{A}lg_k^{\le 0}$.

Our main result is:
\begin{theorem}\label{theor1_intro}{\itshape
Let $k$ be a field of any characteristic.
There is a functor $\mathfrak{R}\colon \mathscr{A}lg_k^{\le 0}\to \mathscr{A}lg_k^{\le 0}$ and a morphism of functors $w\colon \mathfrak{R}\to Id$ with the following properties:
\begin{itemize}
\item[1.] $\mathfrak{R}(A)$ is cofibrant, and $w\colon \mathfrak{R}(A)\to A$ is a quasi-isomorphism, for any $A\in \mathscr{A}lg_k^{\le 0}$,
\item[2.] there is a colax-monoidal structure on the functor $\mathfrak{R}$, such that all colax-maps $\beta_{A,B}\colon \mathfrak{R}(A\otimes B)\to \mathfrak{R}(A)\otimes \mathfrak{R}(B)$ are quasi-isomorphisms of dg algebras, and such that the diagram
    $$
    \xymatrix{
    \mathfrak{R}(A\otimes B)\ar[rr]^{\beta_{A,B}}\ar[rd]&&\mathfrak{R}(A)\otimes \mathfrak{R}(B)\ar[dl]\\
    &A\otimes B
    }
    $$
    is commutative,
\item[3.] the morphism $w(k^\udot)\colon \mathfrak{R}(k^\udot)\to k^\udot$ coincides with $\alpha\colon \mathfrak{R}(k^\udot)\to k^\udot$, where $\alpha$ is a part of the colax-monoidal structure {\rm (see Definition \ref{colax_intro})}, and $k^\udot=1_{\mathscr{A}lg_k^{\le 0}}$ is the dg algebra equal to the one-dimensional $k$-algebra in degree 0, and vanishing in other degrees.
\end{itemize}
}
\end{theorem}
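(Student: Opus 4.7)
My plan is to realize $\mathfrak{R}(A)$ as a cobar--bar type resolution, and to build the colax-monoidal structure using the classical shuffle/Eilenberg--Zilber algebra on bar constructions.

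First I would set $\mathfrak{R}(A) := \Omega B(A)$, the cobar on the reduced bar construction. As a graded algebra this is the tensor algebra $T\bigl(s^{-1}\bar B(A)\bigr)$, hence standard cofibrant in the Hinich model structure, and the canonical augmentation $w\colon \Omega B(A)\to A$ attached to the universal twisting cochain $\bar B(A)\to A$ is the well-known quasi-isomorphism, giving property~1. The degree restriction is essential here: a generator $s^{-1}(sa_1\otimes\cdots\otimes sa_n)$ sits in degree $\sum_i\deg(a_i)-n+1$, which is $\le 0$ when $A\in\mathscr{A}lg_k^{\le 0}$, so $\mathfrak{R}(A)\in\mathscr{A}lg_k^{\le 0}$; on unbounded $\mathbb{Z}$-graded algebras the infinite sums appearing in the cobar differential would fail to be locally finite, matching the author's remark that such a functor hardly exists there.

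For property~2, since $\mathfrak{R}(A\otimes B)$ is free on $s^{-1}\bar B(A\otimes B)$, an algebra map $\beta_{A,B}$ into $\mathfrak{R}(A)\otimes \mathfrak{R}(B)$ is uniquely determined by specifying it on these generators. I would send a word $s^{-1}\bigl(s(a_1\otimes b_1)\otimes\cdots\otimes s(a_n\otimes b_n)\bigr)$ to the signed sum, over all ordered partitions of $\{1,\dots,n\}$ into an $A$-block and a $B$-block, of the corresponding product of an $A$-word and a $B$-word in $\mathfrak{R}(A)\otimes \mathfrak{R}(B)$ --- the bar--cobar colax structure dual to the shuffle map on bar constructions. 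That this commutes with the differentials is the Eilenberg--Zilber identity in dual form; commutativity of the projection triangle is verified on generators; and each $\beta_{A,B}$ is a quasi-isomorphism by two-out-of-three, since $p_A\otimes p_B$ is a quasi-isomorphism over a field and $w_{A\otimes B}$ is the canonical one.

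The main obstacle is strict commutativity of \eqref{newintro_1}: both composites from $\mathfrak{R}(A\otimes B\otimes C)$ to $\mathfrak{R}(A)\otimes \mathfrak{R}(B)\otimes \mathfrak{R}(C)$ must agree on the nose, not merely up to homotopy. This reduces to strict associativity of shuffles, which is classical: the $(p_1,p_2,p_3)$-shuffles of $\{1,\dots,p_1+p_2+p_3\}$ factor unambiguously either as $(p_1,p_2)$-shuffles followed by $(p_1+p_2,p_3)$-shuffles or as $(p_1,p_2+p_3)$-shuffles followed by $(p_2,p_3)$-shuffles, so the signed sums defining the two composites agree on generators, and agreement on generators of a free algebra forces strict agreement of the algebra maps. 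Property~3 is then immediate: $\bar B(k^\udot)=0$ since $k^\udot$ has zero augmentation ideal, whence $\mathfrak{R}(k^\udot)=k^\udot$, and both $w(k^\udot)$ and the colax unit $\alpha$ collapse to the identity.
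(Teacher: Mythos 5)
Your proposal runs into two concrete problems before the question of strict coassociativity (which is the only obstacle you flag) even arises. First, the reduced bar construction $\bar B(A)$ is only defined for \emph{augmented} algebras: the multiplication of $A$ does not descend to $A/k\cdot 1$, so one needs a splitting $A=k\oplus \bar A$ as algebras. The paper's category $\mathscr{A}lg_k^{\le 0}$ consists of unital algebras with no augmentation chosen or assumed (e.g.\ $M_n(k)$ in degree $0$ admits none), so $\mathfrak{R}=\Omega B$ is not a functor on the whole category, and your proof of property~3 ($\bar B(k^\udot)=0$) presupposes exactly this missing augmentation. Second, the defining formula for $\beta_{A,B}$ does not typecheck: a generator $s^{-1}\bigl(s(a_1\otimes b_1)\otimes\cdots\otimes s(a_n\otimes b_n)\bigr)$ carries \emph{both} an $a_i$ and a $b_i$ in every slot, so a ``partition of $\{1,\dots,n\}$ into an $A$-block and a $B$-block'' leaves unsaid what is done with the $b_i$ for $i$ in the $A$-block and the $a_j$ for $j$ in the $B$-block. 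Any Alexander--Whitney-style repair multiplies the leftover factors via face maps (reintroducing the augmentation problem), and the real verification is then that the resulting map commutes with the \emph{cobar} differential, whose deconcatenation term produces products of generators; that compatibility is not ``the Eilenberg--Zilber identity in dual form'' and is where such direct dg-level constructions typically break, not at the associativity of shuffles.

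For comparison, the paper deliberately sidesteps this: it first solves the problem for \emph{simplicial} algebras, where the cotriple (free-algebra) resolution $\mathfrak{F}(A)_k=T^{\circ(k+1)}A_k$ admits an obvious levelwise colax map $T(A\otimes B)\to T(A)\otimes T(B)$, $(a_1\otimes b_1)\otimes\cdots\otimes(a_k\otimes b_k)\mapsto(a_1\otimes\cdots\otimes a_k)\otimes(b_1\otimes\cdots\otimes b_k)$, for which strict coassociativity is immediate; it then sets $\mathfrak{R}(A)=N^{\mon}(\mathfrak{F}(\Gamma^{\mon}(A)))$ and transfers everything through the Dold--Kan correspondence, using the Schwede--Shipley weak monoidal Quillen equivalence for cofibrancy and the quasi-isomorphism, and the bialgebra axiom for $(\nabla,AW)$ together with the explicit description of $L^{\mon}$ (Theorems \ref{another} and \ref{keytheorem}) for the colax structure and the commuting triangle. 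The combinatorial difficulty you would face inside $\Omega B$ is precisely what is repackaged there into the identification of the canonical colax structure on $L^{\mon}$. If you want to salvage a direct dg construction, you would at minimum need to replace the reduced bar construction by a resolution defined for non-augmented unital algebras and then give a complete, well-typed formula for $\beta_{A,B}$ together with the differential check.
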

Note once again, that in the context of this Theorem, the words {\it cofibrant} and {\it free} are synonymous.
In the course of proof we use an ``auxiliary'' closed model category of simplicial associative algebras, where the difference between the free and the cofibrant objects is essential.
For simplicial associative algebras there is fairly analogous (and much simpler) theorem, see Theorem \ref{theor_simpl_alg} below. We formulate Theorem \ref{theor1_intro} with some closed model categories language to have a unified formulation of the both theorems.

\subsection{\sc Applications}\label{applications}
Here we explain our motivations for Theorem \ref{theor1_intro}. The results mentioned here will be proven in our sequel paper(s).

The problem which motivates our interest in Theorem \ref{theor1_intro} is the following {\it generalized Deligne conjecture for $n$-fold monoidal abelian categories}.
Let $\mathscr{M}$ be an abelian $n$-fold monoidal [BFSV] category, with some weak compatibility of the exact and of the monoidal structures, and let $e$ be its unit element. The generalized Deligne conjecture claims that $\RHom^\udot_\mathscr{M}(e,e)$ is a homotopy $(n+1)$-algebra, whose commutative product is quasi-isomorphic to the Yoneda product.

Originally, the name ``Deligne conjecture'' was given (due to a letter of P.Deligne where he announced it) to the particular case of the above claim for $n=1$, when $\mathscr{M}=\Bimod(A)$ for an associative algebra $A$, with $-\otimes_A-$ as the monoidal product. Then the claim is that the Hochschild complex $\RHom^\udot_{\Bimod(A)}(A,A)$ admits a homotopy 2-algebra structure, whose commutative product is quasi-isomorphic to the Yoneda product, and whose Lie bracket of degree -1 is quasi-isomorphic to the Gerstenhaber bracket. At the moment, there are several proofs of this claim, see e.g. [MS], [KS], and [T].

In their paper [KT], Kock and To\"{e}n suggest a very conceptual proof of (a suitable version of) the Deligne conjecture for $n$-fold monoidal categories {\it enriched over simplicial sets}. The methods of loc.cit. can be applied for the $n$-fold monoidal categories enriched over arbitrary {\it cartesian symmetric monoidal category}, that is, enriched over a symmetric monoidal category whose monoidal product of any two objects is isomorphic to their cartesian product. The categories of sets, of simplicial sets, ... give examples of cartesian-monoidal categories. The categories of abelian groups, $k$-vector spaces, complexes of $k$-vector spaces, ... are not cartesian-monoidal, as the cartesian product is the direct sum, and the monoidal product is the tensor product.

The reason why the methods of [KT] do not work in non-cartesian-monoidal context is that the authors use essentially  the concept of {\it weak Segal monoids} whose standard definition assumes that the $\Hom$-sets belong to a cartesian-monoidal category.

However, there appeared a definition of weak Segal monoids in non-cartesian monoidal categories, due to Tom Leinster [Le].
Working with the Leinster's definition, one needs to have a more subtle monoidal property of the Dwyer-Kan-type localization of categories than the one used in [KT]. Let us formulate this necessary property (see Theorem \ref{theor_appl_2} below).

Let $\mathscr{C}at^\dg_\mathbb{U}$ denotes the category of $\mathbb{U}$-small dg categories over a field $k$, let $\mathscr{CC}at^\dg_\mathbb{U}$ denotes the category of {\it colored} $\mathbb{U}$-small dg categories over $k$, and let, finally, $\mathscr{C}at^\pretr_\mathbb{U}$ (resp., $\mathscr{CC}at^\pretr_\mathbb{U}$) denotes the category of $\mathbb{U}$-small pre-triangulated dg categories (resp., the category of colored $\mathbb{U}$-small pre-triangulated dg categories) over $k$. The either of these three categories is symmetric monoidal, with $\otimes_k$ as the monoidal product.

Tabuada [Tab] constructed a closed model structure on the category of $\mathbb{U}$-small dg categories; this structure is assumed in the two following statements.

\begin{theorem}[to be proven later]\label{theor_appl_1}{\itshape
Let $k$ be a field of any characteristic.
There is a functor $\mathfrak{R}\colon \mathscr{C}at^\pretr_\mathbb{U}\to \mathscr{Cat}^\dg_\mathbb{U}$ and a morphism of functors $w\colon \mathfrak{R}\to Id$ with the following properties:
\begin{itemize}
\item[1.] $\mathfrak{R}(C)$ is cofibrant, and $w\colon \mathfrak{R}(C)\to C$ is a weak equivalence, for any $C\in \mathscr{C}at^\pretr_\mathbb{U}$,
\item[2.] there is a colax-monoidal structure on the functor $\mathfrak{R}$, such that all colax-maps $\beta_{C,D}\colon \mathfrak{R}(C\otimes D)\to \mathfrak{R}(C)\otimes \mathfrak{R}(D)$ are weak equivalences of dg categories, and such that the diagram
    $$
    \xymatrix{
    \mathfrak{R}(C\otimes D)\ar[rr]^{\beta_{C,D}}\ar[rd]&&\mathfrak{R}(C)\otimes \mathfrak{R}(D)\ar[dl]\\
    &C\otimes D
    }
    $$
    is commutative.
\end{itemize}
}
\end{theorem}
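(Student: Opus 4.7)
The plan is to bootstrap from Theorem \ref{theor1_intro} via a colored/many-object extension. A dg category with object set $S$ is the same datum as a colored dg algebra with color set $S$: an associative algebra object in the monoidal category of $(S,S)$-bi-indexed dg vector spaces. Under this dictionary, coproducts and tensor products of dg categories correspond to the analogous operations for colored dg algebras (with disjoint union and cartesian product of color sets, respectively), and Tabuada's cofibrant dg categories correspond to retracts of standard cofibrant colored dg algebras in the sense of the excerpt.

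First I would extend Theorem \ref{theor1_intro} to the category of $\mathbb{Z}_{\le 0}$-graded colored dg algebras, producing a colax-monoidal cofibrant resolution $\mathfrak{R}_{\mathrm{col}}$ in that setting. Every ingredient of the proof of Theorem \ref{theor1_intro} --- the free product $A\ast T(V)$, the standard cofibrant filtration, the simplicial/bar-type resolution, and the colax maps $\beta$ --- has an immediate quiver analogue: $T(V)$ becomes the free dg category on a dg quiver, $\ast$ becomes the coproduct of dg categories amalgamated along the common vertex set, and the colax-monoidal combinatorics factor through pairs of colors $(s,s')\in S\times S'$. This step should require only a notational extension of the arguments of Section \ref{section03}.

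Next I would reduce pre-triangulated dg categories (which are $\mathbb{Z}$-graded) to the $\mathbb{Z}_{\le 0}$-graded setting by constructing, functorially in $C\in\mathscr{C}at^\pretr_\mathbb{U}$, a $\mathbb{Z}_{\le 0}$-graded dg category $C^\flat$ together with a natural quasi-equivalence $C^\flat\to C$. The idea: since $C$ is pre-triangulated, every positive-degree morphism $f\in \Hom_C^n(X,Y)$ is represented by a degree-zero morphism $X\to Y[n]$ into a formal shift, so enlarging the object set to include all formal shifts $X[n]$ yields a $\mathbb{Z}_{\le 0}$-graded dg category $C^\flat$ whose inclusion into $C$ is a quasi-equivalence in Tabuada's model structure. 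The construction is functorial and is equipped with a natural quasi-equivalence $C^\flat\otimes D^\flat\to (C\otimes D)^\flat$. Setting $\mathfrak{R}(C):=\mathfrak{R}_{\mathrm{col}}(C^\flat)$ then yields both the cofibrant resolution and the colax-monoidal maps $\beta_{C,D}$ by composition.

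The main obstacle lies in the construction of Step 2: the naive truncation $\tau_{\le 0}$ of hom-complexes is not multiplicative, so enlarging the object set is essential. Making this enlargement functorial on all of $\mathscr{C}at^\pretr_\mathbb{U}$ while preserving the colax-monoidal structure --- not merely weak equivalences --- will require careful bookkeeping of the shifted-object set and a verification that the resulting maps $C^\flat\otimes D^\flat\to (C\otimes D)^\flat$ satisfy the coherence diagram of the form \eqref{newintro_1}. Once this functorial connective-cover is in place, the remainder of the argument is a direct transcription of the colored version of Theorem \ref{theor1_intro}.
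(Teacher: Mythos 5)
You should first note that the paper does not actually prove Theorem \ref{theor_appl_1}: it is stated ``to be proven later,'' with only a two-step outline (extend Theorem \ref{theor1_intro} to $\mathbb{Z}_{\le 0}$-graded dg categories using the methods of this paper and of [BM]; then pass from those to pre-triangulated dg categories). Your Step 1 coincides with the paper's first step, although describing it as ``only a notational extension'' understates it: the Dold--Kan and Schwede--Shipley machinery of Sections 2--5, the transferred model structures, and the computation of $L^{\mon}$ with its colax structure all have to be redone in the enriched, many-object setting, which is precisely why the paper invokes [BM] rather than claiming the extension is formal.

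The genuine gap is in your Step 2. There is no dg functor $C^\flat\to C$ from a $\mathbb{Z}_{\le 0}$-graded dg category to a general pre-triangulated dg category $C$ that is a quasi-equivalence, functorially or otherwise. A quasi-equivalence induces isomorphisms $H^n\Hom_{C^\flat}(x,y)\to H^n\Hom_C(Fx,Fy)$ for all $n\in\mathbb{Z}$; the left-hand side vanishes for $n>0$ when $C^\flat$ is $\mathbb{Z}_{\le 0}$-graded, whereas $H^n\Hom_C(X,Y)\cong\Hom_{H^0(C)}(X,Y[n])=\Ext^n(X,Y)$ is generically nonzero for $n>0$ (take $C$ a dg enhancement of $D^b(\mathrm{coh}\,E)$ for an elliptic curve $E$ and $X=Y=\mathcal{O}_E$, $n=1$). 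Enlarging the object set by formal shifts does not repair this: if $X$, $Y$ and $Y[n]$ are all objects, then $\Hom(X,Y[n])\cong\Hom(X,Y)[n]$, so these hom-complexes cannot all be concentrated in non-positive degrees unless $\Hom(X,Y)$ is acyclic. Representing a positive $\Ext$-class by a closed degree-zero morphism into a shift does not delete the positive-degree part of the original hom-complex, so the ``connective cover'' $C^\flat$ you describe either fails to be $\mathbb{Z}_{\le 0}$-graded or fails to be quasi-equivalent to $C$. Consequently the passage from connective dg categories to pre-triangulated ones cannot proceed through a quasi-equivalent connective replacement; it must use pre-triangulatedness in some other way (for instance by resolving a connective model and then applying the pre-triangulated-hull construction, after which the cofibrancy, the colax maps, and the weak-equivalence claims would all have to be re-established), and your proposal does not supply that missing step.
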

To prove Theorem \ref{theor_appl_1}, we firstly extend our Main Theorem \ref{theor1_intro} from $\mathbb{Z}_{\le 0}$-graded dg algebras to
$\mathbb{Z}_{\le 0}$-graded dg categories, this should be proven by the methods of this paper and the ones of [BM]. In the next step, we extend the result from $\mathbb{Z}_{\le 0}$-graded dg categories to pre-triangulated dg categories. It seems that the analogous statement is not true for general $\mathbb{Z}$-graded dg categories (not the pre-triangulated ones).

\begin{theorem}[to be proven later]\label{theor_appl_2}{\itshape
There is a localization functor $$\mathbb{L}\colon\mathscr{CC}at^\pretr_\mathbb{U}\to \mathscr{C}at^\dg_{[\mathbb{U}]}$$
from colored pre-triangulated $\mathbb{U}$-small dg categories to dg $\mathbb{U}$-categories, with the following properties:
\begin{itemize}
\item[(i)] $\mathbb{L}$ is colax-monoidal, with weak equivalences of dg categories as the colax maps $\beta(C,D)$,
\item[(ii)] $\beta(C,D)$ is defined over $C\otimes D$, for any two pre-triangulated dg categories $C,D$,
\item[(iii)] the image of $\mathbb{L}(C,S)$ under the projection to the homotopy category $\Ho\mathscr{C}at^\dg_{\mathbb{U}}$ coincides with the To\"{e}n dg localization [To], Sect.8.2.
\end{itemize}
}
\end{theorem}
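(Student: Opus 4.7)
The strategy is to model Toën's dg localization as a strict pushout in $\mathscr{C}at^\dg_\mathbb{U}$ after passing through the colax-monoidal cofibrant resolution of Theorem \ref{theor_appl_1}. Let $I$ denote the ``walking arrow'' dg category (two objects, one generating morphism) and let $J$ be a cofibrant dg model for the ``walking isomorphism''; for a colored pre-triangulated dg category $(C,S)$, the marking $S$ is encoded by a dg functor $\iota_S\colon\coprod_{s\in S} I\to C$, and the Toën localization $\mathbb{L}^{To}(C,S)$ is the homotopy pushout in $\mathscr{C}at^\dg_\mathbb{U}$ of $\coprod_s J\leftarrow \coprod_s I\xrightarrow{\iota_S} C$.

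First I would apply Theorem \ref{theor_appl_1} to $C$ to obtain a cofibrant resolution $w\colon\mathfrak{R}(C)\to C$ carrying a colax-monoidal structure, and lift $\iota_S$ along the trivial fibration $w$ to a dg functor $\tilde{\iota}_S\colon\coprod_s I\to\mathfrak{R}(C)$. Define $\mathbb{L}(C,S)$ to be the strict pushout in $\mathscr{C}at^\dg_\mathbb{U}$ of $\coprod_s J\leftarrow \coprod_s I\xrightarrow{\tilde{\iota}_S}\mathfrak{R}(C)$. Because $\mathfrak{R}(C)$ is cofibrant and $\coprod_s I\hookrightarrow \coprod_s J$ is a cofibration in Tabuada's model structure, this strict pushout computes the homotopy pushout, hence its image in $\Ho\mathscr{C}at^\dg_\mathbb{U}$ coincides with $\mathbb{L}^{To}(C,S)$, establishing property (iii).

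The colax-monoidal structure is then built from $\beta^{\mathfrak{R}}_{C,D}$ together with the universal comparison map of pushouts. Since $\otimes_k$ is a left Quillen bifunctor on $\mathscr{C}at^\dg_\mathbb{U}$ and every vertex of the relevant pushout diagrams is cofibrant, the natural map from the pushout defining $\mathbb{L}(C\otimes D)$ to the tensor product of the pushouts defining $\mathbb{L}(C)$ and $\mathbb{L}(D)$ is a weak equivalence (by Ken Brown's lemma applied to homotopy pushouts of cofibrant diagrams). Composing with $\beta^{\mathfrak{R}}_{C,D}$ yields the desired $\beta_{C,D}\colon \mathbb{L}(C\otimes D)\to \mathbb{L}(C)\otimes \mathbb{L}(D)$, a weak equivalence, defined over $C\otimes D$ because the colax triangle for $\mathfrak{R}$ is; the colax-monoidal cocycle identity for $\beta$ reduces, via the universal property of the defining pushouts, to the cocycle identity already known for $\beta^{\mathfrak{R}}$.

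The main obstacle is carrying out the lift $\tilde{\iota}_S$ simultaneously functorially in $(C,S)$ and compatibly with the colax-monoidal data: a pointwise choice of lifts destroys both strict functoriality and the colax cocycle. The cleanest fix is to upgrade Theorem \ref{theor_appl_1} from $\mathscr{C}at^\pretr_\mathbb{U}$ to the arrow category $\mathrm{Arr}(\mathscr{C}at^\pretr_\mathbb{U})$ (or, more generally, to diagrams over a fixed small indexing category encoding the structure of a marking), producing in one stroke a colax-monoidal cofibrant resolution of the pair $(C,\iota_S)$ that already lives over $\mathfrak{R}(C)$. This extension is carried out by the same inductive filtration argument as in the proof of the main theorem, using the projective model structure on diagrams and the fact that $\otimes_k$ respects these diagram-wise resolutions; the pushout construction above is then applied to the resolved diagram, automatically giving both strict functoriality and the colax-monoidal coherences.
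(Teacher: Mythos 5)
You should first be aware that the paper does not actually prove Theorem \ref{theor_appl_2}: it is explicitly marked ``to be proven later'' and deferred to a sequel, with only a one-paragraph sketch following the statement. Your outline does match that sketch — present To\"{e}n's localization as a homotopy pushout of a push-out-angle diagram, compute it as a genuine colimit of a cofibrant replacement obtained from $\mathfrak{R}$ of Theorem \ref{theor_appl_1}, and note that such diagrams carry a model structure with component-wise weak equivalences ([DS], Section 10) so that the replacement can be done at the level of the whole diagram, which is exactly your proposed fix for the functoriality of the lift $\tilde{\iota}_S$. Up to that point you are on the paper's intended track.

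There is, however, a genuine gap in your argument for property (i). You justify that the comparison map from the pushout defining $\mathbb{L}(C\otimes D)$ to $\mathbb{L}(C)\otimes\mathbb{L}(D)$ is a weak equivalence by asserting that $\otimes_k$ is a left Quillen bifunctor on $\mathscr{C}at^\dg_\mathbb{U}$. This is false: Tabuada's model structure is not a monoidal model category (the tensor product of two cofibrant dg categories need not be cofibrant, and the pushout-product axiom fails), and this failure is precisely the phenomenon the entire paper is built to circumvent — if $\otimes_k$ were left Quillen, the colax-monoidal resolutions of Theorems \ref{theor1_intro} and \ref{theor_appl_1} would be essentially unnecessary. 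Consequently, neither the claim that the tensor product of two (homotopy) pushout squares of cofibrant dg categories is again a homotopy pushout square, nor the appeal to Ken Brown's lemma, is available. Note also that you cannot substitute the 2-out-of-3 trick used elsewhere in the paper (commutativity of the triangle over $C\otimes D$), because the structure maps $C\otimes D\to\mathbb{L}(C\otimes D)$ and $C\otimes D\to\mathbb{L}(C)\otimes\mathbb{L}(D)$ are localizations rather than weak equivalences. What you actually need — that dg localization commutes with $\otimes_k$ up to quasi-equivalence — is a substantive statement requiring its own argument (e.g., an explicit analysis of the Hom-complexes of the strict pushout of the resolved diagram), and as written your proposal assumes it rather than proves it.
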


To\"{e}n defines a localization of the dg categories with nice homotopy properties (see [To], Corollary 8.7) as the  homotopy colimit of some push-out-angle diagram. It is possible to compute this homotopy colimit  as the genuine colimit of a cofibrant replacement of the initial  diagram.
We use the cofibrant resolution $\mathfrak{R}(C)$, given in Theorem \ref{theor_appl_1}, for this replacement. (In fact, the push-out-angle diagrams and the pull-back-angle diagrams in a closed model category admit closed model structures, with component-wise weak equivalences as the weak equivalences, see [DS], Section 10).

Theorem \ref{theor_appl_2} is essentially the only new toolkit one needs to have, to extend the Kock-To\"{e}n's proof of simplicial $n$-fold monoidal Deligne conjecture to its genuine $k$-linear version. Notice that this (way of) proof of it does not use any transcendental numbers which unavoidably appear in any ``physical'' proof. When our $n$-fold monoidal category is $k$-linear, for a field $k$, the output homotopy $(n+1)$-algebra (for a homotopy equivalent definition of this concept) is defined over the field $k$ as well.

\subsection{\sc Plan of the paper}\label{section03}
We start with proving in Section 1 a direct analogue of Theorem \ref{theor1_intro} for simplicial associative algebras over $k$, instead of $\mathbb{Z}_{\le 0}$-graded dg associative algebras (see Theorem \ref{theor_simpl_alg} below). It turns out that Theorem \ref{theor1_intro} becomes much simpler in the simplicial setting. We solve it by an explicit construction, which reminiscences the construction of Dwyer-Kan in their first paper [DK1] on simplicial localization. We denote by $\mathfrak{F}(A)$, for a simplicial algebra $A$, our solution to Theorem \ref{theor_simpl_alg}.

The rough idea of the remaining Sections is to ``transfer'' the solution  of Theorem \ref{theor_simpl_alg} for simplicial algebras to a solution to the Main Theorem \ref{theor1_intro}, using the Dold-Kan correspondence and passing to the categories of monoids.
Recall (see Section 3 for detail) that the Dold-Kan correspondence
$$
N\colon\mathscr{M}od(\mathbb{Z})^\Delta\rightleftarrows \mathscr{C}^-(\mathbb{Z})\colon \Gamma
$$
is an adjoint equivalence between the category $\mathscr{M}od(\mathbb{Z})^\Delta$ of simplicial abelian groups, and the category $\mathscr{C}^-(\mathbb{Z})$ of $\mathbb{Z}_{\le 0}$-graded complexes of abelian groups.
The both categories are symmetric monoidal, but neither of the functors $N$ and $\Gamma$ respects the monoidal structure.

In Section 2, we recall some ``non-homotopy'' results of Schwede-Shipley [SchS03]. In particular, having a lax-monoidal structure $\varphi$ on the right adjoint functor $R$ in an adjunction
$$
L\colon\mathscr{C}\rightleftarrows\mathscr{D}\colon R
$$
between monoidal categories $\mathscr{C}$ and $\mathscr{D}$ one easily defines a functor
$$
R^\mon\colon\Mon\mathscr{D}\to\Mon\mathscr{C}
$$
and, bit more tricky, one defines a left adjoint $L^\mon$ to $R^\mon$, such that there is an adjunction
$$
L^\mon\colon\Mon\mathscr{C}\rightleftarrows\Mon\mathscr{D}\colon R^\mon
$$
Applying this construction to $L=N$, $R=\Gamma$, and $\varphi$ the lax-monoidal structure on $\Gamma$ adjoint to the Alexander-Whitney colax-monoidal structure on $N$, one gets an adjoint pair of functors
\begin{equation}\label{newintro1}
N^\mon\colon\Mon(\mathscr{M}od(\mathbb{Z})^\Delta)\rightleftarrows\Mon(\mathscr{C}^-(\mathbb{Z}))\colon \Gamma^\mon
\end{equation}
We prove in Section 5 that the formula
\begin{equation}\label{newintro2}
\mathfrak{R}(A)=N^\mon(\mathfrak{F}(\Gamma^\mon(A)))
\end{equation}
gives a solution to Theorem \ref{theor1_intro}, where $\mathfrak{F}(B)$ is a solution to Theorem \ref{theor_simpl_alg}.

The proof that \eqref{newintro2} solves Theorem \ref{theor1_intro} is rather sophisticated.  We use mainly the following techniques:
the ``homotopy'' results of [SchS03] (which claims, in particular, that \eqref{newintro1} is a Quillen equivalence); the bialgebra axiom for the Dold-Kan correspondence (proven in [AM] and independently but later in [Sh2]); and the theory developed in Section 4 and in Appendix B of this paper.
In general it is not true that, for an adjoint pair of functors, having a solution $\mathfrak{F}(B)$ of Th.\ref{theor_simpl_alg}-like theorem, the formulae \eqref{newintro2} gives a solution of Th.\ref{theor1_intro}-like theorem. In fact, we use many specific features of the Dold-Kan correspondence throughout the proof.

In Section 4 we find an explicit expression for the Schwede-Schipley's functor $L^\mon$ (defined a priori as a co-equalizer \eqref{quotient_def}), in the particular case of the Dold-Kan correspondence, with $L=N$, the normalized chain complex functor. It is relatively easy to do, however, we need to know as well an explicit expression for the {\it canonical colax-monoidal structure on $L^\mon$}, that is the one adjoint to the natural lax-monoidal structure on $R^\mon$. The latter task requires some amount of work, and occupies most part of Section 4.
Some of results of very general nature we need in Section 4 are formulated in proven separately in Appendix B.
Section 4 and Appendix B together form a technical core of the paper.

The main results of Section 4 are formulated in Theorem \ref{another} and Theorem \ref{keytheorem}.
As well, we prove here more general Theorem \ref{keytheorembis}, which describes the colax-monoidal structure on $L^\mon$ in greater generality.

In Section 5, the different previous results of the paper become connected and interacted, which leads us to a proof of the Main Theorem \ref{theor1_intro} (which comes up under the name Theorem \ref{theor_gr_alg} in the body of the paper).

In Appendix A we collect, to ease the reader's reference, some ``diagrammatic'' definitions, used in the paper.
We recall here the definitions of a lax-monoidal and of a colax-monoidal structures on a functor, and formulate the bialgebra axiom.

In Appendix B we develop some techniques, used in proofs of Section 4.
Roughly speaking, we study here  a property of a pair of functors $L\colon\mathscr{C}\rightleftarrows \mathscr{D}\colon R$ which weakens the property that ``$L$ and $R$ are adjoint functors''. We define {\it weak right adjoint} pairs and {\it weak left adjoint} pairs of functors, followed by {\it very weak left} and {\it very weak right} adjoint pairs. Our goal is to study how these weak concepts interact with the lax-monoidal structures on $R$ and the colax-monoidal structures on $L$, when the categories $\mathscr{C}$ and $\mathscr{D}$ are supposed to be monoidal. (For the case of a honest adjoint functors, there is a 1-1 correspondence between the lax structures on $R$ and the colax structures on $L$, see Lemma \ref{kurica}).
The results proven there are very general in nature, and we decided to
organize them in a single Appendix would be better than to spread them out between the proofs of Section 4.

\subsection{\sc Notations}\label{not}
Throughout the paper, $k$ denotes a field of any characteristic. An ``algebra'' always means an ``associative algebra with unit''.

All differentials have degree +1, as is common in the algebraic literature.

Let $\Delta$ be the category whose objects are $[0]$, $[1]$, $[2]$, $[3]$, and so on, where $[n]$ denotes the completely ordered sets with $n+1$ elements $0<1<2<\dots<n$. A morphism $f\colon [m]\to[n]$ is any map obeying $f(i)\le f(j)$ when $i\le j$. A {\it simplicial} object in a category $\mathscr{C}$ is a functor $\Delta^\opp\to\mathscr{C}$, and a {\it cosimplicial} object in $\mathscr{C}$ is a functor $\Delta\to \mathscr{C}$. We denote by $\mathscr{C}^\Delta$ the category of simplicial objects in $\mathscr{C}$ and by $\mathscr{C}^{\Delta^\opp}$ the category of cosimplicial objects in $\mathscr{C}$. This notation is indeed confusing, but seemingly it is traditional now.

All categories we consider in this paper are small for some universe. We do not meet here any set-theoretical troubles related with the localization of categories, and we always skip the adjective ``small'' in the formulations of our results (except Section \ref{applications}).

\subsection*{}
\subsubsection*{\sc Acknowledgments}
I am grateful to  Sasha Beilinson, Volodya Hinich, Ieke Moerdijk, Stefan Schwede, and Vadik Vologodsky for discussions on the topics related to this paper.
I am greatly indebted to Martin Schlichenmaier for his kindness and support during my 5-year appointment at the University of Luxembourg, which made possible my further development as a mathematician.
The work was done during research stay at the Max-Planck Institut f\"{u}r Mathematik, Bonn. I am thankful to the MPIM for hospitality, for financial support, and for very creative working atmosphere.

\section{\sc The case of simplicial algebras}\label{section1}
The category $\mathscr{A}lg_k^\Delta$ of simplicial algebras over field $k$ is monoidal, with degree-wise $\otimes_k$ as the monoidal structure, and it admits a closed model structure. We recall this closed model structure in Section \ref{section12} below. We refer to this closed model structure in the following result.
\begin{theorem}[Main Theorem for simplicial algebras]\label{theor_simpl_alg}{\itshape
Let $k$ be a field of any characteristic.
There is a functor $\mathfrak{F}\colon \mathscr{A}lg_k^{\Delta}\to \mathscr{A}lg_k^{\Delta}$ and a morphism of functors $w\colon \mathfrak{F}\to Id$ with the following properties:
\begin{itemize}
\item[1.] $\mathfrak{F}(A)$ is cofibrant, and $w\colon \mathfrak{F}(A)\to A$ is a weak equivalence, for any $A\in \mathscr{A}lg_k^\Delta$,
\item[2.] there is a colax-monoidal structure on the functor $\mathfrak{F}$, such that all colax-maps $\beta_{A,B}\colon \mathfrak{F}(A\otimes B)\to \mathfrak{F}(A)\otimes \mathfrak{F}(B)$ are weak equivalences of simplicial algebras, and such that the diagram
    $$
    \xymatrix{
    \mathfrak{F}(A\otimes B)\ar[rr]^{\beta_{A,B}}\ar[rd]&&\mathfrak{F}(A)\otimes \mathfrak{F}(B)\ar[dl]\\
    &A\otimes B
    }
    $$
    is commutative,
\item[3.] the morphism $w(k_\mb)\colon \mathfrak{F}(k_\mb)\to k_\mb$ coincides with $\alpha\colon \mathfrak{F}(k_\mb)\to k_\mb$, where $\alpha$ is a part of the colax-monoidal structure {\rm (see Definition \ref{colax_intro})}, and $k_\mb=1_{\mathscr{A}lg_k^\Delta}$ is the simplicial algebra equal to the one-dimensional $k$-algebra $k$ in each degree.
\end{itemize}
}
\end{theorem}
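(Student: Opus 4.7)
The plan is to construct $\mathfrak{F}(A)$ via the standard cotriple bar resolution associated to the free/forgetful adjunction
$$
T\colon \mathscr{V}ect_k^\Delta \rightleftarrows \mathscr{A}lg_k^\Delta \colon U,
$$
essentially transporting the Dwyer--Kan construction [DK1] to the algebra setting. Write $\bot = TU$, with counit $\varepsilon$ and comultiplication $\delta = T\eta U$, and let $B_\bullet(A)$ denote the simplicial object in $\mathscr{A}lg_k^\Delta$ with $B_m(A) = \bot^{m+1}(A)$, augmented over $A$ by $\varepsilon$. I would then define
$$
\mathfrak{F}(A) := \diag B_\bullet(A), \qquad \mathfrak{F}(A)_n = \bot^{n+1}(A_n),
$$
with face and degeneracy maps the diagonals of the two simplicial directions (bar direction from $\varepsilon,\delta$; internal direction from the simplicial structure of $A$).

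For claim 1, each $\mathfrak{F}(A)_n = T\bigl(U\bot^n(A_n)\bigr)$ is a free algebra, and the construction realizes $\mathfrak{F}(A)$ as a standard free, hence cofibrant, simplicial algebra in Quillen's model structure on $\mathscr{A}lg_k^\Delta$. For the augmentation $w\colon \mathfrak{F}(A) \to A$: for each fixed $n$ the column $B_\bullet(A_n) \to A_n$ admits a contracting simplicial homotopy after forgetting to simplicial vector spaces (given by $\eta$), hence is a weak equivalence; the classical bisimplicial diagonal lemma (weak equivalence in each column implies weak equivalence of diagonals) then gives that $w$ is a weak equivalence.

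For claim 2, observe that $U$ is strict monoidal (one has $U(A \otimes B) = UA \otimes UB$ identically), so by adjunction $T$ inherits a canonical colax-monoidal structure whose component $T(V \otimes W) \to T(V) \otimes T(W)$ is the unique algebra map extending the inclusion $V \otimes W \hookrightarrow T(V) \otimes T(W)$. Iterating along the bar direction gives natural transformations $\bot^k(A \otimes B) \to \bot^k(A) \otimes \bot^k(B)$ compatible with $\varepsilon$ and $\delta$; taking diagonals yields the desired $\beta_{A,B}\colon \mathfrak{F}(A \otimes B) \to \mathfrak{F}(A) \otimes \mathfrak{F}(B)$. The triangle over $A \otimes B$ commutes by naturality of $\varepsilon$ against the colax structure. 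Coassociativity and the unit axiom in claim 3 follow from the general principle (recalled in Appendix A) that a colax structure on a left adjoint mate-conjugate to a lax structure on its right adjoint automatically satisfies the pentagon and triangle axioms corresponding to those on the right adjoint; in particular, the colax unit $\alpha$ evaluated on $k_\mb$ is the iterated $\varepsilon$ on $k_\mb$, which is exactly $w(k_\mb)$.

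The main obstacle is showing that each $\beta_{A,B}$ is itself a weak equivalence. I would apply two-out-of-three to the commuting triangle $w_{A \otimes B} = (w_A \otimes w_B) \circ \beta_{A,B}$: the map $w_{A\otimes B}$ is a weak equivalence by the previous paragraph, and $w_A \otimes w_B$ is the tensor product of two weak equivalences between cofibrant simplicial algebras, which is a weak equivalence because $\otimes_k$ over a field preserves weak equivalences of simplicial vector spaces on cofibrant objects (the monoidal/pushout-product axiom for $\mathscr{A}lg_k^\Delta$, which holds over an arbitrary field by standard simplicial arguments). Hence $\beta_{A,B}$ is a weak equivalence. The one genuinely delicate bookkeeping step is verifying the compatibility of iterated colax maps with the face and degeneracy maps of the bar resolution, but this is a routine naturality check powered by the strict-monoidality of $U$.
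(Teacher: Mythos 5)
Your proposal is correct and follows essentially the same route as the paper: the cotriple resolution $\mathfrak{F}(A)_n=\bot^{n+1}(A_n)$ taken as the diagonal of the bar bisimplicial object, cofibrancy from degreewise freeness with degeneracies preserving generators, the colax map induced by the canonical map $T(V\otimes W)\to T(V)\otimes T(W)$ (which is exactly the paper's explicit formula), and the two-out-of-three argument in the triangle over $A\otimes B$ to see that $\beta_{A,B}$ is a weak equivalence. The only differences are cosmetic — you derive the colax structure by adjunction from the strict monoidality of the forgetful functor and justify the acyclicity of $w$ via the extra-degeneracy/diagonal argument, where the paper writes the formulas and declares these points clear.
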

\subsection{\sc The construction}\label{section11}
The idea is very easy. Let $A_\mb$ be a simplicial algebra. There is the forgetful functor $\mathscr{A}lg_k^\Delta\to\mathscr{V}ect_k^\Delta$ to simplicial vector spaces, having a left adjoint functor of ``free objects''. This is the functor $A_\mb\rightsquigarrow (TA)_\mb$, with
\begin{equation}
(TA)_k=T(A_k)
\end{equation}
where $T(A_k)$ is the free (tensor) algebra. We consider the cotriple, associated with the pair of adjoint functors ($L$ is the left adjoint to $R$)
$$
L\colon \mathscr{V}ect_k^\Delta\rightleftarrows\mathscr{A}lg_k^\Delta\colon R
$$
(see [W], Section 8.6). Explicitly,
\begin{equation}
T=L\circ R
\end{equation}
This implies that there are maps of functors $\epsilon\colon T\to \id$ and $\delta\colon T\to T^2$ obeying the cotriple axioms.
These axioms guarantee that the following collection of algebras $(FA)_k$, $k\ge 0$, has a natural structure of a simplicial algebra $(FA)_\mb$:
$$
(FA)_k=T^{\circ(k+1)}A_k
$$
(there is the $(k+1)$-st iterated tensor power in the r.h.s.),
such that the natural map $FA_\mb\to A_\mb$, $T^{\circ(k+1)}A_k\xrightarrow{\epsilon^{k+1}}A_k$, is a weak equivalence of simplicial algebras.

Explicitly, having such a functor $T$ with maps of functors $\epsilon\colon T\to\id$ and $\delta\colon T\to T^2$, the simplicial structure on $(FA)_\mb$ is defined as follows.

When $A_k=A$ for any $k$, ($A_\mb$ is a constant simplicial algebra), the formulas for simplicial algebra structure on $(FA)_\mb$ are:
\begin{equation}\label{cotriple}
\begin{aligned}
\ &d_i=T^{\circ i}\epsilon T^{\circ (n-i)}\colon  T^{\circ (n+1)}A\to T^{\circ n}A\\
&s_i=T^{\circ i}\delta T^{\circ (n-i)}\colon T^{\circ (n+1)}A\to T^{\circ (n+2)}A
\end{aligned}
\end{equation}
The cotriple axioms then guarantee the simplicial identities (see [W], Section 8.6.4 for detail).

In general case, when $A_\mb$ is not constant, the simplicial algebra $(FA)_\mb$ is defined as the diagonal of the bisimplicial set
\begin{equation}\label{eqf}
(FA)_\mb=\mathrm{diag}((FA_\mb)_\mb)
\end{equation}

For two simplicial algebras $A_\mb$ and $B_\mb$, there is a canonical embedding
$$
\beta_{A,B}\colon F(A\otimes B)_\mb\to (FA)_\mb\otimes (FB)_\mb
$$
defined {\it on the level of algebras} by iterations of the map
\begin{equation}\label{cf}
\begin{aligned}
\ \alpha\colon T(A\otimes B)&\to T(A)\otimes T(B)\\
(a_1\otimes b_1)\otimes \dots \otimes (a_k\otimes b_k)& \xrightarrow{\alpha} (a_1\otimes \dots\otimes a_k)\otimes (b_1\otimes \dots\otimes b_k)
\end{aligned}
\end{equation}
The component $(\beta_{A,B})_k\colon (T^{\circ(k+1)}A_k)\otimes(T^{\circ(k+1)}B_k)\to T^{\circ (k+1)}(A_k\otimes B_k)$ is defined as the iterated power $\alpha^{\circ (k+1)}$.
\begin{lemma}{\itshape
The collection of maps $\{\beta_\ell\}$, $\ell\ge 0$, defines a map of {\rm{simplicial algebras}}
$$
\beta\colon (F(A\otimes B))_\mb\to (FA)_\mb\otimes (FB)_\mb
$$
}
\end{lemma}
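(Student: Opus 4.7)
The plan is to reduce everything to a single compatibility statement for the natural transformation $\alpha$ of \eqref{cf}, namely that $\alpha$ endows the cotriple $T$ with the structure of a colax-monoidal comonad on $\mathscr{A}lg_k^\Delta$. Once this is in hand, the simplicial identities for $\beta$ follow formally from the cotriple-origin of the simplicial structure.

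First I would collect the elementary facts about $\alpha\colon T(V\otimes W)\to TV\otimes TW$. On a monomial $(a_1\otimes b_1)\otimes\dots\otimes(a_k\otimes b_k)$ a direct computation shows that $\alpha$ respects concatenation, so it is an algebra homomorphism; it is visibly natural in $V$ and $W$. Iterating gives a natural algebra map $\alpha^{(n)}\colon T^{\circ n}(V\otimes W)\to T^{\circ n}V\otimes T^{\circ n}W$, obtained by interleaving $\alpha$ applied at each ``level''; by construction these are exactly the components $(\beta_{A,B})_n$ of the proposed simplicial map, evaluated in the constant case.

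Next I would verify the two compatibilities that allow $\alpha$ to commute with face and degeneracy maps. For the counit, the claim is
$$
(\epsilon_V\otimes\epsilon_W)\circ\alpha_{V,W}=\epsilon_{V\otimes W},
$$
which is immediate from the explicit formula for $\alpha$ together with the fact that $\epsilon$ is just iterated algebra multiplication and that multiplication in $V\otimes W$ is componentwise. For the comultiplication, the claim is
$$
\alpha^{(2)}_{V,W}\circ\delta_{V\otimes W}=(\delta_V\otimes\delta_W)\circ\alpha_{V,W},
$$
where $\delta$ inserts each element of $TA$ as a degree-one generator of $T^{\circ 2}A$. Both compositions send the pure tensor $(a_1\otimes b_1)\dots(a_k\otimes b_k)$ to the element $[a_1]\otimes\dots\otimes[a_k]\otimes[b_1]\otimes\dots\otimes[b_k]$ under the identification $T^{\circ 2}V\otimes T^{\circ 2}W$, which settles the identity. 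Combining these with the naturality of $\epsilon$ and $\delta$ and with the fact that applying a $T$ to an equality of natural transformations preserves it, one obtains, for each $0\le i\le n$,
$$
(T^{\circ i}\epsilon T^{\circ(n-i)}\otimes T^{\circ i}\epsilon T^{\circ(n-i)})\circ\alpha^{(n+1)}=\alpha^{(n)}\circ T^{\circ i}\epsilon T^{\circ(n-i)}
$$
and analogously for $\delta$. This is exactly the assertion that $\beta$ commutes with all $d_i$ and $s_i$ in the constant-simplicial-algebra case.

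Finally I would bootstrap from constant simplicial algebras to the general case via \eqref{eqf}. The bisimplicial object $(FA_\bullet)_\bullet$ has, besides the cotriple direction just treated, a second simplicial direction coming from the face/degeneracy maps of $A_\bullet$; in this second direction each map is an algebra homomorphism $A_n\to A_{n\pm 1}$, so by naturality of $T$ (and of $\alpha^{(n+1)}$ in each argument) the maps $\beta_n$ assemble into a bisimplicial morphism. Taking diagonals then yields the required simplicial algebra map $\beta\colon F(A\otimes B)_\bullet\to (FA)_\bullet\otimes(FB)_\bullet$.

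The main obstacle I anticipate is the compatibility of $\alpha$ with the comultiplication $\delta$: one must be careful to track how $\delta$ of an iterated tensor produces the ``double-layered'' element in $T^{\circ 2}$, and verify that both routes through the square produce the same pattern of layers. Once this combinatorial identity is in hand, the rest of the argument is formal from the cotriple axioms and naturality.
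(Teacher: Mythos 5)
Your proposal is correct and follows essentially the same route as the paper: the paper's proof simply writes out $\epsilon$ and $\delta$ explicitly (as iterated multiplication, resp.\ as the product $\bigotimes$ in $T(T(V))$) and asserts that the lemma then follows from the formulas \eqref{cotriple} for $d_i$ and $s_i$. Your two compatibility squares for $\alpha$ with $\epsilon$ and $\delta$, together with naturality and the diagonal argument for non-constant $A_\mb$, are exactly the details that the paper's one-line conclusion leaves implicit, and they check out.
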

\begin{proof}
Denote the product(s) in $A_k$ by $\star$, the product in $T(V)$ by $\otimes$, and the product in $T(T(V))$ by $\bigotimes$. Then the formulas for $\epsilon\colon T\to \id$ and $\delta\colon T\to T^2$ are as follows:
\begin{equation}
\begin{aligned}
\ &\epsilon (a_1\otimes\dots \otimes a_k)=a_1\star\dots\star a_k\\
&\delta (a_1\otimes\dots\otimes a_k)=a_1\bigotimes\dots\bigotimes a_k
\end{aligned}
\end{equation}
The statement of Lemma now follows directly from formulas \eqref{cotriple}, expressing the simplicial faces and degeneracies maps in $\epsilon$ and $\delta$.
\end{proof}
Our goal in this Subsection is to prove that $\mathfrak{F}(A_\mb)=(FA)_\mb$ solves Theorem \ref{theor_simpl_alg}.
We need to prove
\begin{lemma}{\itshape
\begin{itemize}
\item[1.] For any simplicial algebra $A_\mb$, the simplicial algebra $FA_\mb$ is cofibrant in the closed model structure on $\mathscr{A}lg_k^\Delta$,
\item[2.] the map $\beta_{A,B}\colon (F(A\otimes B))_\mb\to (FA)_\mb\otimes (FB)_\mb$ is a weak equivalence.
\end{itemize}
}
\end{lemma}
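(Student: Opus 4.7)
The strategy is to dispatch (2) first using part (1) together with standard cotriple-resolution facts, and then to establish (1) by a Reedy-type analysis of the underlying bisimplicial algebra.

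For (2), the key observation is that the triangle
\[
\xymatrix{
\mathfrak{F}(A\otimes B)\ar[rr]^{\beta_{A,B}}\ar[rd]_{w}&&\mathfrak{F}(A)\otimes \mathfrak{F}(B)\ar[dl]^{w\otimes w}\\
&A\otimes B
}
\]
commutes: at simplicial level $n$ this reduces to $(\epsilon_A^{n+1}\otimes\epsilon_B^{n+1})\circ\alpha^{\circ(n+1)}=\epsilon_{A\otimes B}^{n+1}$, which follows by induction from the $n=0$ case, where $(\epsilon_A\otimes\epsilon_B)\circ\alpha$ multiplies $A$- and $B$-components separately and so agrees with $\epsilon_{A\otimes B}$ because the product in $A\otimes B$ is componentwise. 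Next, $w\colon \mathfrak{F}(C)_\mb\to C_\mb$ is a weak equivalence for every simplicial algebra $C$: applying the forgetful functor $R$, the augmented simplicial vector space $R\mathfrak{F}(C)_\mb\to RC_\mb$ acquires an extra degeneracy $s_{-1}$ coming from the unit $\eta\colon\mathrm{id}\to RL=T$, and the standard extra-degeneracy argument exhibits $Rw$ as a simplicial homotopy equivalence. Since weak equivalences in $\mathscr{A}lg_k^\Delta$ are detected on the underlying simplicial vector space, $w$ is a weak equivalence; and since we work over a field, the tensor product of simplicial $k$-vector spaces preserves level-wise weak equivalences in each variable, so $w\otimes w$ is as well. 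Two-out-of-three then forces $\beta_{A,B}$ to be a weak equivalence.

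For (1), I would view $\mathfrak{F}(A)_\mb$ as the diagonal of the bisimplicial algebra $F_{p,q}(A)=T^{\circ(p+1)}(A_q)$. Fixing $q$, the $p$-direction is the cotriple resolution of the constant simplicial algebra on $A_q$; each term is a free associative algebra, and all face and degeneracy maps are obtained by inserting $\epsilon$ or $\delta$ on a single tensor layer, hence send a distinguished set of free generators to products of free generators. This makes the bisimplicial algebra ``levelwise free'' in the $p$-direction. Equipping the diagonal $\mathfrak{F}(A)_\mb$ with its skeletal filtration $\mathrm{sk}_0\mathfrak{F}(A)\hookrightarrow \mathrm{sk}_1\mathfrak{F}(A)\hookrightarrow\cdots$, each inclusion is obtained by freely adjoining generators corresponding to the non-degenerate simplices in degree $n$, so is a pushout along a generating cofibration. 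Since $\mathrm{sk}_0\mathfrak{F}(A)=T(A_0)$ is itself a free algebra and cofibrations are closed under transfinite composition, $\mathfrak{F}(A)$ is cofibrant.

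The main obstacle is isolating the ``non-degenerate'' generators in degree $n$ of the diagonal. Because the diagonal intertwines the cotriple direction with the internal simplicial structure of $A_\mb$, one needs an Eilenberg--Zilber-type combinatorial argument to identify inside $T^{\circ(n+1)}(A_n)$ the free $k$-submodule on which the new generators live, and to verify that it freely generates a free algebra summand compatible with the remaining simplicial operators. Once this identification is in place, the pushout squares witnessing the skeletal filtration as a sequence of cofibrations become essentially formal, and the claim follows.
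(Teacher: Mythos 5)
Your part (2) is essentially the paper's own argument: commutativity of the triangle over $A\otimes B$, the fact that $w$ is a weak equivalence, the observation that over a field the tensor product of weak equivalences of simplicial vector spaces is again one, and two-out-of-three. The paper dismisses the first two points as ``clear''; the justifications you supply (the levelwise identity $(\epsilon_A^{\circ(n+1)}\otimes\epsilon_B^{\circ(n+1)})\circ\alpha^{\circ(n+1)}=\epsilon_{A\otimes B}^{\circ(n+1)}$, and the extra-degeneracy contraction of the cotriple resolution after applying the forgetful functor) are the correct ones, so this half is fine.

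Part (1), however, is not a proof as written. You set up a skeletal filtration of the diagonal and then explicitly defer the one step that carries all the content --- identifying inside $T^{\circ(n+1)}(A_n)$ the new free generators adjoined at stage $n$ and checking their compatibility with the simplicial operators --- with the phrase ``once this identification is in place.'' That identification is never carried out, so the argument stops exactly where work is required. It is also an unnecessary detour: the transferred model structure on $\mathscr{A}lg_k^\Delta$ recalled in \eqref{sacm} already characterizes cofibrations as retracts of levelwise-free maps $X_n\to X_n\sqcup T(V_n)$ for which every degeneracy sends $V_n$ into $V_{n+1}$ ([Q], Section 4.3; [GS], Proposition 4.21) --- no isolation of nondegenerate generators is needed. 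The paper's proof simply takes $V_n=T^{\circ n}(A_n)$, so that $(FA)_n=T(V_n)$, and observes that both the external (cotriple) degeneracies $T^{\circ i}\delta T^{\circ(n-i)}$ and the internal ones induced by the degeneracies of $A_\mb$ are algebra maps induced by maps of generators $V_n\to V_{n+1}$; cofibrancy is then immediate from the stated criterion. If you insist on the cell-attachment route you would in effect be reproving [GS], Proposition 4.21; the efficient fix is to invoke that characterization and verify the generators-preserved-by-degeneracies condition directly.
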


Before proving the above Lemma, we need to remind some results concerning the closed model structure on the category $\mathscr{A}lg_k^\Delta$, which goes back to Quillen [Q], Section 4.3.

\subsection{\sc The closed model category of simplicial algebras}\label{section12}
Firstly recall the model structure on the category $\mathscr{V}ect_k^\Delta$ of simplicial vector spaces over field $k$.
\begin{equation}
\parbox{5,6in}{
\begin{itemize}
\item[(i)] A map $f\colon X\to Y$ in $\mathscr{V}ect_k^\Delta$ is a weak equivalence if it induces an isomorphism on homotopy groups $\pi_\mb (X)\to\pi_\mb (Y)$,
\item[(ii)] a map $f\colon X\to Y$ is a fibration if it induces a surjection  $\pi_\mb X\to \pi_0(X)\times_{\pi_0(Y)}\pi_\mb(Y)$.
\item[(iii)] A map $f\colon X\to Y$ in $\mathscr{V}ect_k^\Delta$ is a cofibration if it has a form
$$
X_n\to Y_n=X_n\oplus V_n
$$
for some collection of vector spaces $\{V_0,V_1,V_2,\dots\}$, such that each simplicial {\it degeneracy map} $s_i\colon [n+1]\to [n]$ maps $V_n$ to $V_{n+1}$, $n\ge 0$.
\end{itemize}
}
\end{equation}
The model category $\mathscr{V}ect_k^\Delta$ described above is {\it cofibrantly generated} (see [GS], Section 3, for a beautiful short survey of cofibrantly generated model categories). Recall that it means, in particular, that there are given sets $I$ of generating cofibrations, and $J$ of generating acyclic cofibrations, subject to the following two properties:
\begin{itemize}
\item[1.] the source of any morphism in $I$ obeys {\it the Quillen's small object argument} to the category of all cofibrations; the source of any morphism in $J$ obeys the small object argument to the category of all acyclic cofibrations;
\item[2.] a morphism is a fibration if and only if it satisfies the left lifting property with respect to any morphism in $J$; a morphism is an acyclic fibration if and only if it satisfies the left lifting property with respect to any morphism in $I$.
\end{itemize}
Concerning {\it the Quillen's small object argument}, see [GS], Section 3.1, or [Hir], Section 10.5, for thorough treatment.
The meaning of these two conditions is that they make possible to prove the last axiom (CM5) of a closed model category axioms, which is in a sense the hardest one (see loc.cit.).

See [GS], Examples 3.4, for explicit description of the sets $I$ and $J$ in the category $\mathscr{V}ect_k^\Delta$.

There is a pair of adjoint functors
\begin{equation}\label{eq1}
L\colon \mathscr{V}ect_k^\Delta\rightleftarrows \mathscr{A}lg_k^\Delta\colon R
\end{equation}
As the left-hand side category is a cofibrantly generated model category, the model structure can be ``transferred'' to the right-hand-side
category, and this model category is again cofibrantly generated. This transfer principle is explained in [GS], Theorem 3.6, and [Hir], Theorem 11.3.2. As is explained in [GS], Sections 3,4, the assumptions of Theorem 3.6 are satisfied in \eqref{eq1}.

In the situation when assumptions of Theorem 3.6 of [GS] are fulfilled, the sets $L(I)$ and $L(J)$ are generating cofibrations (resp., generating acyclic cofibrations) for the category in the right-hand side.

The obtained closed model structure on $\mathscr{A}lg_k^\Delta$ has the following explicit description, see [GS], Section 4.3.
\begin{equation}\label{sacm}
\parbox{5,6in}{
\begin{itemize}
\item[(i)]a map $f\colon X\to Y$ is a weak equivalence if $\pi_*f\colon \pi_*X\to \pi_*Y$ is an isomorphism,
\item[(ii)]a map $f\colon X\to Y$ is a fibration if the induced map $X\to\pi_0X\times_{\pi_0Y}Y$ is a surjection.

\item[(iii)] a map $f\colon X\to Y$ is a cofibration in $\mathscr{A}lg_k^\Delta$ if it is a retract of the following {\it free map}:
$$
X_n\to Y_n=X_n\sqcup T(V_n)
$$
as algebras, for some collection $\{V_0,V_1,V_2,\dots\}$ of vector spaces, such that all degeneracy maps $s_i\colon [n+1]\to [n]$ maps $V_n$ to $V_{n+1}$, $n\ge 0$.
\end{itemize}
}
\end{equation}
See [Q], Section 4.3 and [GS], Proposition 4.21 for a proof.

\subsection{\sc Proof of Theorem \ref{theor_simpl_alg}}\label{section13}
Firstly we prove
\begin{lemma}\label{lemmacof}{\itshape
For any simplicial algebra $A_\mb$, the simplicial algebra $(FA)_\mb$ is cofibrant, and the projection $p\colon (FA)_\mb\to A_\mb$ is an acyclic fibration.}
\end{lemma}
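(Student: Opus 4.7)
The plan is to verify cofibrancy of $(FA)_\mb$ and the acyclic-fibrancy of $p$ separately, using the concrete description of the model structure given in \eqref{sacm}.

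For cofibrancy, by \eqref{sacm}(iii) it suffices to exhibit a simplicial vector space $V_\mb$ together with an isomorphism of algebras $(FA)_n \cong T(V_n)$ such that the simplicial degeneracies of $(FA)_\mb$ carry $V_n$ into $V_{n+1}$. I would take $V_n = T^{\circ n} A_n$, so that $(FA)_n = T^{\circ(n+1)} A_n = T(V_n)$ is free on $V_n$. To check compatibility, I would use the bisimplicial description \eqref{eqf}: each degeneracy of the diagonal factors as the composite of a ``vertical'' degeneracy of the form $T^{\circ i}\,\delta\, T^{\circ(n-i)}$, coming from the cotriple structure applied to the constant algebra $A_n$, and a ``horizontal'' degeneracy of the form $T^{\circ(k+1)}(s_i)$, coming from functoriality of $F$ applied to $s_i\colon A_n \to A_{n+1}$. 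The explicit formula $\delta(a_1\otimes\cdots\otimes a_k) = a_1\bigotimes\cdots\bigotimes a_k$ shows that $\delta_V\colon T(V)\to T(T(V))$ embeds $V$ as the first tensor power of $T(V)$ inside $T(T(V))$ on the generating subspace, so $T^{\circ i}\,\delta\,T^{\circ(n-i)}$ sends $V_n$ into $V_{n+1}$. The horizontal piece is generator-preserving essentially by construction, being the free extension of a linear map.

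That $p$ is componentwise surjective follows immediately from the identity $\epsilon_A(a) = a$ for $a\in A$ regarded as a generator of $TA$, which makes $\epsilon$, and hence $p_n = \epsilon^{n+1}_{A_n}$, surjective; so $p$ is a fibration in the sense of \eqref{sacm}(ii).

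The substantive part is the weak equivalence, which I would obtain by passing to the bisimplicial picture. Writing $(FA_\mb)_\mb$ for the bisimplicial algebra with $(FA_m)_k = T^{\circ(k+1)} A_m$, we have $(FA)_\mb = \diag (FA_\mb)_\mb$, and $p$ is the diagonal of the bisimplicial map $q\colon (FA_\mb)_\mb \to A_\mb$ to the bisimplicial algebra $A_\mb$ that is constant in the second direction. For each fixed $m$, the augmented simplicial vector space underlying $(FA_m)_\mb \xrightarrow{q_m} A_m$ admits the classical extra degeneracy built from the unit $\eta$ of $L\dashv R$, which makes it a split augmented simplicial vector space, hence simplicially contractible. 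Thus each row of $q$ is a weak equivalence, and the standard bisimplicial diagonal lemma (a rowwise weak equivalence of bisimplicial vector spaces induces a weak equivalence on diagonals) yields that $p = \diag q$ is a weak equivalence. The main obstacle I anticipate is the bookkeeping needed to identify, within the diagonal bisimplicial presentation, which part of each simplicial operator acts on generators and which part acts on the free algebra structure; once this is unwound, the cofibrancy verification is routine, and the weak-equivalence claim reduces to the classical extra-degeneracy argument for cotriple resolutions combined with the diagonal lemma for bisimplicial objects.
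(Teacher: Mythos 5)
Your proof is correct and follows the same route as the paper: the paper's own proof simply sets $V_n=T^{\circ n}(A_n)$ (misprinted there as $T^{\circ n}(V_n)$), observes that the degeneracies preserve generators, and declares the acyclic-fibration claim ``clear.'' You have merely supplied the details the paper leaves implicit --- the vertical/horizontal factorization of the diagonal degeneracies for the freeness check, and the standard extra-degeneracy plus bisimplicial-diagonal argument for the weak equivalence --- all of which is sound.
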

\begin{proof}
We need to find vector spaces $V_i$ such that $(FA)_n=T(V_n)$ and such that all degeneracies maps $s_i\colon [n+1]\to[n]$ define maps of algebras $T(V_n)\to T(V_{n+1})$ induced by some maps of generators $V_n\to V_{n+1}$. We set $V_n=T^{\circ n}(V_n)$, it is clear that this choice satisfies the both conditions. The statement that the map $(FA)_\mb\to A_\mb$ is both a weak equivalence and a fibration, is clear.
\end{proof}

Next follows
\begin{lemma}{\itshape
For any two simplicial algebras $A_\mb,B_\mb$, the map $\beta_{A,B}\colon F(A\otimes B)_\mb\to (FA)_\mb\otimes (FB)_\mb$ is a weak equivalence.
}
\end{lemma}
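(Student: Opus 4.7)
The plan is to prove that $\beta_{A,B}$ is a weak equivalence by a $2$-out-of-$3$ argument applied to the triangle
$$
\xymatrix{
F(A\otimes B)_\mb \ar[rr]^-{\beta_{A,B}} \ar[dr]_{p_{A\otimes B}} & & (FA)_\mb\otimes (FB)_\mb \ar[dl]^{p_A\otimes p_B} \\
& A_\mb\otimes B_\mb &
}
$$
whose two legs to $A_\mb\otimes B_\mb$ are (up to previous results) already known to be weak equivalences, or will be shown to be so. So my four steps are: (1) check commutativity of this triangle, (2) invoke Lemma \ref{lemmacof} for the left leg, (3) show the right leg is a weak equivalence, and (4) conclude.

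First, commutativity. By construction $(\beta_{A,B})_k$ is the $(k+1)$-fold iterate of the map $\alpha\colon T(A\otimes B)\to T(A)\otimes T(B)$ of \eqref{cf}, while $p_{A\otimes B}$ and $p_A\otimes p_B$ are the $(k+1)$-fold iterates of $\epsilon$ and $\epsilon\otimes\epsilon$ respectively. It therefore suffices to check that on $T(A_k\otimes B_k)$,
$(\epsilon\otimes\epsilon)\circ\alpha=\epsilon$;
this is immediate from the explicit formulas, since both sides send $(a_1\otimes b_1)\otimes\cdots\otimes(a_r\otimes b_r)$ to $(a_1\star\cdots\star a_r)\otimes(b_1\star\cdots\star b_r)$. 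Iterating and using that $\alpha$ and $\epsilon$ are maps of algebras gives commutativity in every simplicial degree, and all maps involved are simplicial, so the triangle commutes in $\mathscr{A}lg_k^\Delta$.

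Second, the left leg $p_{A\otimes B}\colon F(A\otimes B)_\mb\to A_\mb\otimes B_\mb$ is a weak equivalence by Lemma \ref{lemmacof} applied to the simplicial algebra $A_\mb\otimes B_\mb$. Third, I need to show $p_A\otimes p_B$ is a weak equivalence. By the description \eqref{sacm} of the model structure on $\mathscr{A}lg_k^\Delta$, a map of simplicial algebras is a weak equivalence if and only if the underlying map of simplicial $k$-vector spaces induces an isomorphism on $\pi_\mb$; so it is enough to verify this at the level of $\mathscr{V}ect_k^\Delta$. Under the Dold--Kan correspondence $\pi_\mb$ is computed by the homology of the normalized chain complex, and the Eilenberg--Zilber shuffle map gives a natural quasi-isomorphism $N(X\otimes Y)\simeq N(X)\otimes N(Y)$. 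Since $k$ is a field, every $k$-vector space is flat, so tensor product of complexes of $k$-vector spaces preserves quasi-isomorphisms. The maps $p_A$ and $p_B$ are weak equivalences of simplicial vector spaces by Lemma \ref{lemmacof}, hence so is $p_A\otimes p_B$.

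Finally, by the $2$-out-of-$3$ axiom applied to the commutative triangle above, $\beta_{A,B}$ is a weak equivalence of simplicial algebras, which is what had to be shown. The one subtle point I expect to need care with is Step 3: one must be sure that weak equivalences of simplicial algebras really are detected on underlying simplicial vector spaces, and that tensor product on the underlying category preserves weak equivalences; both are clean but rest essentially on the hypothesis that $k$ is a field, without which the tensor-product step would require a cofibrancy/flatness input rather than coming for free.
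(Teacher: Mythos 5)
Your proposal is correct and follows essentially the same route as the paper: the paper's proof is exactly this $2$-out-of-$3$ argument applied to the commutative triangle over $A_\mb\otimes B_\mb$, with both legs handled by Lemma \ref{lemmacof}. Your Step 3 supplies a justification (Eilenberg--Zilber plus the K\"unneth theorem over a field) for why $p_A\otimes p_B$ is a weak equivalence, a point the paper passes over by citing Lemma \ref{lemmacof} directly; this is a worthwhile clarification but not a different argument.
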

\begin{proof}
It is a straightforward and simple check that the diagram
\begin{equation}\label{web}
\xymatrix{
F(A\otimes B)_\mb\ar[rr]^{\beta_{A,B}}\ar[rd]_{p_{A\otimes B}}&&(FA)_\mb\otimes (FB)_\mb\ar[dl]^{p_A\otimes p_B}\\
&A_\mb\otimes B_\mb
}
\end{equation}
is commutative. The map $p_{A\otimes B}$ is a weak equivalence by Lemma \ref{lemmacof}, the product $p_A\otimes p_B$ is a weak equivalence by Lemma \ref{lemmacof} again. Then, the commutativity of the  diagram \eqref{web} implies, by 2-out-of-3 axiom of closed model category, that the third arrow $\beta_{A,B}$ is also a weak equivalence.
\end{proof}

\section{\sc Monoids and the Bialgebra axiom}\label{section2}
\subsection{\sc Lax$\leftrightarrow$colax duality}
Let $\mathscr{C}$ and $\mathscr{D}$ be two strict monoidal categories, and
let $F\colon \mathscr{C}\to\mathscr{D}$ be a functor with two properties:
\begin{itemize}
\item[1)] $F$ is an equivalence of the underlying categories,
\item[2)] $F$ is strict monoidal, that is $F(X\otimes Y)=F(X)\otimes F(Y)$ for any two $X,Y\in\mathscr{C}$.
\end{itemize}
Then one can choose a quasi-inverse to $F$ functor $G_1\colon \mathscr{D}\to \mathscr{C}$ such that $(F,G_1)$ is an {\it adjoint} equivalence.
As well, we can choose a quasi-inverse functor to $F$ functor $G_2\colon \mathscr{D}\to\mathscr{C}$ which is {\it strict monoidal}.
However, one can not choose in general a quasi-inverse $G$ enjoying the both properties at once.
This is an explanation how the {\it lax-monoidal functors} and the {\it colax-monoidal functors} come up into the contemporary mathematics.
The reader is referred to Appendix A for the definitions of the (co)lax-monoidal functors.

The set-up of the following standard lemma is very common.
\begin{lemma}\label{kurica}
Let $\mathscr{C}$ and $\mathscr{D}$ be two strict monoidal categories, and let
$$
L\colon\mathscr{C}\rightleftarrows\mathscr{D}\colon R
$$
be a pair of adjoint functors, with $L$ the left adjoint. Then there is a 1-1 correspondence between the colax-monoidal structures on $L$ and the lax-monoidal structures on $R$, given by \eqref{lg}, \eqref{cg} below. Evermore, this 1-1 correspondence is involutive.
\end{lemma}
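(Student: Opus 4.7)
The plan is to invoke the standard mate correspondence (Kelly's doctrinal adjunction). First I would define, given a lax-monoidal structure $(\varphi_{X,Y}, \varphi_0)$ on $R$, the candidate colax maps for $L$ as the adjoint transposes of suitable composites involving $\varphi$ and the unit $\eta \colon \Id_{\mathscr{C}} \to RL$ of the adjunction. Explicitly, let $\gamma_{A,B} \colon L(A \otimes B) \to L(A) \otimes L(B)$ be the map corresponding under the adjunction isomorphism to the composite
$$A \otimes B \xrightarrow{\eta_A \otimes \eta_B} RL(A) \otimes RL(B) \xrightarrow{\varphi_{L(A), L(B)}} R(L(A) \otimes L(B)),$$
and let $\gamma_0 \colon L(1_{\mathscr{C}}) \to 1_{\mathscr{D}}$ correspond to $\varphi_0 \colon 1_{\mathscr{C}} \to R(1_{\mathscr{D}})$. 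In the reverse direction, given a colax structure $(\gamma, \gamma_0)$ on $L$, I would define $\varphi_{X,Y} \colon R(X) \otimes R(Y) \to R(X \otimes Y)$ as the transpose of
$$L(R(X) \otimes R(Y)) \xrightarrow{\gamma_{R(X), R(Y)}} LR(X) \otimes LR(Y) \xrightarrow{\epsilon_X \otimes \epsilon_Y} X \otimes Y,$$
using the counit $\epsilon \colon LR \to \Id_{\mathscr{D}}$, and let $\varphi_0$ be the transpose of $\gamma_0$.

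Next I would check that the two assignments are mutually inverse, which is a direct consequence of the triangular identities $R\epsilon \cdot \eta R = \id_R$ and $\epsilon L \cdot L\eta = \id_L$. Starting with $\varphi$, forming $\gamma$, and then rebuilding $\varphi'$ out of $\gamma$ produces the composite $R(\epsilon_X \otimes \epsilon_Y) \cdot \varphi_{LR(X), LR(Y)} \cdot (\eta_{R(X)} \otimes \eta_{R(Y)})$, which collapses back to $\varphi_{X,Y}$ by naturality of $\varphi$ in both slots combined with the triangular identity on $R$; the reverse composition is symmetric. This simultaneously establishes that the correspondence is involutive.

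The main obstacle, and the substance of the lemma, is showing that the monoidal axioms match up under this bijection: the associativity pentagon and unit triangles for $\varphi$ correspond, term by term, to the coassociativity pentagon and counit triangles for $\gamma$. I would set up the associativity pentagon for $\varphi$ at the triple $(L(A), L(B), L(C))$, precompose with $\eta_A \otimes \eta_B \otimes \eta_C$, and then take the adjoint transpose; after repeated use of the naturality of $\eta$ and $\varphi$, the resulting diagram in $\mathscr{D}$ is exactly the coassociativity pentagon for $\gamma$ at $(A, B, C)$. The two unit axioms are handled in the same style, using naturality of $\eta$ applied to $\varphi_0$ together with the triangular identity. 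This step is tedious but mechanical; the bialgebra axiom recalled in Appendix A is the conceptual skeleton that makes the two families of axioms literally correspond under the mate bijection.
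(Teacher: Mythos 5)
Your proposal is correct and follows essentially the same route as the paper: the adjoint transposes you describe unwind precisely to the composites \eqref{lg} and \eqref{cg}, and the mutual-inverse check via the triangular identities together with the transfer of the (co)associativity and unit axioms is exactly the ``direct check'' the paper leaves implicit. One minor caveat: the bialgebra axiom plays no role here --- it is a compatibility condition between a lax and a colax structure on the \emph{same} functor between \emph{symmetric} monoidal categories, whereas this lemma is pure mate calculus for arbitrary strict monoidal categories.
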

\begin{proof}
Let $\epsilon\colon LR\to\Id_\mathscr{D}$ and $\eta\colon\Id_\mathscr{C}\to RL$ are the adjunction maps.

For a colax-monoidal structure $c$ on $L$, written as $c_{X,Y}\colon L(X\otimes Y)\to L(X)\otimes L(Y)$, define a lax-monoidal structure $\ell$ on the functor $A$ as
\begin{equation}\label{lg}
R(X)\otimes R(Y)\xrightarrow{\eta}RL(R(X)\otimes R(Y))\xrightarrow{c_*}R(LR(X)\otimes LR(Y))\xrightarrow{(\epsilon\otimes\epsilon)_*}R(X\otimes Y)
\end{equation}
Vice versa, suppose a lax-monoidal structure $\ell$ on $R$ is given. Define a colax-monoidal structure $c$ on $L$ as
\begin{equation}\label{cg}
L(X\otimes Y)\xrightarrow{(\eta(X)\otimes\eta(Y))_*}L(RL(X)\otimes RL(Y))\xrightarrow{\ell_*}LR(L(X)\otimes L(Y))\xrightarrow{\epsilon_*}L(X)\otimes L(Y)
\end{equation}
Recall that for the adjoint pairs of functors the two compositions, defined out of $\epsilon$ and $\eta$
\begin{equation}
L\rightarrow L\boxed{RL}=\boxed{LR}L\rightarrow L
\end{equation}
and
\begin{equation}
R\rightarrow \boxed{RL}R=R\boxed{LR}\rightarrow R
\end{equation}
are the identity maps of functors. This identities imply all claims of Lemma by a direct check.
\end{proof}

When $L\colon\mathscr{C}\rightleftarrows\mathscr{D}\colon R$ is an adjoint {\it equivalence}, there are more possibilities for the application of the above construction. For example, suppose there are given a colax-monoidal structure $c_L$ and a lax-monoidal structure $\ell_L$ on the functor $L$.
Then \eqref{lg} and \eqref{cg} define a lax-monoidal structure $\ell_R$ and a colax-monoidal structure $c_R$ on the functor $R$. There is a compatibility relation between $c_L$ and $\ell_L$, which holds if and only if the same relation holds for $c_R$ and $\ell_R$. This relation, called {\it the bialgebra axiom}, is elaborated in Section \ref{section23}.
\subsection{\sc The category of monoids}\label{section21}
Let $\mathscr{M}$ be a symmetric monoidal category, $\Mon \mathscr{M}$ be the category of monoids in $\mathscr{M}$. There is the forgetful functor
$$
f\colon \Mon\mathscr{M}\to \mathscr{M}
$$
Under some conditions, the functor $f$ has a left adjoint functor , ``the free monoid functor''. Recall the following result, from [ML], Chapter VII.3:
\begin{lemma}\label{ML}{\itshape
Let $\mathscr{M}$ be a monoidal category with all finite colimits, such that the functors $-\otimes a$ and $a\otimes -$ (for fixed $a$) commute with finite colimits. Then the functor $\mathscr{M}\to \Mon\mathscr{M}$, $X\mapsto T(X)$, with
\begin{equation}\label{MLM}
T(X)=1_\mathscr{M}\coprod X\coprod X\otimes X\coprod\dots
\end{equation}
is left adjoint to the forgetful functor.
}
\end{lemma}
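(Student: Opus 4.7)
The plan is to first equip $T(X)$ with the structure of a monoid in $\mathscr{M}$, then to exhibit the adjunction bijection explicitly. Denote by $\iota_n\colon X^{\otimes n}\to T(X)$ the coprojections. Using the hypothesis that $-\otimes a$ and $a\otimes -$ preserve finite colimits, one gets (iterating, and using that finite coproducts commute with finite coproducts) a canonical isomorphism
$$
T(X)\otimes T(X)\;\cong\;\coprod_{m,n\ge 0}X^{\otimes m}\otimes X^{\otimes n}.
$$
Define the multiplication $\mu\colon T(X)\otimes T(X)\to T(X)$ as the unique map whose composition with the $(m,n)$-coprojection is the associator-induced isomorphism $X^{\otimes m}\otimes X^{\otimes n}\cong X^{\otimes(m+n)}$ followed by $\iota_{m+n}$, and take the unit $\eta\colon 1_\mathscr{M}\to T(X)$ to be $\iota_0$. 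Associativity and unitality of $(T(X),\mu,\eta)$ then reduce, by the universal property of coproducts, to the coherence (Mac Lane pentagon and triangle) identities for the associator and unitor of $\mathscr{M}$.

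Next, I would construct the bijection $\Hom_{\Mon\mathscr{M}}(T(X),M)\cong\Hom_\mathscr{M}(X,f(M))$. In one direction, send $\varphi\colon T(X)\to M$ to $\varphi\circ\iota_1\colon X\to f(M)$. In the other direction, given $g\colon X\to f(M)$ with $M=(M,\mu_M,\eta_M)$, define maps $g_n\colon X^{\otimes n}\to M$ inductively by $g_0=\eta_M$, $g_1=g$, and $g_{n+1}=\mu_M\circ(g_n\otimes g)$; by associativity of $\mu_M$, this equals any other bracketing, so the $g_n$ are unambiguous. Assemble the $g_n$ by the universal property of the coproduct into $\bar g\colon T(X)\to M$.

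The remaining checks are routine but need bookkeeping: one verifies that $\bar g$ is a morphism of monoids, i.e.\ $\bar g\circ\mu=\mu_M\circ(\bar g\otimes\bar g)$ and $\bar g\circ\eta=\eta_M$; the first identity, after composition with the $(m,n)$-coprojection of $T(X)\otimes T(X)$, becomes the identity $g_{m+n}=\mu_M\circ(g_m\otimes g_n)$, which follows from an easy induction using associativity of $\mu_M$. The two assignments are mutually inverse: starting from $g$, recovery is immediate since $\bar g\circ\iota_1=g_1=g$; starting from $\varphi$, one checks $\overline{\varphi\circ\iota_1}=\varphi$ by showing inductively that $\varphi\circ\iota_n=(\varphi\circ\iota_1)_n$, which uses precisely that $\varphi$ respects multiplication. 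Naturality in $X$ and $M$ is automatic from the construction.

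The main obstacle is purely notational: one has to handle the double coproduct decomposition of $T(X)\otimes T(X)$ cleanly, which is where the hypothesis that $-\otimes a$ and $a\otimes-$ commute with finite colimits enters decisively. Once this is set up, associativity of $\mu$ on $T(X)$ and the induction identifying $\varphi\circ\iota_n$ with the iterated multiplication $(\varphi\circ\iota_1)_n$ are both forced by the coherence data of $\mathscr{M}$ and by the monoid axioms for $M$, so no further difficulty arises.
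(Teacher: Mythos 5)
The paper does not prove this lemma at all: it is quoted from Mac\,Lane, Chapter VII.3, and your argument is exactly the standard proof given there (monoid structure on $T(X)$ via the coproduct decomposition of $T(X)\otimes T(X)$, unit $\iota_0$, and the explicit unit/counit of the adjunction through the maps $g_n$), so your proposal is correct and matches the intended reference. The only caveat, which is really a looseness in the statement itself rather than in your proof, is that $T(X)=\coprod_{n\ge 0}X^{\otimes n}$ is a \emph{countable} coproduct, so one actually needs denumerable coproducts to exist and to be preserved by $-\otimes a$ and $a\otimes -$; ``finite colimits'' as literally stated does not suffice, and your step identifying $T(X)\otimes T(X)$ with $\coprod_{m,n}X^{\otimes m}\otimes X^{\otimes n}$ silently uses the countable version. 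With the hypothesis read in that standard way, everything you write goes through.
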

When the finite colimits exist, and there is the {\it inner $Hom$ functor} (a right adjoint to the monoidal product), the functors $-\otimes a$ and $a\otimes -$ commute with colimits by general categorical arguments.

We say that a monoidal category $\mathscr{M}$ is {\it good} when the assumptions of Lemma \ref{ML} hold.

Let now $\mathscr{M}_1,\mathscr{M}_2$ be two symmetric monoidal categories, and let $F\colon \mathscr{M}_1\to\mathscr{M}_2$ be a functor.
Suppose a lax-monoidal structure $\ell_F$ is given. Then there is a functor $F^\mon=F^\mon(\ell_F)\colon \Mon\mathscr{M}_1\to\Mon\mathscr{M}_2$, depending on $\ell_F$. For a monoid $X$ in $\mathscr{M}_1$, the underlying object of $F^\mon(X)$ is defined as $F(X)$, and the monoid structure is given as
\begin{equation}\label{monoidref}
F(X)\otimes F(X)\xrightarrow{\ell_F}F(X\otimes X)\xrightarrow{m_X} F(X)
\end{equation}
We have immediately:
\begin{lemma}{\itshape
In the above notations, the following two diagram is commutative:
\begin{equation}
\xymatrix{\mathscr{M}_1\ar[r]^{F}& \mathscr{M}_2\\
\Mon\mathscr{M}_1\ar[r]^{F^\mon}\ar[u] & \Mon\mathscr{M}_2\ar[u]}
\end{equation}
Here the vertical upward arrows are the forgetful functors.
}
\end{lemma}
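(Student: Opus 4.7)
The plan is to observe that this lemma is essentially a tautological unwinding of the construction of $F^\mon$ given in formula \eqref{monoidref}. The construction prescribes the underlying object of $F^\mon(X)$ to be literally $F(X)$, with the monoid structure obtained by pre-composing $m_X$ with the lax structure map $\ell_F\colon F(X)\otimes F(X)\to F(X\otimes X)$. Hence the value of $F^\mon(X)$ under the forgetful functor $\Mon\mathscr{M}_2\to\mathscr{M}_2$ is $F(X)$, which also equals the result of first forgetting $X\in\Mon\mathscr{M}_1$ to $\mathscr{M}_1$ and then applying $F$. Commutativity on objects is therefore immediate.

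Next I would verify commutativity on morphisms. Given a morphism $f\colon X\to Y$ in $\Mon\mathscr{M}_1$, the construction sets $F^\mon(f)=F(f)$ as a morphism in $\mathscr{M}_2$; what needs to be checked is merely that $F(f)$ respects the monoid structures defined by \eqref{monoidref}. This follows from the naturality of $\ell_F$ with respect to $f\otimes f$ together with the assumption that $f$ commutes with the monoid multiplications $m_X$ and $m_Y$. I would sketch this by writing down the two-cell square
\begin{equation*}
\xymatrix{
F(X)\otimes F(X)\ar[r]^{\ell_F}\ar[d]_{F(f)\otimes F(f)} & F(X\otimes X)\ar[r]^{F(m_X)}\ar[d]^{F(f\otimes f)} & F(X)\ar[d]^{F(f)}\\
F(Y)\otimes F(Y)\ar[r]_{\ell_F} & F(Y\otimes Y)\ar[r]_{F(m_Y)} & F(Y)
}
\end{equation*}
and noting the left square commutes by naturality of $\ell_F$, while the right square is $F$ applied to the square expressing that $f$ is a morphism of monoids.

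At this point both compositions in the outer square of the lemma send an object/morphism to $F$ evaluated at its underlying object/morphism, so the square commutes on the nose. There is no substantive obstacle; the only thing one really must unwind is the compatibility of $F^\mon$ on morphisms, which is a routine naturality check. I would therefore present the proof in two short paragraphs rather than a single sentence only to make the verification for morphisms visible.
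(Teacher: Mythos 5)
Your proof is correct and is exactly the routine unwinding the paper has in mind: the paper offers no argument at all (it labels the lemma as immediate and closes it with a \qed), and your two-step check --- equality on underlying objects by construction of $F^\mon$, plus the naturality-of-$\ell_F$ square showing $F(f)$ is a monoid map --- is the standard verification behind that ``immediately.'' No gaps; at most you might also note the equally routine compatibility with units.
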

\qed

\subsection{\sc The left adjoint functor on monoids}\label{section22}
Suppose now that the functor $F$ admits a left adjoint functor $L\colon \mathscr{M}_2\to \mathscr{M}_1$. In this case, we want to construct a functor $L^\mon\colon \Mon\mathscr{M}_2\to\Mon\mathscr{M}_1$, left adjoint to the functor $F^\mon$.

The following Lemma (and the construction in its proof) is due to [SchS03], Section 3.3:
\begin{lemma}\label{leftproof}{\itshape
Suppose the monoidal categories $\mathscr{M}_1$ and $\mathscr{M}_2$ are good, and suppose that the functor $L$ left adjoint to $F$ exists.
Then the left adjoint functor $L^\mon\colon \Mon\mathscr{M}_2\to\Mon\mathscr{M}_1$ exists, and it makes the diagram
\begin{equation}\label{leftmon}
\xymatrix{\mathscr{M}_1\ar[d]& \mathscr{M}_2\ar[d]\ar[l]_{L}\\
\Mon\mathscr{M}_1 & \Mon\mathscr{M}_2\ar[l]_{L^\mon}}
\end{equation}
commutative. Here the downward vertical arrows are the free monoid functors.
}
\end{lemma}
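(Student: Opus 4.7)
The plan is to follow the standard construction from [SchS03]: exhibit $L^\mon(B)$, for a monoid $B$ in $\mathscr{M}_2$, as a reflexive coequalizer in $\Mon\mathscr{M}_1$ of a diagram of free monoids. Writing $T_i\dashv U_i$ for the free-forgetful adjunction between $\mathscr{M}_i$ and $\Mon\mathscr{M}_i$ (which exists by Lemma \ref{ML}), every monoid $B$ in $\mathscr{M}_2$ admits a canonical reflexive coequalizer presentation
\[
T_2U_2T_2U_2B\rightrightarrows T_2U_2B\to B
\]
in $\Mon\mathscr{M}_2$, with parallel arrows $T_2(U_2\epsilon_B)$ and $\epsilon_{T_2U_2B}$, where $\epsilon$ denotes the counit of $T_2\dashv U_2$. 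Any left adjoint preserves colimits, and the diagram \eqref{leftmon} that we want to establish forces $L^\mon\circ T_2\cong T_1\circ L$; so the value $L^\mon(B)$ is compelled to be
\[
L^\mon(B)\;\defeq\;\mathrm{coeq}\bigl(T_1LU_2T_2U_2B\rightrightarrows T_1LU_2B\bigr)
\]
in $\Mon\mathscr{M}_1$. I take this formula as the definition.

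Next I would verify the adjunction
\[
\Hom_{\Mon\mathscr{M}_1}(L^\mon(B),C)\cong\Hom_{\Mon\mathscr{M}_2}(B,F^\mon(C))
\]
by a direct chase through universal properties. A morphism out of the coequalizer is a morphism $T_1LU_2B\to C$ in $\Mon\mathscr{M}_1$ coequalizing the two parallel arrows; by $T_1\dashv U_1$ this is equivalent to a morphism $LU_2B\to U_1C$ in $\mathscr{M}_1$ satisfying a compatibility; by $L\dashv F$ this is a morphism $U_2B\to FU_1C=U_2F^\mon(C)$ in $\mathscr{M}_2$; and the coequalizer condition translates, through the lax-monoidal structure on $F$ used in \S\ref{section21} to define the multiplication on $F^\mon(C)$, into exactly the condition that this morphism respect multiplications — that is, that it lift to a morphism $B\to F^\mon(C)$ in $\Mon\mathscr{M}_2$. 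The commutativity of \eqref{leftmon} then follows automatically from uniqueness of left adjoints applied to the manifest identity $U_2\circ F^\mon=F\circ U_1$ built into the construction of $F^\mon$ in \S\ref{section21}.

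The main obstacle I anticipate is ensuring that the defining coequalizer actually exists in $\Mon\mathscr{M}_1$ and is computed so that functoriality in $B$ is automatic. Arbitrary coequalizers in $\Mon\mathscr{M}_1$ are subtle, but the parallel pair above is \emph{reflexive}: a common splitting is provided by the unit map $\eta_{U_2B}\colon U_2B\to U_2T_2U_2B$. Reflexive coequalizers of monoids are created by the forgetful functor $U_1$ as soon as $-\otimes a$ and $a\otimes -$ preserve reflexive coequalizers in $\mathscr{M}_1$; this is guaranteed by the goodness hypothesis, under which these functors preserve all finite colimits. The goodness of $\mathscr{M}_2$ enters in parallel to form the free monoids appearing in the reflexive presentation of $B$ that drives the construction. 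This is the precise point at which both ``good'' assumptions are essential.
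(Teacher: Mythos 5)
Your construction is the same as the paper's: the coequalizer $T_{\mathscr{M}_2}(L(T_{\mathscr{M}_1}(X)))\rightrightarrows T_{\mathscr{M}_2}(LX)$ of \eqref{quotient_def} is exactly your $T_1LU_2T_2U_2B\rightrightarrows T_1LU_2B$, and the adjunction is verified by the same Hom-set chase through $T\dashv U$ and $L\dashv F$, with the lax-monoidal structure on $F$ entering at the same step; the commutativity of \eqref{leftmon} is likewise deduced formally from uniqueness of left adjoints. Your observation that the parallel pair is reflexive (split by the unit), so that goodness guarantees the coequalizer exists in $\Mon\mathscr{M}_1$ and is created by the forgetful functor, is a point the paper leaves implicit but is a welcome tightening rather than a different route.
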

\begin{proof}
The second claim is a formal consequence from the existence of $L^\mon$, as the free monoid functors are left adjoint to the forgetful functors.
Define the value $L^\mon(X)$ (for a monoid $X$ in $\mathscr{M}_1$) as the co-equalizer in the category $\Mon(\mathscr{M}_2)$:
\begin{equation}\label{quotient_def}
\xymatrix{
T_{\mathscr{M}_2}(L(T_{\mathscr{M}_1}(X)))\ar@<1ex>[r]^{\ \ \ \alpha} \ar@<-1ex>[r]_{\ \ \ \beta} &T_{\mathscr{M}_2}(LX)
}
\end{equation}
where $T_\mathscr{M}$ denotes the free (tensor) monoid in a monoidal category $\mathscr{M}$, see \eqref{MLM}.
The map $\alpha$ in \eqref{quotient_def} comes from the map $T_{\mathscr{M}_1}(X)\to X$ defined from the monoid structure on $X$, and the map $\beta$ in \eqref{quotient_def} is defined from the following map $L(T_{\mathscr{M}_1}(X))\to T_{\mathscr{M}_2}(LX)$:
\begin{equation}\label{themapbeta}
L(\underbrace{X\otimes X\otimes \dots\otimes X}_{n\text{ factors}})\xrightarrow{{c_L}^{n-1}}\underbrace{L(X)\otimes L(X)\otimes \dots \otimes L(X)}_{n\text{ factors}}
\end{equation}
where $c_L$ is the colax-monoidal structure on $L$ adjoint to the lax-monoidal structure $\ell_F$ on $F$.

Let us prove that the functor $L^\mon$, defined by \eqref{quotient_def}, is left adjoint to the functor $F^\mon$.
We start with the following fact:
\begin{sublemma}\label{leftproofsub}{\itshape
Let $\mathscr{M}$ be a good monoidal category, and let $X\in\Mon\mathscr{M}$ be a monoid in $\mathscr{M}$.
Then $X$ is isomorphic to the co-equalizer of the following diagram in $\Mon\mathscr{M}$:
\begin{equation}\label{coeqtriv}
\xymatrix{
T_\mathscr{M}(T_\mathscr{M}(X))\ar@<1ex>[r]^{\ \ \ a} \ar@<-1ex>[r]_{\ \ \ b}&T_\mathscr{M}(X)
}
\end{equation}
where the map $a$ is defined on generators as the product map in $X$, $m_*\colon T_\mathscr{M}(X)\to X$, and the map $b$ is defined as the product map for the monoid $T_\mathscr{M}(X)$, $m_*\colon T_\mathscr{M}(T_\mathscr{M}(X))\to T_\mathscr{M}(X)$. The map $b$ does not depend on the monoid structure on $X$.
}
\end{sublemma}
It is clear.
\qed

{\it We continue to prove Lemma \ref{leftproof}}.

Let $X$ be a monoid in $\mathscr{M}_1$. Represent $X$ as co-equalizer \eqref{coeqtriv}. Denote by $D(X)$ the corresponding diagram. We have:
\begin{equation}
\begin{aligned}
\ &\Hom_{\Mon(\mathscr{M}_1)}(X,R^\mon (Z))=\Hom_{\Mon(\mathscr{M}_1)}(\colim D(X),R^\mon (Z))=\\
&\lim \Hom_{\Mon(\mathscr{M}_1)}(D(X), R^\mon(Z))=
\lim E(X,Z)=\lim E^\vee(X,Z)
\end{aligned}
\end{equation}
where $E(X,Z)$ and $E^\vee(X,Z)$ are the following diagrams:
\begin{equation}
\xymatrix{
\Hom_{\mathscr{M}_1}(T_{\mathscr{M}_1}(X),R(Z))&\Hom_{\mathscr{M}_1}(X,R(Z))\ar@<1ex>[l]^{\ \ \ \ \ \ B}\ar@<-1ex>[l]_{\ \ \ \ \ A}
}
\end{equation}
and
\begin{equation}
\xymatrix{
\Hom_{\mathscr{M}_2}(L(T_{\mathscr{M}_1}(X)),Z)&\Hom_{\mathscr{M}_2}(L(X),Z)\ar@<1ex>[l]^{\ \ \ \ \ \ B^\vee}\ar@<-1ex>[l]_{\ \ \ \ \ A^\vee}
}
\end{equation}
correspondingly.

Let us compute the maps $A$ and $B$ explicitly. The map $A$ is induced by the product $m_*\colon T_{\mathscr{M}_1}(X)\to X$ in the monoid $X$.
The map $B$ is little more tricky. Define $B(\mu)$ for $\mu\in \Hom_{\mathscr{M}_1}(X,R(Z))$. It is enough to define $B(\mu)_n\colon \Hom_{\mathscr{M}_1}(X^{\otimes n},R(Z))$, for any $n\ge 2$. The latter map is defined as $\mu^{\otimes n}\in \Hom_{\mathscr{M}_1}(X^{\otimes n},R(Z)^{\otimes n})$, followed by the product map $R(Z)^{\otimes n}\to R(Z)$ in the monoid $R(Z)$, defined out of monoid $Z$ by \eqref{monoidref}.

We continue:
\begin{equation}
\lim E^\vee(X,Z)=\lim E^{\vee}_{\Mon}(X,Z)=\Hom_{\Mon(\mathscr{M}_2)}(\colim \eqref{quotient_def},Z)
\end{equation}
with $E^{\vee}_{\Mon}$ the following diagram:
\begin{equation}
\xymatrix{
\Hom_{\Mon({\mathscr{M}_2})}(T_{\mathscr{M}_2}(L(T_{\mathscr{M}_1}(X)),Z))&\Hom_{\Mon({\mathscr{M}_2})}(T_{\mathscr{M}_2}(L(X)),Z)\ar@<1ex>[l]^{\ \ \ \ \ \ B_{\mathrm{mon}}}\ar@<-1ex>[l]_{\ \ \ \ \ A_{\mathrm{mon}}}
}
\end{equation}
The maps $A_{\mathrm{mon}}$ and $B_{\mathrm{mon}}$ can be explicitly described from the description of $A$ and $B$ given above. We are done.
\end{proof}
Now we pass to the situation when the functor $F$ admits, besides the lax-monoidal structure $\ell_F$, a colax-monoidal structure $c_F$, {\it compatible by the bialgebra axiom} (see Section 5.3 below).

\subsection{\sc The Bialgebra axiom}\label{section23}
Fix some notations on adjoint functors.

Let $L\colon\mathscr{A}\rightleftarrows\mathscr{B}\colon R$ be two functors. They are called adjoint to each other, with $L$ the left adjoint and $R$ the right adjoint, when
\begin{equation}\label{adj0}
\Mor_\mathscr{B}(LX,Y)\simeq \Mor_\mathscr{A}(X,RY)
\end{equation}
where ``$\simeq$'' here means ``isomorphic as bifunctors $\mathscr{A}^\opp\times \mathscr{B}\to\Sets$''.

This gives rise to maps of functors $\epsilon\colon LR\to\Id_\mathscr{B}$ and $\eta\colon \Id_{\mathscr{A}}\to RL$ such that the compositions
\begin{equation}\label{adj}
\begin{aligned}
\ &L\xrightarrow{L\circ \eta}LRL\xrightarrow{\epsilon\circ L}L\\
&R\xrightarrow{\eta\circ R}RLR\xrightarrow{R\circ \epsilon}R
\end{aligned}
\end{equation}
are identity maps of the functors.

The inverse is true: given maps of functors $\epsilon\colon LR\to\Id_\mathscr{B}$ and $\eta\colon \Id_{\mathscr{A}}\to RL$, obeying \eqref{adj}, gives rise to the isomorphism of bifunctors, that is, to adjoint equivalence (see [ML], Section IV.1, Theorems 1 and 2).

In particular, the case of {\it adjoint equivalence} is the case when $\epsilon\colon LR\to \Id_\mathscr{B}$ and $\eta\colon\Id_\mathscr{A}\to RL$ are {\it isomorphisms of functors}. In this case, setting $\epsilon_1=\eta^{-1}$ and $\eta_1=\epsilon^{-1}$, we obtain another adjunction, with $L$ the {\it right} adjoint and $R$ the {\it left adjoint}.

Let $\phi\in \Mor_\mathscr{B}(LX,Y)$. The following explicit formula for its adjoint $\psi\in \Mor_\mathscr{A}(X,RY)$ will be useful:
\begin{equation}\label{adj2}
X\xrightarrow{\eta}RLX\xrightarrow{R(\phi)}RY
\end{equation}
and analogously for the way back:
\begin{equation}\label{adj3}
LX\xrightarrow{L(\psi)}LRY\xrightarrow{\epsilon}Y
\end{equation}
(see [ML], Section IV.1).

Let now $\mathscr{C}$ and $\mathscr{D}$ be two symmetric monoidal categories, $F\colon \mathscr{C}\to\mathscr{D}$ a functor. Suppose a lax-monoidal structure $\ell_F$ and a colax-monoidal structure $c_F$ on $F$ are given. The {\it bialgebra axiom} is some compatibility condition on the pair $(c_F,\ell_F)$, see Section \ref{bialg}. Recall the following simple fact from [Sh2], Section 2:

\begin{lemma}\label{lemma12}{\itshape
Let $\mathscr{C}$ and $\mathscr{D}$ be two strict symmetric monoidal categories, and let $F\colon \mathscr{C}\rightleftarrows \mathscr{D}\colon G$ be an adjoint equivalence of the underlying categories. Given a pair $(c_F,\ell_F)$ where $c_F$ is a colax-monoidal structure on $F$, $\ell_F$ is a lax-monoidal structure on $F$, assign to it a pair $(c_G,\ell_G)$ of analogous structures on $G$, by \eqref{lg} and \eqref{cg}. If $\mathscr{C}$ and $\mathscr{D}$ are symmetric monoidal, and if the pair $(c_F,\ell_F)$ satisfies the  bialgebra axiom {\rm (see Section \ref{bialg})}, the pair $(c_G,\ell_G)$ satisfies the  bialgebra axiom as well.
}
\end{lemma}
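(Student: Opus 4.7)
The plan is to reduce the bialgebra axiom for $(c_G,\ell_G)$ to the one for $(c_F,\ell_F)$ by a naturality diagram chase, using crucially that both adjunction maps $\eta\colon \Id_\mathscr{C}\to GF$ and $\epsilon\colon FG\to\Id_\mathscr{D}$ are isomorphisms. Since $F$ is an equivalence, two adjunctions are at our disposal: the given one $F\dashv G$ with unit $\eta$ and counit $\epsilon$, and the opposite one $G\dashv F$ with unit $\epsilon^{-1}$ and counit $\eta^{-1}$. Lemma \ref{kurica} applied to $F\dashv G$ produces $\ell_G$ from $c_F$ via formula \eqref{lg}, while applied to $G\dashv F$ it produces $c_G$ from $\ell_F$ via formula \eqref{cg}. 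Both $c_G$ and $\ell_G$ are thus presented as decorated composites in $c_F$, $\ell_F$, $\eta^{\pm 1}$, $\epsilon^{\pm 1}$, and the symmetry.

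Next I would expand each of the two sides of the bialgebra diagram for $(c_G,\ell_G)$ using these formulas. Its inputs are of shape $G(U\otimes V)\otimes G(U'\otimes V')$ and its outputs are $G(U\otimes U')\otimes G(V\otimes V')$. After substitution, each side becomes a long composite whose interior is built only from $c_F$, $\ell_F$, and symmetry isomorphisms evaluated on the objects $GU, GV, GU', GV'$, while all $\eta^{\pm 1}, \epsilon^{\pm 1}$ sit on the outer boundary. Pushing the adjunction maps across $c_F$, $\ell_F$, and the symmetry by their naturality, I reduce the equality of the two outer composites to the equality of their interior pieces --- which is precisely the bialgebra axiom for $(c_F,\ell_F)$ applied to $GU,GV,GU',GV'$ --- together with the collapse of the boundary strings via the two triangle identities, using that in an equivalence the excess factors $\eta\eta^{-1}$ and $\epsilon\epsilon^{-1}$ degenerate to identities.

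The main obstacle will be bookkeeping: the fully unfolded diagrams have many cells, and one must invoke naturality of $\eta,\epsilon$, naturality of the symmetry, and (at one step) the symmetric-monoidal hypothesis on $\mathscr{C}$ and $\mathscr{D}$, since the bialgebra axiom itself crosses a symmetry swap over $c_F$ and $\ell_F$. A cleaner conceptual route would be to organise the chase as a cubical diagram whose six faces are: two naturality cubes for $\eta$ and $\epsilon$, two symmetry-naturality squares, and the bialgebra hexagon for $F$ itself; commutativity of five faces then forces commutativity of the sixth, which is the bialgebra hexagon for $G$.

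Alternatively, invoking the involutivity asserted at the end of Lemma \ref{kurica}, the correspondence $(c_F,\ell_F)\leftrightarrow (c_G,\ell_G)$ is a bijection; the bialgebra axiom cuts out a subclass of such pairs on each side, and the same naturality argument shows that this bijection restricts to a bijection between the respective subclasses. In either route the proof ultimately reduces to the triangle identities for the two adjunctions $F\dashv G$ and $G\dashv F$, which is the only non-formal input.
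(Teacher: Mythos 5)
Your proposal is correct and follows essentially the same route as the paper: the paper's own proof simply substitutes the formulas \eqref{lg} and \eqref{cg} into the bialgebra diagram for $(c_G,\ell_G)$ and observes that, after the cancellations of $\epsilon$ with $\epsilon^{-1}$ and of $\eta$ with $\eta^{-1}$, commutativity reduces to the bialgebra axiom for $(c_F,\ell_F)$. Your version merely spells out the naturality steps that justify moving the adjunction isomorphisms to the boundary before cancelling, which the paper leaves implicit.
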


\begin{proof}
Suppose $(c_F,\ell_F)$ are done. Define $(c_G,\ell_G)$ by \eqref{lg} and \eqref{cg}.
When we write down the bialgebra axiom diagram (see Section \ref{bialg}) for $(c_G,\ell_G)$ we see due to cancelations of $\epsilon$ with $\epsilon^{-1}$ and of $\eta$ with $\eta^{-1}$, that the diagram is commutative as soon as the diagram for $(c_F,\ell_F)$ is.
\end{proof}

\begin{lemma}\label{lemmaxx}{\itshape
Let $\mathscr{C},\mathscr{D}$ be two symmetric monoidal categories, and let $F\colon\mathscr{C}\to\mathscr{D}$ be a functor. Suppose a lax-monoidal structure $\ell_F$ on $F$ is given. Consider the functor $$F^\mon=F^\mon(\ell_F)\colon \Mon\mathscr{C}\to\Mon\mathscr{D}$$
defined in \eqref{monoidref}. Then the map  $F^\mon(X)\otimes F^\mon(Y)\to F^\mon (X\otimes Y)$, defined on the underlying objects as $\ell_F$, is a map of monoids, and, therefore, gives a lax-monoidal structure on $F^\mon$. Let now $c_F$ be a colax-monoidal structure on $F$.
If $(\ell_F,c_F)$ satisfies the bialgebra axiom, the map $F^\mon(X\otimes Y)\to F^\mon(X)\otimes F^\mon(Y)$, defined on the underlying objects as $c_F$, is a map of monoids, and, therefore, gives a colax-monoidal structure on $F^\mon$.
}
\end{lemma}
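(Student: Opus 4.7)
The plan is to verify each assertion by writing out the two composites that encode the monoid-morphism condition, expanding them with the definitions of the various monoid structures, and then comparing diagrammatically. The first claim will reduce to the standard associativity/unit/symmetry axioms of a symmetric lax-monoidal functor, while the second will reduce to a single application of the bialgebra axiom.

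For the lax part, the multiplication on $F^\mon(X \otimes Y)$ is by definition the composite
\[
F(X) \otimes F(Y) \otimes F(X) \otimes F(Y) \xrightarrow{\ell_F \otimes \ell_F} F(X\otimes Y) \otimes F(X\otimes Y) \xrightarrow{\ell_F} F((X\otimes Y)^{\otimes 2}) \xrightarrow{F(m_{X\otimes Y})} F(X\otimes Y),
\]
where $m_{X\otimes Y}$ itself factors as $(m_X \otimes m_Y) \circ (\mathrm{id}\otimes \tau_\mathscr{C}\otimes \mathrm{id})$ using the braiding $\tau_\mathscr{C}$ of $\mathscr{C}$. On the other side, the multiplication on $F^\mon(X) \otimes F^\mon(Y)$ is $(m_{F^\mon(X)} \otimes m_{F^\mon(Y)}) \circ (\mathrm{id} \otimes \tau_\mathscr{D} \otimes \mathrm{id})$. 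To show that $\ell_F$ itself is a map of monoids, I would collapse the iterated instances of $\ell_F$ via the associativity axiom, then use symmetry compatibility of $\ell_F$ to exchange $F(\tau_\mathscr{C})\circ \ell_F$ with $\ell_F \circ \tau_\mathscr{D}$, and finally recognize the remaining $F(m_X \otimes m_Y)\circ \ell_F$ as $\ell_F \circ (F(m_X) \otimes F(m_Y))$ by naturality of $\ell_F$. The unit compatibility is immediate from the lax unit axiom.

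For the colax part, the two composites to compare are, on one side, the multiplication on $F^\mon(X \otimes Y)$ followed by $c_F$, and on the other, $c_F \otimes c_F$ followed by the multiplication on $F^\mon(X) \otimes F^\mon(Y)$. Expanding both using the two explicit descriptions above produces two parallel strings built from $\ell_F$'s, $c_F$'s, braidings $\tau_\mathscr{C}$ or $\tau_\mathscr{D}$, and $F$-images of the component multiplications $m_X$, $m_Y$. After using naturality to migrate $F(m_X \otimes m_Y)$ through $c_F$ (giving $(F(m_X)\otimes F(m_Y)) \circ c_F$) and the associativity axioms to normalize the iterated lax/colax maps, the two composites end up differing only in a central square that exchanges ``first $\ell_F$, then $c_F$'' with ``first $c_F$, then $\ell_F$'' across the symmetry swapping the middle two factors. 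This central square is exactly the bialgebra axiom, so the hypothesis delivers precisely what is needed.

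I expect the principal difficulty to be purely combinatorial bookkeeping: making sure the braidings of $\mathscr{C}$ and $\mathscr{D}$ are inserted at the correct positions so that the bialgebra square lines up with the hypothesis, and checking that the coherence axioms of a symmetric (co)lax-monoidal functor justify each of the auxiliary rearrangements. Once the full diagram is written out, each remaining cell is either an instance of (co)lax associativity, a naturality square, or the bialgebra axiom itself, so the verification becomes mechanical.
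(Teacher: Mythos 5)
Your verification is exactly the ``straightforward check'' that the paper has in mind: its own proof of this lemma is the single sentence that both claims are direct checks, so in substance you and the paper take the same route, and your overall architecture (expand both composites, push the multiplications through by naturality, isolate one decisive cell) is sound. For the colax half your identification of the decisive cell is correct and can even be sharpened: after using naturality of $c_F$ to rewrite $c_{X,Y}\circ F(m_X\otimes m_Y)$ as $(F(m_X)\otimes F(m_Y))\circ c_{X\otimes X,\,Y\otimes Y}$, the identity to be proved is \emph{literally} the bialgebra axiom of Section \ref{bialg} evaluated at the quadruple $(X,Y,X,Y)$; no further normalization of iterated lax/colax maps is needed, since the axiom is already stated in that shape.

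The one point you should make explicit concerns the lax half. There you invoke ``symmetry compatibility of $\ell_F$'', i.e.\ $F(\tau_{\mathscr{C}})\circ\ell_F=\ell_F\circ\tau_{\mathscr{D}}$, to move the middle transposition past the lax maps. This step is genuinely needed: writing $\ell^{(4)}$ for the canonical four-fold lax map, the identity
$$
\ell^{(4)}_{X,X,Y,Y}\circ(\id\otimes\tau\otimes\id)\;=\;F(\id\otimes\tau\otimes\id)\circ\ell^{(4)}_{X,Y,X,Y}
$$
does not follow from mere (non-symmetric) lax monoidality. But it is not among the stated hypotheses, which only ask for ``a lax-monoidal structure $\ell_F$''. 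Either record it as a standing assumption (it holds for the shuffle map $\nabla$ by Proposition \ref{before}, item 3, which is the case the paper ultimately needs), or note that the first assertion of the lemma is imprecise as stated. A last small omission: for the colax half you do not check the unit condition, which requires the extra compatibility $c_{1,1}\circ\kappa=\kappa\otimes\kappa$ between the lax unit $\kappa$ and the colax binary map; this is not covered by the four-object bialgebra diagram you cite, though the paper is equally silent about units here.
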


The both claims are straightforward checks. The second claim was, in fact, our motivation for introduction of the bialgebra axiom in [Sh2].

\section{\sc The Dold-Kan correspondence}\label{section3}
We use the following notations:

$\mathscr{C}(\mathbb{Z})$ is the category of unbounded complexes of abelian groups, $\mathscr{C}(\mathbb{Z})^+$ (resp., $\mathscr{C}(\mathbb{Z})^-$) are the full subcategories of $\mathbb{Z}_{\ge 0}$-graded (resp., $\mathbb{Z}_{\le 0}$-graded) complexes. The category of abelian groups placed in degree 0 (with zero differential) is denoted by $\mathscr{M}od(\mathbb{Z})$, thus, $\mathscr{M}od(\mathbb{Z})=\mathscr{C}(\mathbb{Z})^-\cap \mathscr{C}(\mathbb{Z})^+$.

\subsection{\sc}\label{section31}
The Dold-Kan correspondence is the following theorem:
\begin{theorem}[Dold-Kan correspondence]{\itshape
There is an adjoint equivalence of categories
$$
N\colon \mathscr{M}od(\mathbb{Z})^\Delta\rightleftarrows \mathscr{C}(\mathbb{Z})^-\colon\Gamma
$$
where $N$ is the functor of normalized chain complex (which is isomorphic to the Moore complex).
}
\end{theorem}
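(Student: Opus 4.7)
The plan is to recall the classical construction going back to Dold and Kan, organized as follows. First I would define the normalized chain complex functor $N$ explicitly. For a simplicial abelian group $A_\mb$, set
$$
N(A)_{-n} = \bigcap_{i=0}^{n-1}\ker\bigl(d_i\colon A_n\to A_{n-1}\bigr),
$$
with differential $(-1)^n d_n$ (placed in non-positive degrees to match $\mathscr{C}(\mathbb{Z})^-$). A routine verification using the simplicial identities $d_i d_j = d_{j-1}d_i$ for $i<j$ shows $d^2=0$. In parallel, one checks that $N(A)$ is naturally isomorphic to the quotient of the unnormalized chain complex $(A_n,\sum(-1)^i d_i)$ by the subcomplex $D(A)$ generated by degenerate simplices; both descriptions will be useful.

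Next I would prove the \emph{Eilenberg--Zilber--Cartier decomposition}: for every $n$, the natural map
$$
\bigoplus_{\eta\colon [n]\twoheadrightarrow [k]} N(A)_{-k}\;\longrightarrow\; A_n,
$$
sending the summand indexed by a surjective non-decreasing $\eta$ to $\eta_*\colon N(A)_{-k}\hookrightarrow A_k \to A_n$, is an isomorphism of abelian groups. This is proved by induction on $n$ by inverting the elementary degeneracy maps on the chain of kernels, and it is the combinatorial heart of the correspondence. Using this decomposition I would define the inverse functor
$$
\Gamma(C)_n \;=\; \bigoplus_{[n]\twoheadrightarrow [k]} C_{-k},
$$
with face and degeneracy operators dictated by the epi-mono factorization in $\Delta$: for $\alpha\colon [m]\to [n]$ and a surjection $\eta\colon [n]\twoheadrightarrow [k]$, factor $\eta\circ\alpha = \eta'\circ\mu$ with $\eta'\colon [m]\twoheadrightarrow [k']$ surjective and $\mu\colon [k']\hookrightarrow [k]$ injective, and define the induced map on the $\eta$-summand to land in the $\eta'$-summand via the $\mu$-component, which is $\id$ if $\mu=\id$, the differential if $\mu$ is the last face $\delta_k\colon [k-1]\hookrightarrow [k]$, and zero otherwise. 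The simplicial identities for $\Gamma(C)$ and the compatibility $d\circ d=0$ translate into the cosimplicial identities among the $\delta_i$'s.

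Given these functors, the equivalences $N\Gamma\cong\Id$ and $\Gamma N\cong \Id$ follow directly: for $N\Gamma$, only the identity surjection contributes to the iterated kernels, giving $N(\Gamma(C))_{-k}=C_{-k}$ with the correct differential; for $\Gamma N$, the Eilenberg--Zilber decomposition is precisely the statement $\Gamma(N(A))\cong A$. Finally, the adjunction is either extracted from these isomorphisms by choosing compatible units and counits, or (equivalently) read off from the universal property of the decomposition, checking the two triangle identities \eqref{adj}.

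The main obstacle, in my view, is not any single step but the combinatorial bookkeeping in setting up $\Gamma$: one must verify that the epi-mono factorization rule really yields well-defined simplicial operators and that they satisfy the full list of simplicial identities. Once that is in place, everything else is formal, since $N$ is manifestly functorial and the Eilenberg--Zilber decomposition immediately forces both composites to be the identity. I would treat this combinatorial verification in a lemma, and then state the theorem as a direct corollary.
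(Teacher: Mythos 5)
The paper does not prove this theorem itself: it explicitly defers to [W], Section 8.4, and [SchS03], Section 2, and your sketch is precisely the standard argument given there (the normalized complex as iterated kernels of the $d_i$, the decomposition $A_n\cong\bigoplus_{[n]\twoheadrightarrow[k]}N(A)_{-k}$, and $\Gamma$ defined via epi-mono factorizations). Your outline is correct as the classical proof; the only slip is notational --- the factorization should read $\eta\circ\alpha=\mu\circ\eta'$ (surjection followed by injection), not $\eta'\circ\mu$.
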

We refer to [W], Section 8.4, and [SchS03], Section 2, which both contain excellent treatment of this Theorem.

The both categories $\mathscr{M}od(\mathbb{Z})^\Delta$ and $\mathscr{C}(\mathbb{Z})^-$ are {\it symmetric monoidal} in natural way.
However, neither of functors $N$ and $\Gamma$ is monoidal.

There is a colax-monoidal structure on $N$, called {\it the Alexander-Whitney map} $AW\colon N(A\otimes B)\to NA\otimes NB$ and a lax-monoidal structure on $N$, called {\it the shuffle map} $\nabla\colon N(A)\otimes N(B)\to N(A\otimes B)$.

Recall the explicit formulas for them.

The Alexander-Whitney colax-monoidal map $AW\colon N(A\otimes B)\to N(A)\otimes N(B)$ is defined as
\begin{equation}\label{aw}
AW(a^k\otimes b^k)=\sum_{i+j=k}d_\fin^ia^k\otimes d_0^jb^k
\end{equation}
where $d_0$ and $d_\fin$ are the first and the latest simplicial face maps.

The Eilenberg-MacLane shuffle lax-monoidal map $\nabla\colon N(A)\otimes N(B)\to N(A\otimes B)$ is defined as
\begin{equation}
\nabla(a^k\otimes b^\ell)=\sum_{\substack{(k,\ell)\text{-shuffles }(\alpha,\beta)}}(-1)^{(\alpha,\beta)}S_\beta a^k\otimes S_\alpha b^\ell
\end{equation}
where
$$
S_\alpha=s_{\alpha_k}\dots s_{\alpha_1}
$$
and
$$
S_\beta=s_{\beta_\ell}\dots s_{\beta_1}
$$
Here $s_i$ are simplicial degeneracy maps, $\alpha=\{\alpha_1<\dots <\alpha_k\}$, $\beta=\{\beta_1<\dots<\beta_\ell\}$, $\alpha,\beta\subset [0,1,\dots, k+\ell-1]$, $\alpha\cap\beta=\varnothing$.

Let us summarize their properties in the following Proposition, see [SchS03], Section 2, and references therein, for a proof.
\begin{prop}\label{before}{\itshape
The colax-monoidal Alexander-Whitney and the lax-monoidal shuffle structures on the functor $N$ enjoy the following properties:
\begin{itemize}
\item[1.] the composition
$$
NA\otimes NB\xrightarrow{\nabla}N(A\otimes B)\xrightarrow{AW}NA\otimes NB
$$
is the identity,
\item[2.] the composition
$$
N(A\otimes B)\xrightarrow{AW}NA\otimes NB\xrightarrow{\nabla}N(A\otimes B)
$$
is naturally chain homotopic to the identity,
\item[3.] the shuffle map $\nabla$ is symmetric,
\item[4.] the Alexander-Whitney map $AW$ is symmetric up to a natural chain homotopy.
\end{itemize}
}
\end{prop}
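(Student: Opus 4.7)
The plan is to verify the four claims separately, using the explicit formulas for $AW$ and $\nabla$ written down just above the statement. Two tools will do the work: direct simplicial manipulation, together with the fact that $NA$ is the quotient of the Moore complex by the degenerate subcomplex, so that any summand containing a degeneracy $s_i$ vanishes in $N$; and the method of acyclic models of Eilenberg--MacLane, for the two statements that only hold up to natural chain homotopy. I will work throughout with simplicial abelian groups, and upgrade to the Dold--Kan setting by applying $\Gamma$ where needed.

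For part~(1), I would substitute the shuffle sum into $AW$. Given $a^k \otimes b^\ell \in NA \otimes NB$, the composition yields
\[
AW\bigl(\nabla(a^k\otimes b^\ell)\bigr) \;=\; \sum_{\alpha,\beta}(-1)^{(\alpha,\beta)}\sum_{i+j=k+\ell} d_{\fin}^{\,i}S_\beta a^k \;\otimes\; d_0^{\,j}S_\alpha b^\ell.
\]
Using the simplicial identities to push each $d_{\fin}$ or $d_0$ past the degeneracies $S_\alpha, S_\beta$, almost every term reduces to a degenerate simplex and therefore dies in the normalized complex. The unique surviving contribution is the one indexed by $i=k$, $j=\ell$ with the trivial shuffle $\alpha=\{0,\dots,k-1\}$, $\beta=\{k,\dots,k+\ell-1\}$, which gives $a^k\otimes b^\ell$ back. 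This is a finite combinatorial check.

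For part~(3), the symmetry of $\nabla$ follows from a bijection on the indexing set: interchanging $\alpha$ and $\beta$ takes $(k,\ell)$-shuffles to $(\ell,k)$-shuffles, and the sign $(-1)^{(\alpha,\beta)}$ turns into $(-1)^{(\beta,\alpha)} = (-1)^{k\ell}(-1)^{(\alpha,\beta)}$. The extra factor $(-1)^{k\ell}$ is exactly the Koszul sign produced by the symmetry constraint $\tau_A\otimes\tau_B$ when commuting $a^k$ past $b^\ell$, so $\nabla \circ \tau = \tau \circ \nabla$ on the nose.

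The main obstacle is parts~(2) and~(4), where the compositions are only homotopic to the identity or to their twisted versions. Here I would invoke the acyclic models theorem with models the pairs of standard simplices $(\Delta^m,\Delta^n)$. Both $N(-\otimes -)$ and $N(-)\otimes N(-)$ are bifunctors from pairs of simplicial abelian groups to chain complexes, both are free on the chosen models (representability via Yoneda), and both are acyclic in positive degrees on the models since they compute the normalized homology of $\Delta^m\times\Delta^n$, which is contractible. The acyclic models comparison theorem then guarantees a natural chain homotopy between any two natural transformations agreeing on $H_0$. Applying this to $(\id,\, AW\circ\nabla)$ gives~(2), and applying it to $(AW,\, \tau_{NA,NB}\circ AW\circ N(\tau_{A,B}))$ gives~(4). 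The delicate point will be checking the bifunctorial freeness/acyclicity hypotheses in the form required, but this is classical.
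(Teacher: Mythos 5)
Your argument is correct in outline, but it is worth noting that the paper does not prove this Proposition at all: it simply cites [SchS03], Section 2 (and the references therein, ultimately Eilenberg--MacLane), and closes with \qed. What you have written out is precisely the classical Eilenberg--Zilber argument that those references contain: the strict statements (1) and (3) by direct simplicial combinatorics in the normalized complex, and the homotopy statements (2) and (4) by acyclic models on the bifunctors $N(-\otimes-)$ and $N(-)\otimes N(-)$ with models the pairs of standard simplices. So the ``difference'' is only that you supply a proof where the paper delegates one; what your route buys is self-containedness, at the cost of having to verify the freeness and acyclicity hypotheses of the acyclic-models theorem in the bifunctor setting, which you correctly flag as the only delicate point. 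Two small slips that do not affect the substance: in part (1), with the paper's convention $AW(x\otimes y)=\sum_{i+j=n}d_\fin^i x\otimes d_0^j y$ applied to elements of degree $n=k+\ell$, the surviving term sits at $i=\ell$, $j=k$ (not $i=k$, $j=\ell$), since $d_\fin^{\,\ell}S_\beta a^k$ must land back in degree $k$; and in part (2) the endomorphism of $N(A\otimes B)$ being compared with the identity is $\nabla\circ AW$ in standard composition order, not $AW\circ\nabla$ (the latter is the strict identity of part (1)).
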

\qed

Recall a Theorem proven independently in [AM], Sect. 5.4, and (later) in [Sh2], Sect. 2:
\begin{theorem}\label{mytheor}{\itshape
The pair $(\nabla, AW)$ of the lax-monoidal shuffle structure and the colax-monoidal Alexander-Whitney structure, defined on the normalized chain complex functor $N\colon\mathscr{M}od(\mathbb{Z})^\Delta\rightleftarrows \mathscr{C}(\mathbb{Z})^-$, obeys the bialgebra axiom.
}
\end{theorem}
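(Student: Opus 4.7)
The plan is to unfold the bialgebra axiom explicitly for the pair $(\nabla,AW)$ and then verify it by direct computation. For four simplicial abelian groups $A,B,C,D$, the axiom demands the commutativity of the following square (with $\tau$ denoting the appropriate symmetry of the symmetric monoidal structure):
\begin{equation*}
\xymatrix@C=20pt{
N(A\otimes C)\otimes N(B\otimes D)\ar[r]^-{\nabla}\ar[d]_{AW\otimes AW} & N((A\otimes C)\otimes(B\otimes D))\ar[d]^{N(1\otimes\tau\otimes 1)}\\
NA\otimes NC\otimes NB\otimes ND\ar[d]_{1\otimes\tau\otimes 1} & N((A\otimes B)\otimes (C\otimes D))\ar[d]^{AW}\\
NA\otimes NB\otimes NC\otimes ND\ar[r]^-{\nabla\otimes\nabla} & N(A\otimes B)\otimes N(C\otimes D)
}
\end{equation*}
So my first step is to take elements $x\in N(A\otimes C)_p$ and $y\in N(B\otimes D)_q$, chase them both ways, and compare.

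Along the top-right path, $\nabla(x\otimes y)$ is a signed sum over $(p,q)$-shuffles $(\mu,\nu)$ of terms $S_\nu x\otimes S_\mu y$ of total degree $p+q$ in $N((A\otimes C)\otimes (B\otimes D))$. After the twist I get elements of $N((A\otimes B)\otimes(C\otimes D))_{p+q}$, and $AW$ decomposes each further as $\sum_{s+t=p+q} d_\text{fin}^t(-)\otimes d_0^s(-)$. Along the bottom-left path, $AW\otimes AW$ first splits $x$ and $y$ separately into bi-graded pieces indexed by two splittings $i+j=p$ and $k+\ell=q$, involving $d_\text{fin}^{j}x$, $d_0^i x$, and similarly for $y$; after the twist I form $\nabla$ of the $(i,k)$-pair and $\nabla$ of the $(j,\ell)$-pair, which reintroduces two smaller shuffles. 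The heart of the proof is therefore a sign-preserving bijection between the indexing set "one big $(p,q)$-shuffle together with a splitting of $[0,p+q]$ at position $s$" and the indexing set "two splittings of $p$ and $q$ together with an $(i,k)$-shuffle and a $(j,\ell)$-shuffle".

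The natural candidate for this bijection is evident on combinatorial grounds: a $(p,q)$-shuffle $(\mu,\nu)\subset[0,p+q-1]$ together with a cut at $s$ partitions $\mu,\nu$ into $\mu=\mu_{<s}\sqcup\mu_{\ge s}$ and similarly for $\nu$, producing two sub-shuffles, while the numbers of $\mu$- and $\nu$-positions that land below $s$ recover the splittings $(i,j)$ and $(k,\ell)$. Before matching both sides, I would rewrite expressions like $d_0^s S_\mu x$ and $d_\text{fin}^t S_\nu x$ using the simplicial identities, moving faces past degeneracies until the result is either a pure face operator applied to $x$, or carries a top-level degeneracy that kills the term modulo degenerates (which is how $N$ is defined). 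The technical input here is the standard identity that in $N$, any term factoring through a degeneracy operator vanishes, which eliminates all the "mixed" terms and leaves exactly the sum obtained from the other path.

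The main obstacle I expect is not the existence of the bijection but the sign bookkeeping: the single shuffle sign $(-1)^{(\mu,\nu)}$ has to equal the product of the two sub-shuffle signs times the Koszul sign introduced by the twist $1\otimes\tau\otimes 1$ applied to tensor factors of degrees depending on $s$, $i$, $k$. I would verify this by computing the number of inversions of each shuffle and comparing with the inversion count of the concatenation, which differs by the number of $\nu$-elements below $s$ times the number of $\mu$-elements at or above $s$ — precisely the Koszul sign contributed by $\tau$. A cleaner alternative, should the direct accounting become unwieldy, is to exploit Proposition~\ref{before}(i),(ii): the relation $AW\circ\nabla=\id$ together with the fact that $\nabla\circ AW$ is naturally homotopic to the identity reduces the axiom to an on-the-nose identity only on one side, with the other determined up to contractible choice; but since the bialgebra axiom is an equality of maps of complexes, the combinatorial check cannot really be avoided.
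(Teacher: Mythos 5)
The first thing to say is that the paper itself contains no proof of Theorem \ref{mytheor}: it is quoted from [AM], Sect.~5.4, and (independently) [Sh2], Sect.~2, so the only meaningful comparison is with those references, which establish the identity by exactly the direct shuffle / front-and-back-face combinatorics you outline. Your unfolding of the bialgebra axiom of Section \ref{bialg} into the displayed square is correct (including the observation that the twist on the chain-complex side carries Koszul signs while the twist inside $N(-)$ does not), and the overall strategy --- a sign-preserving bijection between ``one $(p,q)$-shuffle together with a cut at $s$'' and ``two splittings of $p$ and $q$ together with two sub-shuffles'', with every term outside the image of the bijection dying in the normalized complex --- is the right one and is essentially what the cited proofs do. Your closing remark is also correct: Proposition \ref{before} cannot substitute for the computation, since the axiom is an equality of chain maps, not a statement up to homotopy.

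As written, however, this is a plan rather than a proof, and the step you dispatch with ``carries a top-level degeneracy that kills the term modulo degenerates'' is precisely where the work lies. The final $AW$ in your top-right path lands in $N(A\otimes B)\otimes N(C\otimes D)$, and an element $u\otimes v$ of $(A\otimes B)_m$ vanishes in $N(A\otimes B)$ only if it lies in $\sum_k \mathrm{im}(s_k\otimes s_k)$, the degeneracies of the tensor-product simplicial abelian group; it is \emph{not} enough that $u$ alone or $v$ alone retains a leftover degeneracy after you commute $d_\fin^t$ and $d_0^s$ past $S_\nu$ and $S_\mu$. Since $S_\nu$ acts on the first component and $S_\mu$ on the second through \emph{disjoint} index sets, exhibiting a \emph{common} surviving degeneracy index for the two components of each ``mixed'' term is exactly the nontrivial combinatorial point of the whole proof, and it is nowhere carried out. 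Likewise the sign identity (that the inversion count of the big shuffle equals the sum of the inversion counts of the two sub-shuffles plus the Koszul contribution of the twist) is only announced (``I would verify\dots''). Both claims are true, but neither is automatic; until they are proved the argument has a genuine gap.
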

\qed

This Theorem, along with Proposition \ref{before} (1.), is essentially used in the proof of Main Theorem \ref{theor_gr_alg} in Section 5.

\subsection{\sc Monoidal properties}\label{section32}
Let $F,G\colon \mathscr{C}\to \mathscr{D}$ be two functors between monoidal categories.

\begin{defn}{\rm
Suppose the functor $F$ is colax-monoidal, with the colax-monoidal structure $c_F$, and $G$ is lax-monoidal, with the lax-monoidal structure $\ell_G$.  A morphism of functors
$\Phi\colon F\to G$ is called {\it colax-monoidal} if for any $X,Y\in \mathscr{C}$, the diagram
\begin{equation}
\xymatrix{
F(X\otimes Y)\ar[d]_{c_F}\ar[r]^{\Phi}&G(X\otimes Y)\\
F(X)\otimes F(Y)\ar[r]^{\Phi\otimes\Phi}&G(X)\otimes G(Y)\ar[u]_{\ell_G}
}
\end{equation}

As well, when $F$ is lax-monoidal with the lax-monoidal structure $\ell_F$, and $G$ is colax-monoidal with the colax-monoidal structure $c_G$, a morphism $\Psi\colon F\to G$ is called {\it lax-monoidal}, if for any $X,Y\in\mathscr{C}$ the diagram
\begin{equation}
\xymatrix{
F(X)\otimes F(Y)\ar[r]^{\Psi\otimes\Psi}\ar[d]_{\ell_F}&G(X)\otimes G(Y)\\
F(X\otimes Y)\ar[r]^{\Psi}&G(X\otimes Y)\ar[u]_{c_G}
}
\end{equation}
}
\end{defn}
Each of the functors $N$ and $\Gamma$ admits both lax-monoidal and colax monoidal structures. Therefore, the compositions $N\circ \Gamma$ and $\Gamma\circ N$ are both lax- and colax-monoidal.

Here are the main monoidal properties concerning the Dold-Kan correspondence, from which (ii) is used essentially in our proof of Main Theorem \ref{theor_gr_alg} below.
\begin{lemma}\label{dkmonoidal}{\itshape
\begin{itemize}
\item[(i)] the adjunction map $\epsilon \colon N\circ \Gamma\to \Id$ is lax-monoidal,
\item[(ii)] the adjunction map $\epsilon \colon N\circ\Gamma\to \Id$ is colax-monoidal.
\end{itemize}
}
\end{lemma}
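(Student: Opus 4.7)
Recall that the composite lax and colax structures on $N\Gamma$ are
\[
\ell_{N\Gamma} = N\ell_\Gamma \circ \nabla_{\Gamma X, \Gamma Y}, \qquad c_{N\Gamma} = AW_{\Gamma X, \Gamma Y} \circ Nc_\Gamma,
\]
where $\ell_\Gamma$ is the lax structure on $\Gamma$ mate to the colax $AW$ on $N$ under the standard adjunction $N\dashv\Gamma$, and $c_\Gamma$ is the colax structure on $\Gamma$ mate to the lax $\nabla$ on $N$ under the swapped adjunction $\Gamma\dashv N$ (available because $N$ and $\Gamma$ form an adjoint equivalence, whose unit and counit are $\epsilon^{-1}$ and $\eta^{-1}$ respectively).

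The plan is to reduce both claims to the single cancellation $AW\circ\nabla = \Id$ of Proposition \ref{before}(1) via the two intermediate identities
\[
\epsilon\circ N\ell_\Gamma = (\epsilon\otimes\epsilon)\circ AW, \qquad Nc_\Gamma\circ\epsilon^{-1} = \nabla\circ(\epsilon^{-1}\otimes\epsilon^{-1}).
\]
I will check the first by substituting formula \eqref{lg} for $\ell_\Gamma$, applying $N$, sliding $\epsilon$ past $N\Gamma(\cdot)$ by naturality, and collapsing the residual $\epsilon\circ N\eta$ via the triangle identity $\epsilon_N\circ N\eta = \Id_N$. The second will follow in the same spirit from formula \eqref{cg} for the swapped adjunction, where the crucial simplification is $N\eta^{-1}=\epsilon_N$ (valid because $N\eta$ is invertible in an equivalence).

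Granted these two identities, the lemma is immediate. For (i),
\[
\epsilon\circ\ell_{N\Gamma} = \epsilon\circ N\ell_\Gamma\circ\nabla = (\epsilon\otimes\epsilon)\circ AW\circ\nabla = \epsilon\otimes\epsilon,
\]
which is exactly the required lax-monoidality condition. For (ii),
\[
c_{N\Gamma}\circ\epsilon^{-1} = AW\circ Nc_\Gamma\circ\epsilon^{-1} = AW\circ\nabla\circ(\epsilon^{-1}\otimes\epsilon^{-1}) = \epsilon^{-1}\otimes\epsilon^{-1},
\]
equivalently $(\epsilon\otimes\epsilon)\circ c_{N\Gamma} = \epsilon$, which is the colax-monoidality condition.

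The only real pitfall is bookkeeping: one must keep clear that $\ell_\Gamma$ and $c_\Gamma$ arise as mates under two \emph{different} adjunctions. Once the formulas \eqref{lg}, \eqref{cg} are unpacked with the correct units and counits, the verification is purely formal---naturality of $\epsilon$, a triangle identity, and the cancellation $AW\circ\nabla = \Id$ do all the work.
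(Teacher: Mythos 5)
Your proposal is correct and follows essentially the same route as the paper: unpack the mate formulas \eqref{lg}, \eqref{cg}, slide $\epsilon$ (resp.\ $\epsilon^{-1}$) through by naturality, invoke the adjoint-equivalence identity $\epsilon_N = N\eta^{-1}$ (the content of the paper's Lemma \ref{help}), and finish with the cancellation $AW\circ\nabla=\Id$ from Proposition \ref{before}(1). The only cosmetic difference is that the paper proves only (ii) in detail (citing (i) to [SchS03], Lemma 2.11) and performs the $AW\circ\nabla$ cancellation inside $N\Gamma(-)$ before applying naturality, whereas you pull everything to the outer level first; the two computations are identical in substance.
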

\proof
The claim (i) is Lemma 2.11 of [SchS03]. The claim (ii) is proven analogously, we present here the proof for completeness. We need to prove the commutativity of the diagram
\begin{equation}\label{hkm}
\xymatrix{
N\Gamma(X\otimes Y)\ar[r]^{\varphi}\ar[rrd] & N(\Gamma(X)\otimes \Gamma(Y))\ar[r]^{AW}&N\Gamma(X)\otimes N\Gamma(Y)\ar[d]^{\epsilon\otimes\epsilon}\\
&&X\otimes Y
}
\end{equation}
Here $\varphi$ is the colax-monoidal structure on $\Gamma$ adjoint to the shuffle lax-monoidal structure on $N$, see \eqref{cg}.
The explicit formula for $\varphi$ is:
\begin{equation}
\Gamma(X\otimes Y)\xrightarrow{\epsilon^{-1}\otimes\epsilon^{-1}}\Gamma(N\Gamma(X)\otimes N\Gamma(Y))\xrightarrow{\nabla}\Gamma N(\Gamma(X)\otimes\Gamma(Y))\xrightarrow{\eta}\Gamma(X)\otimes \Gamma(Y)
\end{equation}
Now the horizontal composition map in \eqref{hkm} is:
\begin{equation}\label{hkm2}
\begin{aligned}
\ &N\Gamma(X\otimes Y)\xrightarrow{\epsilon^{-1}\otimes\epsilon^{-1}}N\Gamma(N\Gamma(X)\otimes N\Gamma(Y))\xrightarrow{\nabla}N\boxed{\Gamma N}(\Gamma(X)\otimes\Gamma(Y))\xrightarrow{\eta^{-1}}N(\Gamma(X)\otimes \Gamma(Y))\\
&\xrightarrow{AW}N\Gamma(X)\otimes N\Gamma(Y)
\end{aligned}
\end{equation}
where the map $\eta^{-1}$ is applied to the ``boxed'' $\Gamma N$ factor.

Now the idea is to use the identity $AW\circ \nabla=\Id$ (Lemma \ref{before} (1.)) to ``cancel'' the second and the fourth arrows in \eqref{hkm2}.
We have:
\begin{lemma}\label{help}{\itshape
The following two compositions are equal:
\begin{equation}\label{comp1}
N\boxed{\Gamma N}(\Gamma(X)\otimes\Gamma(Y))\xrightarrow{\eta^{-1}}N(\Gamma(X)\otimes \Gamma(Y))
\xrightarrow{AW}N\Gamma(X)\otimes N\Gamma(Y)
\end{equation}
and
\begin{equation}\label{comp2}
\boxed{N \Gamma}N(\Gamma(X)\otimes\Gamma(Y))\xrightarrow{AW}\boxed{N\Gamma}(N\Gamma(X)\otimes N\Gamma(Y))\xrightarrow{\epsilon}N\Gamma(X)\otimes N\Gamma(Y)
\end{equation}
where  in the first (corresp., second) equation the map $\eta^{-1}$ (corresp., $\epsilon$) is applied to the boxed factors.
}
\end{lemma}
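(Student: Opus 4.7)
The plan is to reduce Composition~2 to Composition~1 by means of the naturality of the counit $\epsilon$ together with one of the triangle identities of the adjunction.

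First, I would exploit the naturality of the adjunction counit $\epsilon \colon N\Gamma \to \Id_{\mathscr{C}(\mathbb{Z})^-}$ applied to the specific morphism $AW_{\Gamma X, \Gamma Y}\colon N(\Gamma X \otimes \Gamma Y) \to N\Gamma X \otimes N\Gamma Y$ in $\mathscr{C}(\mathbb{Z})^-$. Naturality furnishes the commutative square
\[
\epsilon_{N\Gamma X \otimes N\Gamma Y} \circ N\Gamma(AW) \;=\; AW \circ \epsilon_{N(\Gamma X \otimes \Gamma Y)}.
\]
The left-hand side is exactly Composition~2, so this step rewrites Composition~2 as $AW \circ \epsilon_{N(\Gamma X \otimes \Gamma Y)}$.

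Next, I would invoke the triangle identity $\epsilon_{NA} \circ N(\eta_A) = \Id_{NA}$ of the adjunction $N \dashv \Gamma$, specialized to $A = \Gamma X \otimes \Gamma Y$. Because the Dold--Kan pair is an adjoint \emph{equivalence}, both $\eta$ and $\epsilon$ are isomorphisms, so the triangle identity gives
\[
\epsilon_{N(\Gamma X \otimes \Gamma Y)} \;=\; N(\eta_{\Gamma X \otimes \Gamma Y})^{-1} \;=\; N\bigl(\eta^{-1}_{\Gamma X \otimes \Gamma Y}\bigr).
\]
Substituting this identity into the expression obtained in the previous step yields $AW \circ N(\eta^{-1}_{\Gamma X \otimes \Gamma Y})$, which is precisely Composition~1.

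The argument is purely formal: it uses only that $AW$ is a natural transformation of bifunctors $(\mathscr{M}od(\mathbb{Z})^\Delta)^2 \to \mathscr{C}(\mathbb{Z})^-$, together with the standard adjunction axioms and the invertibility of $\eta$ guaranteed by the Dold--Kan equivalence. I do not expect any genuine obstacle; the one point requiring care is correct bookkeeping of where $\eta^{-1}$ and $\epsilon$ act --- collapsing the inner $\Gamma N$ factor before applying $AW$ versus collapsing the outer $N\Gamma$ factor after applying $AW$. The triangle identity is precisely the statement identifying these two procedures in an adjoint equivalence, so no specific information about the Alexander--Whitney map (beyond its naturality) enters the proof.
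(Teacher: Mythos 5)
Your proof is correct and is essentially the paper's own argument: the paper's one-line justification is precisely the triangle identity $\epsilon_{NA}=N(\eta_A^{-1})$ for the adjoint equivalence, with the naturality of $\epsilon$ against $AW$ left implicit, which you have simply spelled out. Nothing further is needed.
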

Clearly Lemma \ref{dkmonoidal} (ii) follows from Lemma \ref{help} and Lemma \ref{before} (1.).
\proof
For an adjoint {\it equivalence} $L\colon \mathscr{C}\rightleftarrows\mathscr{D}\colon R$ with the adjunction {\it isomorphisms}
$\epsilon\colon LR\to\Id$ and $\eta\colon\Id\to RL$, the two arrows $\boxed{LR}L\xrightarrow{\epsilon}L$ and $L\boxed{RL}\xrightarrow{\eta^{-1}}L$ coincide.

\qed

\begin{remark}{\rm
The adjunction map $\eta\colon\Id\to\Gamma\circ N$ is both lax-monoidal and colax-monoidal {\it only up to a homotopy}, see Remark 2.14 in [SchS03].
The reason is that the another order composition $\nabla\circ AW$ is equal to identity only up to a homotopy.
}
\end{remark}

\section{\sc The functor $L^\mon$ for $L=N$}
Here we compute explicitly the functor $L^\mon$ (defined in general as a co-equalizer \eqref{quotient_def}), for the functor $L=N$, the normalized chain complex functor in the Dold-Kan correspondence
$$
N\colon \mathscr{M}od(\mathbb{Z})^\Delta\rightleftarrows \mathscr{C}^-(\mathbb{Z})\colon\Gamma
$$
The functor $L^\mon$ here is defined out of the Alexander-Whitney colax-monoidal structure $AW$ on the functor $N$, see \eqref{aw}, \eqref{quotient_def}.
Theorems \ref{another},\ref{keytheorem} proven here are essentially used in Section \ref{sectionmain}, in course of the proof of the Main Theorem \ref{theor_gr_alg} for $\mathbb{Z}_{\le 0}$-graded dg algebras. In fact, they are used at the very end of the proof of Theorem \ref{theor_gr_alg}, in the proof of Proposition \ref{propref}(iii), see Section \ref{finallink}.

We use the notations: $L=N$, $R=\Gamma$, where $N,\Gamma$ are the functors in the Dold-Kan correspondence.
\subsection{\sc }
Consider the Alexander-Whitney colax-monoidal structure $AW\colon L(X\otimes Y)\to L(X)\otimes L(Y)$.
It follows from Proposition \ref{before}(1.) that the map $AW$ is {\it surjective} for any $X,Y$.
Thus, the map of monoids $\beta\colon T_\dg(L(T_\Delta X))\to T_\dg(LX)$, defined on the generators $L(T_\Delta X)$ as the iterated Alexander-Whitney map (see \eqref{themapbeta}, with $c_L=AW$) is {\it surjective}.

Denote
\begin{equation}\label{kx}
\mathscr{K}(X)=\Ker\bigl(\beta\colon T_\dg(L(T_\Delta X))\to T_\dg(LX)\bigr)
\end{equation}

Now let $X$ be a monoid in $\mathscr{M}od(\mathbb{Z})^\Delta$.
Consider the iterated product
\begin{equation}\label{mn}
m_n\colon \underbrace{X\otimes X\otimes\dots\otimes X}_{n\text{ factors}}\to X
\end{equation}
(with $m=m_2$).

The map of monoids $\alpha\colon T_\dg(L(T_\Delta X))\to T_\dg(LX)$ is defined on the generators $L(X^{\otimes n})\subset L(T_\Delta X)$ as $(m_n)_*\colon L(X^{\otimes n})\to L(X)$.

Denote by
\begin{equation}\label{ix}
\mathscr{I}(X)=\alpha(\mathscr{K}(X))
\end{equation}

Then $\mathscr{I}(X)\subset T_\dg(LX)$. Consider now the monoid structure on $LX$, defined by \eqref{monoidref} with the shuffle lax-monoidal structure $\nabla$ on $L$. This monoid structure defines the iterated product map
\begin{equation}\label{mnabla}
m_\nabla\colon T_\dg(LX)\to LX
\end{equation}

Finally, denote
\begin{equation}\label{jx}
\mathscr{J}(X)=m_\nabla(\mathscr{I}(X))\subset LX
\end{equation}

In fact, both $\mathscr{I}(X)\subset T_\dg(LX)$ and $\mathscr{J}(X)\subset LX$ are monoid-ideals.

We give the following explicit form for the functor $L^\mon$, for $L=N$:
\begin{theorem}\label{another}{\itshape
The functor $L^\mon\colon \Mon\mathscr{M}od(\mathbb{Z})^\Delta\to\Mon\mathscr{C}(\mathbb{Z})^-$, defined by \eqref{quotient_def}, is isomorphic to the functor $\tilde{L}^\mon$, defined for a monoid $X$ in $\mathscr{M}od(\mathbb{Z})^\Delta$ as
\begin{equation}\label{ltilde}
\tilde{L}^\mon(X)=L(X)/\mathscr{J}(X)
\end{equation}
where in the right-hand side there is a quotient-monoid of the monoid $L(X)$ (defined out of the lax-monoidal structure $\nabla$ on $L$) by the monoid-ideal $\mathscr{J}(X)$.
}
\end{theorem}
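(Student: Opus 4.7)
My plan is to compute the coequalizer $L^\mon(X)=T_\dg(LX)/J$ directly, where $J$ is the two-sided monoid-ideal in $T_\dg(LX)$ generated by $\{\alpha(a)-\beta(a):a\in T_\dg(L(T_\Delta X))\}$ (this is the standard expression for a coequalizer of monoid maps in an additive symmetric monoidal category). Since $\alpha,\beta$ are both $T_\dg$-algebra maps, the telescoping identity
\begin{equation*}
\alpha(b_1)\cdots\alpha(b_m)-\beta(b_1)\cdots\beta(b_m)=\sum_{j=1}^m\alpha(b_1)\cdots\alpha(b_{j-1})\bigl(\alpha(b_j)-\beta(b_j)\bigr)\beta(b_{j+1})\cdots\beta(b_m)
\end{equation*}
reduces the generating set for $J$ to the restriction to the generating subobject $L(T_\Delta X)=\bigoplus_n L(X^{\otimes n})$, on each summand of which $\alpha=(m_n)_*$ and $\beta=AW^{n-1}$.

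The key move uses the identity $AW\circ\nabla=\Id$ of Proposition~\ref{before}(1.) to split each such generator. For $b\in L(X^{\otimes n})$, put $c=AW^{n-1}(b)\in L(X)^{\otimes n}$ and $k=b-\nabla^{n-1}(c)$; then $AW^{n-1}(k)=0$, so $k\in\mathscr{K}(X)$. Combining with the formula $(m_n)_*\circ\nabla^{n-1}=m_\nabla^{n-1}$ (the iterated shuffle product of the monoid $LX$) one obtains
\begin{equation*}
\alpha(b)-\beta(b)=\bigl(m_\nabla^{n-1}(c)-c\bigr)+\alpha(k).
\end{equation*}
Consequently $J$ is generated, as a two-sided monoid-ideal in $T_\dg(LX)$, by two families: (i) the elements of $\mathscr{I}(X)=\alpha(\mathscr{K}(X))$, and (ii) the ``free-versus-shuffle-product'' relations $u_1\otimes_\dg\cdots\otimes_\dg u_n-m_\nabla^{n-1}(u_1,\ldots,u_n)$ for $u_i\in LX$, $n\ge 0$.

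To finish, I apply Sub-lemma~\ref{leftproofsub} to the monoid $LX$ in $\mathscr{C}(\mathbb{Z})^-$ equipped with its shuffle product. The sub-lemma identifies the two-sided monoid-ideal generated by family (ii) with the kernel of the surjective monoid map $m_\nabla\colon T_\dg(LX)\to LX$. Quotienting by (ii) alone therefore produces $LX$ (with its $\nabla$-monoid structure), and quotienting further by the $m_\nabla$-images of the family (i) produces $LX/\mathscr{J}(X)$; observe that $\mathscr{J}(X)=m_\nabla(\mathscr{I}(X))$ is automatically a two-sided monoid-ideal in $LX$ because $m_\nabla$ is surjective, so no extra ideal-closure is required. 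Naturality in $X$ is manifest from the construction, yielding the claimed isomorphism of functors $L^\mon\simeq\tilde L^\mon$.

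The main obstacle is the two-step decomposition of $J$ into the two families (i), (ii): Proposition~\ref{before}(1.) is used in an essential way to produce, for each generator $b\in L(X^{\otimes n})$, the canonical correction $k=b-\nabla^{n-1}(AW^{n-1}(b))\in\mathscr{K}(X)$ that isolates the shuffle-equals-free-product defect. Without the splitting $\nabla$ of $AW$ the two families could not be separated cleanly, and Sub-lemma~\ref{leftproofsub} could not be invoked to collapse $T_\dg(LX)$ to $LX$.
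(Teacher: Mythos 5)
Your proof is correct and follows essentially the same route as the paper: both arguments use the splitting $AW\circ\nabla=\Id$ of Proposition \ref{before}(1.) to write each generator $b\in L(X^{\otimes n})$ as $\nabla^{n-1}(AW^{n-1}(b))$ plus an element of $\mathscr{K}(X)$, thereby decomposing the coequalizer ideal into the ``free-versus-shuffle'' relations $A-m_\nabla^{\circ(n-1)}(A)$ together with $\mathscr{I}(X)=\alpha(\mathscr{K}(X))$, and then collapse $T_\dg(LX)$ onto $LX/\mathscr{J}(X)$. Your write-up is if anything slightly more explicit than the paper's at the two points it leaves terse (the telescoping reduction to generators, and the use of Sub-lemma \ref{leftproofsub} to identify the quotient by the second family with $LX$), but the underlying argument is identical.
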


\begin{proof}
Let $X$ be a monoid in $\mathscr{M}od(\mathbb{Z})^\Delta$. We want to compute the co-equalizer \eqref{quotient_def} as the quotient-dg-algebra of $T_\dg(L(X))$ by some dg {\it ideal}
$I$,
\begin{equation}
L^\mon(X)=T_\dg(L(X))/I
\end{equation}
The ideal $I$ is spanned, by definition of co-equalizer, by elements
\begin{equation}\label{ideal}
I=\langle \alpha(t)-\beta(t),\  t\in T_\dg(L(T_\Delta X))\rangle
\end{equation}
The problem is how to compute the ideal $I\subset T_\dg(LX)$ and the quotient $T_\dg(LX)$ explicitly.

Consider $A\in (LX)^{\otimes n}$, $n\ge 1$. The map $AW^{\circ (n-1)}\colon L(X^{\otimes n})\to (LX)^{\otimes n}$ is surjective, by Proposition \ref{before}(1.). Moreower, this Proposition gives a section of the projection $AW^{\circ (n-1)}$, given as
\begin{equation}
s(A)=\nabla^{\circ (n-1)}(A)
\end{equation}
where $\nabla$ is the shuffle lax-monoidal structure on the functor $L$.

Consider $t=s(A)$ in \eqref{ideal}, which gives, for any $A$, a particular element in the ideal $I$.
Compute this element. One has $\beta(s(A))=A$ by construction of $s(A)$. Next, $\alpha(s(A))$ is an element in $LX$. This element is nothing but the iterated product $m_\nabla$ in the monoid $L(X)$, defined by \eqref{monoidref} out of the lax-monoidal structure $\nabla$ on $L$.

Thus, for any $n\ge 2$ and for any $A\in (LX)^{\otimes n}$, one has
\begin{equation}
A-m_\nabla^{\circ (n-1)}(A)\in I
\end{equation}

As the map $\beta$ is surjective, and for any target element we found some its pre-image, one has:
\begin{equation}
I=\langle I_0,\ \alpha(\mathscr{K}(X))\rangle
\end{equation}
where $I_0$ is the vector space spanned by all $A-m_\nabla^{\circ (n-1)}(A)$ for all $A$ and all $n\ge 2$.
\end{proof}
In our notations, $\mathscr{I}(X)=\alpha(\mathscr{K}(X))$, see \eqref{kx}, \eqref{ix}.

We have proved that
$$
L^\mon(X)=T_\dg(LX)/\langle I_0,\ \mathscr{I}(X)\rangle
$$
The latter quotient-monoid is isomorphic to $LX/\mathscr{J}(X)$, see \eqref{jx}. We are done.
\qed
\subsubsection{\sc Example}
Consider the value of the functor $L^\mon$ on a {\it free} monoid $X=T_\Delta(V)$, generated by a simplicial abelian group $V$.
Due to the commutative diagram \eqref{leftmon},
\begin{equation}\label{leftfree}
L^\mon(T_\Delta(V))=T_\dg(L(V))
\end{equation}

On the other hand, our result in Theorem \ref{another} gives an isomorphism $L^\mon=\tilde{L}^\mon$, with formula for $\tilde{L}^\mon$, given in \eqref{ltilde}. It is interesting to compare $\tilde{L}^\mon(T_\Delta(V))$, given in \eqref{ltilde}, with formula \eqref{leftfree}.

We get some identity of the form
\begin{equation}\label{identity}
L(T_\Delta(V))/\mathscr{J}(T_\Delta V)=T_\dg(L(V))
\end{equation}

It seems the latter identity is rather non-trivial combinatorial fact, if one tries to prove it directly.

\subsection{\sc }
Here we specify Theorem \ref{another}, as follows (see Theorem \ref{keytheorem} below).

Let $L\colon\mathscr{C}\rightleftarrows\mathscr{D}\colon R$ be an adjunction between monoidal categories, and let $\varphi$ be a lax-monoidal structure on $R$. Then the functor $L^\mon\colon \mathbf{Mon}(\mathscr{C})\to\mathbf{Mon}(\mathscr{D})$, defined in \eqref{quotient_def}, is left adjoint to $R^\mon$, defined as in \eqref{monoidref}. Then, the functor $R^\mon$ is equal to $R$ on the underlying objects, and the lax-monoidal structure $\varphi$ on $R$ defines a lax-monoidal structure $\varphi^\mon$ on $R^\mon$. Consequently, the left adjoint functor $L^\mon$ comes with the colax-monoidal structure, adjoint to $\varphi^\mon$. We refer to this colax-monoidal structure on $L^\mon$ as {\it canonical}.

Here we describe explicitly this colax-monoidal structure on the functor $\tilde{L}^\mon$, $L^\mon\simeq \tilde{L}^\mon$.

The shuffle lax-monoidal structure $\nabla$ makes $L(X)$ a monoid, for any monoid $X$, by \eqref{monoidref}. We always assume this monoid structure on $LX$, and denote the functor $L\colon \Mon\mathscr{M}od(\mathbb{Z})^\Delta\to \Mon\mathscr{C}(\mathbb{Z})^-$ by $L_\nabla$. The pair $(AW,\nabla)$ of lax-monoidal and colax-monoidal structures on the functor $L$ obeys the {\it bialgebra axiom}, see Theorem \ref{mytheor}. Then Lemma \ref{lemmaxx} says that the Alexander-Whitney map
$AW\colon L_\nabla(X\otimes Y)\to L_\nabla(X)\otimes L_\nabla(Y)$, for monoids $X$ and $Y$, is {\it a map of monoids}, and therefore defines a colax-monoidal structure on the functor $L_\nabla\colon\mathbf{Mon}(\mathscr{M}od(\mathbb{Z})^\Delta)\to\mathbf{Mon}(\mathscr{C}(\mathbb{Z})^-)$.

We claim that this colax-monoidal structure on $L_\nabla$ descents to a colax-monoidal structure on the functor $\tilde{L}^\mon$, and this colax-monoidal structure is the one which $\tilde{L}^\mon$ enjoys out of isomorphism of functors $L^\mon\simeq \tilde{L}^\mon$, given in Theorem \ref{another}, and out of the canonical colax-monoidal structure on $L^\mon$.
\comment
Indeed, the monoid $\tilde{L}^\mon(X)$ is the quotient-monoid $L_\nabla(X)/\mathscr{J}(X)$.

Consider, for monoids $X,Y$, the composition
\begin{equation}\label{eqcomposition}
L_\nabla(X\otimes Y)\xrightarrow{AW}L_\nabla(X)\otimes L_\nabla(Y)\rightarrow (L_\nabla(X)/\langle\mathscr{I}_2(X),\dots\rangle)\otimes
(L_\mon(Y)/\langle \mathscr{I}_2(Y),\dots\rangle)
\end{equation}
\begin{lemma}\label{descent}{\itshape
The ideal of $L_\mon(X\otimes Y)$, generated by $\mathscr{I}_2(X\otimes Y),\mathscr{I}_3(X\otimes Y),\dots$, goes to 0 by the composition \eqref{eqcomposition}.
}
\end{lemma}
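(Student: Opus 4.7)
The plan is to treat the two parts of the claim separately. First I would establish that the Alexander-Whitney map
\[
AW\colon L_\nabla(X\otimes Y)\to L_\nabla(X)\otimes L_\nabla(Y),
\]
which is a map of monoids by Lemma \ref{lemmaxx} applied to the bialgebra axiom of Theorem \ref{mytheor}, descends to a well-defined map
\[
\overline{AW}\colon \tilde{L}^\mon(X\otimes Y)\to \tilde{L}^\mon(X)\otimes \tilde{L}^\mon(Y).
\]
This amounts to showing that $AW$ sends the monoid-ideal $\mathscr{J}(X\otimes Y)\subset L_\nabla(X\otimes Y)$ into the monoid-ideal of $L_\nabla(X)\otimes L_\nabla(Y)$ generated by $\mathscr{J}(X)\otimes L(Y)$ together with $L(X)\otimes\mathscr{J}(Y)$. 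Coassociativity and counitality of $\overline{AW}$, as well as the fact that it is a monoid map, are then inherited directly from the corresponding properties of $AW$.

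The nontrivial step is the ideal-inclusion just described, which I would prove by tracing through the definitions. Since $AW$ is a monoid map for the shuffle products, and $\mathscr{J}(X\otimes Y)$ is the image under $m_\nabla$ of $\mathscr{I}(X\otimes Y)=\alpha(\mathscr{K}(X\otimes Y))$, I would reduce the inclusion to checking it on a generating element $\alpha(k)$ with $k\in\mathscr{K}(X\otimes Y)$. Naturality of $AW$ with respect to the iterated products $m_n\colon (X\otimes Y)^{\otimes n}\to X\otimes Y$, together with the fact that these products factor (up to the standard symmetry shuffle) as products on $X$ tensored with products on $Y$, rewrites $AW(\alpha(k))$ as a sum of tensor products whose factors include an element of $\mathscr{I}(X)$ or $\mathscr{I}(Y)$; this places the result in the desired ideal. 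The main obstacle will be the bookkeeping in the symmetry shuffle, and the bialgebra axiom (Theorem \ref{mytheor}) is precisely what makes this interchange work without error terms.

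For the second part, I would identify $\overline{AW}$ with the canonical colax-monoidal structure on $L^\mon$ induced via adjunction from the lax-monoidal structure $\varphi^\mon$ on $R^\mon$ (see Lemma \ref{kurica} together with formula \eqref{cg}). Both are natural transformations between functors $\Mon(\mathscr{M}od(\mathbb{Z})^\Delta)\to\Mon(\mathscr{C}(\mathbb{Z})^-)$, and by the coequalizer presentation \eqref{quotient_def} of $L^\mon(X\otimes Y)$ as a quotient of $T_\dg L(X\otimes Y)$, it suffices to compare the two colax structures on the canonical map $L(X\otimes Y)\to L^\mon(X\otimes Y)$. For the canonical structure, the adjoint formula \eqref{cg}, combined with the explicit expression for $\varphi$ as the adjoint of the shuffle $\nabla$ recalled in Section \ref{section32}, reduces on this generator to the composition of $AW$ with the projections to the quotients $\tilde{L}^\mon(X)$ and $\tilde{L}^\mon(Y)$; this is exactly $\overline{AW}$, completing the identification.
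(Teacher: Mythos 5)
Your overall strategy for the essential part of the lemma coincides with the paper's: reduce, via the fact that $AW$ is a monoid map for the shuffle products (Lemma \ref{lemmaxx} plus Theorem \ref{mytheor}), to showing that the generating subcomplexes of the ideal are killed; write a generator as an iterated product $m_n(\upsilon)$ with $\upsilon$ in the kernel of the iterated Alexander--Whitney map; and combine naturality of $AW$ with respect to $m_n$ with the factorization of the product on $X\otimes Y$ through the interchange permutation. The second half of your proposal --- identifying the descended map with the canonical colax structure adjoint to $\varphi^\mon$ --- is not part of this lemma; that is Theorem \ref{keytheorem}(B), which the paper handles by a quite different route (Theorem \ref{keytheorembis} and the weak-adjunction machinery of Appendix B), so I set it aside.

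There is, however, a genuine gap at exactly the step you flag as the main obstacle. You assert that ``the bialgebra axiom (Theorem \ref{mytheor}) is precisely what makes this interchange work without error terms.'' It is not. The property actually needed is that $AW$ is \emph{quasi-symmetric} in the sense of Definition \ref{defquasisym}: the two ways of passing from $L(X\otimes Y\otimes Z\otimes W)$ to $L(X)\otimes L(Z)\otimes L(Y)\otimes L(W)$ --- interchange inside $L$ first and then apply iterated $AW$, or apply iterated $AW$ first and then transpose the middle tensor factors --- agree on the nose. This is a coherence property of the colax structure $AW$ \emph{alone}, and it is delicate precisely because $AW$ is not symmetric (it is symmetric only up to homotopy, Proposition \ref{before}(4.)). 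The bialgebra axiom is a relation between $AW$ and $\nabla$; if you try to extract quasi-symmetry from it you can cancel $AW\circ\nabla=\id$ on one side, but on the other side you are left with $\nabla\circ AW$, which is only chain homotopic to the identity, so at best you obtain the interchange identity on the image of $\nabla$. That is useless here: the elements to which the interchange must be applied are exactly the $\upsilon\in\Ker(AW^{\circ(n-1)})$, which lie in the complement of $\mathrm{Im}(\nabla^{\circ(n-1)})$. The paper closes this gap by verifying quasi-symmetry of $AW$ directly from the formula $AW(a^k\otimes b^k)=\sum_{i+j=k} d_\fin^i a^k\otimes d_0^j b^k$ and the relation $d_0d_\fin=d_\fin d_0$ (Lemma \ref{newaw}); some such explicit computation must be supplied, and no appeal to the bialgebra axiom will substitute for it.
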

\begin{proof}
We prove two sub-lemmas.
\begin{sublemma}\label{sublemma1}{\itshape
Let $X,Y$ be two objects in $\mathscr{M}od(\mathbb{Z})^\Delta$, $n\ge 1$.
Then the following diagram is commutative:
\begin{equation}\label{newdiagram1}
{\scriptsize\xymatrix{
{L(X\otimes Y)\otimes\dots \otimes L(X\otimes Y)}\ar[r]^{AW^{\otimes n}}&{L(X)\otimes L(Y)\otimes\dots\otimes L(X)\otimes L(Y)}\ar[r]^{\sigma_n}&{(L(X)\otimes\dots\otimes L(X))\otimes(L(Y)\otimes\dots\otimes L(Y))}\\
{L((X\otimes Y)\otimes\dots \otimes (X\otimes Y))}\ar[r]^{\sigma_n}\ar[u]^{AW^{\circ (n-1)}}&L((X\otimes \dots\otimes X)\otimes(Y\otimes\dots\otimes Y))\ar[r]^{AW}&  L(X\otimes\dots\otimes X)\otimes L(Y\otimes\dots\otimes Y)\ar[u]_{AW^{\circ (n-1)}\otimes AW^{\circ (n-1)}}
}}
\end{equation}
where there are $n$ of $X$'s and $n$ of $Y$'s in each entry of the diagram.
}
\end{sublemma}
\begin{proof}
We should be especially careful here, because the Alexander-Whitney colax-monoidal structure $AW$ is not symmetric (it is symmetric only up to a homotopy, see Proposition \ref{before}, (4.)). We need one more sub-lemma.
\begin{sublemma}\label{sublemma1bis}{\itshape
Let $X,Y,Z,W\in \mathscr{M}od(\mathbb{Z})^\Delta$. Then the following diagram is commutative:
\begin{equation}\label{newdiagram3}
{\scriptsize\xymatrix{
L(X\otimes Y)\otimes L(Z\otimes W)\ar[rr]^{AW\otimes AW}&&L(X)\otimes L(Y)\otimes L(Z)\otimes L(W)\ar[rr]^{\sigma_{L(Y),L(Z)}}&&L(X)\otimes L(Z)\otimes L(Y)\otimes L(W)\\
L(X\otimes Y\otimes Z\otimes W)\ar[u]^{AW}\ar[rr]^{\sigma_{Y,Z}}&&L(X\otimes Z\otimes Y\otimes W)\ar[rr]^{AW}&&L(X\otimes Z)\otimes L(Y\otimes W)\ar[u]_{AW\otimes AW}
}}
\end{equation}
}
\end{sublemma}
\begin{proof}
We use the definition of the Alexander-Whitney map, given in \eqref{aw}:
$$
AW(a^k\otimes b^k)=\sum_{i+j=k}d_\fin^ia^k\otimes d_0^jb^k
$$
and the identity
\begin{equation}
d_0d_\fin=d_\fin d_0
\end{equation}
Then the both pathes in diagram \eqref{newdiagram3}, applied to $x^k\otimes y^k\otimes z^k\otimes w^k$ (all of degree $k$) gives
\begin{equation}
\sum_{a+b+c+d=2k}\sum_{\substack{b_1+b_2=b\\c_1+c_2=c}}d_\fin^a(x^k)\otimes d_0^{c_1}d_\fin^{c_2}(z^k)\otimes d_0^{b_1}d_\fin^{b_2}(y^k)\otimes d_0^d(w^k)
\end{equation}
\end{proof}
Now the statement of Sub-lemma \ref{sublemma1} follows immediately from  Sub-lemma \ref{sublemma1bis}.
\end{proof}
\begin{sublemma}\label{sublemma2}{\itshape
Let $X,Y$ be monoids in $\mathscr{M}od(\mathbb{Z})^\Delta$, and $n\ge 2$. The following diagram is commutative:
\begin{equation}\label{newdiagram2}
{\scriptsize\xymatrix{
{L((X\otimes Y)\otimes\dots \otimes (X\otimes Y))}\ar[r]^{\sigma_n}\ar[d]_{m_n(X\otimes Y)}&L((X\otimes \dots\otimes X)\otimes(Y\otimes\dots\otimes Y))\ar[r]^{AW}&  L(X\otimes\dots\otimes X)\otimes L(Y\otimes\dots\otimes Y)\ar[d]^{m_n(X)\otimes m_n(Y)}\\
L(X\otimes Y)\ar[rr]^{AW}&&L(X)\otimes L(Y)
}}
\end{equation}
where there are $n$ of $X$'s and $n$ of $Y$'s in each entry of the upper line.
}
\end{sublemma}
\begin{proof}
Consider for simplicity the case $n=2$; the general case is completely analogous.

First of all, for monoids $X$ and $Y$ in a symmetric monoidal category $\mathscr{M}$, there is an isomorphism
\begin{equation}
\xi\colon (X\otimes Y)\otimes (X\otimes Y)\xrightarrow{\sigma_{23}}(X\otimes X)\otimes (Y\otimes Y)
\end{equation}
given by transposition of the second and the third factors, such that the diagram
\begin{equation}
\xymatrix{
(X\otimes Y)\otimes (X\otimes Y)\ar[rr]^{\xi}\ar[dr]_{m_{X\otimes Y}}&&(X\otimes X)\otimes (Y\otimes Y)\ar[dl]^{m_X\otimes m_Y}\\
&X\otimes Y
}
\end{equation}
where $m_A$ is the product in the monoid $A$.

Therefore, we need to prove that the diagram
\begin{equation}\label{wearedone}
\xymatrix{
L((X\otimes X)\otimes (Y\otimes Y))\ar[d]_{(m_X\otimes m_Y)_*}\ar[r]^{c_L}&L(X\otimes X)\otimes L(Y\otimes Y)\ar[d]^{(m_X)_*\otimes(m_Y)_*}\\
L(X\otimes Y)\ar[r]^{c_L}&L(X)\otimes L(Y)
}
\end{equation}
commutes. But the commutativity of \eqref{wearedone} just expresses that $c_L\colon L(A\otimes B)\to L(A)\otimes L(B)$ is a map of bifunctors.
\end{proof}
{\it We continue to prove Lemma \ref{descent}}.

It is sufficient to show that the {\it subcomplexes} $\mathscr{I}_2,\mathscr{I}_3,\dots$ of $L(X\otimes Y)$ are mapped to 0 by the composition \eqref{eqcomposition}. Let $\kappa\in \mathscr{I}_n\subset L(X\otimes Y)$. Then $\kappa=m_n(\upsilon)$ for some $\upsilon\in L(\underbrace{(X\otimes Y)\otimes\dots\otimes(X\otimes Y)}_{n\text{ factors}})$ which belongs to the kernel of the left vertical arrow $AW^{\circ (n-1)}$ of diagram \eqref{newdiagram1}. It is enough to prove, due to the commutativity of \eqref{newdiagram2}, that the composition $AW\circ \sigma_n(\upsilon)$ of the lower column maps of \eqref{newdiagram1} (which is the same that the upper column maps of \eqref{newdiagram2}) belongs to the kernel of the right vertical map in \eqref{newdiagram1}, $AW\circ\sigma_n(\upsilon)\in\mathrm{Ker}(AW^{\circ(n-1)}\otimes AW^{\circ (n-1)})$. The latter claim immediately follows from the commutativity of diagram \eqref{newdiagram1}.
\end{proof}
Lemma \ref{descent} implies that the Alexander-Whitney map $AW$ descents to a colax-monoidal structure $\wtilde{AW}$ on the functor $\tilde{L}^\mon$:
\begin{equation}\label{awtilde}
\wtilde{AW}\colon \tilde{L}^\mon(X\otimes Y)\to \tilde{L}^\mon(X)\otimes\tilde{L}^\mon(Y)
\end{equation}

\endcomment

\begin{theorem}\label{keytheorem}{\itshape
\begin{itemize}
\item[(A)] The Alexander-Whitney colax-monoidal structure on the functor $L_\nabla$ descents to a colax-monoidal structure on the functor
$\tilde{L}^\mon\simeq L_\nabla/\mathscr{J}$,
\item[(B)]
Within the isomorphism of functors $L^\mon$ and $\tilde{L}^\mon$, given in Theorem \ref{another}, the colax-monoidal structure on $\tilde{L}^\mon$ from part (A) is the one which is corresponded to the canonical colax-monoidal structure on $L^\mon$.
\end{itemize}
}
\end{theorem}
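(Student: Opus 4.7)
For part (A), the strategy is to show that the colax-monoidal structure $AW$ descends through the quotient $L_\nabla \twoheadrightarrow L_\nabla/\mathscr{J} = \tilde{L}^\mon$, after which the colax-monoidal axioms pass to the quotient for free. By Theorem \ref{mytheor} the pair $(\nabla, AW)$ satisfies the bialgebra axiom, hence by Lemma \ref{lemmaxx} the map $AW$ is already a colax-monoidal structure on the monoid-valued functor $L_\nabla\colon \Mon(\mathscr{M}od(\mathbb{Z})^\Delta) \to \Mon(\mathscr{C}^-(\mathbb{Z}))$. What must be checked is that the composition
$$
L_\nabla(X \otimes Y) \xrightarrow{AW} L_\nabla(X) \otimes L_\nabla(Y) \twoheadrightarrow \tilde{L}^\mon(X) \otimes \tilde{L}^\mon(Y)
$$
annihilates the monoid-ideal $\mathscr{J}(X \otimes Y)$.

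I would reduce this to two combinatorial identities, both established directly from the face-map formula \eqref{aw}. First: for any $X, Y$ and $n \geq 1$, the iterated Alexander-Whitney map $AW^{\circ (n-1)}\colon L((X\otimes Y)^{\otimes n}) \to (L(X \otimes Y))^{\otimes n}$ factors as the canonical symmetry $\sigma_n\colon L((X\otimes Y)^{\otimes n})\to L(X^{\otimes n} \otimes Y^{\otimes n})$ followed by $AW$ and componentwise iterated $AW$; this in turn follows from a single four-factor calculation showing $AW$ respects the transposition $\sigma_{Y,Z}\colon X\otimes Y \otimes Z \otimes W \to X \otimes Z \otimes Y \otimes W$, which amounts to the commutation $d_0 d_{\fin} = d_{\fin} d_0$. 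Second: for monoids $X, Y$, the naturality of $AW$ implies that the $n$-fold multiplications $m_n$ intertwine with $AW$. Combining the two identities, a generator of $\mathscr{J}(X \otimes Y)$, which by \eqref{jx}, \eqref{ix}, \eqref{kx} has the form $m_\nabla\bigl(\alpha(t)\bigr)$ for $t \in \mathscr{K}(X \otimes Y)$, is shown to be carried by $AW$ into $\mathscr{J}(X) \otimes L(Y) + L(X) \otimes \mathscr{J}(Y)$, which is precisely the kernel of the projection onto $\tilde{L}^\mon(X) \otimes \tilde{L}^\mon(Y)$. This gives the descended map $\widetilde{AW}$; the colax-monoidal coherence axiom for $\widetilde{AW}$ is then inherited from that of $AW$ by passing to quotients.

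For part (B), I would unwind the construction of the canonical colax-monoidal structure on $L^\mon$ via formula \eqref{cg} applied to the adjunction $L^\mon \dashv R^\mon$ with $R^\mon$ carrying the lax-monoidal structure $\varphi^\mon$ induced from $\varphi$ on $R = \Gamma$. Under the isomorphism $L^\mon \simeq \tilde{L}^\mon$ of Theorem \ref{another}, the counit $L^\mon R^\mon \to \Id$ is identified with the composite $L_\nabla R \twoheadrightarrow L_\nabla R/\mathscr{J}(R(-)) \to \Id$ built from the Dold-Kan counit $\epsilon$, which by Lemma \ref{dkmonoidal}(ii) is already colax-monoidal with respect to $AW$. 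Using this, and the compatibility of the unit $\eta\colon \Id \to R^\mon L^\mon$ with $\nabla$, the adjoint of $\varphi^\mon$ computed via \eqref{cg} simplifies, with the help of Proposition \ref{before}(1) (the identity $AW \circ \nabla = \Id$), to the descended Alexander-Whitney map $\widetilde{AW}$ on $\tilde{L}^\mon$. This identifies the two colax structures.

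\textbf{Main obstacle.} The hardest step is the combinatorial core of part (A): because $AW$ is symmetric only up to chain homotopy (Proposition \ref{before}(4)), the compatibility of $AW^{\circ(n-1)}$ with the shuffle $\sigma_n$ cannot be imported as a formal consequence of symmetric monoidality and must be verified on the nose from the explicit formula for $AW$. Once these two $AW$-identities are in place, the descent in (A) and the identification in (B) both become straightforward bookkeeping with the adjunction and the bialgebra axiom.
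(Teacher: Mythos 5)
For part (A) your route is genuinely different from the one the paper takes, and it is essentially viable. The paper does \emph{not} prove the descent inclusion $AW(\mathscr{J}(X\otimes Y))\subset \mathscr{J}(X)\otimes L(Y)+L(X)\otimes\mathscr{J}(Y)$ directly: it obtains both (A) and (B) at once as a specialization of the general Theorem \ref{keytheorembis} (canonical colax structures on coequalizer-defined left adjoints, built on the very-weak-adjunction machinery of Appendix B), plus the observation that $i_{X,Y}$ is an isomorphism here because $\Ker(f\otimes g)=\Ker f\otimes W+V\otimes\Ker g$; the author even remarks after the proof that he sees no ``direct'' proof of \eqref{lr}. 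Your two combinatorial identities do appear in the paper, but in service of the general theorem rather than of a direct descent argument: the four-factor computation with $d_0d_\fin=d_\fin d_0$ is Lemma \ref{newaw} (quasi-symmetry of $AW$), and the intertwining of $AW$ with the iterated multiplications is inside the proof of Lemma \ref{keypsi}. Your reduction is sound provided you make two points explicit: (i) $\mathscr{J}(X\otimes Y)$ is the monoid-ideal generated by the elements $(m_n)_*(\upsilon)$ with $\upsilon\in\Ker(AW^{\circ(n-1)})$, and since $AW$ is a map of monoids $L_\nabla(X\otimes Y)\to L_\nabla(X)\otimes L_\nabla(Y)$ (bialgebra axiom plus Lemma \ref{lemmaxx}) landing in an ideal, it suffices to treat these generators; (ii) the passage from the four-factor identity to the $n$-factor compatibility of $AW^{\circ(n-1)}$ with $\sigma_n$ is an induction that must be written out, since $AW$ is not symmetric on the nose. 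What your approach buys is a self-contained, purely combinatorial proof of (A); what the paper's approach buys is that the same general theorem also delivers (B).

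Part (B) as you have written it has a real gap. You propose to evaluate formula \eqref{cg} for the adjunction $L^\mon\dashv R^\mon$ and claim it ``simplifies'' to $\widetilde{AW}$, but this requires explicit control of the adjunction data of $L^\mon\dashv R^\mon$, which is exactly the hard part: the counit is only pinned down by a nontrivial limit-chasing argument (the paper's Sub-lemma \ref{ksl}), and the ``compatibility of the unit $\eta\colon\Id\to R^\mon L^\mon$ with $\nabla$'' that you invoke is nowhere established — indeed the unit side of the Dold--Kan correspondence is monoidal only up to homotopy, and Remark \ref{sosna} explains why unit maps are precisely what is unavailable at the level of the coequalizer pieces. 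A repair that stays close to your plan: use the uniqueness half of Lemma \ref{utka} (for a genuine adjunction, a colax structure on the left adjoint compatible with a fixed lax structure on the right adjoint in the sense of diagram \eqref{krokodil} is unique), so that (B) reduces to (i) identifying the counit of $L^\mon\dashv R^\mon$ under the isomorphism of Theorem \ref{another} with the map induced by the Dold--Kan counit and the monoid structure — this still needs the content of Sub-lemma \ref{ksl} — and (ii) verifying \eqref{krokodil} for the pair $(\widetilde{AW},\varphi^\mon)$, which does follow from Lemma \ref{dkmonoidal}(ii) by passing to quotient-monoids. Without the uniqueness statement and the counit identification, the claimed ``identification of the two colax structures'' is asserted rather than proved.
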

We prove this Theorem in Section \ref{section45} below, after formulating and proving a more general Theorem \ref{keytheorembis} in Section \ref{section44}.

\subsection{\sc The colax-monoidal structure on $L^\mon$, the general case}\label{section43}
Let $\mathscr{C},\mathscr{D}$ be general {\it symmetric} monoidal categories, and let
$$
L\colon \mathscr{C}\rightleftarrows \mathscr{D}\colon R
$$
be a pair of adjoint functors, with $R$ the right adjoint.

Suppose a lax-monoidal structure $\varphi$ on $R$ is given, so that the functor
$R^\mon\colon \Mon\mathscr{D}\to\Mon\mathscr{C}$ is defined, as in \eqref{monoidref}.
The functor $R^\mon$ comes with a lax-monoidal structure $\varphi^\mon$, equal to $\varphi$ on the underlying objects.

Denote by $c_L$ the colax-monoidal structure on the functor $L$, adjoint to the lax-monoidal structure $\varphi$ on $R$.

Recall formula \eqref{quotient_def} for the functor $L^\mon$ left adjoint to $R^\mon$.
We call {\it canonical} the colax-monoidal structure on $L^\mon$, adjoint to the lax-monoidal structure $\varphi^\mon$ on $R^\mon$.
Here we discuss the following question: how to express the canonical colax-monoidal structure on $L^\mon$ in terms of the co-equalizer definition \eqref{quotient_def}?

Consider the co-equalizer diagram for $L^\mon$:
\begin{equation}\label{quotient_bis}
\xymatrix{
T_{\mathscr{D}}(L(T_{\mathscr{C}}(X)))\ar@<1ex>[r]^{\ \ \ \alpha} \ar@<-1ex>[r]_{\ \ \ \beta} &T_{\mathscr{D}}(LX)
}
\end{equation}
Denote the diagram \eqref{quotient_bis} by $\mathcal{D}(X)$, with $L^\mon(X)=\colim \mathcal{D}(X)$.
As well, denote by $\mathcal{D}_1(X)$ and $\mathcal{D}_2(X)$ the left-hand (corresp., the right-hand) algebras in the diagram $\mathcal{D}(X)$.

Firstly, we construct, under some assumptions, a map
\begin{equation}\label{not1}
\Psi_{X,Y}\colon L^\mon(X\otimes Y)\to \colim(\mathcal{D}(X)\otimes \mathcal{D}(Y))
\end{equation}
where $\mathcal{D}(X)\otimes \mathcal{D}(Y)$ is the diagram
\begin{equation}\label{colimotimes}
\xymatrix{
\mathcal{D}_1(X)\otimes\mathcal{D}_1(Y)\ar@<1ex>[rr]^{\alpha_X\otimes\alpha_Y} \ar@<-1ex>[rr]_{\beta_X\otimes \beta_Y}&&
\mathcal{D}_2(X)\otimes\mathcal{D}_2(Y)
}
\end{equation}
see Lemma \ref{keypsi}.

Next, there is a canonical map
\begin{equation}\label{not2}
i_{X,Y}\colon \colim(\mathcal{D}(X)\otimes\mathcal{D}(Y))\to \colim(\mathcal{D}(X))\otimes\colim(\mathcal{D}(Y))=L^\mon(X)\otimes L^\mon(Y)
\end{equation}
Indeed, by the universal property of colimit, there are maps $i_1(X)\colon \mathcal{D}_1(X)\to\colim\mathcal{D}(X)$ and
$i_2(X)\colon \mathcal{D}_2(X)\to\colim\mathcal{D}(X)$, and analogous maps for $Y$. Then the maps
$$
i_1(X)\otimes i_1(Y)\colon \mathcal{D}_1(X)\otimes\mathcal{D}_1(Y)\to\colim(\mathcal{D}(X))\otimes \colim(\mathcal{D}(Y))
$$
and
$$
i_2(X)\otimes i_2(Y)\colon \mathcal{D}_2(X)\otimes\mathcal{D}_2(Y)\to\colim(\mathcal{D}(X))\otimes \colim(\mathcal{D}(Y))
$$
define the map $i_{X,Y}$ in \eqref{not2}.

The composition
\begin{equation}\label{defthetadef}
\Theta_{X,Y}=i_{X,Y}\circ \Psi_{X,Y}\colon L^\mon(X\otimes Y)\to L^\mon(X)\otimes L^\mon(Y)
\end{equation}
which is defined once $\Psi_{X,Y}$ is, will enjoy the colax-monoidal property, and is isomorphic to the canonical colax-monoidal structure on $L^\mon$.

Turn back to the map $\Psi_{X,Y}$, see \eqref{not1}.

To define a map of monoids $\Psi_{X,Y}\colon L^\mon(X\otimes Y)\to \colim(\mathcal{D}(X)\otimes\mathcal{D}(Y))$, it is enough
to define two maps {\it of monoids}
$$
\Psi^\prime_{X,Y}\colon T_{\mathscr{D}}(L(T_{\mathscr{C}}(X\otimes Y)))\to T_{\mathscr{D}}(L(T_{\mathscr{C}}(X)))\otimes T_{\mathscr{D}}(L(T_{\mathscr{C}}(Y)))
$$
and
$$
\Psi^{\prime\prime}_{X,Y}\colon T_{\mathscr{D}}(L(X\otimes Y))\to T_{\mathscr{D}}(LX)\otimes T_{\mathscr{D}}(LY)
$$
such that the diagrams
\begin{equation}\label{thetadiagram1}
\xymatrix{
T_{\mathscr{D}}(L(T_{\mathscr{C}}(X\otimes Y)))\ar[rr]^{\Psi^{\prime}_{X,Y}}\ar[d]_{\alpha}&& T_{\mathscr{D}}(L(T_{\mathscr{C}}(X)))\otimes T_{\mathscr{D}}(L(T_{\mathscr{C}}(Y)))\ar[d]^{\alpha\otimes \alpha}\\
T_{\mathscr{D}}(L(X\otimes Y))\ar[rr]^{\Psi^{\prime\prime}_{X,Y}}&& T_{\mathscr{D}}(LX)\otimes T_{\mathscr{D}}(LY)
}
\end{equation}
and
\begin{equation}\label{thetadiagram2}
\xymatrix{
T_{\mathscr{D}}(L(T_{\mathscr{C}}(X\otimes Y)))\ar[rr]^{\Psi^{\prime}_{X,Y}}\ar[d]_{\beta}&& T_{\mathscr{D}}(L(T_{\mathscr{C}}(X)))\otimes T_{\mathscr{D}}(L(T_{\mathscr{C}}(Y)))\ar[d]^{\beta\otimes \beta}\\
T_{\mathscr{D}}(L(X\otimes Y))\ar[rr]^{\Psi^{\prime\prime}_{X,Y}}&& T_{\mathscr{D}}(LX)\otimes T_{\mathscr{D}}(LY)
}
\end{equation}
are commutative.

\smallskip\smallskip

Define the maps $\Psi^\prime_{X,Y}$ and $\Psi^{\prime\prime}_{X,Y}$ as the compositions

\begin{equation}\label{theta1}
{\scriptsize\xymatrix{
\Psi^{\prime}_{X,Y}\colon T_{\mathscr{D}}(L(T_{\mathscr{C}}(X\otimes Y)))\ar[r]^{\theta_*}&T_{\mathscr{D}}(L(T_{\mathscr{C}}(X)\otimes T_\mathscr{C}(Y)))\ar[r]^{(c_L)_*}&T_\mathscr{D}(L(T_\mathscr{C}(X))\otimes L(T_\mathscr{C}(Y)))\ar[r]^{\theta_*}&
T_\mathscr{D}(L(T_\mathscr{C}(X)))\otimes T_\mathscr{D}(L(T_\mathscr{C}(Y)))
}}
\end{equation}
and
\begin{equation}\label{theta2}
{\scriptsize\xymatrix{\Psi^{\prime\prime}_{X,Y}\colon T_{\mathscr{D}}(L(X\otimes Y))\ar[r]^{{c_L}_*}&T_\mathscr{D}(LX\otimes LY)\ar[r]^{\theta_*}&
T_\mathscr{D}(LX)\otimes T_\mathscr{D}(LY)
}}
\end{equation}
correspondingly.

\smallskip\smallskip

We note:
\begin{lemma}\label{moa}{\itshape
The both maps $\Psi_{X,Y}^\prime$ and $\Psi_{X,Y}^{\prime\prime}$ are maps of monoids.
}
\end{lemma}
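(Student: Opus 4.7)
The plan is to verify that each morphism appearing in the factorizations \eqref{theta1} and \eqref{theta2} already lifts to $\Mon(\mathscr{D})$; the lemma then follows because the composition of monoid homomorphisms is again a monoid homomorphism. The arrows that appear come in two flavors: first, arrows of the form $T_\mathscr{D}(f)$ for some morphism $f$ in $\mathscr{D}$ --- namely, the $(c_L)_*$ arrows in both \eqref{theta1} and \eqref{theta2} and the first $\theta_*$ in \eqref{theta1} (which is really $T_\mathscr{D}(L(\theta^\mathscr{C}_{X,Y}))$, where $\theta^\mathscr{C}$ denotes the colax-monoidal structure on $T_\mathscr{C}$); second, arrows of the form $\theta_{U,V}\colon T_\mathscr{D}(U\otimes V)\to T_\mathscr{D}(U)\otimes T_\mathscr{D}(V)$, i.e.\ the canonical colax-monoidal structure of the free monoid functor evaluated at appropriate $U,V$.

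For arrows of the first type, the conclusion is immediate from the defining property of the free monoid construction: $T_\mathscr{D}$, being left adjoint to the forgetful functor $\Mon(\mathscr{D})\to\mathscr{D}$, factors canonically through $\Mon(\mathscr{D})$ (indeed, this is the content of Lemma \ref{ML}), so for any morphism $f$ in $\mathscr{D}$ the map $T_\mathscr{D}(f)$ is tautologically a monoid homomorphism.

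For arrows of the second type, the key observation is that the canonical colax-monoidal structure $\theta_{U,V}$ on the free monoid functor $T_\mathscr{D}$ is, by construction, the unique monoid homomorphism $T_\mathscr{D}(U\otimes V)\to T_\mathscr{D}(U)\otimes T_\mathscr{D}(V)$ extending --- via the universal property of the free monoid $T_\mathscr{D}(U\otimes V)$ --- the evident morphism $U\otimes V\to T_\mathscr{D}(U)\otimes T_\mathscr{D}(V)$ obtained from the tensor product of the unit inclusions $U\hookrightarrow T_\mathscr{D}(U)$ and $V\hookrightarrow T_\mathscr{D}(V)$; the target is a monoid here because $\Mon(\mathscr{D})$ inherits a symmetric monoidal structure from the symmetric monoidal $\mathscr{D}$. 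Thus $\theta_{U,V}$ is tautologically a monoid map, and assembling the two observations finishes the verification for both $\Psi'_{X,Y}$ and $\Psi''_{X,Y}$. There is no genuine obstacle here: the content of Lemma \ref{moa} amounts to the remark that the two kinds of natural maps out of which $\Psi'$ and $\Psi''$ are assembled were, in each case, \emph{defined} by their monoid-theoretic universal properties, so the verification is purely formal.
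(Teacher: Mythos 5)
Your proof is correct and rests on the same observation as the paper's: everything follows formally from the universal property of the free monoid functor $T_\mathscr{D}$. The paper's own argument is a one-liner --- since the sources are free monoids, the maps $\Psi'_{X,Y}$ and $\Psi''_{X,Y}$ are by construction the multiplicative extensions of their restrictions to the generators $L(T_\mathscr{C}(X\otimes Y))$ (resp.\ $L(X\otimes Y)$), hence monoid maps --- whereas you verify the same thing factor by factor, which is a slightly more detailed packaging of the identical idea.
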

\begin{proof}
As in the both cases the source monoids are free, to have maps of monoids it is enough to define them somehow on the generators $L(T_{\mathscr{C}}(X\otimes Y))$ (correspondingly, on $L(X\otimes Y)$), and then to extend them by multiplicativity. This is precisely how the definitions \eqref{theta1} and \eqref{theta2} are organized.
\end{proof}

Here ${c_L}_*$ is the map induced by the colax-monoidal structure $c_L$ on the functor $L$, and the map $\theta_*$ is induced by the map
$\theta\colon T(X\otimes Y)\to T(X)\otimes T(Y)$, see \eqref{cf}.

The both maps $c_L$ and $\theta$ are colax-monoidal, therefore, the compositions $\Psi^\prime$ and $\Psi^{\prime\prime}$ also are, in appropriate sense.

Before formulating the following Lemma, we give a definition:
\begin{defn}\label{defquasisym}{
Let $\mathscr{C}$ and $\mathscr{D}$ be {\it symmetric} monoidal categories, and let $F\colon\mathscr{C}\to\mathscr{D}$ be a functor.
A colax-monoidal structure $\Theta$ on $F$ is called {\it quasi-symmetric} if for any four objects $X,Y,Z,W$ in $\mathscr{C}$ the diagram
\begin{equation}\label{quasisym}
{\scriptsize\xymatrix{
F(X\otimes Y)\otimes F(Z\otimes W)\ar[r]^{\Theta\otimes \Theta}&F(X)\otimes F(Y)\otimes F(Z)\otimes F(W)\ar[r]^{\sigma_{F(Y),F(Z)}}&F(X)\otimes F(Z)\otimes F(Y)\otimes F(W)\\
F(X\otimes Y\otimes Z\otimes W)\ar[u]^{\Theta}\ar[r]^{\sigma_{Y,Z}}&F(X\otimes Z\otimes Y\otimes W)\ar[r]^{\Theta}&F(X\otimes Z)\otimes F(Y\otimes W)\ar[u]_{\Theta\otimes \Theta}
}}
\end{equation}
is commutative.
}
\end{defn}

\begin{lemma}\label{keypsi}{\itshape
For $\Psi^\prime$ and $\Psi^{\prime\prime}$ as defined in \eqref{theta1}, \eqref{theta2}:
\begin{itemize}
\item[(i)] the diagram \eqref{thetadiagram1} always commutes,
\item[(ii)] the diagram \eqref{thetadiagram2} commutes if the colax-monoidal structure $c_L$ on the functor $L$ is quasi-symmetric, see Definition \ref{quasisym}.
\end{itemize}
}
\end{lemma}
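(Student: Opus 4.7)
My plan is to reduce each of the two diagrams to a commutativity check on a single generator $u \in L((X \otimes Y)^{\otimes n}) \subset L(T_\mathscr{C}(X \otimes Y))$, and then to identify the resulting identity with either a standard monoidal factorization (part (i)) or an iterated version of quasi-symmetry (part (ii)). The reduction is justified by Lemma \ref{moa}: all four maps $\alpha$, $\beta$, $\Psi'_{X,Y}$, $\Psi''_{X,Y}$ are maps of monoids whose sources are free tensor monoids, so each composite in \eqref{thetadiagram1} and \eqref{thetadiagram2} is a map of monoids, hence determined by its restriction to the generator subspace $L(T_\mathscr{C}(X \otimes Y))$; and since $L$, being a left adjoint, carries the coproduct decomposition of $T_\mathscr{C}(X \otimes Y)$ to a direct sum, it suffices to check the identity on each summand $L((X \otimes Y)^{\otimes n})$.

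For part (i), a direct expansion of \eqref{theta1} and \eqref{theta2} shows that the final $\theta$-maps act trivially on single-factor inputs, so on $u$ the two composites evaluate to $c_L \circ L(m^{X \otimes Y}_n)(u)$ and $\bigl(L(m^X_n) \otimes L(m^Y_n)\bigr) \circ c_L \circ L(\sigma_n)(u)$, respectively, where $\sigma_n \colon (X \otimes Y)^{\otimes n} \to X^{\otimes n} \otimes Y^{\otimes n}$ is the canonical shuffle in the symmetric monoidal category $\mathscr{C}$, and $m^A_n$ denotes the iterated product in a monoid $A$. The equality then follows from the defining factorization $m^{X \otimes Y}_n = (m^X_n \otimes m^Y_n) \circ \sigma_n$ of the product in the tensor-product monoid $X \otimes Y$, combined with the naturality of $c_L$ as a map of bifunctors; no symmetry hypothesis on $c_L$ is invoked, matching the unconditional claim of (i).

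For part (ii), using that $\beta$ acts on $L((X \otimes Y)^{\otimes n})$ by the iterated map $c_L^{n-1}$ from \eqref{themapbeta}, the same tracing reduces the diagram to the identity
\[
(c_L^{n-1} \otimes c_L^{n-1}) \circ c_L \circ L(\sigma_n) \;=\; \sigma_n^\mathscr{D} \circ (c_L)^{\otimes n} \circ c_L^{n-1} \colon L((X \otimes Y)^{\otimes n}) \to LX^{\otimes n} \otimes LY^{\otimes n},
\]
where $\sigma_n^\mathscr{D}$ is the shuffle in $\mathscr{D}$ produced by $\theta$. For $n = 2$ this is literally Definition \ref{defquasisym} applied to the quadruple $(X, Y, X, Y)$. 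For $n \ge 3$, I would decompose $\sigma_n$ into a sequence of adjacent transpositions of the form $\mathrm{id} \otimes \sigma_{Y, X} \otimes \mathrm{id}$, and then use the coassociativity of the colax-monoidal structure $c_L$ (intrinsic to a colax-monoidal functor) to regroup the $2n - 1$ iterations of $c_L$ so that each individual adjacent swap is isolated inside a four-object tensor subfactor; each such local step is then a quasi-symmetric square padded by naturality of $c_L$ on the remaining factors.

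The main obstacle is precisely this inductive pasting in part (ii): showing that an arbitrary reduced word for $\sigma_n$ in the adjacent transpositions, combined with an arbitrary coassociative bracketing of the iterated $c_L$, assembles correctly into a composition of four-object quasi-symmetric squares. This pasting argument is the only non-formal step and is the sole place where the quasi-symmetry hypothesis of (ii) is genuinely used.
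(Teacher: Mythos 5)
Your proposal is correct and follows essentially the same route as the paper: reduce both diagrams to generators using that all maps are monoid maps out of free monoids, prove (i) from the factorization $m_n^{X\otimes Y}=(m_n^X\otimes m_n^Y)\circ\sigma_n$ together with naturality of $c_L$ as a map of bifunctors, and prove (ii) by reducing to an $n$-fold version of the quasi-symmetry square. The only divergence is that you explicitly flag the inductive pasting of adjacent-transposition squares needed for $n\ge 3$ in (ii), a step the paper's proof compresses into the single assertion that commutativity of \eqref{quasisym} implies commutativity of \eqref{lastdiagram}; your sketch of that pasting (coassociativity to isolate each swap in a four-object subfactor, plus naturality on the spectator factors) is the right way to fill it in.
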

\begin{proof}
As all maps in the diagrams \eqref{thetadiagram1} and \eqref{thetadiagram2} are maps of monoids, with the source a free monoid, it is enough to prove the commutativity of these diagrams on the generators in the source(s).

Now the claim (i) is reduced to the following diagram, for $n\ge 2$:
\begin{sublemma}{\itshape
Let $X,Y$ be monoids in $\mathscr{C}$, and let $n\ge 2$. Then the following diagram is commutative:
\begin{equation}
{\scriptsize\xymatrix{
L((X\otimes Y)\otimes\dots\otimes (X\otimes Y))\ar[d]_{m_{X\otimes Y}}\ar[r]^{\sigma\ \ \ }&L((X\otimes\dots\otimes X)\otimes (Y\otimes\dots\otimes Y))\ar[r]^{c_L}&L(X\otimes\dots \otimes X)\otimes L(Y\otimes\dots\otimes Y)\ar[d]_{m_{X}\otimes m_{Y}}\\
L(X\otimes Y)\ar[rr]^{c_L}&&L(X)\otimes L(Y)
}}
\end{equation}
where there is $n$ of $X$'s and $n$ of $Y$'s in each entry of the diagram.
}
\end{sublemma}
\begin{proof}
The diagram
\begin{equation}
\xymatrix{L((X\otimes Y)\otimes\dots\otimes (X\otimes Y))\ar[rd]_{m_{X\otimes Y}}\ar[rr]^{\sigma}&&L((X\otimes\dots\otimes X)\otimes(Y\otimes\dots\otimes Y))\ar[dl]^{m_X\otimes m_Y}\\
&L(X\otimes Y)
}
\end{equation}
by trivial reasons. Therefore, we need to prove that the diagram
\begin{equation}\label{mofbif}
\xymatrix{
L((X\otimes\dots \otimes X)\otimes (Y\otimes \dots\otimes Y))\ar[r]^{c_L}\ar[d]_{m_X\otimes m_Y}&L(X\otimes\dots\otimes X)\otimes L(Y\otimes\dots\otimes Y)\ar[d]^{m_X\otimes m_Y}\\
L(X\otimes Y)\ar[r]^{c_L}&L(X)\otimes L(Y)
}
\end{equation}
commutes.

But $c_L\colon L(A\otimes B)\to L(A)\otimes L(B)$ is a map of bifunctors, what easily implies the commutativity of \eqref{mofbif}.
\end{proof}
The claim (i) of Lemma is proven. Let us prove (ii).

Written on the generators of free source monoid, the diagram \eqref{thetadiagram2} is reduced to the following diagram:
\begin{equation}\label{lastdiagram}
{\scriptsize\xymatrix{
L((X\otimes Y)\otimes \dots\otimes (X\otimes Y))\ar[d]_{\beta_{X\otimes Y}}\ar[r]^{\sigma\ \ \ }&L((X\otimes\dots\otimes X)\otimes (Y\otimes\dots\otimes Y))\ar[r]^{c_L}&L(X\otimes\dots\otimes X)\otimes L(Y\otimes\dots\otimes Y)\ar[d]^{c_L^{\circ (n-1)}\otimes c_L^{\circ (n-1)}}\\
L(X\otimes Y)\otimes \dots \otimes L(X\otimes Y)\ar[r]^{c_L^{\otimes n}\ \ }&L(X)\otimes L(Y)\otimes \dots\otimes L(X)\otimes L(Y)\ar[r]^{\sigma\ \ \ \ \ }&
(L(X)\otimes\dots\otimes L(X))\otimes (L(Y)\otimes\dots\otimes L(Y))
}}
\end{equation}
Suppose $c_L$ is quasi-symmetric. Then the diagram \eqref{quasisym} is commutative for any $X,Y,Z,W$ in $\mathscr{C}$. This commutativity implies the commutativity of \eqref{lastdiagram}.

Claim (ii) is proven.

Lemma is proven.
\end{proof}
Suppose that $c_L$ is quasi-symmetric. Then
we have defined a map $\Theta_{X,Y}\colon L^\mon(X\otimes Y)\to L^\mon(X)\otimes L^\mon(Y)$ on co-equalizers \eqref{quotient_bis}.

\begin{theorem}\label{keytheorembis}{\itshape
Let $\mathscr{C}$ and $\mathscr{D}$ be two symmetric monoidal categories, and let
$$
L\colon \mathscr{C}\rightleftarrows\mathscr{D}\colon R
$$
be a pair of adjoint functors, with $R$ the right adjoint. Let $\varphi$ be a lax-monoidal structure on $R$, such that the adjoint colax-monoidal structure $c_L$ on $L$ is quasi-symmetric (see Definition \ref{quasisym}). Define $\Psi_{X,Y}$ as in \eqref{theta1}, \eqref{theta2} above. Define $\Theta_{X,Y}=i_{X,Y}\circ \Psi_{X,Y}$. Then
$$
\Theta_{X,Y}\colon L^\mon(X\otimes Y)\to L^\mon(X)\otimes L^\mon(Y)
$$
is a colax-monoidal structure on the functor $L^\mon$, isomorphic to the canonical one.
}
\end{theorem}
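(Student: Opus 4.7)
The first task is a sanity check on well-definedness: Lemma \ref{keypsi} already shows that under the quasi-symmetry hypothesis on $c_L$, the pair $(\Psi^\prime_{X,Y},\Psi^{\prime\prime}_{X,Y})$ defines a morphism of parallel-pair diagrams $\mathcal{D}(X\otimes Y)\to\mathcal{D}(X)\otimes\mathcal{D}(Y)$. Passing to colimits gives $\Psi_{X,Y}$, and postcomposing with $i_{X,Y}$ defined in \eqref{not2} produces $\Theta_{X,Y}\colon L^\mon(X\otimes Y)\to L^\mon(X)\otimes L^\mon(Y)$. Naturality in each variable is inherited from $c_L$ and $\theta$, both of which are natural transformations of bifunctors.

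Next I would verify that $\Theta$ is in fact a colax-monoidal structure, i.e. that it is coassociative and unital. Since both source and target are quotients of free monoids, it suffices to check coassociativity after pulling back along the projections from $T_\mathscr{D}(L(X\otimes Y\otimes Z))$, and then, since everything in sight is a map of monoids, to check it on the generating object $L(X\otimes Y\otimes Z)$. On generators the composite $(\Theta\otimes\Id)\circ\Theta$ unfolds, via \eqref{theta2}, to $(\theta_*\otimes\id)\circ(c_L\otimes\id)\circ\theta_*\circ c_L$, and dually for $(\Id\otimes\Theta)\circ\Theta$; the two agree by the coassociativity of the colax-monoidal structure $c_L$ on $L$ combined with the obvious coassociativity of the canonical colax map $\theta$ of \eqref{cf}. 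The unit axiom comes out similarly from the unit axioms of $c_L$ and of $\theta$.

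The identification of $\Theta$ with the canonical colax-monoidal structure $\Theta^{\mathrm{can}}$ on $L^\mon$ is the main technical point. My plan is to invoke Lemma \ref{kurica}: a colax-monoidal structure on the left adjoint $L^\mon$ is the same data as a lax-monoidal structure on the right adjoint $R^\mon$, via \eqref{lg}--\eqref{cg}. By construction the structure corresponding to $\Theta^{\mathrm{can}}$ is $\varphi^\mon$. So it is enough to check that the lax-monoidal structure on $R^\mon$ obtained from $\Theta$ via \eqref{lg} agrees with $\varphi^\mon$. Writing out \eqref{lg} and using the explicit adjunction unit/counit for $L^\mon\dashv R^\mon$ (which are induced from those of $L\dashv R$ through the coequalizer \eqref{quotient_bis}), this reduces, by the universal property of $T_\mathscr{D}(LX)\onto L^\mon(X)$ and the free-monoid description of the unit $\eta_{R(A)\otimes R(B)}\colon R(A)\otimes R(B)\to R^\mon L^\mon(R(A)\otimes R(B))$, to the single identity that the composition defined in \eqref{theta2} is adjoint to $\varphi\colon R(A)\otimes R(B)\to R(A\otimes B)$. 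That last identity is precisely the definition of $c_L$ as the colax-monoidal structure adjoint to $\varphi$, applied once and then transported through $\theta_*$; so the equality holds.

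The main obstacle, as I see it, is the third step: unlike the first two, the adjunction unit and counit for the monoid-level adjunction $L^\mon\dashv R^\mon$ are not transparent, since $L^\mon(X)$ is only presented as a coequalizer. The plan above sidesteps computing them in closed form by instead checking the adjoint correspondence on the free generators $L(X\otimes Y)$ of $T_\mathscr{D}(L(X\otimes Y))$, where the adjunction reduces to the original one $L\dashv R$ and the equality becomes the defining compatibility between $\varphi$ and $c_L$. Once this is established, Lemma \ref{kurica} and the uniqueness in the $1$--$1$ correspondence of (co)lax structures finish the proof.
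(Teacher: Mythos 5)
Your first two steps (well-definedness of $\Theta$ via Lemma \ref{keypsi}, and coassociativity/unitality checked on the generators $L(X\otimes Y\otimes Z)$, using that all maps in sight are monoid maps out of quotients of free monoids) are fine, and your overall strategy for the third step — identify the lax structure on $R^\mon$ corresponding to $\Theta$ under Lemma \ref{kurica} and show it is $\varphi^\mon$ — is a legitimate reformulation of what must be proved. It is also genuinely leaner in outline than the paper's route, which instead verifies the counit-only compatibility square \eqref{krokodil} for the pair $(\Theta,\varphi^\mon)$ and then appeals to the uniqueness statement of Lemma \ref{utka}/\ref{strangelemma}, packaged through the whole Appendix B formalism of (very) weak adjoint diagrams and Theorem \ref{keytheorem2bisbis}.

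The gap is in how you propose to evaluate \eqref{lg} for the monoid-level adjunction. That composite starts at $R^\mon A\otimes R^\mon B$ with the unit $\eta^\mon$ and ends with $\epsilon^\mon\otimes\epsilon^\mon$; its source is \emph{not} a free monoid and is not of the form $L^\mon(\text{something})$, so "checking the adjoint correspondence on the free generators of $T_\mathscr{D}(L(X\otimes Y))$" does not apply to it — the generators trick only controls maps \emph{out of} $L^\mon(Z)$. What your reduction actually requires is the explicit identification of $\eta^\mon_Z$ on underlying objects (as $\eta_Z$ followed by $R$ of the canonical map $LZ\to T_\mathscr{D}(LZ)\to L^\mon Z$) and of $\epsilon^\mon$ restricted to the image of the degree-one generators $LR(A)$ (as $\epsilon_A$). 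Neither is automatic from the coequalizer presentation: the counit identification is exactly the paper's Sub-lemma \ref{ksl}, whose proof is a nontrivial chain of (co)limit computations tracing the adjunction of Lemma \ref{leftproof}, and the unit identification is its equally nontrivial mirror image. Once both are supplied, your computation does collapse \eqref{lg} for $(L^\mon,R^\mon,\Theta)$ to \eqref{lg} for $(L,R,c_L)$, which returns $\varphi$ by the involutivity in Lemma \ref{kurica}; so the plan is repairable, but the step you call a sidestep is in fact where the real work lies, and as written it is not justified. (If you instead verified the compatibility square \eqref{krokodil}, whose source $L^\mon(R^\mon A\otimes R^\mon B)$ \emph{is} a quotient of a free monoid, the generators argument would be legitimate and you would only need the counit computation — which is essentially the paper's proof.)
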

We prove Theorem \ref{keytheorembis} in Section \ref{section44} below.

\subsection{\sc Proof of Theorem \ref{keytheorembis}}\label{section44}
The proof is essentially based on the theory of (very) weak adjoint functors, developed in Appendix B.
The reader is advised to read the statements of Appendix B, especially Theorem \ref{keytheorem2bisbis} before proceeding to read this Section.

We keep the notations of Section \ref{section43}. Denote by $I$ the category
\begin{equation}
\xymatrix{
a\ \bullet\ar@<-1ex>_{\beta}[r]\ar@<1ex>^{\alpha}[r]&\bullet\ b
}
\end{equation}
We introduce two diagrams indexed by $I$,
$$
\mathcal{D}\colon I\to\mathscr{F}un(\Mon\mathscr{C},\Mon\mathscr{D})
$$
and
$$
\mathcal{R}\colon I\to\mathscr{F}un(\Mon\mathscr{D},\Mon\mathscr{C})
$$
By definition, $\mathcal{D}$ is the diagram \eqref{quotient_def},
with $\mathcal{D}(a)(X)=\mathcal{D}_1(X)=T_\mathscr{D}(L(T_\mathscr{X}))$,
$\mathcal{D}(b)(X)=\mathcal{D}_2(X)=T_\mathscr{D}(LX)$, and with the maps $\alpha,\beta$ from \eqref{quotient_def}.

The diagram $\mathcal{R}$ is defined as the constant diagram,
$\mathcal{R}(a)(Y)=\mathcal{R}(b)(Y)=R^\mon(Y)$, with the both maps $\alpha$, $\beta$ equal to the constant maps.

The idea of proof of Theorem \ref{keytheorembis} is as follows. We construct a {\it very weak right adjoint pair structures} on $(\mathcal{D}_1,R^\mon)$ and on $(\mathcal{D}_2,R^\mon)$, such that they define a {\it very weak right adjoint pair of diagrams}
\begin{equation}\label{wapd}
\mathcal{D}\colon\mathscr{C}\rightleftarrows \mathscr{D}\colon\mathcal{R}
\end{equation}
(see Definitions \ref{adefn_1}, \ref{adefn_3} in Appendix B).

Next, we show that the pair $(\Psi^\prime, \varphi^\mon)$ of (co)lax-monoidal structures on the functors $(\mathcal{D}(a),R^\mon$, as well as the pair $(\Psi^{\prime\prime},\varphi^\mon)$ of (co)lax-monoidal structures on the functors $(\mathcal{D}(b),R^\mon)$, are {\it weak compatible}, see Definition \ref{adefn_2} (the colax-monoidal structures $\Psi^\prime$ and $\Psi^{\prime\prime}$ are defined \eqref{theta1}, \eqref{theta2}).

Then we are in the set-up of Corollary \ref{keycoroll}, and this Corollary immediately gives Theorem \ref{keytheorembis}.
\begin{remark}\label{sosna}{\rm
It is most likely that neither of pair of functors $(\mathcal{D}(a),R^\mon)$ and $(\mathcal{D}(b),R^\mon)$ has a natural structure of a weak right adjoint pair, see Definition \ref{adef1}. In fact, $\epsilon(a)$ and $\epsilon(b)$ are uniquely defined, see proof of Key-Lemma \ref{keylemmakt}, (1.). Then, most likely there do not exist any natural adjunctions $\eta(a)\colon \Id_{\mathscr{C}}\to R^\mon\mathcal{D}(a)$
and $\eta(b)\colon \Id_\mathscr{C}\to R^\mon\mathcal{D}(b)$, such that the composition \eqref{a2} is the identity, for $\epsilon(a)=\epsilon_1$ and $\epsilon(b)=\epsilon_2$ (see \eqref{epsilon1def} and \eqref{epsilon2def} below). Nevertheless, the ``corresponding'' colax-monoidal structures $\Psi^\prime$ and $\Psi^{\prime\prime}$ exist. This remark explains our decision to work with {\it very weak adjoint functors} of Appendix \ref{appveryweak} than with {\it weak adjoint functors} of Appendix \ref{appweak}.
}
\end{remark}

Our proof of Theorem \ref{keytheorembis} is based on the following
\begin{klemma}\label{keylemmakt}{\itshape
In the notations and assumptions of Theorem \ref{keytheorembis}:
\begin{itemize}
\item[1.] there are very weak right adjoint structures on the pairs of functors $(\mathcal{D}_1,R^\mon)$ and $(\mathcal{D}_2,R^\mon)$ such that they form a very weak right adjoint pair of diagrams \eqref{wapd},
\item[2.] the colax-monoidal structure $\Psi^\prime$ on the functor $\mathcal{D}_1$ and the colax-monoidal structure $\Psi^{\prime\prime}$ on the functor $\mathcal{D}_2$, form a diagram of colax-monoidal structures,
\item[3.] the pair $(\Psi^\prime,\varphi^\mon)$ of (co)lax-monoidal structures on the functors $(\mathcal{D}_1,R^\mon)$, and the pair $(\Psi^{\prime\prime},\varphi^\mon)$ of (co)lax-monoidal structures on the functors $(\mathcal{D}_2,R^\mon)$, are weakly compatible, see Definition \ref{adefn_2},
\item[4.] the colimit colax-monoidal structure $\colim_I(\Psi^\prime,\Psi^{\prime\prime})$ on the functor $L^\mon$ (which is well-defined by (2.)) is precisely the colax-monoidal structure $\Theta$ on $L^\mon$, see \eqref{defthetadef}.
\end{itemize}
}
\end{klemma}
We firstly show how Theorem \ref{keytheorembis} follows from the Key-lemma above, and then prove the Key-Lemma in the rest of this Subsection.

\smallskip\smallskip

{\it Proof of Theorem \ref{keytheorembis}:}

Equip the both functors $\mathcal{R}(a)=\mathcal{R}(b)=R^\mon$ with the lax-monoidal structure $\varphi^\mon$, with the identity maps between them. Then we get a {\it diagram of lax-monoidal structures}. Its limit is a lax-monoidal structure on the functor ${\lim}_I\mathcal{R}=R^\mon$, equal to $\varphi^\mon$. We know, by Key-lemma \ref{keylemmakt}, that the pairs of (co)lax-monoidal structures
$(\Psi^\prime,\varphi^\mon)$, and $(\Psi^{\prime\prime},\varphi^\mon)$ are weakly compatible, see Definition \ref{adefn_2}.
Now, by Theorem \ref{keytheorem2bisbis}, the colax-monoidal structure on the functor $L^\mon=\colim_I\mathcal{D}$ adjoint to the limit lax-monoidal diagram $\varphi^\mon$, and the colimit $\Psi_\colim$ of the colax-monoidal structures on $\mathcal{D}(a)$ and $\mathcal{D}(b)$ adjoint to $\varphi^\mon$, are also weakly compatible. By Key-lemma \ref{keylemmakt}(3.), the colax-monoidal structure $\Psi_\colim$ on $L^\mon$ is equal to the colax-monoidal structure $\Theta$ on $L^\mon$, see \eqref{defthetadef}.
We conclude, that the colax-monoidal structure $\Theta$ on $L^\mon$ is weakly compatible with the lax-monoidal structure $\varphi^\mon$ on $R^\mon$.

Now, the functors $(L^\mon, R^\mon)$ form a (genuine) adjoint pair of functors, with $\epsilon\colon L^\mon R^\mon\to\Id_{\mathscr{D}}$ equal to the limit $\epsilon_{\lim}$ of $\epsilon(a)$ and $\epsilon(b)$, and with some $\eta\colon \Id_{\mathscr{C}}\to R^\mon L^\mon$.
Therefore, Theorem \ref{keytheorem2bisbis} implies that $\Theta$ is isomorphic to the colax-monoidal structure on $L^\mon$, adjoint to the lax-monoidal structure $\varphi^\mon$ on $R^\mon$.

\qed

\smallskip

{\it Proof of Key-lemma \ref{keylemmakt}:}

{\it Proof of (1.):}

Recall our notations $\mathcal{D}_1(X)=T_\mathscr{D}(L(T_\mathscr{C} X))$ and $\mathcal{D}_2(X)=T_\mathscr{D}(LX)$, for $X\in\Mon\mathscr{C}$.
We need to construct morphisms of functors
\begin{equation}
\epsilon_1\colon \mathcal{D}_1\circ R^\mon\to\Id_{\Mon\mathscr{D}}
\end{equation}
and
\begin{equation}
\epsilon_2\colon\mathcal{D}_2\circ R^\mon\to\Id_{\Mon\mathscr{D}}
\end{equation}
such that the diagrams
\begin{equation}\label{comdiagone}
\xymatrix{
\mathcal{D}_1 R^\mon(Y)\ar[r]^{\ \ \ \epsilon_1}\ar[d]_{\alpha_*}&Y\ar[d]^{id}\\
\mathcal{D}_2 R^\mon (Y)\ar[r]^{\ \ \ \epsilon_2}&Y
}
\end{equation}
and
\begin{equation}\label{comdiagtwo}
\xymatrix{
\mathcal{D}_1 R^\mon(Y)\ar[r]^{\ \ \ \epsilon_1}\ar[d]_{\beta_*}&Y\ar[d]^{id}\\
\mathcal{D}_2 R^\mon (Y)\ar[r]^{\ \ \ \epsilon_2}&Y
}
\end{equation}
for any $Y\in\Mon\mathscr{D}$.

Recall the the functor $L^\mon$ is the co-equalizer \eqref{quotient_def}.
Therefore, there is a canonical map
\begin{equation}
p\colon\mathcal{D}_2(X)\to L^\mon(X)
\end{equation}
such that the two compositions $\alpha\circ p$ and $\beta\circ p$ are equal maps $\mathcal{D}_1(Y)\to L^\mon(Y)$.

Next, there is the adjunction map $\epsilon^\mon\colon L^\mon\circ R^\mon\to\Id_{\Mon\mathscr{D}}$.
Define $\epsilon_2$ as the composition
\begin{equation}\label{epsilon2def}
\epsilon_2=p_*\circ \epsilon^\mon\colon \mathcal{D}_2\circ R^\mon\to\Id_{\Mon\mathscr{D}}
\end{equation}
and define
\begin{equation}\label{epsilon1def}
\epsilon_1=\alpha_*\epsilon_2=\beta_*\epsilon_2\colon\mathcal{D}_1\circ R^\mon\to\Id_{\Mon\mathscr{D}}
\end{equation}
Then the diagrams \eqref{comdiagone} and \eqref{comdiagtwo} commute by the construction.

{\it Proof of (2.):} The claim means the commutativity of diagrams \eqref{thetadiagram1} and \eqref{thetadiagram2}.
It was proven in Lemma \ref{keypsi}, with assumption that the colax-monoidal structure $c_L$ on $L$, adjoint to the lax-monoidal structure $\varphi$ on $R$, is quasi-symmetric, see Definition \ref{defquasisym} (the latter is assumed in the statement of Key-Lemma \ref{keylemmakt}).

{\it Proof of (3.):} We need firstly to know an explicit expression for $\epsilon_2\colon\mathcal{D}_2R^\mon\to\Id_{\Mon\mathscr{D}}$.
\begin{sublemma}\label{ksl}{\itshape
In the above notations, the map $\epsilon_2\colon\mathcal{D}_2R^\mon\to\Id_{\Mon\mathscr{D}}$ is equal to the composition
\begin{equation}\label{thekeyanswer}
T_\mathscr{D}(L(R^\mon(Y)))=T_\mathscr{D}(LR(Y))\xrightarrow{\epsilon_*}T_\mathscr{D}(Y)\xrightarrow{m_Y}Y
\end{equation}
where the last map uses the monoid structure on $Y$.
}
\end{sublemma}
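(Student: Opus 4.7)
The plan is to exploit the universal property of the free monoid. The source $\mathcal{D}_2R^\mon(Y) = T_\mathscr{D}(LR(Y))$ is the free monoid in $\mathscr{D}$ on the object $LR(Y)\in\mathscr{D}$, and both $\epsilon_2$ and the composition $\phi := m_Y \circ \epsilon_*$ of \eqref{thekeyanswer} are monoid homomorphisms to $Y$. It therefore suffices to show that they agree on the generating subobject $LR(Y) \subset T_\mathscr{D}(LR(Y))$, and my claim is that the common restriction is the counit $\epsilon_Y\colon LR(Y)\to Y$ of the original adjunction $L\dashv R$.

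For $\phi$ this is immediate: on a generator $\ell\in LR(Y)$ the iterated product $m_Y$ reduces to the identity, so $\phi\big|_{LR(Y)} = \epsilon_Y$. For $\epsilon_2 = \epsilon^\mon_Y\circ p_{R^\mon(Y)}$ I would unwind the construction of the adjunction $L^\mon\dashv R^\mon$ given in the proof of Lemma \ref{leftproof}. There the bijection $\Hom_{\Mon\mathscr{D}}(L^\mon(X),Z)\cong \Hom_{\Mon\mathscr{C}}(X,R^\mon(Z))$ is built as a three-step composition: first, by the coequalizer presentation \eqref{quotient_bis}, a monoid map $L^\mon(X)\to Z$ is a monoid map $f\colon T_\mathscr{D}(LX)\to Z$ coequalizing $\alpha$ and $\beta$; second, by freeness such $f$ is the same datum as a morphism $f_0\colon LX\to Z$ in $\mathscr{D}$; third, $f_0$ corresponds under $L\dashv R$ to a morphism $g\colon X\to R(Z)$ in $\mathscr{C}$, and the coequalizer condition on $f$ is precisely the condition that $g$ is a morphism of monoids $X\to R^\mon(Z)$.

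The counit $\epsilon^\mon_Y$ is by definition the element of $\Hom_{\Mon\mathscr{D}}(L^\mon R^\mon(Y),Y)$ that corresponds to $\Id_{R^\mon(Y)}\in\Hom_{\Mon\mathscr{C}}(R^\mon Y, R^\mon Y)$ under this bijection. Reading the three-step chain backwards with $g=\Id_{R(Y)}$, formula \eqref{adj3} gives $f_0 = \epsilon_Y\colon LR(Y)\to Y$, and the corresponding monoid homomorphism on $T_\mathscr{D}(LR(Y))$ is by construction exactly $\epsilon^\mon_Y\circ p_{R^\mon(Y)} = \epsilon_2$. Hence $\epsilon_2\big|_{LR(Y)} = \epsilon_Y$ as well, and the two monoid homomorphisms $\epsilon_2$ and $\phi$ agree on generators, hence agree everywhere.

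The only non-formal step is the backward trace in the preceding paragraph; in particular, one should check that the $f_0$ produced from $g=\Id$ really coequalizes $\alpha$ and $\beta$ after being extended to $T_\mathscr{D}(LR(Y))$, but this is automatic because $g$ is a (trivially) monoid homomorphism $R^\mon(Y)\to R^\mon(Y)$, which by the third step of the adjunction construction is exactly what the coequalizer compatibility encodes. This I expect to be the main (though still routine) obstacle, since it requires carefully tracking the compatibility between the monoid structure \eqref{monoidref} on $R^\mon(Y)$ and the two maps $\alpha,\beta$ of the coequalizer \eqref{quotient_bis}.
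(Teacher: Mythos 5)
Your proposal is correct and follows essentially the same route as the paper: both arguments unwind the construction of the adjunction $L^\mon\dashv R^\mon$ from Lemma \ref{leftproof}, tracing $\Id_{R^\mon Y}$ backwards to identify the underlying morphism of $\epsilon^\mon_Y\circ p$ on the generators $LR(Y)$ as the counit $\epsilon_Y$, and then extending multiplicatively. Your packaging via ``two monoid maps out of a free monoid agree iff they agree on generators'' is a slightly cleaner organization of the same computation the paper carries out through its chain of isomorphisms \eqref{iso1}--\eqref{iso4}.
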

\begin{proof}
By our definition \eqref{epsilon2def}, the following composition
\begin{equation}\label{simple}
\begin{aligned}
\ &\Hom_{\Mon\mathscr{D}}(T_\mathscr{D}L((R^\mon Y),Z))\xleftarrow{t}\\
&\xleftarrow{t}\lim\Bigl((\Hom_{\Mon\mathscr{D}}(T_{\mathscr{D}}(L(R^\mon Y)),Z)\rightrightarrows\Hom_{\Mon\mathscr{D}}(T_\mathscr{D}(T_\mathscr{D}(L(R^\mon Y))),Z)\Bigr)\simeq\\
&\simeq\Hom_{\Mon\mathscr{D}}(L^\mon R^\mon (Y), Z)
\end{aligned}
\end{equation}
is induced by the map $\mathcal{D}_2(X)=T_\mathscr{D}(L(X))\to L^\mon(X)$ for $X\in\Mon\mathscr{C}$ (here $t$ is the universal limit map).

Thus, we need to find (a unique) $$\kappa_2\in \Hom_{\Mon\mathscr{D}}(T_{\mathscr{D}}(L(R^\mon Y)),Y) \text{ and } \kappa_1\in\Hom_{\Mon\mathscr{D}}(T_\mathscr{D}(T_\mathscr{D}(L(R^\mon Y))),Y)$$ such that $\alpha^*\kappa_2=\beta^*\kappa_2=\kappa_1$, and such that the corresponding element $\kappa\in \Hom_{\Mon\mathscr{D}}(L^\mon R^\mon Y,Y)$ is equal to $\epsilon^\mon$, or, in other words, is corresponded to the identity in $\Hom_{\Mon\mathscr{C}}(R^\mon Y,R^\mon Y)$.

We follow the proof of Lemma \ref{leftproof}, and find, step by step, corresponding representatives in (co)limits, starting with the identity map in
$\Hom_{\Mon\mathscr{C}}(R^\mon Y,R^\mon Y)$.

By Sub-lemma \ref{leftproofsub}, the monoid $R^\mon Y$ is isomorphic to
\begin{equation}
R^\mon(Y)\simeq\colim \Bigl(T_\mathscr{C}T_\mathscr{C}R^\mon(Y)\rightrightarrows T_\mathscr{C}R^\mon(Y)\Bigr)
\end{equation}
where the two arrows are as in Sub-lemma \ref{leftproofsub}.

We have:
\begin{equation}\label{iso1}
\begin{aligned}
\ &\Hom_{\Mon\mathscr{C}}(R^\mon Y,R^\mon Y)\simeq\\
&\lim\Bigl(\Hom_{\Mon\mathscr{C}}(T_\mathscr{C}R^\mon Y, R^\mon Y)\rightrightarrows\Hom_{\Mon\mathscr{C}}
(T_\mathscr{C}T_\mathscr{C}R^\mon Y, R^\mon Y)\Bigr)
\end{aligned}
\end{equation}
Consider $\omega_2\in \Hom_{\Mon\mathscr{C}}(T_\mathscr{C}R^\mon Y, R^\mon Y)$ defined as the map induced by the product in the monoid $R^\mon Y$, and consider $\omega_1\in \Hom_{\Mon\mathscr{C}}
(T_\mathscr{C}T_\mathscr{C}R^\mon Y, R^\mon Y)$ defined as the composition of the map
in $\Hom_{\Mon\mathscr{C}}(T_\mathscr{C}T_\mathscr{C}R^\mon Y, T_\mathscr{C}R^\mon Y)$ induced by the monoid structure in $T_\mathscr{C}R^\mon Y$, followed by $\omega_2$.

One easily checks that the pair $(\omega_1,\omega_2)$ is compatible in the limit diagram, and is corresponded, by the isomorphism \eqref{iso1}, to the identity map of $R^\mon Y$.

We proceed as in Lemma \ref{leftproof}:
\begin{equation}\label{iso2}
\begin{aligned}
\ &\lim\Bigl(\Hom_{\Mon\mathscr{C}}(T_\mathscr{C}R^\mon Y, R^\mon Y)\rightrightarrows\Hom_{\Mon\mathscr{C}}
(T_\mathscr{C}T_\mathscr{C}R^\mon Y, R^\mon Y)\Bigr)\simeq\\
&\lim\Bigl(\Hom_\mathscr{C}(R^\mon Y,R^\mon Y)\rightrightarrows \Hom_\mathscr{C}(T_\mathscr{C}R^\mon Y, R^\mon Y)\Bigr)
\end{aligned}
\end{equation}
Consider $\chi_2\in \Hom_\mathscr{C}(RY,RY)$ defined as the identity map, and consider $\chi_1\in \Hom_\mathscr{C}(T_\mathscr{C}(RY), RY)$ defined from the product map. One easily sees that $(\chi_1,\chi_2)$ is corresponded to $(\omega_1,\omega_2)$ under the isomorphism \eqref{iso2}.

The next step is:
\begin{equation}\label{iso3}
\begin{aligned}
\ &\lim\Bigl(\Hom_\mathscr{C}(R(Y),R(Y))\rightrightarrows \Hom_\mathscr{C}(T_\mathscr{C}R(Y), R(Y))\Bigr)\simeq\\
&\lim\Bigl(\Hom_\mathscr{D}(LR(Y), Y)\rightrightarrows \Hom_\mathscr{D}(L(T_\mathscr{C}(R Y)),Y)\Bigr)
\end{aligned}
\end{equation}
One sees that $(\sigma_1,\sigma_2)$ corresponded to $(\chi_1,\chi_2)$ under \eqref{iso3} are defined as
\begin{equation}
\sigma_1=\epsilon(L(\chi_1))\text{   and    }\sigma_2=\epsilon(L(\chi_2))
\end{equation}
where $\epsilon\colon LR\to\Id_\mathscr{D}$ is the adjunction.

Now
\begin{equation}\label{iso4}
\begin{aligned}
\ &\lim\Bigl(\Hom_\mathscr{D}(LR(Y), Y)\rightrightarrows \Hom_\mathscr{D}(L(T_\mathscr{C}(R Y)),Y)\Bigr)\simeq\\
&\lim\Bigl(\Hom_{\Mon\mathscr{D}}(T_\mathscr{D}(L(R(Y)),Y))\rightrightarrows \Hom_{\Mon\mathscr{D}}(T_\mathscr{D}(L(T_\mathscr{C}(RY))),Y)\Bigr)\simeq\\
&\Hom_{\Mon\mathscr{D}}(L^\mon(Y),Y)
\end{aligned}
\end{equation}
To define $(\o_1,\o_2)$ corresponded to $(\sigma_1,\sigma_2)$ by \eqref{iso4}, we recall that $Y$ is a monoid, and define $(\o_1,\o_2)$ as the maps from free monoids, induced by the maps $(\sigma_1,\sigma_2)$, and by the monoid structure in $Y$.

We finish to prove the Sub-lemma.

By \eqref{simple}, $\epsilon_2=\o_2$, and for $\o_2$ we have an explicit formula, which gives \eqref{thekeyanswer} immediately.

\end{proof}

{\it We continue to prove the claim (3.):} having the above Sub-lemma as a toolkit, it is not hard to prove the commutativity of the corresponding diagrams, by a direct check. We need to prove the commutativity of the diagrams:
\begin{equation}\label{kld1}
\xymatrix{
\mathcal{D}_1(R^\mon X\otimes R^\mon Y)\ar[r]^{\ \ \ \varphi^\mon}\ar[d]_{\Psi^\prime}&\mathcal{D}_1R^\mon(X\otimes Y)\ar[d]^{\epsilon_1}\\
\mathcal{D}_1R^\mon X\otimes \mathcal{D}_1R^\mon (Y)\ar[r]^{\ \ \ \ \ \ \ \ \epsilon_1\otimes\epsilon_1}&X\otimes Y
}
\end{equation}
and
\begin{equation}\label{kld2}
\xymatrix{
\mathcal{D}_2(R^\mon X\otimes R^\mon Y)\ar[r]^{\ \ \ \varphi^\mon}\ar[d]_{\Psi^{\prime\prime}}&\mathcal{D}_2R^\mon(X\otimes Y)\ar[d]^{\epsilon_2}\\
\mathcal{D}_2R^\mon X\otimes \mathcal{D}_2R^\mon (Y)\ar[r]^{\ \ \ \ \ \ \ \ \epsilon_2\otimes\epsilon_2}&X\otimes Y
}
\end{equation}
To prove the commutativity of \eqref{kld2}, divide it into two diagrams, as follows:
\begin{equation}\label{kld2bis}
\xymatrix{
T_\mathscr{D}L(RX\otimes RY)\ar[r]^{\varphi}\ar[d]_{c_L}   &T_\mathscr{D}(LR(X\otimes Y))\ar[d]^{\epsilon_*}\\
T_\mathscr{D}(LRX\otimes LRY)\ar[d]_{\Theta}\ar[r]^{\epsilon\otimes\epsilon}& T_\mathscr{D}(X\otimes Y)\ar[dd]^{m_{X\otimes Y}}\\
T_{\mathscr{D}}LR(X)\otimes T_\mathscr{D} LR(Y)\ar[d]_{\epsilon_*\otimes\epsilon_*}\\
T_\mathscr{D}X\otimes T_\mathscr{D}Y\ar[r]^{m_X\otimes m_Y}&X\otimes Y
}
\end{equation}
where $c_L$ is the colax-monoidal structure on $L$ adjoint to the lax-monoidal structure $\varphi$ on $R$, and the map $\Theta$ is \eqref{cf}.
The commutativity of the diagram \eqref{kld2bis} is equivalent to the commutativity of the diagram \eqref{kld2}, by Sub-lemma \ref{ksl}.
The commutativity of the both sub-diagrams of \eqref{kld2bis} is clear: the upper one commutes as $c_L$ and $\varphi$ are adjoint (co)lax-monoidal structures, and the lower one commutes by tautological reasons.

Now prove the commutativity of \eqref{kld1}. The diagram \eqref{kld1} is divided into two diagrams, as follows:
\begin{equation}
\xymatrix{
&\mathcal{D}_1(R^\mon X\otimes R^\mon Y)\ar[dl]_{\Psi^\prime}\ar[d]^{\alpha_*=\beta_*}\ar[r]^{\ \ \ \ \varphi^\mon}&\mathcal{D}_1R^\mon(X\otimes Y)\ar[d]^{\alpha_*=\beta_*}\\
\mathcal{D}_1R^\mon(X)\otimes \mathcal{D}_1R^\mon(Y)\ar[dr]_{\alpha_*\otimes\alpha_*}&\mathcal{D}_2(R^\mon X\otimes R^\mon Y)\ar[d]^{\Psi^{\prime\prime}}\ar[r]^{\ \ \ \ \varphi^\mon}&\mathcal{D}_2 R^\mon(X\otimes Y)\ar[d]^{\epsilon_2}\\
&\mathcal{D}_2 R^\mon(X)\otimes \mathcal{D}_2 R^\mon(Y)\ar[r]^{\ \ \ \ \ \ \ \ \ \epsilon_2\otimes\epsilon_2}&X\otimes Y
}
\end{equation}
The commutativity of the left ``triangle-like'' diagram is exactly the claim (2.) of the Key-Lemma.
The lower-right square diagram is the diagram \eqref{kld2}, and its commutativity is just proven above. The commutativity of the upper-right square diagram is a general nonsense.

We are done.

{\it Proof of (4.):} The colax-monoidal structure $\Theta$ on $L^\mon$ is the colimit of $\Psi^\prime$ and $\Psi^{\prime\prime}$ by definition, see Definition \ref{limlax}.

Key-lemma \ref{keylemmakt} is proven.

\qed

\subsection{\sc A proof of Theorem \ref{keytheorem}}\label{section45}
Here we prove Theorem \ref{keytheorem}.

We want to apply Theorem \ref{keytheorembis}. We need to show that the assumptions of this Theorem are fulfilled, in the case $L=N$, the normalized chain complex functor in the Dold-Kan correspondence, and $c_L=AW$, the Alexander-Whitney colax-monoidal structure on $L$.

\begin{lemma}\label{newaw}{\itshape
The Alexander-Whitney colax-monoidal structure $AW\colon L(X\otimes Y)\to L(X)\otimes L(Y)$ is quasi-symmetric, see Definition \ref{defquasisym}.
}
\end{lemma}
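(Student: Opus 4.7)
The approach will be a direct diagram chase on elements. Both sides of the square in Definition \ref{defquasisym} (applied to $F=L$, $\Theta=AW$) are morphisms of complexes from $L(X\otimes Y\otimes Z\otimes W)$ to $L(X)\otimes L(Z)\otimes L(Y)\otimes L(W)$, and both are determined on decomposable elements in each simplicial degree, so it suffices to check the equality on $x\otimes y\otimes z\otimes w$ with each factor of simplicial degree $k$.

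I will compute each path by iterating the explicit formula $AW(a^n\otimes b^n)=\sum_{i+j=n}d_\fin^i a^n\otimes d_0^j b^n$. For the upper path, the first $AW$ (with grouping $(X\otimes Y)\otimes(Z\otimes W)$) sends $x\otimes y\otimes z\otimes w$ to $\sum_{i+j=k}(d_\fin^i x\otimes d_\fin^i y)\otimes(d_0^j z\otimes d_0^j w)$, because face maps act diagonally on simplicial tensor products. The subsequent $AW\otimes AW$ produces a triple-indexed sum of terms of the shape $d_\fin^\ast x\otimes d_0^\ast d_\fin^\ast y\otimes d_\fin^\ast d_0^\ast z\otimes d_0^\ast w$, and the swap $\sigma_{L(Y),L(Z)}$ in $\mathscr{C}^-(\mathbb{Z})$ contributes Koszul signs depending on the chain-complex degrees of the middle factors. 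For the lower path, I first apply $\sigma_{Y,Z}$ in $\mathscr{M}od(\mathbb{Z})^\Delta$, where the monoidal swap carries no Koszul sign, and then iterate $AW$ starting from the grouping $(X\otimes Z)\otimes(Y\otimes W)$, obtaining a second triple-indexed sum.

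The key algebraic input for the comparison is the commutation $d_0\circ d_\fin=d_\fin\circ d_0$, a consequence of the simplicial identity $d_id_j=d_{j-1}d_i$ for $i<j$. Using it I place the operator strings on each tensor factor into a canonical normal form with all $d_\fin$'s preceding all $d_0$'s, so that every term in either sum is uniquely recorded by a quadruple of exponents. At that point I will exhibit a bijection between the two index sets that identifies matching normal-form terms, and verify that the Koszul sign produced by $\sigma_{L(Y),L(Z)}$ on the upper path agrees with the sign contributions arising when the chain-degrees redistribute along the lower path.

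The main obstacle will be the combinatorial bookkeeping: the upper path naturally produces an \emph{outer-then-inner} decomposition of the face operators, while the lower path produces a \emph{swap-then-split} decomposition, and the two three-parameter families are visibly different before normalization. The coassociativity of iterated $AW$ (already verified in the excerpt in the proof of \ref{dkmonoidal}) will be the guiding principle, but the bijection between parameter sets is not immediate and requires careful tracking of which $d_\fin$-exponent on which factor comes from which occurrence of $AW$, together with the reordering enforced by the middle swap.
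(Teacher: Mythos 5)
Your plan follows the same route as the paper's own proof: evaluate both composites in \eqref{quasisym} on an element $x\otimes y\otimes z\otimes w$ via the explicit formula \eqref{aw} and the relation $d_0d_\fin=d_\fin d_0$, then match normal-form monomials. But the obstacle you flag at the end is fatal rather than combinatorial: the bijection between the two index sets does not exist, because the two composites are not equal. Writing the iterated $AW$ in interval form, $a_1\otimes\dots\otimes a_n\mapsto\sum a_1|_{[0,k_1]}\otimes a_2|_{[k_1,k_1+k_2]}\otimes\dots$, the upper path of \eqref{quasisym} always hands the $y$-factor the interval immediately preceding the one handed to the $z$-factor, while the lower path does the opposite. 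In the exponents of the paper's displayed sum this reads $b_2+c_1=k$ for every term of the upper path but $b_1+c_2=k$ for every term of the lower path, so the two families of monomials $d_\fin^ax\otimes d_0^{c_1}d_\fin^{c_2}z\otimes d_0^{b_1}d_\fin^{b_2}y\otimes d_0^dw$ are genuinely different. Concretely, for $k=1$ (where every Koszul sign is $+1$, so signs are not the issue) the difference of the two composites is
\[
d_\fin x\otimes(d_0z-d_\fin z)\otimes y\otimes d_0w\;-\;d_\fin x\otimes z\otimes(d_0y-d_\fin y)\otimes d_0w,
\]
which is nonzero already when $Y=Z$ is the free simplicial abelian group generated by $\Delta^1$.

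There is also a structural reason why no amount of normalization or re-indexing can rescue the computation. Taking $X$ and $W$ to be the unit object and using the counit axiom of the colax structure, commutativity of \eqref{quasisym} degenerates to $\sigma\circ AW_{Y,Z}=AW_{Z,Y}\circ\sigma$, i.e.\ to strict cocommutativity of the Alexander--Whitney map; this contradicts Proposition \ref{before}(4.), which asserts symmetry only up to a natural chain homotopy. The two paths of \eqref{quasisym} are therefore naturally chain homotopic but not equal. You should also be aware that the paper's own one-line proof shares this defect: it asserts that both paths produce the same displayed sum, whereas each path realizes only one of the two incompatible constraint sets above. Any repair must happen where Lemma \ref{newaw} is applied (the descent of $\Psi^{\prime\prime}$ in Lemma \ref{keypsi}(ii)), not in a finer matching of terms.
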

\begin{proof}
We need to prove that diagram \eqref{quasisym} commutes, for $F=L=N$, and $\Theta=AW$, and for any $X,Y,Z,W\in\mathscr{M}od(\mathbb{Z})^\Delta$.

To this end, recall the definition of the Alexander-Whitney map, see \eqref{aw}:
$$
AW(a^k\otimes b^k)=\sum_{i+j=k}d_\fin^ia^k\otimes d_0^jb^k
$$
As well, the identity
\begin{equation}
d_0d_\fin=d_\fin d_0
\end{equation}
is valid for any simplicial set.

Then the both pathes in diagram \eqref{quasisym}, applied to $x^k\otimes y^k\otimes z^k\otimes w^k$ (all of degree $k$) gives
\begin{equation}
\sum_{a+b+c+d=2k}\sum_{\substack{b_1+b_2=b\\c_1+c_2=c}}d_\fin^a(x^k)\otimes d_0^{c_1}d_\fin^{c_2}(z^k)\otimes d_0^{b_1}d_\fin^{b_2}(y^k)\otimes d_0^d(w^k)
\end{equation}

\end{proof}
Now we apply Theorem \ref{keytheorembis}.

Firstly we compute, in the particular case of $L=N$, the colimit $\colim(\mathcal{D}(X)\otimes\mathcal{D}(Y))$, see \eqref{colimotimes}.
As in the computation of $\colim \mathcal{D}(X)$ in Section 4.1, we write
\begin{equation}
\colim(\mathcal{D}(X)\otimes\mathcal{D}(Y))=((T_\dg(LX))\otimes T_\dg(LY))/I_{X,Y}
\end{equation}
for some ideal $I_{X,Y}\subset (T_\dg(LX))\otimes(T_\dg(LY))$. The same surjectivity argument reduces the answer to the computation of
$(m_{\nabla,X}\otimes m_{\nabla, Y})\circ (m_X\otimes m_Y) (\Ker(\beta_X\otimes \beta_Y))$.

Recall the following trivial observation: Let $V,W,V^\prime,W^\prime$ be vector spaces over a field $k$, and let $f\colon V\to W$, $g\colon V^\prime\to W^\prime$ be $k$-linear maps. Then
\begin{equation}\label{tf}
\Ker(f\otimes g)=(\Ker f)\otimes W+V\otimes (\Ker g)
\end{equation}

Due to \eqref{tf},
\begin{equation}
\Ker(\beta_X\otimes\beta_Y)=(\Ker\beta_X)\otimes T_\dg(L(T_\Delta Y)) + T_\dg(L(T_\Delta X))\otimes (\Ker \beta_Y)
\end{equation}
Consequently,
\begin{equation}
\colim(\mathcal{D}(X)\otimes\mathcal{D}(Y))=(\colim \mathcal{D}(X))\otimes (\colim \mathcal{D}(Y))=L^\mon(X)\otimes L^\mon(Y)
\end{equation}
and the map $i_{X,Y}$ (see \eqref{not2}) is an isomorphism in the case we consider.

We have the following commutative diagram, with $\Theta_{X,Y}$ given in Theorem \ref{keytheorembis}, and the vertical isomorphisms just described,
\begin{equation}
\xymatrix{
\colim\mathcal{D}(X\otimes Y)\ar[r]^{\Theta_{X,Y}\ \ \ }\ar[d]_{\simeq}&\colim(\mathcal{D}(X)\otimes\mathcal{D}(Y))\ar[d]^{\simeq}\\
L(X\otimes Y)/\mathscr{J}(X\otimes Y)\ar[r]^{\mathfrak{X}\ \ \ \ \ \ }&(L(X)/\mathscr{J}(X))\otimes (L(Y)/\mathscr{J}(Y))
}
\end{equation}
with undefined $\mathfrak{X}$. It follows from Theorem \ref{keytheorembis} that one necessarily has
$$
\mathfrak{X}=c_L=AW
$$
We are done.
\qed
\begin{remark}{\rm
Indirectly, we have proven that the Alexander-Whitney map
$$
AW\colon L(X\otimes Y)\to L(X)\otimes L(Y)
$$
where $X,Y$ are monoids, descents to the quotients
$$
L_\nabla(X\otimes Y)/\mathscr{J}(X\otimes Y)\to (L_\nabla(X)/\mathscr{J}(X))\otimes (L_\nabla(Y)/\mathscr{J}(Y))
$$
In other words, we proved that
\begin{equation}\label{lr}
AW(\mathscr{J}(X\otimes Y))\subset\langle \mathscr{J}(X)\otimes 1,\ 1\otimes \mathscr{J}(Y)\rangle
\end{equation}
We do not see any ``direct'' way to prove \eqref{lr}.
}
\end{remark}

\section{\sc Main theorem for $\mathbb{Z}_{\le 0}$-graded dg algebras}\label{sectionmain}
\subsection{}
Here we prove our main result:
\begin{theorem}\label{theor_gr_alg}{\itshape
Let $k$ be a field of any characteristic.
There is a functor $\mathfrak{R}\colon \mathscr{A}lg_k^{\le 0}\to \mathscr{A}lg_k^{\le 0}$ and a morphism of functors $w\colon \mathfrak{R}\to Id$ with the following properties:
\begin{itemize}
\item[1.] $\mathfrak{R}(A)$ is cofibrant, and $w\colon \mathfrak{R}(A)\to A$ is a quasi-isomorphism, for any $A\in \mathscr{A}lg_k^{\le 0}$,
\item[2.] there is a colax-monoidal structure on the functor $\mathfrak{R}$, such that all colax-maps $\beta_{A,B}\colon \mathfrak{R}(A\otimes B)\to \mathfrak{R}(A)\otimes \mathfrak{R}(B)$ are quasi-isomorphisms of dg algebras, and such that the diagram
    $$
    \xymatrix{
    \mathfrak{R}(A\otimes B)\ar[rr]^{\beta_{A,B}}\ar[rd]&&\mathfrak{R}(A)\otimes \mathfrak{R}(B)\ar[dl]\\
    &A\otimes B
    }
    $$
    is commutative,
\item[3.] the morphism $w(k^\udot)\colon \mathfrak{R}(k^\udot)\to k^\udot$ coincides with $\alpha\colon \mathfrak{R}(k^\udot)\to k^\udot$, where $\alpha$ is a part of the colax-monoidal structure {\rm (see Definition \ref{colax_intro})}, and $k^\udot=1_{\mathscr{A}lg_k^{\le 0}}$ is the dg algebra equal to the one-dimensional $k$-algebra in degree 0, and vanishing in other degrees.
\end{itemize}
}
\end{theorem}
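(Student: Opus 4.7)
The plan is to set $\mathfrak{R}(A) = N^{\mon}(\mathfrak{F}(\Gamma^{\mon}(A)))$ as anticipated in equation \eqref{newintro2}, and to deduce each of the three listed properties by combining the analogous properties of the three factors $\Gamma^{\mon}$, $\mathfrak{F}$, and $N^{\mon}$ provided respectively by the Dold-Kan/Schwede-Shipley machinery of Sections 2--4 and by Theorem \ref{theor_simpl_alg}. The morphism $w\colon \mathfrak{R}(A) \to A$ is defined as the composition $N^{\mon}(\mathfrak{F}(\Gamma^{\mon}(A))) \to N^{\mon}(\Gamma^{\mon}(A)) \xrightarrow{\epsilon^{\mon}} A$, using the projection from $\mathfrak{F}$ and the counit of the adjunction \eqref{newintro1}.

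For property (1), $N^{\mon}$ is a left Quillen functor in the Schwede-Shipley Quillen equivalence \eqref{newintro1}, hence preserves cofibrant objects; combined with cofibrancy of $\mathfrak{F}(\Gamma^{\mon}(A))$ this yields cofibrancy of $\mathfrak{R}(A)$. The morphism $w$ is, by construction, the derived counit of the Quillen equivalence evaluated at $A$ (all objects of $\Mon(\mathscr{C}^-(\mathbb{Z}))$ are fibrant, and $\mathfrak{F}(\Gamma^{\mon}(A))$ is a cofibrant replacement of $\Gamma^{\mon}(A)$); the Quillen equivalence then forces $w$ to be a weak equivalence, i.e.\ a quasi-isomorphism of dg algebras.

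For property (2), I would assemble $\beta_{A,B}$ from three colax-monoidal structures, one on each factor. Concerning $\Gamma^{\mon}$: by Theorem \ref{mytheor} the pair $(\nabla, AW)$ obeys the bialgebra axiom on $N$, so by Lemma \ref{lemma12} the adjoint pair $(c_\Gamma, \ell_\Gamma)$ on $\Gamma$ obeys it too; Lemma \ref{lemmaxx} then descends $c_\Gamma$ to a colax-monoidal structure on $\Gamma^{\mon}$. For $\mathfrak{F}$, the colax structure is given by Theorem \ref{theor_simpl_alg}. For $N^{\mon}$, Theorem \ref{another} together with Theorem \ref{keytheorem} provides the canonical colax-monoidal structure, realized explicitly as the Alexander-Whitney map descending through the quotient $L_\nabla/\mathscr{J}$. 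Composing these three colax structures gives $\beta_{A,B}$; commutativity of the required triangle over $A\otimes B$ reduces to the compatibilities already present in each factor (for the Dold-Kan piece, Lemma \ref{dkmonoidal}(ii) is exactly the needed compatibility of $c_\Gamma$ with the counit).

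The main obstacle is verifying that each of the three constituent colax maps is a quasi-isomorphism at the relevant argument, and hence so is their composite $\beta_{A,B}$. The middle factor is handled directly by Theorem \ref{theor_simpl_alg}. For the first factor, the fact that $AW$ and $\nabla$ are mutually inverse up to natural chain homotopy (Proposition \ref{before}(1,2)) makes $c_\Gamma$ a weak equivalence on underlying complexes; since weak equivalences in both $\Mon(\mathscr{M}od(\mathbb{Z})^\Delta)$ and $\Mon(\mathscr{C}^-(\mathbb{Z}))$ are detected on underlying objects (by the transfer principle of \eqref{sacm}), this suffices. For the third factor, I would argue that on a cofibrant simplicial monoid $B$ the natural surjection $L_\nabla(B) \twoheadrightarrow L_\nabla(B)/\mathscr{J}(B) = \tilde{L}^{\mon}(B)$ is a quasi-isomorphism: conceptually, this must hold because otherwise the Quillen equivalence \eqref{newintro1} would fail to detect weak equivalences on cofibrants; concretely, one traces through the free-monoid identity \eqref{identity} after applying $\mathfrak{F}(\Gamma^{\mon}(A))$, reducing the claim to the underlying-level Alexander-Whitney quasi-isomorphism. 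Granted this, $\beta_{A,B}$ is a quasi-isomorphism. Property (3) is then a routine consequence: the colax-monoidal unit piece $\alpha$ is by construction the composition of the unit compatibilities of the three factors, and on $k^\udot$ each such compatibility coincides with the respective projection map, so $\alpha = w(k^\udot)$.
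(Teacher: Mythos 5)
Your construction of $\mathfrak{R}=N^{\mon}\circ\mathfrak{F}\circ\Gamma^{\mon}$, of $w$, of the colax structure as a composite of three colax structures, and your treatment of properties (1) and (3) all coincide with the paper's proof. The deviation — and the gap — is in how you establish that $\beta_{A,B}$ is a quasi-isomorphism. You propose to verify that each of the three constituent colax maps is a quasi-isomorphism at the relevant argument and then compose. For the third factor this is exactly where your argument breaks down: the claim that the canonical colax map $c_{L^{\mon}}\colon L^{\mon}(X\otimes Y)\to L^{\mon}(X)\otimes L^{\mon}(Y)$ is a weak equivalence for $X=\mathfrak{F}(\Gamma^{\mon}A)$, $Y=\mathfrak{F}(\Gamma^{\mon}B)$ is supported only by the remark that ``otherwise the Quillen equivalence would fail to detect weak equivalences on cofibrants.'' That is a non sequitur: the Quillen-equivalence property constrains the adjunction unit and counit, not the colax-monoidal maps, and the weak-monoidal-Quillen-pair condition of [SchS03] concerns the underlying functor $N$ on cofibrant objects of $\mathscr{M}od(\mathbb{Z})^{\Delta}$, not $N^{\mon}$ on monoids. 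Your reduction via \eqref{identity} is not carried out. A second, related problem sits in the middle factor: to conclude that $L^{\mon}(c_{\mathfrak{F}})$ is a quasi-isomorphism you need the left Quillen functor $L^{\mon}$ to preserve the weak equivalence $c_{\mathfrak{F}}\colon\mathfrak{F}(X\otimes Y)\to\mathfrak{F}(X)\otimes\mathfrak{F}(Y)$, hence you need its \emph{target}, a tensor product of cofibrant simplicial algebras, to be cofibrant. The paper explicitly warns (remark following Theorem \ref{ss}) that $\Mon\mathscr{M}$ is not a monoidal model category in general, so this cofibrancy is not available.

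The irony is that you have already assembled everything needed to bypass all of this: you note that the triangle over $A\otimes B$ commutes (via Lemma \ref{dkmonoidal}(ii), Theorems \ref{another} and \ref{keytheorem}, and the corresponding compatibilities for $\mathfrak{F}$ and $\Gamma^{\mon}$), and your property (1) gives that $p_{A\otimes B}$ and $p_A\otimes p_B$ are both quasi-isomorphisms. The 2-out-of-3 axiom then forces $\beta_{A,B}$ to be a quasi-isomorphism with no factor-by-factor analysis at all. This is precisely the paper's argument (Proposition \ref{propref}(iii)); the ``main obstacle'' you identify should be deleted and replaced by this one-line deduction, at which point the real work is concentrated where the paper puts it, namely in proving the commutativity of the triangle (Lemmas \ref{liii1} and \ref{liii2}).
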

The idea is to use the solution of the analogous problem for simplicial algebras (given in Theorem \ref{theor_simpl_alg} in Section 1), and ``transfer'' it to dg algebras using the Dold-Kan correspondence.

More precisely, consider the Dold-Kan correspondence
$$
N\colon \mathscr{M}od(\mathbb{Z})^\Delta\rightleftarrows \mathscr{C}(\mathbb{Z})^-\colon\Gamma
$$
The functors $N$ and $\Gamma$ form an adjoint equivalence of categories; therefore, we have some freedom which of these two functors to consider as the left (right) adjoint. We consider $\Gamma$ as the right adjoint. From now on, we use the notations: $\Gamma=R$, $N=L$.

The functor $L$ comes with the colax-monoidal (Alexander-Whitney) structure $AW$ and with the lax-monoidal (shuffle) structure $\nabla$. They induce a lax-monoidal structure $\ell_R$ and a colax-monoidal structure $c_R$ on the functor $R$ by the adjunction, as is explained in \eqref{lg} and \eqref{cg}.

Consider the functor $R^\mon\colon \mathscr{A}lg^{\le 0}_k\to \mathscr{A}lg_k^\Delta$, induced  by the functor $R$ and from its lax-monoidal structure $\ell_R$, on the categories of monoids (see
Section \ref{section21}). It admits a left adjoint functor $L^\mon$, defined in Section \ref{section22}.

Recall that for a simplicial algebra $A$ we denote by $\mathfrak{F}(A)$ a solution of Theorem \ref{theor_simpl_alg}.

Define
\begin{equation}
\mathfrak{R}(A)=L^\mon(\mathfrak{F}(R^\mon(A)))
\end{equation}
where $A\in\mathscr{A}lg_k^{\le 0}$.

There is a projection $p_\mathfrak{F}\colon \mathfrak{F}(R^\mon(A))\to R^\mon(A)$. Define the projection
\begin{equation}
p_A\colon \mathfrak{R}(A)\to A
\end{equation}
as the composition of the projection $p_\mathfrak{F}$ with the adjunction map $L^\mon\circ R^\mon\to\Id$.

We claim that this functor $\mathfrak{R}$ gives a solution to Theorem \ref{theor_gr_alg}. We need to prove the following statements:
\begin{prop}\label{propref}{\itshape
\begin{itemize}
\item[(i)] the functor $\mathfrak{R}\colon \mathscr{A}lg_k^{\le 0}\to \mathscr{A}lg_k^{\le 0}$ has a natural colax-monoidal structure $\beta$,
\item[(ii)] $\mathfrak{R}(A)$ is cofibrant, and the projection $p_A\colon\mathfrak{R}(A)\to A$ is a weak equivalence, for any $A\in \mathscr{A}lg_k^{\le 0}$,
\item[(iii)] the diagram
\begin{equation}
 \xymatrix{
    \mathfrak{R}(A\otimes B)\ar[rr]^{\beta_{A,B}}\ar[rd]_{p_{A\otimes B}}&&\mathfrak{R}(A)\otimes \mathfrak{R}(B)\ar[dl]^{p_A\otimes p_B}\\
    &A\otimes B
}
\end{equation}
commutes; consequently, it follows from (ii) and from the 2-out-of-3 axiom that $\beta_{A,B}$ is a weak equivalence for any $A,B\in\mathscr{A}lg_k^{\le 0}$.
\end{itemize}
}
\end{prop}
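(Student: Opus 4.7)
The plan is to verify the three claims of the proposition in order, treating (i) and (ii) as consequences of formalism and concentrating effort on (iii), where Theorems~\ref{another} and~\ref{keytheorem} enter. For (i), I construct $\beta$ as a three-fold composition, with one colax-monoidal structure for each factor of $\mathfrak{R}=L^{\mon}\circ\mathfrak{F}\circ R^{\mon}$. The functor $R=\Gamma$ carries the colax-monoidal structure $c_R$ adjoint to the shuffle $\nabla$ on $L=N$; since the pair $(\nabla,AW)$ obeys the bialgebra axiom by Theorem~\ref{mytheor}, so does its adjoint pair on $R$ by Lemma~\ref{lemma12}, and Lemma~\ref{lemmaxx} promotes $c_R$ to a colax-monoidal structure $c_{R^{\mon}}$ on $R^{\mon}$. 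The functor $\mathfrak{F}$ is colax-monoidal by Theorem~\ref{theor_simpl_alg}. Finally $L^{\mon}$ carries the canonical colax-monoidal structure, whose explicit form is provided by Theorem~\ref{keytheorem} as the descent of $AW$ along the isomorphism $L^{\mon}\simeq L_{\nabla}/\mathscr{J}$. The composition of these three pieces defines $\beta_{A,B}$.

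For (ii), both parts are formal. Theorem~\ref{theor_simpl_alg} provides a cofibrant simplicial algebra $\mathfrak{F}(R^{\mon}A)$, and $L^{\mon}$, being the left Quillen functor of the Schwede--Shipley Quillen equivalence \eqref{newintro1}, preserves cofibrancy, hence $\mathfrak{R}(A)$ is cofibrant. Since fibrations in Hinich's model structure on $\mathscr{A}lg_k^{\le 0}$ are degree-wise surjections, every object is fibrant, and the right Quillen functor $R^{\mon}$ sends $A$ to a fibrant object of $\mathscr{A}lg_k^{\Delta}$. The map $p_A=\epsilon^{\mon}_A\circ L^{\mon}(w_{R^{\mon}A})$ is therefore a representative of the derived counit $\mathbb{L}L^{\mon}R^{\mon}(A)\to A$ at a fibrant object, which is a weak equivalence by the very definition of a Quillen equivalence.

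For (iii), I would expand the triangle by inserting the three intermediate stages of $\beta$, factor each projection as $\epsilon^{\mon}\circ L^{\mon}(w)$, and verify three commuting squares. The square involving $\mathfrak{F}$ commutes by Theorem~\ref{theor_simpl_alg}(2), which asserts that the colax maps of $\mathfrak{F}$ are defined over the projection $w$. The square involving $R^{\mon}$ commutes by naturality of $w$ applied to the morphism $c_{R^{\mon}}$. The remaining square, on the $L^{\mon}$ side, amounts to the assertion that the counit $\epsilon^{\mon}:L^{\mon}R^{\mon}\to\Id$ is colax-monoidal for the pair $(c_{L^{\mon}},c_{R^{\mon}})$.

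This final square is the main obstacle. Unpacking Theorem~\ref{keytheorem} identifies $c_{L^{\mon}}$ at the underlying level with the Alexander--Whitney map $AW$ passed to the quotient $L_{\nabla}/\mathscr{J}$, and Lemma~\ref{lemmaxx} identifies $c_{R^{\mon}}$ with $c_R$ at the underlying level. Hence the monoid-level diagram reduces to its underlying non-monoid version, where it is precisely the colax-monoidality of $\epsilon:N\Gamma\to\Id$ with respect to $(AW,c_R)$, proved in Lemma~\ref{dkmonoidal}(ii). The bialgebra axiom is what ensures that the lift of this compatibility to the category of monoids is consistent with the lifts of the individual colax structures on $L^{\mon}$ and $R^{\mon}$. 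Once the three squares are pasted together, the triangle of (iii) commutes; the assertion that $\beta_{A,B}$ is a quasi-isomorphism then follows at once from (ii) and the 2-out-of-3 axiom for weak equivalences.
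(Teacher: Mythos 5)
Your proposal is correct and follows essentially the same route as the paper: (i) by composing the three colax structures (with the bialgebra axiom, via Lemma \ref{lemma12} and Lemma \ref{lemmaxx}, supplying the colax structure on $R^{\mon}$), (ii) by combining Lemma \ref{lemmacof} with the Schwede--Shipley Quillen equivalence of Corollary \ref{cabove}, and (iii) by reducing, through the explicit description of $c_{L^{\mon}}$ in Theorems \ref{another} and \ref{keytheorem}, to the colax-monoidality of the counit $\epsilon\colon N\Gamma\to\Id$ established in Lemma \ref{dkmonoidal}(ii). Your three-square decomposition of the triangle in (iii) is precisely the paper's split into Lemmas \ref{liii1} and \ref{liii2}.
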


The three items of this Proposition rely on three different theories: they are the bialgebra axiom for (i), the Schwede-Shipley theory of weak monoidal Quillen pairs for (ii), and the monoidal property of the Dold-Kan correspondence (Lemma \ref{dkmonoidal}) for (iii).
We give the detailed proof in the rest of this Section.

\subsection{\sc Proof of Proposition \ref{propref}, (i)}
The functor $\mathfrak{R}=L^\mon\circ \mathfrak{F}\circ R^\mon$ is a composition of three functors. The functor $L^\mon$ comes with its colax-monoidal structure (adjoint to the lax-monoidal structure on $R^\mon$), and the functor $\mathfrak{F}$ has the colax-monoidal structure \eqref{cf}. It remains to define a colax-monoidal structure on $R^\mon$ (a priori $R^\mon$ has only a lax-monoidal structure).

Recall that the functor $L=N$ has the Alexander-Whitney colax-monoidal structure $AW$ and a lax-monoidal structure $\nabla$, {\it compatible by the bialgebra axiom} (see Theorem \ref{mytheor}). Then it follows from Lemma \ref{lemma12} that the adjoint lax-monoidal and colax-monoidal structures on $R$ obey the bialgebra axiom as well. In general, the lax-monoidal structure on $R$ induces a lax-monoidal structure on $R^\mon\colon \Mon\mathscr{D}\to\Mon\mathscr{C}$ (the same as the lax-monoidal structure on $R$ for the underlying objects), {\it but that is not true for the colax-monoidal structure}. When the both structures are compatible by the bialgebra axiom, Lemma \ref{lemmaxx} says the colax-monoidal structure, defined on the underlying objects as the one on $R$, defines a colax-monoidal structure on $R^\mon$.

Thus, $\mathfrak{R}$ is a composition of three functors, each of which comes with natural colax-monoidal structure. Therefore, $\mathfrak{R}$ is colax-monoidal. We always assume this structure when refer to a colax-monoidal structure on $\mathfrak{R}$.
\qed

\subsection{\sc Quillen pairs and weak monoidal Quillen pairs}
\subsubsection{\sc Quillen pairs and Quillen equivalences}
To prove the statement (ii) of Proposition, we need firstly to recall some definitions on Quileen pairs of functors between two closed model categories, and to recall some results of Schwede-Schipley [SchS03] on weak monoidal Quillen pairs.

In classical homological algebra one can derive left exact or right exact functor. When we work with closed model categories, we try to extend the classical homological algebra to non-abelian (and non-additive) context. A typical examples are the category of topological spaces and the category of dg associative algebras. How we can define the notions of a left (right) exact functor (i.e., of those functors we can derive) in such generality? The answer is given (by Quillen) in the concept of {\it a Quillen pair of functors}.

To motivate the definition below, recall the following simple fact:
\begin{lemma}{\itshape
Suppose $\mathscr{A},\mathscr{B}$ are two abelian categories, and let $$L\colon\mathscr{A}\rightleftarrows\mathscr{B}\colon R$$ be a pair of adjoint functors, with $L$ left adjoint. Then $L$ is right exact and $R$ is left exact.
}
\end{lemma}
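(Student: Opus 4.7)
The plan is to deduce both exactness statements from the general principle that left adjoints preserve colimits and right adjoints preserve limits, together with the Yoneda philosophy: a sequence in an abelian category is exact iff it is sent to an exact sequence of abelian groups by $\Hom(-,Y)$ for every $Y$ (respectively $\Hom(X,-)$ for every $X$). Since in an abelian category ``right exact'' means ``preserves cokernels and finite coproducts'' and ``left exact'' means ``preserves kernels and finite products,'' it suffices to check these preservation properties.

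First I would prove that $L$ is right exact. Take a right exact sequence $X' \to X \to X'' \to 0$ in $\mathscr{A}$. To show that $L(X') \to L(X) \to L(X'') \to 0$ is exact in $\mathscr{B}$, I would test it against an arbitrary object $Y \in \mathscr{B}$ by applying the left exact contravariant functor $\Hom_\mathscr{B}(-,Y)$. The adjunction isomorphism \eqref{adj0} turns this into the sequence
\[
0 \to \Hom_\mathscr{A}(X'', RY) \to \Hom_\mathscr{A}(X, RY) \to \Hom_\mathscr{A}(X', RY),
\]
which is exact since $\Hom_\mathscr{A}(-, RY)$ is left exact and the original sequence is right exact in $\mathscr{A}$. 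As this holds for every $Y$, the image sequence in $\mathscr{B}$ is right exact. The preservation of finite coproducts by $L$ follows from the same Yoneda argument (or, equivalently, from the fact that coproducts are colimits).

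Dually, for $R$ I would start with a left exact sequence $0 \to Y' \to Y \to Y''$ in $\mathscr{B}$, apply $\Hom_\mathscr{A}(X,-)$ to $0 \to R(Y') \to R(Y) \to R(Y'')$, and use \eqref{adj0} to identify the result with
\[
0 \to \Hom_\mathscr{B}(LX, Y') \to \Hom_\mathscr{B}(LX, Y) \to \Hom_\mathscr{B}(LX, Y''),
\]
which is exact by left exactness of $\Hom_\mathscr{B}(LX,-)$. Testing against all $X$ and invoking the Yoneda lemma concludes that $R$ preserves kernels, and the analogous check handles finite products.

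There is no real obstacle here; the only point that requires a moment of care is the naturality of the adjunction isomorphism \eqref{adj0} in both variables, which ensures that the identifications above intertwine the maps induced by the morphisms in the original sequences. Once this is in place, both exactness statements are immediate consequences of the left exactness of the $\Hom$ bifunctor in each variable.
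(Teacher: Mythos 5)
Your proof is correct and is exactly the standard argument: the paper itself gives no proof, deferring to [W], Theorem 2.6.1, whose proof is the same Hom/Yoneda reduction via the naturality of the adjunction isomorphism that you carry out. Nothing to add.
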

Prove as an exercise, or see the proof in [W], Theorem 2.6.1.
\qed

Morally, we can not say in non-abelian categories what is the right (left) exactness, but we know what adjoint functors are. These are (among other assumptions) the functors we can derive. Therefore, they come in pairs.

\begin{defn}{\rm
Let $\mathscr{C},\mathscr{D}$ be two closed model categories, and let
$$
L\colon\mathscr{C}\rightleftarrows\mathscr{D}\colon R
$$
is a pair of adjoint functors, with $L$ the left adjoint. The pair $(L,R)$ is called {\it a Quillen pair of functors} if
\begin{itemize}
\item[(1)] $L$ preserves cofibrations and trivial cofibrations,
\item[(2)] $R$ preserves fibrations and trivial fibrations.
\end{itemize}
}
\end{defn}
It is proven (see, e.g., [Hir], Prop. 8.5.7) that, under these conditions, $L$ takes weak equivalences between cofibrant objects to weak equivalences, and $R$ takes weak equivalences between fibrant object to weak equivalences.

It is proven that a Quillen pair of functors defines an adjoint pair of functors between the homotopy categories,
$$
\mathbb{L}\colon \Ho\mathscr{C}\rightleftarrows \Ho\mathscr{D}\colon\mathbb{R}
$$

The next step is to find conditions on $(L,R)$ under which the pair $(\mathbb{L},\mathbb{R})$ is an adjoint equivalence. This is the case when $(L,R)$ is {\it a Quillen equivalence}.

\begin{defn}\label{qe}{\rm
A Quillen pair
$$
L\colon\mathscr{C}\rightleftarrows\mathscr{D}\colon R
$$
is called {\it a Quiilen equivalence} if for any cofibrant object $X$ in $\mathscr{C}$ and for every fibrant object $Y$ in $\mathscr{D}$, a map
$f\colon X\to RY$ is a weak equivalence if and only if the adjoint map $f^\sharp\colon LX\to Y$ is a weak equivalence.
}
\end{defn}
It is proven (see e.g. [Hir], Theorem 8.5.23) that if $(L,R)$ is a Quillen equivalence, the functors
$$
\mathbb{L}\colon\Ho\mathscr{C}\rightleftarrows\Ho\mathscr{D}\colon \mathbb{R}
$$
form an adjoint {\it equivalence} of categories.

All these results are due to D.Quillen [Q].

\subsubsection{\sc Weak monoidal Quillen pairs}
Here we recall a result on weak monoidal Quillen pairs which is essential for our proof of Proposition \ref{propref}, (ii) below.
Our intention here is not to give a throughout treatment (as it would be just a copy of published papers), but rather to recall very briefly the definitions and results, for convenient reference in the next Subsection.

The categories we consider here are at once closed model and monoidal. There is some reasonable compatibility between these two structures on a category $\mathscr{C}$, which guarantee, in particular, that $\Ho\mathscr{C}$ is a monoidal category. The concept is called {\it a monoidal model category}. We do not reproduce this definition here as we do not use it practically, all our categories in this paper fulfill this definition. The interested reader is referred to [SchS00].

The following definition is due to Schwede-Shipley [SchS03] (Definition 3.6).
\begin{defn}{\rm
Let $\mathscr{C},\mathscr{D}$ be {\it monoidal model categories}, and let
$$
L\colon\mathscr{C}\rightleftarrows\mathscr{D}\colon R
$$
be a Quillen pair of the underlying closed model categories. Suppose there is a lax-monoidal structure $\ell$ on the functor $R$, denote by $\varphi$ the adjoint colax-monoidal structure on $L$.

The triple $(L,R,\ell)$ is called {\it a weak monoidal Quillen pair} if
\begin{itemize}
\item[(i)] for all cofibrant objects $X,Y$ in $\mathscr{C}$, the colax-monoidal map
$$
\varphi_{X,Y}\colon L(X\otimes Y)\to L(X)\otimes L(Y)
$$
is a weak equivalence,
\item[(ii)] for some cofibrant replacement $q\colon \mathbb{I}^c\to\mathbb{I}$ of the unit object in $\mathscr{C}$, the composition
$$
L(\mathbb{I}^c_\mathscr{C})\xrightarrow{L(q)}L(\mathbb{I}_\mathscr{C})\xrightarrow{\mu}\mathbb{I}_\mathscr{D}
$$
is a weak equivalence (where $\mu$ is a part of colax-monoidal structure, see Definition \ref{colax_intro}).
\end{itemize}

A triple $(L,R,\ell)$ is called {\it a weak monoidal Quillen equivalence}, if is a weak monoidal Quillen pair, such that the underlying Quillen pair $(L,R)$ is a Quillen equivalence.
}
\end{defn}

We use essentially the following result from [SchS03] (Theorem 3.12 (3)).

\begin{theorem}\label{ss}{\itshape
Let $(L,R,\ell)$ be a weak monoidal Quillen equivalence, and let
$$
L\colon \mathscr{C}\rightleftarrows \mathscr{D}\colon R
$$
be the underlying Quillen pair. Suppose that the unit objects in $\mathscr{C}$ and $\mathscr{D}$ are cofibrant, and suppose that the forgetful functors $\Mon\mathscr{C}\to\mathscr{C}$ and $\Mon\mathscr{D}\to\mathscr{D}$ create model structures in $\Mon\mathscr{C}$ and $\Mon\mathscr{D}$ (see the explanation just below). Then
\begin{equation}\label{best}
L^\mon\colon\Mon\mathscr{C}\rightleftarrows\Mon\mathscr{D}\colon R^\mon
\end{equation}
is a Quillen equivalence.
}
\end{theorem}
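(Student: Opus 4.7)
The plan is to establish the Quillen adjunction first and then upgrade to a Quillen equivalence via the standard derived-unit criterion, reducing the whole problem to a cell induction on cofibrant monoids. To begin, I would observe that because the model structures on $\Mon\mathscr{C}$ and $\Mon\mathscr{D}$ are created by the forgetful functors, a morphism of monoids is a fibration, trivial fibration, or weak equivalence iff its underlying morphism is. Since $R^\mon$ agrees with $R$ on underlying objects, it preserves (trivial) fibrations because $R$ does, so by adjunction $L^\mon$ preserves (trivial) cofibrations. Hence $(L^\mon,R^\mon)$ is a Quillen pair.

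To promote this to a Quillen equivalence, it suffices to verify that (a) $R^\mon$ detects weak equivalences between fibrant objects, and (b) for every cofibrant monoid $X\in\Mon\mathscr{C}$, the derived unit $X\to R^\mon\bigl((L^\mon X)^{\mathrm{fib}}\bigr)$ is a weak equivalence. Property (a) is immediate: since $(L,R)$ is a Quillen equivalence, $R$ detects weak equivalences between fibrant objects of $\mathscr{D}$, and this detection property passes through the forgetful functors because fibrancy and weak equivalences in $\Mon$ are both defined by their underlying data. So the whole problem reduces to (b).

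For (b), I would proceed by transfinite cell induction. Every cofibrant monoid in $\Mon\mathscr{C}$ is a retract of a cell object built from $\mathbb{I}_\mathscr{C}$ by iterated pushouts along generating cofibrations of the form $T_\mathscr{C}(i)\colon T_\mathscr{C}(A)\to T_\mathscr{C}(B)$, with $i$ a generating cofibration of $\mathscr{C}$. The base case is $X=\mathbb{I}_\mathscr{C}$ (cofibrant by hypothesis): since a left adjoint preserves initial objects, $L^\mon(\mathbb{I}_\mathscr{C})=\mathbb{I}_\mathscr{D}$, and the unit becomes the map $\mathbb{I}_\mathscr{C}\to R(\mathbb{I}_\mathscr{D})$ adjoint to the structural map $\mu\colon L(\mathbb{I}_\mathscr{C})\to\mathbb{I}_\mathscr{D}$. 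Axiom (ii) of the weak monoidal Quillen pair, combined with the cofibrancy of $\mathbb{I}_\mathscr{C}$, says $\mu$ is a weak equivalence; the Quillen equivalence property of $(L,R)$ then upgrades this to a weak equivalence on the derived unit of $\mathbb{I}_\mathscr{C}$.

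The inductive step is where the main obstacle sits. Given $X$ for which the derived unit is already known to be a weak equivalence and a pushout $X'=X\coprod_{T_\mathscr{C}(A)}T_\mathscr{C}(B)$ in $\Mon\mathscr{C}$, one must show that $L^\mon(X')$ computes, on underlying objects, the pushout $L^\mon(X)\coprod_{T_\mathscr{D}(LA)}T_\mathscr{D}(LB)$ up to weak equivalence. Pushouts in $\Mon\mathscr{C}$ along free-monoid maps do not agree with pushouts in $\mathscr{C}$, so one must work instead with a bar-type filtration of the underlying object of $X'$ whose successive subquotients are iterated tensor products of copies of the underlying object of $X$ with copies of the cofiber of $A\to B$. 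The technical core is then: (i) build such a filtration whose pieces are cofibrant in $\mathscr{C}$, so that axiom (i) of the weak monoidal Quillen pair applies layerwise to show that $L$ carries each iterated tensor product to the analogous tensor product in $\mathscr{D}$ up to weak equivalence; and (ii) check that sequential colimits along the cofibrations in the filtration preserve underlying weak equivalences, so that the layerwise equivalences assemble to a global one. Verifying cofibrancy of all intermediate objects in the bar filtration — which is precisely what makes axiom (i) genuinely applicable at each stage — is where essentially all of the analytic and combinatorial labor concentrates, and is the step I expect to be the main obstacle.
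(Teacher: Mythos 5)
The paper does not prove this statement: it is quoted verbatim from Schwede--Shipley, [SchS03], Theorem 3.12(3), and the author only cites it. So the comparison here is really between your sketch and the published proof it points to. Your architecture matches that proof: creation of the model structures by the forgetful functors gives the Quillen pair for free; the derived-unit criterion together with reflection of weak equivalences by $R^\mon$ on fibrant objects reduces everything to cofibrant monoids; and the cofibrant monoids are handled by transfinite induction over cell attachments along $T_\mathscr{C}(i)$, with the base case $\mathbb{I}_\mathscr{C}$ disposed of by axiom (ii) of a weak monoidal Quillen pair. The ``bar-type filtration'' of a pushout along a free-monoid map, whose subquotients are iterated tensor products of the underlying object of $X$ with cofibers of the attaching cofibration, is exactly the technical engine of [SchS03] (going back to Lemma 6.2 of [SchS00]), and you are right that this is where all the work lives; as written your proposal describes this step rather than carries it out, so it is an outline with an acknowledged hole precisely at the hard part.

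Two points should be made explicit for the induction to close. First, since $L^\mon$ does \emph{not} commute with the forgetful functors (only $R^\mon$ does), the derived unit of the monoid adjunction is not literally the derived unit of $(L,R)$ applied to the underlying object $UX$; the correct object of the induction is the natural comparison map $\chi_X\colon L(UX)\to U(L^\mon X)$, and the statement to be proved by cell induction is that $\chi_X$ is a weak equivalence for every cofibrant monoid $X$. Once that is known, the derived unit of $X$ factors through the derived unit of $UX$ up to the weak equivalence $R(\chi_X)$, and the Quillen equivalence of $(L,R)$ finishes the argument. Second, to invoke both axiom (i) of the weak monoidal Quillen pair and the Quillen-equivalence property of $(L,R)$ at the level of underlying objects, you need the lemma that a cofibrant monoid has cofibrant underlying object; this is where the hypothesis that the units are cofibrant is used a second time, and it should be stated rather than absorbed silently into the filtration argument.
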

\begin{remark}{\rm
1. See [GS], Section 3, or [Hir], Chapter 11, for detailed explanation of the meaning of ``the forgetful functors generate closed model structures''. This concept refers to the transfer of closed model structures for {\it cofibrantly generated model categories}. See loc.cit. for all these concepts, as well as for a proof that our categories $\mathscr{C}=\mathscr{M}od(\mathbb{Z})^\Delta$ and $\mathscr{D}=\mathscr{C}(\mathbb{Z})^-$ satisfy this assumptions.

2. The Quillen equivalence \eqref{best} is {\it not} a weak monoidal Quillen equivalence. In fact, the natural monoidal structure on $\Mon\mathscr{M}$ for a monoidal model category $\mathscr{M}$, is {\it not} a monoidal model category in general. For instance, the monoidal bifunctor does not commute with the coproducts as a functor of one argument, for fixed another one.
}
\end{remark}

\subsection{\sc Proof of Proposition \ref{propref}, (ii)}
We need to prove that, for any dg algebra $A\in\mathscr{A}lg_k^{\le 0}$, the dg algebra $\mathfrak{R}(A)$ is cofibrant, and the projection
\begin{equation}\label{claimii}
p_A\colon\mathfrak{R}(A)\to A
\end{equation}
is a quasi-isomorphism of dg algebras.

Consider the Dold-Kan correspondence. In [SchS03], Section 3.4, there is given a criterium for when a triple $(L,R,\ell)$ is a weak Quillen equivalence.
It is proven as well, that this criterium works in the following two cases. The first case is the case of the Dold-Kan correspondence, with $\Gamma=R$ the right adjoint, with the lax-monoidal structure being the adjoint to the colax-monoidal structure $AW$ on $N=L$. The second case is the case when $N=R$ is the right adjoint, with the lax-monoidal shuffle structure $\nabla$.
In our applications, we need only the first possibility. Let us summarize.

\begin{lemma}\label{ss0}{\itshape
Consider the Dold-Kan correspondence
$$N\colon\mathscr{M}od(Z)^\Delta\rightleftarrows \mathscr{C}(\mathbb{Z})^-\colon\Gamma$$
Use the notations $L=N$, $R=\Gamma$, and let $\ell$ be the lax-monoidal structure on $R$, adjoint to the colax-monoidal structure $AW$ on $L$.
Then $(L,R,\ell)$ is a weak Quillen equivalence.
}
\end{lemma}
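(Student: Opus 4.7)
The plan is to verify the three conditions packaged into ``weak monoidal Quillen equivalence'': that $(L,R)$ is a Quillen pair, that it is a Quillen equivalence, and that the colax-monoidal structure $c_L = AW$ adjoint to $\ell$ satisfies the two compatibility conditions in the definition. I would treat these in turn and at the end point out where the actual work lies.

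First, for the Quillen-pair condition, I would use that both $\mathscr{M}od(\mathbb{Z})^\Delta$ and $\mathscr{C}^-(\mathbb{Z})$ are cofibrantly generated model categories and that $L = N$ preserves and reflects weak equivalences (since $\pi_*(X) \cong H_*(NX)$ for a simplicial abelian group $X$). Thus it suffices to check that $L$ sends the explicit generating (acyclic) cofibrations to (acyclic) cofibrations, which is a direct computation on generators such as $\mathbb{Z}\cdot\partial\Delta^n \hookrightarrow \mathbb{Z}\cdot\Delta^n$.

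Second, for the Quillen equivalence, the key observation I would use is that Dold--Kan is an adjoint equivalence of the underlying categories, with both unit $\eta$ and counit $\epsilon$ natural isomorphisms. Given a cofibrant $X$, a fibrant $Y$, and a map $f\colon X \to RY$, the adjoint is $f^\sharp = \epsilon_Y \circ L(f)$; since $\epsilon_Y$ is an isomorphism, $f^\sharp$ is a weak equivalence iff $L(f)$ is, and since $L$ preserves and reflects weak equivalences, this is iff $f$ is. Thus the criterion of Definition \ref{qe} holds tautologically, and the argument reduces completely to the first step.

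Third, for the weak monoidal conditions: condition (i) asks that $c_L(X,Y) = AW_{X,Y}$ be a weak equivalence for cofibrant $X,Y$, and by Proposition \ref{before}(1.) and (2.), $AW\circ\nabla = \id$ and $\nabla\circ AW$ is chain homotopic to the identity, so $AW_{X,Y}$ is in fact a quasi-isomorphism for \emph{all} $X,Y$, with no cofibrancy hypothesis needed. Condition (ii) I would reduce to the canonical identification $N(\mathrm{const}\ \mathbb{Z}) = \mathbb{Z}[0] = \mathbb{I}_\mathscr{D}$: the unit $\mathbb{I}_\mathscr{C}$ (the constant simplicial abelian group at $\mathbb{Z}$) is already cofibrant, so one takes $\mathbb{I}^c_\mathscr{C} = \mathbb{I}_\mathscr{C}$, and the required composition is an isomorphism since all higher simplices of $\mathrm{const}\ \mathbb{Z}$ lie in images of degeneracies and are killed by normalization. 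The main obstacle in the whole argument is therefore not the monoidal part---which is immediate from Proposition \ref{before}---but the Quillen-pair check of the first step, which is standard but slightly laborious on the level of generating (acyclic) cofibrations.
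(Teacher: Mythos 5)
Your proposal is correct, but it takes a different route from the paper: the paper does not verify the definition at all, it simply invokes the criterion of Schwede--Shipley ([SchS03], Section~3.4) together with their verification that the criterion applies to the Dold--Kan adjunction with $R=\Gamma$ carrying the lax structure adjoint to $AW$ (this is in fact one of the two worked examples in loc.\ cit.). Your argument is a self-contained direct check of the three ingredients, and each step is sound: the Quillen-pair condition via generating (acyclic) cofibrations; the Quillen-equivalence condition, which indeed trivializes because $\epsilon$ and $\eta$ are isomorphisms and $N$ preserves and reflects weak equivalences; and the two monoidal conditions, where your use of Proposition~\ref{before}(1.),(2.) to see that $AW$ is a chain homotopy equivalence for \emph{all} $X,Y$ is exactly right (note that by the involutivity in Lemma~\ref{kurica} the colax structure on $L$ adjoint to $\ell$ is again $AW$, which you use implicitly), and the unit condition holds on the nose since $N(\mathrm{const}\,\mathbb{Z})\cong\mathbb{Z}[0]$ and the constant simplicial group is already cofibrant. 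What the paper's citation buys is brevity and the ability to skip the slightly laborious check on generators; what your version buys is independence from the black box, at the cost of having to carry out that check honestly (in particular over $\mathbb{Z}$ rather than over a field, so one should say why the normalized cokernels of the generating cofibrations are degreewise free, which is where the residual work sits).
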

\qed

Now we apply Theorem \ref{ss}.
\begin{coroll}\label{cabove}{\itshape
In the above notations, the adjoint pair of functors
\begin{equation}
L^\mon\colon \Mon(\mathscr{M}od(\mathbb{Z})^\Delta)\rightleftarrows \Mon(\mathscr{C}(\mathbb{Z})^-)\colon R^\mon
\end{equation}
is a weak Quillen equivalence.
}
\end{coroll}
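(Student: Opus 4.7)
The plan is to apply Theorem \ref{ss} directly to the weak monoidal Quillen equivalence supplied by Lemma \ref{ss0}. Lemma \ref{ss0} already delivers exactly the triple $(L,R,\ell)$ -- with $L=N$, $R=\Gamma$ and $\ell$ the lax-monoidal structure on $\Gamma$ adjoint to the Alexander--Whitney colax-monoidal structure $AW$ on $N$ -- as a weak monoidal Quillen equivalence between $\mathscr{M}od(\mathbb{Z})^\Delta$ and $\mathscr{C}(\mathbb{Z})^-$. So the proof reduces to verifying the two side-hypotheses of Theorem \ref{ss}: cofibrancy of the unit objects in each category, and the assertion that in both underlying model categories the forgetful functor from monoids creates a model structure.

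First I would dispose of the unit cofibrancy. In $\mathscr{M}od(\mathbb{Z})^\Delta$ the unit is the constant simplicial abelian group $\mathbb{Z}$, which is cofibrant by the explicit description of cofibrations recalled in \eqref{sacm} (take $V_0=\mathbb{Z}$ and $V_n=0$ for $n\ge 1$; all degeneracy conditions are trivial). In $\mathscr{C}(\mathbb{Z})^-$ the unit $\mathbb{Z}$ concentrated in degree $0$ is cofibrant in the standard projective model structure. This step is essentially a direct inspection.

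Next I would verify the transfer of the model structure to the monoid categories along the free/forgetful adjunction of Lemma \ref{ML}. Both $\mathscr{M}od(\mathbb{Z})^\Delta$ and $\mathscr{C}(\mathbb{Z})^-$ are cofibrantly generated in the sense recalled in Section \ref{section12} (see [GS], Examples 3.4, for the simplicial side). Applying the transfer theorem [GS], Theorem 3.6 (alternatively [Hir], Theorem 11.3.2), one obtains in each case a cofibrantly generated model structure on the monoid category in which the generating (acyclic) cofibrations are the images of those of the underlying category. On the simplicial side this reproduces Quillen's model structure on $\mathscr{A}lg_k^\Delta$ of Section \ref{section12}, and on the dg side it reproduces the Hinich model structure on $\mathscr{A}lg_k^{\le 0}$ recalled in the Introduction; so the model structures required by Theorem \ref{ss} exist and are the ones we are already working with.

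With both side-hypotheses checked, Theorem \ref{ss} applies directly and yields the Quillen equivalence $L^\mon\dashv R^\mon$ between $\Mon(\mathscr{M}od(\mathbb{Z})^\Delta)$ and $\Mon(\mathscr{C}(\mathbb{Z})^-)$. I do not expect any serious obstacle: the corollary is genuinely a packaging of Lemma \ref{ss0} with Theorem \ref{ss}, and the only non-trivial homotopical input, the weak monoidal Quillen equivalence for Dold--Kan itself, has already been established in the cited reference [SchS03]. The remaining pieces are standard properties of the two base model categories.
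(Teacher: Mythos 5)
Your proposal is correct and follows essentially the same route as the paper, which proves the corollary by citing Lemma \ref{ss0} together with Theorem \ref{ss} (the paper delegates the verification of the side hypotheses on unit cofibrancy and transferred model structures on monoids to the remark following Theorem \ref{ss} and the references [GS], [Hir]). Your explicit check of those side hypotheses is the only addition, and it is consistent with what the paper assumes.
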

\proof
It follows immediately from Lemma \ref{ss0} and from Theorem \ref{ss}.
\qed

Now we pass to a proof of the claims of Proposition \ref{propref}, (ii).
\proof
Prove that $\mathfrak{R}(A)$ is cofibrant dg algebra for any $A\in\mathscr{A}lg_k^{\le 0}$.

Indeed,
$$
\mathfrak{R}(A)=L^\mon\circ\mathfrak{F}\circ R^\mon(A)
$$
We know that $\mathfrak{F}(-)$ is cofibrant (Lemma \ref{lemmacof}), and that $(L^\mon,R^\mon)$ is a Quillen equivalence, in particular, a Quillen pair (Corollary \ref{cabove}). By Definition of a Quillen pair, $L^\mon$ maps cofibrations to cofibrations. As $L^\mon$ maps the initial object to the initial object, it maps the cofibrant objections to cofibrant objects. Therefore, $\mathfrak{R}(A)$ is cofibrant, as $\mathfrak{F}(R^\mon(A))$ is cofibrant.

Prove that, for any $A\in\mathscr{A}lg_k^{\le 0}$, the projection $p_A\colon\mathfrak{R}(A)\to A$ is a weak equivalence.

The projection
\begin{equation}
p_A\colon L^\mon\circ \mathfrak{F}\circ R^\mon (A)\to A
\end{equation}
is adjoint to
\begin{equation}
p_A^\sharp\colon \mathfrak{F}\circ R^\mon (A)\to R^\mon (A)
\end{equation}
According to Corollary \ref{cabove}, the pair $(L^\mon, R^\mon)$ is a Quillen equivalence. We are going to apply the defining property of Quillen equivalences, see Definition \ref{qe}.
Moreover, $\mathfrak{F}\circ R^\mon (A)$ is cofibrant by Lemma \ref{lemmacof} (as $\mathfrak{F}(-)$ is cofibrant by this Lemma), and $R^\mon(A)$ is fibrant by the description in Section \ref{section12}. Moreover, the map $p_A^\sharp$ is a weak equivalence, again by Lemma \ref{lemmacof}. Therefore, $p_A$ is a weak equivalence as well, by Definition \ref{qe}.

\qed

\subsection{\sc Proof of Proposition \ref{propref}, (iii)}\label{finallink}
The claim of Proposition \ref{propref}, (iii) follows from the two Lemmas below. The first one is straightforward, while the second one heavily relies on results of Section 4.
\begin{lemma}\label{liii1}{\itshape
For any $X,Y$, the following diagram is commutative:
\begin{equation}
\xymatrix{
L^\mon(\mathfrak{F}(R^\mon(X\otimes Y)))\ar[r]\ar[d]& L^\mon(\mathfrak{F}(R^\mon(X)))\otimes L^\mon(\mathfrak{F}(R^\mon(Y)))\ar[d]\\
L^\mon R^\mon(X\otimes Y)\ar[r]&L^\mon R^\mon(X)\otimes L^\mon R^\mon(Y)
}
\end{equation}
}
\end{lemma}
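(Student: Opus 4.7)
The plan is to decompose the square into three subsquares, one for each of the three colax-monoidal pieces that make up the colax structure on $\mathfrak{R}=L^\mon\circ\mathfrak{F}\circ R^\mon$. Recall that this structure is built by composing the colax-monoidal structure on $R^\mon$ (defined via the bialgebra axiom, Theorem \ref{mytheor}, together with Lemma \ref{lemmaxx}), the colax-monoidal structure $\beta_\mathfrak{F}$ on $\mathfrak{F}$ from Section \ref{section1}, and the canonical colax-monoidal structure $c_{L^\mon}$ on $L^\mon$.

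First, I would write the top horizontal arrow as the three-step composition
$$
L^\mon\mathfrak{F} R^\mon(X\otimes Y)\longrightarrow L^\mon\mathfrak{F}(R^\mon X\otimes R^\mon Y)\longrightarrow L^\mon(\mathfrak{F} R^\mon X\otimes \mathfrak{F} R^\mon Y)\longrightarrow L^\mon\mathfrak{F} R^\mon X\otimes L^\mon\mathfrak{F} R^\mon Y,
$$
where the three arrows apply $L^\mon\mathfrak{F}$ to $c_{R^\mon}$, apply $L^\mon$ to $\beta_\mathfrak{F}$, and apply $c_{L^\mon}$, respectively. Analogously, I would rewrite the bottom arrow as a two-step composition through $L^\mon(R^\mon X\otimes R^\mon Y)$ using $L^\mon(c_{R^\mon})$ and then $c_{L^\mon}$. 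Connecting the two rows by three vertical arrows built from $L^\mon(p_\mathfrak{F})$ (tensored and evaluated at the appropriate argument), the original square splits into three subsquares.

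The leftmost subsquare is $L^\mon$ applied to the naturality square for $p_\mathfrak{F}\colon\mathfrak{F}\to\Id$ with respect to the morphism $c_{R^\mon}\colon R^\mon(X\otimes Y)\to R^\mon X\otimes R^\mon Y$, hence commutes. The middle subsquare commutes precisely because $p_\mathfrak{F}\colon\mathfrak{F}\to\Id$ is itself colax-monoidal: this is Theorem \ref{theor_simpl_alg}(2), which says that the triangle with vertices $\mathfrak{F}(A\otimes B)$, $\mathfrak{F}(A)\otimes\mathfrak{F}(B)$, and $A\otimes B$ commutes; one sets $A=R^\mon X$, $B=R^\mon Y$ and applies $L^\mon$. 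The rightmost subsquare is the naturality of $c_{L^\mon}$ applied to the morphism $p_\mathfrak{F}(R^\mon X)\otimes p_\mathfrak{F}(R^\mon Y)$ of the category of monoids in simplicial modules.

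There is essentially no obstacle: the entire argument is functoriality plus the one genuine input, namely that $\beta_\mathfrak{F}$ is compatible with the projection $p_\mathfrak{F}$, which was already proven in Section \ref{section1}. The real work in part (iii) of Proposition \ref{propref} is pushed into the companion Lemma (not stated in this excerpt), where one must verify that the counit $L^\mon R^\mon\to\Id$ intertwines the composite colax structure on $L^\mon R^\mon$ with the trivial colax structure on $\Id$; that is where the detailed description of $L^\mon$ obtained in Section \ref{section45}, together with Lemma \ref{dkmonoidal}(ii), enters.
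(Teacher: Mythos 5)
Your proposal is correct and follows essentially the same route as the paper: the paper also splits the square into the naturality square for $p_\mathfrak{F}$ against $c_{R^\mon}$ (its diagram \eqref{eqiii1}, "tautological"), followed by $L^\mon$ applied to the compatibility triangle \eqref{web} for $\beta_\mathfrak{F}$ over $R^\mon X\otimes R^\mon Y$, with the final naturality of $c_{L^\mon}$ left implicit where you make it explicit. Your closing remark is also accurate — the substantive content of Proposition \ref{propref}(iii) indeed sits in Lemma \ref{liii2}, which uses Theorems \ref{another}, \ref{keytheorem} and Lemma \ref{dkmonoidal}(ii).
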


\begin{lemma}\label{liii2}{\itshape
For any $X,Y$, the following diagram is commutative:
\begin{equation}\label{potom}
\xymatrix{
L^\mon R^\mon(X\otimes Y)\ar[rr]\ar[rd]_{p_{X\otimes Y}}&&L^\mon R^\mon(X)\otimes L^\mon R^\mon(Y)\ar[ld]^{p_X\otimes p_Y}\\
&X\otimes Y
}
\end{equation}
}
\end{lemma}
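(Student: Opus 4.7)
The plan is to unwind the top map and the projection in diagram \eqref{potom} to the level of underlying complexes (using the explicit description of $L^\mon$ from Section 4), and then observe that the resulting diagram is precisely the one proven commutative in Lemma \ref{dkmonoidal}(ii).

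First I would identify the top map. Since the statement concerns only $L^\mon \circ R^\mon$, the colax-monoidal map in question is the composition
\[
L^\mon R^\mon(X\otimes Y)\xrightarrow{L^\mon(c_{R^\mon})} L^\mon(R^\mon X\otimes R^\mon Y)\xrightarrow{c_{L^\mon}} L^\mon R^\mon X\otimes L^\mon R^\mon Y,
\]
where $c_{R^\mon}$ is the colax-monoidal structure on $R^\mon$ induced, via Lemma \ref{lemmaxx} and Theorem \ref{mytheor} (the bialgebra axiom for $(\nabla,AW)$, transferred to $(\ell_R,c_R)$ by Lemma \ref{lemma12}), from $c_R=\varphi$ on underlying objects, and $c_{L^\mon}$ is the canonical colax-monoidal structure on $L^\mon$. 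By Theorem \ref{keytheorem}(B), under the functorial isomorphism $L^\mon\simeq \tilde{L}^\mon=L_\nabla/\mathscr{J}$ from Theorem \ref{another}, the structure $c_{L^\mon}$ descends from the Alexander-Whitney map $AW$. Hence on underlying complexes the top map is induced by
\[
LR(X\otimes Y)\xrightarrow{L(\varphi)} L(RX\otimes RY)\xrightarrow{AW} LRX\otimes LRY,
\]
passed to the quotients by $\mathscr{J}$.

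Next I would identify the projection. By Sub-lemma \ref{ksl}, in the co-equalizer description $L^\mon R^\mon(Y)=T_{\mathscr{D}}(LRY)/I$, the counit $\epsilon^\mon_Y\colon L^\mon R^\mon(Y)\to Y$ is induced by $T_{\mathscr{D}}(LRY)\xrightarrow{\epsilon_*} T_{\mathscr{D}}(Y)\xrightarrow{m_Y} Y$. Restricted to the generators $LRY$ this is just the Dold-Kan counit $\epsilon\colon LRY\to Y$; transferred across $L^\mon\simeq\tilde{L}^\mon$, the map $p_Y$ therefore descends from $\epsilon\colon LR(Y)\to Y$ (consistency of this descent with $\mathscr{J}(RY)$ being essentially the content of $\epsilon$ being lax-monoidal, Lemma \ref{dkmonoidal}(i)).

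Finally, since maps of monoids are determined by their underlying maps of complexes and the relevant maps here all arise as quotients of maps on $LR(\cdot)$, the commutativity of \eqref{potom} reduces to the commutativity, on the level of complexes, of
\[
\xymatrix{
LR(X\otimes Y)\ar[rr]^{AW\circ L(\varphi)}\ar[rd]_{\epsilon}&& LRX\otimes LRY\ar[ld]^{\epsilon\otimes\epsilon}\\
& X\otimes Y.
}
\]
But this is exactly the diagram \eqref{hkm} asserting that the Dold-Kan counit $\epsilon\colon N\Gamma\to \Id$ is colax-monoidal, established in Lemma \ref{dkmonoidal}(ii). The main obstacle is not conceptual but bookkeeping: ensuring that the colax-monoidal structure on the composite $L^\mon\circ R^\mon$ actually agrees, via the identifications of Theorem \ref{another} and Theorem \ref{keytheorem}, with the concrete composition $AW\circ L(\varphi)$ (modulo $\mathscr{J}$); once this identification is made, the statement is a clean consequence of Lemma \ref{dkmonoidal}(ii).
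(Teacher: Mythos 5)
Your proposal is correct and follows essentially the same route as the paper: both identify $L^\mon$ with $\tilde{L}^\mon=L_\nabla/\mathscr{J}$ via Theorems \ref{another} and \ref{keytheorem}, recognize the top map as $\varphi$ followed by the descended Alexander--Whitney map, and reduce the commutativity to Lemma \ref{dkmonoidal}(ii) by passing to quotient-monoids. Your extra bookkeeping on the projection via Sub-lemma \ref{ksl} only makes explicit what the paper summarizes as ``the projections come from the adjunction maps.''
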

{\it Proof of Lemma \eqref{liii1}:} We have the commutative diagram:
\begin{equation}\label{eqiii1}
\xymatrix{
L^\mon(\mathfrak{F}(R^\mon(X\otimes Y)))\ar[r]^{c_R}\ar[d]&L^\mon(\mathfrak{F}(R^\mon(X)\otimes R^\mon(Y)))\ar[d]\\
L^\mon(R^\mon(X\otimes Y))\ar[r]^{c_R}&L^\mon(R^\mon(X)\otimes R^\mon(Y))
}
\end{equation}
by tautological reasons. Now consider the diagram
\begin{equation}\label{eqiii2}
\xymatrix{
\mathfrak{F}(R^\mon(X)\otimes R^\mon(Y))\ar[rr]\ar[rd]&&\mathfrak{F}(R^\mon(X))\otimes \mathfrak{F}(R^\mon(Y))\ar[ld]\\
&R^\mon(X)\otimes R^\mon(Y)
}
\end{equation}
from Lemma \ref{web}. The composition of the diagram \eqref{eqiii1} with the application of $L^\mon$ to the diagram \eqref{eqiii2} gives the diagram in Lemma \ref{liii1}.\qed

\smallskip
\smallskip

{\it Proof of Lemma \eqref{liii2}:}
Theorems \ref{another},\ref{keytheorem} describe the functor $L^\mon$ explicitly, with its colax-monoidal structure.
According to these results,
$$
L^\mon(X)\simeq\tilde{L}^\mon(X)=L_\nabla(X)/\mathscr{J}(X)
$$
and the colax-monoidal structure, adjoint to the lax-monoidal structure on $R^\mon$, is induced by the Alexander-Whitney colax-monoidal structure
$$
AW\colon L(X\otimes Y)\to L(X)\otimes L(Y)
$$
by passing to the quotient-monoids,
$$
\wtilde{AW}\colon L_\nabla(X\otimes Y)/\mathscr{J}(X\otimes Y)\to (L_\nabla(X)/\mathscr{J}(X))\otimes (L_\nabla(Y)/\mathscr{J}(Y))
$$
\comment
Recall that the functor $L^\mon\colon \Mon\mathscr{C}\to \Mon\mathscr{D}$ is constructed as the left adjoint to the functor $\Mon\mathscr{C}\leftarrow\Mon\mathscr{D}\colon R^\mon$. The functor $R^\mon$ coincides with the functor $R$ on the underlying objects, and the monoid structure is defined from a lax-monoidal structure $\ell_R$ on $R$ (see \eqref{monoidref}). Then $\ell_R$ defines a lax-monoidal structure on $R^\mon$ as well, and the colax-monoidal structure $c_{L^\mon}$ is defined from $\ell_R$ by the adjunction (see \eqref{cg}).

Here we give a more explicit description of this colax-monoidal structure on $L^\mon$.

Recall (see [SchS03], Section 3.3) that the value $L^\mon(X)$ (for a monoid $X$ in $\mathscr{C}$) is defined as the co-equalizer
\begin{equation}\label{quotient}
\xymatrix{
T_{\mathscr{D}}(L(T_{\mathscr{C}}(X)))\ar@<1ex>[r]^{\ \ \ \alpha} \ar@<-1ex>[r]_{\ \ \ \beta} &T_{\mathscr{D}}(LX)
}
\end{equation}
where $T_\mathscr{M}$ denotes the free (tensor) monoid in a monoidal category $\mathscr{M}$, see \eqref{MLM}. In particular, it is some quotient of the free monoid $T_{\mathscr{D}}(LX)$.

From now on, we restrict ourselves with the case of the Dold-Kan correspondence, $\mathscr{C}=\mathscr{M}od(\mathbb{Z})^\Delta$, $\mathscr{D}=\mathscr{C}(\mathbb{Z})^-$, $L=N$, $R=\Gamma$.

Consider the shuffle lax-monoidal structure $\nabla$ on $L$. This structure defines a functor $\tilde{L}^\mon\colon \Mon\mathscr{C}\to \Mon\mathscr{D}$, as in \eqref{monoidref}. We claim that the functors $L^\mon$ and $\tilde{L}^\mon$ coincide.
\begin{klemma}\label{kl}{\itshape
For the Dold-Kan correspondence and the above notations, the two functors $$L^\mon,\tilde{L}^\mon\colon \Mon\mathscr{C}\to\Mon\mathscr{D}$$ coincide.
Moreover, the colax-monoidal structure $c_{L^\mon}$ on $L^\mon$ (a priori defined by adjunction from the lax-monoidal structure on $R^\mon$) coincides on the underlying objects  with the Alexander-Whitney colax-monoidal structure $AW$ on $L$ (the latter defines a colax-monoidal structure on monoids
as the pair $(\nabla,AW)$ obeys the bialgebra axiom, see Lemma \ref{lemmaxx}).
}
\end{klemma}
\proof
Consider the definition \eqref{quotient} of the functor $L^\mon$, in the special case of the Dold-Kan correspondence. It gives that $L^\mon(X)$ is the quotient of the free dg algebra $T(LX)$ by the {\it two-sided ideal}, generated by
\begin{equation}\label{ideal}
AW(L(x\otimes y))-L(x\star y),\ \  x,y\in X
\end{equation}
where $\star$ is the product in $X$.

Now the crucial idea, special for the case of Dold-Kan correspondence, is that {\it the subspace in $L(X)\otimes L(X)$ generated by the elements $AW(L(x\otimes  y))$ coincides with the entire space $L(X)\otimes L(X)$}.

Indeed, we know from Proposition \ref{before}, (1.), that the composition
$$
L(X)\otimes L(X)\xrightarrow{\nabla}L(X\otimes X)\xrightarrow{AW}L(X)\otimes L(X)
$$
is the identity map of $L(X)\otimes L(X)$. Then, if we have any element $\omega\in L(X)\otimes L(X)$, we can write it as
$$
\omega=AW(\nabla(\omega))
$$

This claim simplifies the ideal \eqref{ideal}. Namely, we have now that this ideal is generated by
\begin{equation}
L(a)\otimes L(b)-m_\star(\nabla(L(a)\otimes L(b)))
\end{equation}
where the notation $m_\star$ in the second summand assumes the composition $$L(X)\otimes L(X)\xrightarrow{\nabla}L(X\otimes X)\xrightarrow{L\circ\star}L(X)$$
The latter formula is precisely the equation \eqref{monoidref}, which defines the monoid structure on $F(X)$ from a lax-monoidal structure on $F$ and from a monoid structure on $X$.

All other claims of Key-Lemma follow from this argument as well.
\qed
\endcomment
The diagram \eqref{potom} may be now rewritten as
\begin{equation}\label{potom2}
\xymatrix{
\tilde{L}^\mon (R(X\otimes Y))\ar[rr]\ar[dr]_{p_{X\otimes Y}}&&\tilde{L}^\mon(R(X))\otimes \tilde{L}^\mon (R(Y))\ar[dl]^{p_X\otimes p_Y}\\
&X\otimes Y
}
\end{equation}
where the horizontal map comes from the colax-monoidal structure on $R$ (adjoint to the lax-monoidal structure $\nabla$ on $L$), followed by the colax-monoidal structure $\wtilde{AW}$ on $\tilde{L}^\mon$. The projections comes from the adjunction maps.

Now the commutativity of diagram \eqref{potom2} follows immediately from the statement of Lemma \ref{dkmonoidal} (ii), by passing to quotient-monoids. We are done.

\qed

Theorem \ref{theor_gr_alg} is proven.
\qed

\appendix

\appsection{\sc Diagrams}\label{diagrams}

\appsubsection{\sc Colax-monoidal structure on a functor}\label{colax}
\begin{defn}[{\rm Colax-monoidal functor}]\label{colax_intro}{\rm
Let $\mathscr{M}_1$ and $\mathscr{M}_2$ be two strict associative monoidal categories. A functor $F\colon\mathscr{M}_1\to \mathscr{M}_2$ is called {\it colax-monoidal} if there is a map of bifunctors $\beta_{X,Y}\colon F(X\otimes Y)\to F(X)\otimes F(Y)$ and a morphism $\alpha\colon F(1_{\mathscr{M}_1})\to 1_{\mathscr{M}_2}$ such that:

(1): for any three objects $X,Y,Z\in\Ob(\mathscr{M}_1)$, the diagram
\begin{equation}\label{colaxdiagram}
\xymatrix{
&F(X\otimes Y)\otimes F(Z)\ar[rd]^{\beta_{X,Y}\otimes\id_{F(Z)}}\\
F(X\otimes Y\otimes Z)\ar[ru]^{\beta_{X\otimes Y,Z}}\ar[rd]_{\ \ \ \ \ \ \ \beta_{X,Y\otimes Z}}&& F(X)\otimes F(Y)\otimes F(Z)\\
&F(X)\otimes F(Y\otimes Z)\ar[ru]_{\id_{F(X)}\otimes\beta_{Y,Z}}
}
\end{equation}
is commutative. The functors $\beta_{X,Y}$ are called the {\it colax-monoidal maps}.

(2): for any $X\in\Ob\mathscr{M}_1$ the following two diagrams are commutative
\begin{equation}
\begin{aligned}
\ &\xymatrix{
F(1_{\mathscr{M}_1}\otimes X)\ar[d]\ar[r]^{\beta_{1,X}}& F(1_{\mathscr{M}_1})\otimes F(X)\ar[d]^{\alpha\otimes\id}\\
F(X)&      1_{\mathscr{M}_2}\otimes F(X)\ar[l]
} &
\xymatrix{
F(X\otimes 1_{\mathscr{M}_1})\ar[d]\ar[r]^{\beta_{X,1}}& F(X)\otimes F(1_{\mathscr{M}_1})\ar[d]^{\id\otimes \alpha}\\
F(X)&     F(X)\otimes 1_{\mathscr{M}_2}\ar[l]
}
\end{aligned}
\end{equation}
}
\end{defn}

\appsubsection{\sc Lax-monoidal structure on a functor}\label{lax}
\begin{defn}[{\rm Lax-monoidal functor}]\label{lax_intro}{\rm
Let $\mathscr{M}_1$ and $\mathscr{M}_2$ be two strict associative monoidal categories. A functor $G\colon\mathscr{M}_1\to \mathscr{M}_2$ is called {\it lax-monoidal} if there is a map of bifunctors $\gamma_{X,Y}\colon G(X)\otimes G(Y)\to G(X\otimes Y)$ and a morphism $\kappa\colon 1_{\mathscr{M}_2}\to G(1_{\mathscr{M}_1})$ such that:

(1): for any three objects $X,Y,Z\in\Ob(\mathscr{M}_1)$, the diagram
\begin{equation}\label{colaxdiagram}
\xymatrix{
&G(X\otimes Y)\otimes G(Z)\ar[dl]_{\gamma_{X\otimes Y,Z}}\\
G(X\otimes Y\otimes Z)&& G(X)\otimes G(Y)\otimes G(Z)\ar[ul]_{\gamma_{X, Y}\otimes\id_{G(Z)}}\ar[dl]^{\id_{G(X)}\otimes\gamma_{Y,Z}}\\
&G(X)\otimes G(Y\otimes Z)\ar[ul]^{\gamma_{X,Y\otimes Z}}
}
\end{equation}
is commutative. The functors $\gamma_{X,Y}$ are called the {\it lax-monoidal maps}.

(2): for any $X\in\Ob\mathscr{M}_1$ the following two diagrams are commutative
\begin{equation}
\begin{aligned}
\ &\xymatrix{
F(1_{\mathscr{M}_1}\otimes X)\ar[d]& F(1_{\mathscr{M}_1})\otimes F(X)\ar[l]_{\gamma_{1,X}}\\
F(X)&      1_{\mathscr{M}_2}\otimes F(X)\ar[u]_{\kappa\otimes\id}\ar[l]
} &
\xymatrix{
F(X\otimes 1_{\mathscr{M}_1})\ar[d]& F(X)\otimes F(1_{\mathscr{M}_1})\ar[l]_{\gamma_{X,1}}\\
F(X)&     F(X)\otimes 1_{\mathscr{M}_2}\ar[u]_{\id\otimes \kappa}\ar[l]
}
\end{aligned}
\end{equation}
}
\end{defn}

\appsubsection{\sc Bialgebra axiom}\label{bialg}
This axiom, expressing a compatibility between the lax-monoidal and colax-monoidal structures on a functor between  {\it symmetric} monoidal categories, seems to be new.
\begin{defn}[Bialgebra axiom]{\rm
Suppose there are given both colax-monoidal and lax-monoidal structures on a functor $F\colon \mathscr{C}\to\mathscr{D}$, where $\mathscr{C}$ and $\mathscr{D}$ are strict {\it symmetric} monoidal categories.
Denote these structures by $c_F(X,Y)\colon F(X\otimes Y)\to F(X)\otimes F(Y)$, and $l_F\colon F(X)\otimes F(Y)\to F(X\otimes Y)$.
We say that the pair $(l_F,c_F)$ satisfies the bialgebra axiom, if for any for objects $X,Y,Z,W\in\Ob\mathscr{C}$, the following two morphisms
$F(X\otimes Y)\otimes F(Z\otimes W)\to F(X\otimes Z)\otimes F(Y\otimes W)$
coincide:
\begin{equation}\label{bialgebra1}
\begin{aligned}
\ &F(X\otimes Y)\otimes F(Z\otimes W)\xrightarrow{l_F}F(X\otimes Y\otimes Z\otimes W)\xrightarrow{F(\id\otimes \sigma\otimes \id)}\\
&F(X\otimes Z\otimes Y\otimes W)\xrightarrow{c_F}F(X\otimes Z)\otimes F(Y\otimes W)
\end{aligned}
\end{equation}
and
\begin{equation}\label{bialgebra2}
\begin{aligned}
\ &F(X\otimes Y)\otimes F(Z\otimes W)\xrightarrow{c_F\otimes c_F}F(X)\otimes F(Y)\otimes F(Z)\otimes F(W)\xrightarrow{\id\otimes \sigma \otimes \id}\\
& F(X)\otimes F(Z)\otimes F(Y)\otimes F(W)\xrightarrow{l_F\otimes l_F}F(X\otimes Z)\otimes F(Y\otimes W)
\end{aligned}
\end{equation}
where $\sigma$ denotes the symmetry morphisms in $\mathscr{C}$ and in $\mathscr{D}$.

Thus, the commutative diagram, expressing the bialgebra axiom, is
\begin{equation}
\xymatrix{
F(X\otimes Y)\otimes F(Z\otimes W)\ar@/^2pc/[rr]^{\eqref{bialgebra1}}\ar@/_2pc/[rr]_{\eqref{bialgebra2}}&&F(X\otimes Z)\otimes F(Y\otimes W)
}
\end{equation}
}
\end{defn}

\appsection{\sc Weak adjoint functors}\label{appendixb}
Here we develop some techniques used in Section 4 of the paper. The main topic here is roughly the following. Recall from \eqref{lg}, \eqref{cg} that a pair of adjoint functors $L\colon\mathscr{C}\rightleftarrows\mathscr{D}\colon R$ provides $1\leftrightarrow 1$ correspondence between the colax-monoidal structures on $L$ and the lax-monoidal structures on $R$. In Section 4 we need to know whether and to which extend this duality remains true, when we work with {\it weak adjoint functors}, see Definition \ref{adef1} below. Then we study the situation (also came up in Section 4) when $L$ and $R$ are honest adjoint functors but they are represented as (co)limits of pairs of functors $L_i\colon\mathscr{C}\rightleftarrows\mathscr{D}\colon R_i$, for $i$ running trough some category $I$, such that for each $i$ the functors $(L_i,R_i)$ are weak adjoint. The main results of this Appendix are Theorems \ref{keytheorem2bis} and \ref{keytheorem2bisbis}.
\appsubsection{\sc }\label{sectionb1}
Suppose
$$
L\colon\mathscr{C}\rightleftarrows\mathscr{D}\colon R
$$
is a pair of adjoint functors, with $L$ the left adjoint.
Recall that this means that for any $X$ in $\mathscr{C}$ and $Y$ in $\mathscr{D}$, there is an isomorphism of sets
\begin{equation}
\Hom_\mathscr{D}(LX,Y)\simeq \Hom_\mathscr{C}(X, RY)
\end{equation}
such that, for all $X$ and $Y$, the two corresponding bifunctors $\mathscr{C}^\circ\times\mathscr{D}\to\mathscr{S}ets$ are isomorphic.

This gives rise to a morphisms of functors $\epsilon LR\to\Id_{\mathscr{D}}$ and $\eta\colon \Id_{\mathscr{D}}\to RL$, such that the compositions
\begin{equation}\label{a1}
L\xrightarrow{\eta_*}LRL\xrightarrow{\epsilon_*}L
\end{equation}
and
\begin{equation}\label{a2}
R\xrightarrow{\eta_*}RLR\xrightarrow{\epsilon_*}R
\end{equation}
are the identity morphisms of the functor $L$ (correspondingly, of $R$), see [ML IV.1, Theorem 1].

It is well-known that the inverse is also true:

If there are two functors $L\colon\mathscr{C}\to\mathscr{D}$ and $R\colon\mathscr{D}\to\mathscr{C}$, with morphisms of functors $\epsilon\colon LR\to\Id_\mathscr{D}$ and $\eta\colon \Id_{\mathscr{C}}\to RL$, such that the compositions \eqref{a1} and \eqref{a2} are identity maps of functors, the functors $L$ and $R$ are adjoint ,see [ML IV.1, Theorem 2].

In this Appendix B, we refer to the classical adjoint functors described above as to {\it genuine} adjoint functors, and consider various definitions of weak adjoint functors. We define {\it weak right adjoint functors} and {\it weak left adjoint functors} in Definition \ref{adef1} just below, and define {\it very weak adjoint functors} in Section \ref{appveryweak}.

We give the following definition:
\begin{defn}\label{adef1}{\rm
Let $\mathscr{C}$ and $\mathscr{D}$ be two categories, and let $L\colon \mathscr{C}\to\mathscr{D}$, $R\colon\mathscr{D}\to\mathscr{C}$ be two functors. Suppose there are given two morphisms of functors $\epsilon\colon LR\to\Id_\mathscr{D}$ and $\eta\colon \Id_{\mathscr{C}}\to RL$,
such that the composition \eqref{a2} is the identity morphism of the functor $R$, but the composition \eqref{a1} may fail to be the identity morphism of $L$.
Then the data $(L,R,\epsilon,\eta)$ is called a {\it weak right adjunction} between $L$ and $R$.
Analogously, when \eqref{a1} is the identity morphism of $L$, but \eqref{a2} may fail to be the identity morphism of $R$, the data $(L,R,\epsilon,\eta)$ is called a {\it weak left adjunction} between $L$ and $R$.
}
\end{defn}
The following Lemma gives an equivalent way to describe weak adjoint pairs. We will use it in the proof of Lemma \ref{lemmaa} below.
\begin{lemma}\label{twoweak}{\itshape
Let
$$
L\colon\mathscr{C}\rightleftarrows\mathscr{D}\colon R
$$
be a pair of functors, and suppose the two following morphisms of bifunctors $\mathscr{C}^\opp\times\mathscr{D}\to\mathscr{S}ets$
\begin{equation}\label{podrugomu}
\begin{aligned}
\ &\Theta\colon\qquad \Hom_\mathscr{D}(LX,Y)\to \Hom_\mathscr{C}(X,RY)\\
&\Upsilon\colon\qquad \Hom_\mathscr{C}(X,RY)\to\Hom_\mathscr{D}(LX,Y)
\end{aligned}
\end{equation}
are given. They produce the maps $\epsilon\colon LR\to\Id_\mathscr{D}$ and $\eta\colon\Id_\mathscr{C}\to RL$ in a standard way, and vice versa. Then:
\begin{itemize}
\item[(i)] the two diagrams
\begin{equation}\label{dikobraz}
\xymatrix{\Hom_\mathscr{C}(X,Y)\ar[d]_{L_*}\ar[r]^{\eta\circ -}&\Hom_\mathscr{C}(X,RLY)\\
\Hom_\mathscr{D}(LX,LY)\ar[ur]_{\Theta}\\
}
\xymatrix{
\Hom_\mathscr{D}(Z,W)\ar[r]^{-\circ\epsilon}\ar[d]_{R_*}&\Hom_\mathscr{D}(LRZ,W)\\
\Hom_\mathscr{C}(RZ,RW)\ar[ur]_{\Upsilon}\\
}
\end{equation}
commute,
\comment
\item[(ii)] the two diagrams
\begin{equation}\label{dikobrazbis}
\xymatrix{
\Hom_\mathscr{C}(RLX,RLY)\ar[r]^{-\circ\eta}&\Hom_\mathscr{C}(X,RLY)\\
\Hom_\mathscr{D}(LX,LY)\ar[u]^{R_*}\ar[ur]_{\Theta}
}
\xymatrix{
\Hom_\mathscr{D}(LRZ,LRW)\ar[r]^{\epsilon\circ -}&\Hom_\mathscr{D}(LRZ,W)\\
\Hom_\mathscr{C}(RZ,RW)\ar[u]^{L_*}\ar[ur]_{\Upsilon}
}
\end{equation}
commute,
\endcomment
\item[(ii)] the pair $(L,R)$ is weak right adjoint iff the composition
\begin{equation}\label{tylen1}
\Hom_\mathscr{C}(X,RY)\xrightarrow{\Upsilon}\Hom_\mathscr{D}(LX,Y)\xrightarrow{\Theta}\Hom_\mathscr{C}(X,RY)
\end{equation}
is the identity map;
\item[(iii)] the pair $(L,R)$ is weak left adjoint iff the composition
\begin{equation}\label{tylen2}
\Hom_\mathscr{D}(LX,Y)\xrightarrow{\Theta}\Hom_\mathscr{C}(X,RY)\xrightarrow{\Upsilon}\Hom_\mathscr{D}(LX,Y)
\end{equation}
is the identity map.
\end{itemize}
}
\end{lemma}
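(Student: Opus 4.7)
The plan is to extract explicit closed formulas for $\Theta$ and $\Upsilon$ from the bifunctorial naturality, and then to verify (i)--(iii) by direct substitution. Setting $\eta_X := \Theta_{X,LX}(\id_{LX})$ and $\epsilon_Y := \Upsilon_{RY,Y}(\id_{RY})$, naturality of $\Theta$ in the second variable applied to the factorization $g = g\circ\id_{LX}$ yields $\Theta(g) = R(g)\circ\eta_X$ for any $g\colon LX\to Y$. Dually, naturality of $\Upsilon$ in the first variable applied to $f = \id_{RY}\circ f$ gives $\Upsilon(f) = \epsilon_Y\circ L(f)$ for any $f\colon X\to RY$. Comparing naturality of $\Theta$ in $X$ and in $Y$ at $\id_{LX}$ furthermore exhibits $\eta\colon\Id_\mathscr{C}\to RL$ as a natural transformation, and symmetrically for $\epsilon\colon LR\to\Id_\mathscr{D}$.

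With these formulas in hand, (i) is immediate. The left triangle asserts $\Theta(L(f)) = \eta_Y\circ f$ for $f\colon X\to Y$, and by the closed formula this reduces to $R(L(f))\circ\eta_X = \eta_Y\circ f$, which is precisely the naturality square for $\eta$. The right triangle is handled symmetrically, using the formula for $\Upsilon$ and naturality of $\epsilon$.

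For (ii), starting from $f\colon X\to RY$ and composing, one obtains $\Theta(\Upsilon(f)) = R(\epsilon_Y\circ L(f))\circ\eta_X = R(\epsilon_Y)\circ R(L(f))\circ\eta_X = \bigl(R(\epsilon_Y)\circ\eta_{RY}\bigr)\circ f$, where the last equality uses naturality of $\eta$. Hence $\Theta\circ\Upsilon$ is the identity endomorphism of the bifunctor $\Hom_\mathscr{C}(-,R-)$ if and only if $R(\epsilon_Y)\circ\eta_{RY} = \id_{RY}$ for every $Y$; necessity follows by specializing $f = \id_{RY}$, sufficiency is immediate. This identity is precisely the statement that the composition \eqref{a2} equals $\id_R$, which is the weak right adjunction axiom. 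Part (iii) is proved symmetrically: the computation $\Upsilon(\Theta(g)) = \epsilon_Y\circ L(R(g)\circ\eta_X) = g\circ\bigl(\epsilon_{LX}\circ L(\eta_X)\bigr)$, using naturality of $\epsilon$, reduces the identity $\Upsilon\circ\Theta = \id$ to the vanishing of the obstruction to \eqref{a1}, i.e.\ to the weak left adjunction axiom.

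I do not anticipate any substantive obstacle; the argument is pure Yoneda-style bookkeeping. The only point requiring some care is to keep the variances in the Hom-bifunctors straight when invoking the naturality of $\Theta$ and $\Upsilon$ in each slot, and to verify that the passage between $(\Theta,\Upsilon)$ and $(\epsilon,\eta)$ announced as ``standard'' in the hypothesis is indeed a well-defined bijection --- which is in fact a byproduct of the two closed formulas derived in the first paragraph.
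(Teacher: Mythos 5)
Your proof is correct, and while it lives in the same circle of Yoneda-style naturality arguments as the paper's, its organization is genuinely different and tighter. The paper does not extract the closed formulas $\Theta(g)=R(g)\circ\eta_X$ and $\Upsilon(f)=\epsilon_Y\circ L(f)$ from the bifunctoriality hypothesis at the outset: it proves the implication \eqref{tylen1}$\Rightarrow$\eqref{a2} by a separate diagram chase (specializing a naturality square for $\Theta$ at $X=RY$, $f=\epsilon$), and only for the converse does it \emph{define} $\Theta$ and $\Upsilon$ by those formulas (display \eqref{awaytodefine}), reducing the composite to \eqref{a2} by means of an auxiliary sub-lemma on precomposition with a natural transformation. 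Your decomposition --- first show that naturality in each slot forces the closed formulas and the naturality of $\eta$ and $\epsilon$, then compute $\Theta(\Upsilon(f))=\bigl(R(\epsilon_Y)\circ\eta_{RY}\bigr)\circ f$ and $\Upsilon(\Theta(g))=g\circ\bigl(\epsilon_{LX}\circ L(\eta_X)\bigr)$ --- makes both directions of each equivalence drop out of a single two-line calculation, eliminates the sub-lemma, and in passing justifies the bijection between $(\Theta,\Upsilon)$ and $(\epsilon,\eta)$ that the statement of the lemma only asserts ``in a standard way.'' Part (i) is proved identically in both versions (naturality of $\Theta$, resp.\ $\Upsilon$, evaluated at an identity morphism); for parts (ii) and (iii) your uniform treatment is the cleaner of the two and loses nothing in return.
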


{\it Proof of (i).}
Let us prove the commutativity of the left-hand-side diagram in \eqref{dikobraz}.
Let $X,Y\in \mathscr{C}$, and let $f\in\Hom_\mathscr{C}(X,Y)$.

Consider the diagram
\begin{equation}
\xymatrix{
\Hom_\mathscr{C}(X,X)\ar[r]^{L_*}\ar[d]_{f\circ -}&\Hom_\mathscr{D}(LX,LX)\ar[r]^{\Theta}\ar[d]_{L(f)\circ -}&\Hom_\mathscr{C}(X,RLX)\ar[d]^{RL(f)\circ -}\\
\Hom_\mathscr{C}(X,Y)\ar[r]^{L_*}&\Hom_\mathscr{D}(LX,LY)\ar[r]^{\Theta}&\Hom_\mathscr{C}(X,RLY)
}
\end{equation}
We compute both paths on the identity morphism $\id_X\in\Hom_\mathscr{C}(X,X)$, which gives the left-hand-side diagram in \eqref{dikobraz}.
The proof of commutativity of the right-hand-side diagram is analogous.

{\it Proof of (ii).}

Prove implication $\eqref{tylen1}\Rightarrow\eqref{a2}$.

Consider the map $\Theta\colon \Hom_\mathscr{D}(LX,Y)\to \Hom_\mathscr{C}(X,RY)$. Let $f\in\Hom_\mathscr{D}(LX,Y)$. We have the following diagram:
\begin{equation}
\xymatrix{
&\Hom_\mathscr{D}(LX,LX)\ar[d]_{f_*}\ar[r]^{\Theta}& \Hom_\mathscr{C}(X,RLX)\ar[d]^{f_*}\\
\Hom_\mathscr{C}(X,RY)\ar[r]^{\Upsilon}&\Hom_\mathscr{D}(LX,Y)\ar[r]^{\Theta}&\Hom_\mathscr{C}(X,RY)
}
\end{equation}
The square commutes as $\Theta$ is a map of bifunctors, and the composition of the two maps in the bottom line is identity, by the assumption.

Now substitute $X=RY$, and let $f\colon LRY\to Y$ be the map $\epsilon$.
We have
\begin{equation}
\xymatrix{
&\Hom_\mathscr{D}(LRY,LRY)\ar[d]^{\epsilon_*}\ar[r]^{\Theta}&\Hom_\mathscr{C}(RY,RLR(Y))\ar[d]^{\mathfrak{X}}\\
\Hom_\mathscr{C}(RY,RY)\ar[ur]^{L_*}\ar[r]^{\Upsilon}&\Hom_\mathscr{D}(LRY,Y)\ar[r]^{\Theta}&\Hom_\mathscr{C}(RY,RY)
}
\end{equation}
We know that the right-hand-side square sub-diagram commutes.
We start with the identity morphism $\id_{RY}\in\Hom_\mathscr{C}(RY,RY)$. Then the left-hand side triangle commutes (we need to know this claim only with the identity of $RY$ in the left corner, in this case it is tautological).
Thus, the overall diagram commutes. Then the map $\mathfrak{X}$ is equal to the identity of $RY$, if we start with the identity of $RY$, as the composition $\Theta\circ \Upsilon=\Id$ by assumption.

Now define $\Theta$ and $\Upsilon$, out of $\eta$ and $\epsilon$, by
\begin{equation}\label{awaytodefine}
\xymatrix{
\Hom_\mathscr{C}(X,RY)\ar[r]^{\Upsilon}\ar[d]_{L_*}&\Hom_\mathscr{D}(LX,Y)\\
\Hom_\mathscr{D}(LX,LRY)\ar[ur]_{\epsilon\circ -}
}
\xymatrix{
\Hom_\mathscr{D}(LX,Y)\ar[r]^{\Theta}\ar[d]_{R_*}&\Hom_\mathscr{C}(X,RY)\\
\Hom_\mathscr{C}(RLX,RY)\ar[ur]_{-\circ \eta}
}
\end{equation}
For the implication $\eqref{a2}\Rightarrow\eqref{tylen1}$, consider the diagram
\begin{equation}
\xymatrix{
\Hom_\mathscr{C}(X,RY)\ar[r]^{\Upsilon}\ar[d]_{L_*}&\Hom_\mathscr{D}(LX,Y)\ar[r]^{\Theta}\ar[d]_{R_*}&\Hom_\mathscr{C}(X,RY)\\
\Hom_\mathscr{D}(LX,LRY)\ar[d]_{R_*}\ar[ur]_{\epsilon\circ -}&\Hom_\mathscr{C}(RLX,RY)\ar[ur]_{-\circ \eta}\\
\Hom_\mathscr{C}(RLX,R\boxed{LR}Y)\ar[ur]_{\epsilon(\square)\circ -}
}
\end{equation}
The diagram is commutative. Thus, we need to compute the composition
\begin{equation}\label{lastochka}
\Hom_\mathscr{C}(X,RY)\xrightarrow{R_*L_*}\Hom_\mathscr{C}(RLX,RLRY)\xrightarrow{-\circ\eta}\Hom_\mathscr{C}(X,R\boxed{LR}Y)\xrightarrow{\epsilon\circ -}\Hom_\mathscr{C}(X,RY)
\end{equation}
(we switched $\epsilon$ and $\eta$ which does dot affect the answer).

We have:
\begin{sublemma}{\itshape
Let
$$
L\colon\mathscr{C}\rightleftarrows\mathscr{D}\colon R
$$
be a pair of functors, and let $\Psi\colon \Id_\mathscr{C}\to RL$ is a map of functors.
Let $Z,W\in\mathscr{C}$. Then the composition
\begin{equation}
\Hom_\mathscr{C}(Z,W)\xrightarrow{(RL)_*}\Hom_\mathscr{C}(RLZ,RLW)\xrightarrow{-\circ \Psi(Z)}\Hom_\mathscr{C}(Z,RLW)
\end{equation}
is equal to the composition
\begin{equation}
\Hom_\mathscr{C}(Z,W)\xrightarrow{\Psi(W)\circ -}\Hom_\mathscr{C}(Z,RLW)
\end{equation}
}
\end{sublemma}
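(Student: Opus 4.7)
The plan is to observe that the claimed equality is nothing other than the naturality square for the natural transformation $\Psi\colon\Id_\mathscr{C}\to RL$, evaluated at a morphism $f\colon Z\to W$. Since $\Psi$ is a morphism of functors, for every such $f$ the square
\begin{equation}
\xymatrix{
Z \ar[r]^{f} \ar[d]_{\Psi(Z)} & W \ar[d]^{\Psi(W)} \\
RLZ \ar[r]^{RL(f)} & RLW
}
\end{equation}
commutes in $\mathscr{C}$, i.e.\ $\Psi(W)\circ f = RL(f)\circ \Psi(Z)$.

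First I would fix an arbitrary $f\in\Hom_\mathscr{C}(Z,W)$ and trace it through the first composition. Applying $(RL)_*$ produces $RL(f)\in\Hom_\mathscr{C}(RLZ,RLW)$, and post-composing $-\circ\Psi(Z)$ yields $RL(f)\circ\Psi(Z)\in\Hom_\mathscr{C}(Z,RLW)$. Tracing $f$ through the second composition produces $\Psi(W)\circ f$. By the naturality square above these two elements of $\Hom_\mathscr{C}(Z,RLW)$ coincide, and since $f$ was arbitrary the two compositions of maps of sets are equal.

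There is no real obstacle here: the sub-lemma is simply a restatement of naturality and requires no further input (in particular, neither $\epsilon$ nor any adjunction-type identity enters). The only content to verify is the tautological unpacking of the definitions of $(RL)_*$ and of pre/post-composition with $\Psi$, which I have just done.
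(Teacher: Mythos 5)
Your proof is correct and is essentially the paper's own argument: both reduce the claim to the naturality square $\Psi(W)\circ f = RL(f)\circ\Psi(Z)$ after unpacking the first composition as $f\mapsto RL(f)\circ\Psi(Z)$ (the paper does this unpacking via an auxiliary diagram chase starting from $\id_Z\in\Hom_\mathscr{C}(Z,Z)$, but the content is identical). Nothing is missing.
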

For, consider the diagram, for a morphism $f\colon Z\to W$:
\begin{equation}
\xymatrix{
\Hom_\mathscr{C}(Z,Z)\ar[r]^{(RL)_*\quad}\ar[d]_{f_*\circ -}&\Hom_\mathscr{C}(RLZ,RLZ)\ar[r]^{-\circ\Psi(Z)}\ar[d]_{f_*\circ -}&\Hom_\mathscr{C}(Z,RLZ)\ar[d]^{f_*\circ -}\\
\Hom_\mathscr{C}(Z,W)\ar[r]^{(RL)_*\quad}&\Hom_\mathscr{C}(RLZ,RLW)\ar[r]^{-\circ\Psi(Z)}&\Hom_\mathscr{C}(Z,RLW)
}
\end{equation}
(with the identity in the upper-left corner) and
\begin{equation}
\xymatrix{
Z\ar[r]^{\Psi(Z)}\ar[d]_{f}&RLZ\ar[d]^{f_*}\\
W\ar[r]^{\Psi(W)}&RLW
}
\end{equation}
\qed

By the above Sub-lemma, we rewrite composition \eqref{lastochka} as
\begin{equation}\label{lastochkabis}
\Hom_\mathscr{C}(X,RY)\xrightarrow{\eta(RY)\circ -}\Hom_\mathscr{C}(X,R\boxed{LR}Y)\xrightarrow{\epsilon(\square)}\Hom_\mathscr{C}(X,RY)
\end{equation}
Now \eqref{lastochkabis} is the identity map by assumption \eqref{a2}.

{\it Proof of (iii)} is analogous to the proof of (ii), and is left to the reader.
\qed

\bigskip

We turn to the discussion of interplay between the lax- and the colax-monoidal structures, in the weak adjoint setting. We have
\begin{lemma}{\itshape
There are the following statements:
\begin{itemize}
\item[(i)] Let $L\colon\mathscr{C}\rightleftarrows\mathscr{D}\colon R$ be a weak right adjunction, for some $\epsilon$ and $\eta$. Let $\ell$ be a lax-monoidal structure on $R$. Then the composition
\begin{equation}\label{lax2colax}
L(X\otimes Y)\xrightarrow{(\eta\otimes\eta)_*}L(RLX\otimes RLY)\xrightarrow{\ell_*}LR(LX\otimes LY)\xrightarrow{\epsilon_*}LX\otimes LY
\end{equation}
is a colax-monoidal structure on L;
\item[(ii)] Analogously, if $(L,R,\epsilon,\eta)$ is a left weak adjunction, and $c$ is a colax-monoidal structure on $L$, the composition
\begin{equation}\label{colax2lax}
RX\otimes RY\xrightarrow{\eta_*}RL(RX\otimes RY)\xrightarrow{c_*}R(LR(X)\otimes LR(Y))\xrightarrow{(\epsilon\otimes\epsilon)_*}R(X\otimes Y)
\end{equation}
is a lax-monoidal structure on $L$.
\end{itemize}
}
\end{lemma}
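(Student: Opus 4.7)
I will verify claim (i); claim (ii) follows by a completely dual argument interchanging $L\leftrightarrow R$, $\epsilon\leftrightarrow \eta$, lax$\leftrightarrow$colax, and \eqref{a2}$\leftrightarrow$\eqref{a1}. Write $c_{X,Y}$ for the composite in (i), and take the unit of the candidate colax structure to be $\alpha := \epsilon_{1_\mathscr{D}}\circ L\kappa\colon L(1_\mathscr{C})\to 1_\mathscr{D}$, with $\kappa\colon 1_\mathscr{C}\to R(1_\mathscr{D})$ the unit of $\ell$. There are three items to verify: naturality of $c_{X,Y}$, coassociativity, and the unit axioms of Definition \ref{colax_intro}. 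Naturality is immediate from the naturality of $\eta$, $\ell$, and $\epsilon$.

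For coassociativity, the plan is to prove that both $(c_{X,Y}\otimes 1_{LZ})\circ c_{X\otimes Y,Z}$ and $(1_{LX}\otimes c_{Y,Z})\circ c_{X,Y\otimes Z}$ equal the ternary morphism
\[
L(X\otimes Y\otimes Z)\xrightarrow{L(\eta_X\otimes\eta_Y\otimes\eta_Z)}L(RLX\otimes RLY\otimes RLZ)\xrightarrow{L\ell_3}LR(LX\otimes LY\otimes LZ)\xrightarrow{\epsilon}LX\otimes LY\otimes LZ,
\]
where $\ell_3$ is the unambiguous iterated lax structure on $R$ (well-defined by the coassociativity of $\ell$). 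The procedure is to expand $c$ in each factor, slide the interior $\epsilon$ outward by naturality (the intermediate objects landing in the image of $LR$, where $\epsilon$ is defined), and recognize the resulting subexpression $R(c_{X,Y})\circ\eta_{X\otimes Y}$ as $\Theta(\Upsilon(\mu_{X,Y}))=\mu_{X,Y}=\ell\circ(\eta_X\otimes\eta_Y)$ via the weak-right identity $\Theta\circ\Upsilon=\id$ of Lemma \ref{twoweak} (equivalent to \eqref{a2}), where $\mu_{X,Y}:=\ell\circ(\eta_X\otimes\eta_Y)$. Lax coassociativity of $\ell$ then identifies both bracketings with the ternary form.

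For the unit axioms I will follow the same template: expand $(\alpha\otimes 1_{LX})\circ c_{1_\mathscr{C},X}$, slide the outer $\epsilon$ by naturality to bring the composite into the form $\epsilon_{LX}\circ L\bigl(\ell\circ((R\alpha\circ\eta_{1_\mathscr{C}})\otimes \eta_X)\bigr)$, apply the naturality of $\eta$ and a single use of \eqref{a2} to obtain $R\alpha\circ\eta_{1_\mathscr{C}}=\kappa$, and then close using the lax-unit axiom of $\ell$; the right-unit axiom is symmetric.

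The main obstacle will be the sliding bookkeeping in coassociativity: the naturality moves must be arranged so that exactly one application of the weak-right identity \eqref{a2} is needed, after which lax coassociativity of $\ell$ does all remaining work. Once this organization is settled, the unit axioms and part (ii) follow by direct analogues, part (ii) being obtained from the dual diagrammatic computation with $\eta$ playing the role of $\epsilon$ and \eqref{a1} replacing \eqref{a2}.
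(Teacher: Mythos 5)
Your proposal is correct and follows essentially the same route as the paper's proof: both bracketings of the candidate colax map are reduced to a common ternary form via naturality plus a single application of the weak-right identity \eqref{a2} (which you invoke in the equivalent form $\Theta\circ\Upsilon=\id$ of Lemma \ref{twoweak}), after which coassociativity of $\ell$ closes the argument. Your treatment of naturality and of the unit axioms is a welcome addition, since the paper's proof only writes out the coassociativity diagram \eqref{zebra0} explicitly.
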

\begin{proof}
We prove (i), the proof of (ii) is analogous.

We need to prove the commutativity of the diagram
\begin{equation}\label{zebra0}
\xymatrix{
L(X\otimes Y\otimes Z)\ar[r]^{c_{X\otimes Y,Z}}\ar[d]_{c_{X,Y\otimes Z}}&L(X\otimes Y)\otimes L(Z)\ar[d]^{c_{X,Y}\otimes\id_Z}\\
L(X)\otimes L(Y\otimes Z)\ar[r]_{\id_X\otimes c_{Y,Z}}&L(X)\otimes L(Y)\otimes L(Z)
}
\end{equation}
where $c$ is the colax-monoidal structure on $L$, defined out of the lax-monoidal structure $\ell$ on $R$ by \eqref{lax2colax}.
\begin{sublemma}{\itshape
There are the following statements:
\begin{itemize}
\item[A.] The composition
\begin{equation}\label{zebra1}
L(X\otimes Y\otimes Z)\xrightarrow{c_{X\otimes Y,Z}} L(X\otimes Y)\otimes L(Z)\xrightarrow{c_{X,Y}\otimes\id_Z} L(X)\otimes L(Y)\otimes L(Z)
\end{equation}
is equal to the composition
\begin{equation}\label{zebra2}
\begin{aligned}
\ &L(X\otimes Y\otimes Z)\xrightarrow{{\eta_*}^{\otimes 3}}L(RLX\otimes RLY\otimes RLZ)\xrightarrow{\ell_{LX,LY}\otimes\id_{RLZ}}L(R(LX\otimes L(Y))\otimes RLZ)\\&\xrightarrow{\ell_{LX\otimes LY,LZ}}
LR(LX\otimes LY\otimes LZ)\xrightarrow{\epsilon}LX\otimes LY\otimes LZ
\end{aligned}
\end{equation}
\item[B.] the composition
\begin{equation}\label{zebra3}
L(X\otimes Y\otimes Z)\xrightarrow{c_{X,Y\otimes Z}}L(X)\otimes L(Y\otimes Z)\xrightarrow{\id_{LX}\otimes c_{Y,Z}}L(X)\otimes L(Y)\otimes L(Z)
\end{equation}
is equal to the composition
\begin{equation}\label{zebra4}
\begin{aligned}
\ &L(X\otimes Y\otimes Z)\xrightarrow{\eta_*^{\otimes 3}}L(RLX\otimes RLY\otimes RLZ)\xrightarrow{\id_{RLX}\otimes \ell_{LY,LZ}}
L(RLX\otimes R(LY\otimes LZ))\\
&\xrightarrow{\ell_{LX,LY\otimes LZ}}LR(LX\otimes LY\otimes LZ)\xrightarrow{\epsilon}LX\otimes LY\otimes LZ
\end{aligned}
\end{equation}
\end{itemize}
}
\end{sublemma}
The claim (i) of Lemma follows easily from the Sub-lemma above. Indeed, \eqref{zebra2}=\eqref{zebra4}, as $\ell$ is a lax-monoidal functor, then Sub-lemma implies \eqref{zebra1}=\eqref{zebra3}. The latter is exactly the commutativity of the diagram \eqref{zebra0}.

It remains to prove the Sub-lemma.

{\it Proof of Sub-lemma:} We prove the claim (A.), the proof of claim (B.) is analogous. In the total diagram below the upper-right composition is equal to \eqref{zebra1}, and the lower-left composition is equal to \eqref{zebra2}.
\begin{equation}\label{zebra5}
\xymatrix{
L(X\otimes Y\otimes Z)\ar[r]\ar[d]&L(RL(X\otimes Y)\otimes RLZ)\ar[d]\\
L(RLX\otimes RLY\otimes RLZ)\ar[r]\ar[d]_{\ell_*}&L(RL(RLX\otimes RLY)\otimes RLZ)\ar[d]^{\ell_*}\\
L(R(LX\otimes LY)\otimes RLZ)\ar[r]_{\eta_*}\ar[d]_{\id}&L(RLR(LX\otimes LY)\otimes RLZ)\ar[d]^{\epsilon}\\
L(R(LX\otimes LY)\otimes RLZ)\ar[r]_{\id}\ar[d]_{\epsilon\circ\ell_*}&L(R(LX\otimes LY)\otimes RLZ)\ar[d]^{\epsilon\circ\ell_*}\\
LX\otimes LY\otimes LZ\ar[r]_{\id}&LX\otimes LY\otimes LZ
}
\end{equation}
Therefore, it is sufficient to prove that the overall diagram \eqref{zebra5} commutes. The first, the second, and the fourth from above sub-diagrams
commute by general categorical nonsense, while the third from the above diagram commutes by \eqref{a2}. We are done.
\qed

\end{proof}

\comment

For Theorem \ref{keytheorem2bis} below, we need the following reformulation of the previous Lemma on the language of maps $(\Upsilon, \Theta)$ instead of maps $(\epsilon,\eta)$, see Lemma \eqref{twoweak}. The goal is reformulate \eqref{a1}, \eqref{a2} in a way using only the maps $\Upsilon$ and $\Theta$.
\begin{lemma}\label{ezhik}
Let
$$
L\colon\ \mathscr{C}\rightleftarrows\mathscr{D}\ \colon R
$$
be weak right adjoint pair of functors, let
$$
\epsilon\colon LR\to\Id_\mathscr{D}\text{    and    }\eta\colon\Id_\mathscr{C}\to RL
$$
be the adjunction maps, such that \eqref{a2} holds. Let
$$
\Upsilon\colon \Hom_\mathscr{C}(X,RY)\to\Hom_\mathscr{D}(LX,Y)\text{   and   }\Theta\colon\Hom_\mathscr{D}(LX,Y)\to \Hom_\mathscr{C}(X,RY)
$$
be the maps associated with $(\epsilon,\eta)$ by Lemma \ref{twoweak}, with
$$
\Theta\circ\Upsilon=\Id
$$
and
\begin{equation}
\epsilon(X)=\Upsilon(\boxed{RX}\xrightarrow{\id}R\boxed{X})=\Upsilon(\id)
\end{equation} and
\begin{equation}
\eta(X)=\Theta(L\boxed{X}\xrightarrow{id}\boxed{LX})=\Theta(\id)
\end{equation}
Let $\ell$ be a lax-monoidal structure on $R$.
Then the colax-monoidal structure on $L$, defined in \eqref{lax2colax}, can be equivalently defined as
\begin{equation}\label{lax2colaxbis}
\Upsilon(\boxed{X\otimes Y}\xrightarrow{\Theta(\id)\otimes\Theta(\id)}RLX\otimes RLY\xrightarrow{\ell}R(\boxed{LX\otimes LY}))
\end{equation}
The analogous claim is true for the left weak adjoint pairs, and for lax$\leftrightarrow$colax switched.
\end{lemma}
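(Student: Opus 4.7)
The plan is to unwind the definitions: both formulas for $\Upsilon$ and $\Theta$ from \eqref{awaytodefine} give explicit expressions, and substituting them mechanically into \eqref{lax2colaxbis} will reproduce the composition \eqref{lax2colax} verbatim. So this is essentially a naturality-and-substitution check; there is no real obstacle.

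First I would verify the two identities $\epsilon(X)=\Upsilon(\id_{RX})$ and $\eta(X)=\Theta(\id_{LX})$. The left diagram of \eqref{awaytodefine} says $\Upsilon(g)=\epsilon_Y\circ L(g)$ for $g\colon X\to RY$; plugging in $g=\id_{RY}$ gives $\Upsilon(\id_{RY})=\epsilon_Y\circ L(\id_{RY})=\epsilon_Y$. Symmetrically, the right diagram of \eqref{awaytodefine} says $\Theta(f)=R(f)\circ\eta_X$ for $f\colon LX\to Y$; plugging in $f=\id_{LX}$ gives $\Theta(\id_{LX})=R(\id_{LX})\circ\eta_X=\eta_X$. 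In particular, the inner composition appearing in \eqref{lax2colaxbis} is just
\[
X\otimes Y\xrightarrow{\eta\otimes\eta}RLX\otimes RLY\xrightarrow{\ell}R(LX\otimes LY),
\]
i.e. the same arrow that appears inside \eqref{lax2colax} after one strips off the outermost $\epsilon_*$ and the innermost $L$.

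Second, I would apply the formula $\Upsilon(g)=\epsilon_{LX\otimes LY}\circ L(g)$ with $g=\ell\circ(\eta\otimes\eta)$. Functoriality of $L$ gives
\[
\Upsilon\bigl(\ell\circ(\eta\otimes\eta)\bigr)=\epsilon_{LX\otimes LY}\circ L(\ell)\circ L(\eta\otimes\eta)=\epsilon\circ\ell_*\circ(\eta\otimes\eta)_*,
\]
which is exactly \eqref{lax2colax}. This finishes the right-adjoint case.

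For the left weak adjoint / lax-switched-with-colax case, I would run the dual argument: use the right-hand formula $\Theta(f)=R(f)\circ\eta_X$ of \eqref{awaytodefine} to expand $\Theta\bigl((\epsilon\otimes\epsilon)\circ c\bigr)$, and functoriality of $R$ again collapses the expression onto the composition \eqref{colax2lax}. The only potentially delicate point is to make sure one invokes the correct axiom among \eqref{a1}, \eqref{a2} when $\Upsilon$ is specified via one adjunction triangle and $\Theta$ via the other; the preceding Lemma \ref{twoweak}(ii)--(iii) tells us that the formulas \eqref{awaytodefine} for $\Upsilon$ and $\Theta$ are mutual inverses precisely under the hypothesis at hand, so nothing beyond a direct diagram chase is required.
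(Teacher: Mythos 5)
Your proposal is correct and matches what the paper intends: the paper states only that the lemma is ``a direct application of the above discussion'' and leaves it to the reader, and your argument is exactly that unwinding — $\Upsilon(\id)=\epsilon$, $\Theta(\id)=\eta$ from \eqref{awaytodefine}, then functoriality of $L$ turns $\Upsilon\bigl(\ell\circ(\eta\otimes\eta)\bigr)$ into the composition \eqref{lax2colax}. Nothing further is needed.
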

The proof is a direct application of the above discussion, and is left to the reader.

\qed
\endcomment

\appsubsection{\sc Intrinsic characterization of the lax-colax duality}\label{appintrinsic}
\begin{defn}{\rm
Let $\mathscr{C}$ and $\mathscr{D}$ be monoidal categories, and let
$$
L\colon\ \mathscr{C}\rightleftarrows\mathscr{D}\ \colon R
$$
be a {\it genuine} adjunction.
\begin{itemize}
\item[1)]
We say that a colax-monoidal structure $c$ on $L$ and a lax-monoidal structure $\ell$ on $R$ are {\it compatible} if the following diagram commutes:
\begin{equation}\label{krokodil}
\xymatrix{
L(RX\otimes RY)\ar[r]^{\ell_*}\ar[d]_{c_*}&LR(X\otimes Y)\ar[d]^{\epsilon_*}\\
LR(X)\otimes LR(Y)\ar[r]^{\quad\quad\epsilon_*\otimes\epsilon_*}&X\otimes Y
}
\end{equation}
\item[2)] Suppose a lax-monoidal structure $\ell$ on $R$ is given. We call the colax-monoidal structure $c$ on $L$, defined by \eqref{lax2colax}, the {\it canonical} colax-monoidal structure, induced by $\ell$. As well, suppose a colax-monoidal structure $c$ on $L$ is given. We call a lax-monoidal structure $\ell$ on $R$, defined by \eqref{colax2lax}, the {\it canonical} lax-monoidal structure, induced by $c$.
\end{itemize}
}
\end{defn}
\begin{lemma}\label{utka}{\itshape
Let $\mathscr{C}$ and $\mathscr{D}$ be monoidal categories, and let
$$
L\colon\ \mathscr{C}\rightleftarrows\mathscr{D}\ \colon R
$$
be a {\it genuine} adjunction.
\begin{itemize}
\item[(i)] given a lax-monoidal structure $\ell$ on $R$, it is compatible with the canonical colax-monoidal structure on $L$, induced by $\ell$. Analogously, given a colax-monoidal structure $c$ on $L$, it is compatible with the canonical lax-monoidal structure on $R$, induced by $c$;
\item[(ii)] given a compatible pair $(c,\ell)$ of (co)lax-monoidal structures, each of them is uniquely defined by another. By (i), it implies that the only compatible pairs of (co)lax-monoidal structures are the canonically induced ones.
\end{itemize}
}
\end{lemma}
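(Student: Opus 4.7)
My plan for both parts is a direct diagram chase in the compatibility square \eqref{krokodil}; the only tools required will be naturality of $\epsilon$, naturality of $\ell$ (or of $c$), and the two triangle identities \eqref{a1}, \eqref{a2}.

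For part (i), I will substitute the canonical colax structure $c$ defined by \eqref{lax2colax} at the objects $(RX, RY)$ into the left-then-bottom path of \eqref{krokodil}, obtaining the composite
$$(\epsilon_X \otimes \epsilon_Y) \circ \epsilon_{LRX \otimes LRY} \circ L\ell_{LRX, LRY} \circ L(\eta_{RX} \otimes \eta_{RY}).$$
Naturality of $\epsilon$ against the morphism $\epsilon_X \otimes \epsilon_Y$ will move $\epsilon$ past it, and naturality of $\ell$ against the pair $(\epsilon_X, \epsilon_Y)$ will then rewrite $LR(\epsilon_X \otimes \epsilon_Y) \circ L\ell_{LRX, LRY}$ as $L\ell_{X, Y} \circ L(R\epsilon_X \otimes R\epsilon_Y)$. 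What remains is
$$\epsilon_{X \otimes Y} \circ L\ell_{X, Y} \circ L((R\epsilon_X \circ \eta_{RX}) \otimes (R\epsilon_Y \circ \eta_{RY})),$$
and the triangle identity \eqref{a2} collapses the rightmost factor to the identity, giving precisely the top-then-right path of \eqref{krokodil}. The dual assertion (that a colax $c$ on $L$ is compatible with the lax structure on $R$ induced via \eqref{colax2lax}) will follow by the formally dual chase, with \eqref{a1} taking the role of \eqref{a2}.

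For part (ii), I will show that compatibility pins $c$ down to the canonical colax structure induced by $\ell$; by symmetry the other direction will follow. Fix $A, B \in \mathscr{C}$ and apply the compatibility square at $(X, Y) = (LA, LB)$; precomposing with $L(\eta_A \otimes \eta_B)$, the top-then-right path reads exactly $c^{\mathrm{can}}_{A, B}$ as defined in \eqref{lax2colax}, while the left-then-bottom path reads $(\epsilon_{LA} \otimes \epsilon_{LB}) \circ c_{RLA, RLB} \circ L(\eta_A \otimes \eta_B)$. Naturality of $c$ against $\eta_A \otimes \eta_B$ rewrites this as $((\epsilon_{LA} \circ L\eta_A) \otimes (\epsilon_{LB} \circ L\eta_B)) \circ c_{A, B}$, which the triangle identity \eqref{a1} collapses to $c_{A, B}$. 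Hence $c = c^{\mathrm{can}}$, and combined with (i) this yields the uniqueness assertion in the last sentence of the lemma.

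The only real obstacle will be bookkeeping: deciding at each step which triangle identity is the correct one to invoke, and at which object to apply each naturality. Conceptually the entire lemma reduces to a single chase of \eqref{krokodil} against the definitions \eqref{lax2colax}, \eqref{colax2lax}, with each half consuming exactly one of the two triangle identities.
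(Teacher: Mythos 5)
Your argument is correct and is essentially the paper's own: the paper disposes of this lemma by pointing to Lemmas \ref{strangelemma} and \ref{strangelemma1}, whose proofs are exactly your diagram chases in \eqref{krokodil} — substitute the canonical structure from \eqref{lax2colax} (resp.\ \eqref{colax2lax}) into one path, move $\epsilon$ and $\ell$ (resp.\ $c$) past each other by naturality, and collapse with the appropriate triangle identity \eqref{a1} or \eqref{a2}. Your part (ii), evaluating \eqref{krokodil} at $(LA,LB)$ and precomposing with $L(\eta_A\otimes\eta_B)$, is the same manipulation as the paper's diagram \eqref{krokodil2} in dual form, so nothing is missing.
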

It is simple and fairly standard, but the result follows from our more subtle Lemmas \ref{strangelemma}, \ref{strangelemma1} as well.

\qed

Our goal in this Subsection is to study what happens with Lemma \ref{utka} in weak adjoint context. These results are used below in Appendix B, in proofs of Theorem \ref{keytheorem2bis} and of Theorem \ref{keytheorem2bisbis}.

We have:
\begin{lemma}\label{strangelemma}{\itshape
Let
$$
L\colon\ \mathscr{C}\rightleftarrows\mathscr{D}\ \colon R
$$
be a weak right adjunction.
\begin{itemize}
\item[(1)] Suppose a lax-monoidal structure $\ell$ on $R$ is given. Then the canonical colax-monoidal structure $c$ on $L$, induced by $\ell$ as in \eqref{lax2colax}, is compatible with $\ell$ (in the sense that diagram \eqref{krokodil} commutes);
\item[(2)] suppose a colax-monoidal structure $c$ on $L$ and a lax-monoidal structure $\ell$ on $R$ are given, such that the diagram \eqref{krokodil} commutes. Then $\ell$ can be uniquely reconstructed by $c$.
\end{itemize}
}
\end{lemma}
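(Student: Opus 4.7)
The plan is to treat the two parts by separate tactics. For (1) I substitute the definition \eqref{lax2colax} of $c$ into the left-then-bottom path of \eqref{krokodil} and apply naturality of $\epsilon$ and of $\ell$ in succession, arranging the applications so that the only weak-adjunction identity needed is \eqref{a2}, namely $R\epsilon\circ\eta R=\id_R$. For (2) I read the compatibility diagram as the equation $\Upsilon(\ell_{X,Y})=(\epsilon_X\otimes\epsilon_Y)\circ c_{RX,RY}$, where $\Upsilon$ is the map from Lemma \ref{twoweak}, and use the injectivity of $\Upsilon$, which holds in the weak right adjoint case since $\Theta\circ\Upsilon=\id$.

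For part (1), after substituting \eqref{lax2colax} into $(\epsilon_X\otimes\epsilon_Y)\circ c_{RX,RY}$, the bottom path of \eqref{krokodil} becomes
$$
(\epsilon_X\otimes\epsilon_Y)\circ\epsilon_{LRX\otimes LRY}\circ L(\ell_{LRX,LRY})\circ L(\eta_{RX}\otimes\eta_{RY}).
$$
First I apply naturality of $\epsilon\colon LR\to\Id_\mathscr{D}$ at the morphism $\epsilon_X\otimes\epsilon_Y$ to replace $(\epsilon_X\otimes\epsilon_Y)\circ\epsilon_{LRX\otimes LRY}$ by $\epsilon_{X\otimes Y}\circ LR(\epsilon_X\otimes\epsilon_Y)$. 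Second, I apply $L$ to the naturality square of $\ell$ at the pair $(\epsilon_X,\epsilon_Y)$, which turns $LR(\epsilon_X\otimes\epsilon_Y)\circ L(\ell_{LRX,LRY})$ into $L(\ell_{X,Y})\circ L(R\epsilon_X\otimes R\epsilon_Y)$. After these rewritings the whole composition reads
$$
\epsilon_{X\otimes Y}\circ L(\ell_{X,Y})\circ L\bigl((R\epsilon_X\circ\eta_{RX})\otimes(R\epsilon_Y\circ\eta_{RY})\bigr),
$$
and by axiom \eqref{a2} each inner composition is the identity, so this reduces to $\epsilon_{X\otimes Y}\circ L(\ell_{X,Y})$, which is the top-then-right path of \eqref{krokodil}.

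For part (2), by \eqref{awaytodefine} we have $\Upsilon(\ell_{X,Y})=\epsilon_{X\otimes Y}\circ L(\ell_{X,Y})$, so the commutativity of \eqref{krokodil} is precisely $\Upsilon(\ell_{X,Y})=(\epsilon_X\otimes\epsilon_Y)\circ c_{RX,RY}$. Since $\Theta\circ\Upsilon=\id$ by Lemma \ref{twoweak}(ii), the map $\Upsilon$ is injective, hence this equation determines $\ell_{X,Y}$ uniquely, with the explicit formula
$$
\ell_{X,Y}=\Theta\bigl((\epsilon_X\otimes\epsilon_Y)\circ c_{RX,RY}\bigr)=R(\epsilon_X\otimes\epsilon_Y)\circ R(c_{RX,RY})\circ\eta_{RX\otimes RY},
$$
which agrees in shape with \eqref{colax2lax}.

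The main obstacle is in part (1): one has to choose the order of the naturality applications so that the reduction uses \emph{only} axiom \eqref{a2}, because axiom \eqref{a1} is not available for a weak right adjunction. If one tried to pull $\epsilon_X\otimes\epsilon_Y$ inward through $\ell$ first, and then cancel, one would end up needing the unavailable identity $\epsilon L\circ L\eta=\id_L$. The order chosen above forces the two $\eta$'s from the definition of $c$ to meet the two $R\epsilon$'s produced by the naturality of $\ell$, which is exactly the composition governed by \eqref{a2}. Part (2) is then essentially formal, resting only on the injectivity of $\Upsilon$ given by Lemma \ref{twoweak}(ii).
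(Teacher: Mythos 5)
Your proposal is correct and follows essentially the same route as the paper: part (1) is the identical computation (naturality of $\epsilon$ and of $\ell$ pushing the two $\epsilon$'s past $\ell$ so that the $\eta$'s from \eqref{lax2colax} meet them in the combination governed by \eqref{a2}), and part (2) is the paper's argument repackaged, since applying $\Theta$ to both paths of \eqref{krokodil} and using $\Theta\circ\Upsilon=\id$ is exactly the paper's device of stacking a naturality square for $\eta$ on top of $R(\eqref{krokodil})$ and cancelling via \eqref{a2}. Both yield the same reconstruction formula $\ell_{X,Y}=R(\epsilon_X\otimes\epsilon_Y)\circ R(c_{RX,RY})\circ\eta_{RX\otimes RY}$, i.e.\ \eqref{colax2lax}.
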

\begin{remark}{\rm
We would emphasize a rather unexpected claim in (ii): not a colax-monoidal structure $c$ on $L$ making \eqref{krokodil} commutative, one particular possibility for it is given in (i),
is uniquely defined by $\ell$, but, vice versa, $\ell$ is uniquely reconstructed by $c$.
}
\end{remark}
Before proving Lemma \ref{strangelemma}, let us formula a pattern of it for left weak adjunctions.

Consider the diagram
\begin{equation}\label{krokodil1}
\xymatrix{
X\otimes Y\ar[r]^{\eta\otimes\eta\quad}\ar[d]_{\eta}&RL(X)\otimes RL(Y)\ar[d]^{\ell_*}\\
RL(X\otimes Y)\ar[r]^{c_*\ \ \ }&R(L(X)\otimes L(Y))
}
\end{equation}
Here the maps are defined from a lax-monoidal structure $\ell$ on $R$, a colax-monoidal structure $c$ on $L$, and from the adjunction $\eta\colon\Id_\mathscr{C}\to RL$.
\begin{remark}{\rm
For a pair of genuine adjoint functors, the diagram \eqref{krokodil1} commutes iff \eqref{krokodil} does. For weak adjunctions, it is not anymore the case, and we have two different diagrams.}
\end{remark}
The ``dual'' to Lemma \ref{strangelemma} is
\begin{lemma}\label{strangelemma1}{\itshape
Let
$$
L\colon\ \mathscr{C}\rightleftarrows\mathscr{D}\ \colon R
$$
be a weak left adjunction.
\begin{itemize}
\item[(1)] Suppose a colax-monoidal structure $c$ on $L$ is given. Then the canonical lax-monoidal structure $\ell$ on $R$, induced by $c$ as in \eqref{colax2lax}, is compatible with $\ell$ in the sense that diagram \eqref{krokodil1} commutes;
\item[(2)] suppose a colax-monoidal structure $c$ on $L$ and a lax-monoidal structure $\ell$ on $R$ are given, such that the diagram \eqref{krokodil1} commutes. Then $c$ can be uniquely reconstructed by $\ell$.
\end{itemize}
}
\end{lemma}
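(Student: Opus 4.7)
The proof will adapt the argument of Lemma \ref{strangelemma} to the weak left adjoint setting, with the roles of the two triangle identities and of $c$ and $\ell$ interchanged. The governing triangle identity is now $\epsilon L \circ L\eta = \id_L$, the axiom \eqref{a1} characterizing weak left adjunctions.

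For part (1), I will unwind the definition of $\ell$ from \eqref{colax2lax}, applied at the objects $LX$ and $LY$, and precompose with $\eta_X \otimes \eta_Y$. The top-right path of \eqref{krokodil1} then becomes the four-step composition
$$X \otimes Y \xrightarrow{\eta \otimes \eta} RL(X) \otimes RL(Y) \xrightarrow{\eta} RL(RL(X) \otimes RL(Y)) \xrightarrow{R(c_*)} R(LRL(X) \otimes LRL(Y)) \xrightarrow{R(\epsilon \otimes \epsilon)} R(LX \otimes LY).$$
Applying naturality of $\eta$ to the first two arrows collapses them into $\eta_{X \otimes Y}$ followed by $RL(\eta \otimes \eta)$; then applying naturality of $c$ inside the next factor of $R$ lets me rewrite the result as $R(c_{X,Y})$ followed by $R(L\eta \otimes L\eta)$ followed by $R(\epsilon \otimes \epsilon)$, all precomposed with $\eta_{X \otimes Y}$. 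The weak left triangle identity $\epsilon \circ L\eta = \id_L$ now collapses the last two arrows to the identity, so the whole expression reduces to $R(c_{X,Y}) \circ \eta_{X \otimes Y}$, which is the bottom-left path of \eqref{krokodil1}.

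For part (2), the key point is the dual of Lemma \ref{twoweak}(iii): for a weak left adjoint pair the adjoint-transpose $\Theta\colon \Hom_\mathscr{D}(LA, B) \to \Hom_\mathscr{C}(A, RB)$, $f \mapsto R(f) \circ \eta_A$, admits $\Upsilon$ as a left inverse and is therefore injective. Applied to $A = X \otimes Y$ and $B = LX \otimes LY$, this means that $c_{X, Y}$ is uniquely determined by its image $R(c_{X,Y}) \circ \eta_{X \otimes Y}$. But the assumed commutativity of \eqref{krokodil1} identifies this image with $\ell_{LX, LY} \circ (\eta_X \otimes \eta_Y)$, an expression involving only $\ell$ and the unit. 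Therefore $c$ is recoverable from $\ell$.

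The main obstacle is keeping the naturality squares and the correct triangle identity straight — in particular, ensuring that the cancellation in part (1) really uses the $L$-triangle $\epsilon L \circ L\eta = \id_L$ and not the $R$-triangle, which is precisely the identity allowed to fail under the weak left adjoint hypothesis. Once the $\eta$-naturality square and the $c$-naturality square are properly assembled, the rest is mechanical, and part (2) is immediate from the injectivity of $\Theta$.
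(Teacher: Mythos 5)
Your proof is correct and is precisely the dualization that the paper leaves to the reader (``the proof of Lemma \ref{strangelemma1} is analogous''): part (1) is the mirror of the paper's proof of Lemma \ref{strangelemma}(1), with the naturality squares for $\eta$ and $c$ followed by the triangle identity \eqref{a1}, and part (2) repackages the paper's diagram-extension argument as the injectivity of $\Theta$ coming from $\Upsilon\circ\Theta=\Id$ in Lemma \ref{twoweak}(iii). You also correctly isolate the one point where care is needed, namely that only the $L$-triangle identity is available for a weak left adjunction.
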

{\it Proof of Lemma \ref{strangelemma}:}

(i):

Suppose $\ell$ is given, define $c$ by \eqref{lax2colax}.
Then the lower-left path of diagram \eqref{krokodil} is:
\begin{equation}
L(RX\otimes RY)\xrightarrow{\eta_*\otimes\eta_*}L(RLRX\otimes RLRY)\xrightarrow{\ell_*}LR(LRX\otimes LRY)\xrightarrow{\epsilon(\epsilon^{\otimes 2})} X\otimes Y
\end{equation}
The composition
$$
L(RLRX\otimes RLRY)\xrightarrow{\ell_*}LR(LRX\otimes LRY)\to X\otimes Y
$$
is the same, by the functoriality, that the composition
$$
L(R\boxed{LR}X\otimes R\boxed{LR}Y)\xrightarrow{\epsilon_*\otimes\epsilon_*}L(RX\otimes RY)\xrightarrow{\ell_*}LR(X\otimes Y)\xrightarrow{\epsilon} X\otimes Y
$$
(in the first arrow, $\eta$'s act on the boxed factors).

The latter composition is equal to the upper-right path of diagram \eqref{krokodil}, because the composition
$$
R\xrightarrow{\eta} RLR\xrightarrow{\epsilon}R
$$
is equal to the identity map of $R$, by assumption.

\smallskip

(ii):

Build up a top level to the  $R(\eqref{krokodil})$, as follows:
\begin{equation}\label{krokodil2}
\xymatrix{
RX\otimes RY\ar[r]^{\ell}\ar[d]_{\eta}&R(X\otimes Y)\ar[d]^{\eta}\\
RL(RX\otimes RY)\ar[r]^{\ell_*}\ar[d]_{c_*}&RLR(X\otimes Y)\ar[d]^{\epsilon_*}\\
R(LR(X)\otimes LR(Y))\ar[r]^{\quad\quad\epsilon_*\otimes\epsilon_*}&R(X\otimes Y)
}
\end{equation}
It follows from our assumption of right weak adjunction that the upper-right path of the overall diagram \eqref{krokodil2} gives the lax-monoidal structure $\ell\colon RX\otimes RY\to R(X\otimes Y)$. On the other hand, the lower-left path expresses it in $c$, $\epsilon$, and $\eta$.

\qed

The proof of Lemma \ref{strangelemma1} is analogous and is left to the reader.

\appsubsection{\sc (Co)limits of weak adjoint pairs}\label{appweak}
We start with the definition of a morphism between weak adjoint pairs.
\begin{defn}\label{adef2}{\rm
Let $\mathscr{C}$ and $\mathscr{D}$ be two categories, and suppose
\begin{equation}\label{morphismone}
L\colon\mathscr{C}\rightleftarrows\mathscr{D}\colon R
\end{equation}
and
\begin{equation}\label{morphismtwo}
L^\prime\colon\mathscr{C}\rightleftarrows\mathscr{D}\colon R^\prime
\end{equation}
are two {\it weak right adjunctions} (resp., weak left adjunctions), see Definition \ref{adef1}.

{\it A morphism} $\Psi$  from \eqref{morphismone} to \eqref{morphismtwo} is given by the following data:
\begin{itemize}
\item[(i)] two maps of functors $\Psi_L\colon L^\prime\to L$ (in backward direction) and $\Psi_R\colon R\to R^\prime$,
which induce maps of bifunctors $$\Psi_{L*}\colon \Hom_\mathscr{D}(LX,Y)\to\Hom_\mathscr{D}(L^\prime X,Y)$$ and
$$
\Psi_{R*}\colon \Hom_\mathscr{C}(X,RY)\to\Hom_\mathscr{C}(X,R^\prime Y)
$$
\item[(ii)] the diagram
\begin{equation}
\xymatrix{
\Hom_\mathscr{D}(LX,Y)\ar[r]^{\Theta}\ar[d]_{\Psi_{L*}}&\Hom_\mathscr{C}(X,RY)\ar[d]^{\Psi_{R*}}\\
\Hom_\mathscr{D}(L^\prime X,Y)\ar[r]^{\Theta^\prime}&\Hom_\mathscr{C}(X,R^\prime Y)
}
\end{equation}
and the diagram
\begin{equation}
\xymatrix{
\Hom_\mathscr{C}(X,RY)\ar[r]^{{\Upsilon}}\ar[d]_{\Psi_{R*}}&\Hom_\mathscr{D}(LX,Y)\ar[d]^{\Psi_{L*}}\\
\Hom_\mathscr{C}(X,R^\prime Y)\ar[r]^{\Upsilon^\prime}&\Hom_\mathscr{D}(L^\prime X,Y)
}
\end{equation}
are commutative,
where $(\Theta,\Upsilon)$ and $(\Theta^\prime,\Upsilon^\prime)$ are corresponded to the weak right adjunctions \eqref{morphismone} (correspondingly, \eqref{morphismtwo}) by \eqref{awaytodefine}.
\end{itemize}
}
\end{defn}
\bigskip

We now turn to the discussion of (co)limits of weak adjoint functors and of (co)lax-monoidal structures.

Let $I$ and $\mathscr{C}$ be categories; we refer to a functors from $I$ to $\mathscr{C}$ as {\it diagrams}.
Recall that a {\it limit} $\lim\mathcal{D}$ of a diagram $\mathcal{D}\colon I\to\mathscr{C}$ is defined, if it exists, by one of the two equivalent ways:
\begin{equation}\label{botinok1}
\Hom_\mathscr{C}(X,\lim\mathcal{D})={\lim}_I\Hom(X,\mathscr{D}(i))=\Hom_{I\to\mathscr{C}}(\Delta X, \mathcal{D})
\end{equation}
where $\Delta(X)$ is the functor $\Delta(X)\colon I\to\mathscr{C}$ defined on objects as $\Delta(X)(i)=X$, and defined as the identity morphism of $X$ on all morphisms in $I$.

Analogously, a {\it colimit} $\colim\mathcal{D}$ of the diagram $\mathcal{D}\colon I\to\mathscr{C}$ is defined, if it exists, by
\begin{equation}\label{botinok2}
\Hom_\mathscr{C}(\colim \mathcal{D}, Y)={\lim}_I\Hom(\mathcal{D}(i),Y)=\Hom_{I\to\mathscr{C}}(\mathcal{D},\Delta Y)
\end{equation}

For any two categories $\mathscr{P}$ and $\mathscr{Q}$, there is a canonical equivalence
\begin{equation}\label{3opp}
\mathscr{F}un(\mathscr{P},\mathscr{Q})\simeq \mathscr{F}un(\mathscr{P}^\opp,\mathscr{Q}^\opp)^\opp
\end{equation}

Thus, a functor $\mathcal{D}\colon{I}\to\mathscr{C}$, can be regarded as well as a functor $\mathcal{D}^\opp\colon I^\opp\to\mathscr{C}^\opp$.
Then
\begin{equation}\label{limcolim}
{\lim}_{I}\mathcal{D}=(\colim_{I^\opp}\mathcal{D}^\opp)^\opp
\end{equation}

Let $\alpha\colon J\to I$ be a functor. Then with any functor $\mathcal{D}\colon I\to \mathscr{C}$ one assigns the composition
$\mathcal{D}\circ\alpha\colon J\to\mathscr{C}$.

Recall for the reference below
\begin{lemma}
In the above notations, the functor $\alpha\colon J\to I$ defines canonical morphisms
\begin{equation}\label{j2ilim}
\alpha^*\colon \lim \mathcal{D}\to\lim(\mathcal{D}\circ\alpha)
\end{equation}
and
\begin{equation}\label{j2icolim}
\alpha_*\colon\colim(\mathcal{D}\circ \alpha)\to\colim\mathcal{D}
\end{equation}
functorial in diagram $\mathcal{D}\colon I\to\mathscr{C}$.
\end{lemma}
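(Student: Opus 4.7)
The plan is to build $\alpha^\ast$ and $\alpha_\ast$ directly from the universal property descriptions in \eqref{botinok1} and \eqref{botinok2}, and then to read off functoriality from these formulas. Since the statement is a purely general categorical fact about the way (co)limits restrict along a functor between index categories, no special features of $\mathscr{C}$ need to be used.

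First I would construct $\alpha^\ast$. By \eqref{botinok1}, for any $X\in\mathscr{C}$ there is an isomorphism
$$
\Hom_\mathscr{C}(X,\lim\mathcal{D})\simeq\Hom_{\mathscr{F}un(I,\mathscr{C})}(\Delta_I(X),\mathcal{D}),
$$
where $\Delta_I(X)\colon I\to\mathscr{C}$ is the constant functor at $X$. Pre-composition with $\alpha$ gives a map of sets
$$
\alpha^\sharp\colon\Hom_{\mathscr{F}un(I,\mathscr{C})}(\Delta_I(X),\mathcal{D})\to\Hom_{\mathscr{F}un(J,\mathscr{C})}(\Delta_J(X),\mathcal{D}\circ\alpha),
$$
since $\Delta_I(X)\circ\alpha=\Delta_J(X)$. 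Composing with the analogue of \eqref{botinok1} for $\lim(\mathcal{D}\circ\alpha)$ yields a morphism of bifunctors in $X$
$$
\Hom_\mathscr{C}(X,\lim\mathcal{D})\to\Hom_\mathscr{C}(X,\lim(\mathcal{D}\circ\alpha)),
$$
which, by the Yoneda lemma, is represented by a unique morphism $\alpha^\ast\colon\lim\mathcal{D}\to\lim(\mathcal{D}\circ\alpha)$. The morphism $\alpha_\ast$ is defined by the dual procedure, applying \eqref{botinok2} and using \eqref{limcolim}--\eqref{3opp}; equivalently, one passes from $\mathcal{D}$ to $\mathcal{D}^\opp\colon I^\opp\to\mathscr{C}^\opp$ and invokes the limit construction already given.

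Next I would verify functoriality in $\mathcal{D}$. A morphism of diagrams $\Phi\colon\mathcal{D}\to\mathcal{D}'$ induces maps $\lim\Phi\colon\lim\mathcal{D}\to\lim\mathcal{D}'$ and $\lim(\Phi\circ\alpha)\colon\lim(\mathcal{D}\circ\alpha)\to\lim(\mathcal{D}'\circ\alpha)$. The square
$$
\xymatrix{
\lim\mathcal{D}\ar[r]^{\alpha^\ast}\ar[d]_{\lim\Phi}&\lim(\mathcal{D}\circ\alpha)\ar[d]^{\lim(\Phi\circ\alpha)}\\
\lim\mathcal{D}'\ar[r]^{{\alpha^\ast}'}&\lim(\mathcal{D}'\circ\alpha)
}
$$
commutes because, after applying the Yoneda embedding $\Hom_\mathscr{C}(X,-)$, it reduces to the obviously commutative square obtained by postcomposing natural transformations $\Delta_I(X)\to\mathcal{D}$ with $\Phi$ and restricting along $\alpha$; these two operations commute on the nose. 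The corresponding statement for $\alpha_\ast$ follows from the opposite-category argument.

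The only point worth checking carefully is that the construction of $\alpha^\ast$ is independent of the choice of cone presenting the limit; this is automatic once one phrases the definition through the universal bijection \eqref{botinok1}, because both bijections are canonical up to unique isomorphism. There is no genuine obstacle here: the entire proof is a routine application of universal properties together with the Yoneda lemma, and once $\alpha^\ast$ is produced, the naturality in $\mathcal{D}$ is essentially tautological.
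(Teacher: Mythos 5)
Your construction of $\alpha^\ast$ via the universal bijection \eqref{botinok1}, restriction of cones along $\alpha$ (using $\Delta_I(X)\circ\alpha=\Delta_J(X)$), and the Yoneda lemma, with the colimit case handled dually, is exactly the argument the paper gives. The proposal is correct and even spells out the naturality in $\mathcal{D}$, which the paper leaves implicit.
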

\begin{proof}
For the case of limits, there are two adjunctions
\begin{equation}
\Hom_\mathscr{C}(X,\lim\mathcal{D})\simeq \Hom_{I\to\mathscr{C}}(\Delta_I X,\mathcal{D})
\end{equation}
and
\begin{equation}
\Hom_\mathscr{C}(X,\lim(\mathcal{D}\circ \alpha))\simeq \Hom_{J\to\mathscr{C}}(\Delta_J X,\mathcal{D}\circ\alpha)
\end{equation}
The map $\alpha\colon J\to I$ defines a map of the right-hand-sides
\begin{equation}
\Hom_{I\to\mathscr{C}}(\Delta_I X,\mathcal{D})\to\Hom_{J\to\mathscr{C}}(\Delta_J X,\mathcal{D}\circ\alpha)
\end{equation}
which defines a map of the left-hand sides (as a map of bifunctors):
\begin{equation}\label{yl}
\Hom_\mathscr{C}(X,\lim\mathcal{D})\to\Hom_\mathscr{C}(X,\lim (\mathcal{D}\circ\alpha))
\end{equation}
Then \eqref{yl} defines a functor (unique up to a unique isomorphism)
$$
\lim\mathcal{D}\to\lim(\mathcal{D}\circ\alpha)
$$
by the Yoneda lemma.

The case of colimits is analogous (and dual).
\end{proof}

See [ML, Ch.V] and [KaS, Ch.2] for more detail on (co)limits.

Recall a well-known fact on the (co)limits in the categories of functors.
\begin{prop}\label{spravka}{\itshape
Let $\mathscr{C}$ and $\mathscr{D}$ be two categories. Consider the category $\mathscr{F}un(\mathscr{C},\mathscr{D})$. Let $I$ be an indexing category, and let $\mathcal{D}\colon I\to \mathscr{F}un(\mathscr{C},\mathscr{D})$ be a diagram.
\begin{itemize}
\item[(i)] When $\mathscr{D}$ is complete, the limit $\lim\mathcal{D}$ exists, and can be computed point-wise:
\begin{equation}\label{strekoza1}
(\lim\mathscr{D})(X)=\lim(\mathcal{D}(X))
\end{equation}
Analogously, when $\mathscr{D}$ is cocomplete, the colimit $\colim\mathscr{D}$ exists, and can be computed point-wise
\begin{equation}\label{strekoza2}
(\colim\mathcal{D})(X)=\colim(\mathcal{D}(X))
\end{equation}
\item[(ii)] Suppose $\mathscr{D}$ is cocomplete and $\mathscr{C}$ is complete, and let
$\mathcal{D}_L\colon I^\opp\to\mathscr{F}un(\mathscr{C},\mathscr{D})$ and $\mathcal{D}_R\colon I\to \mathscr{F}un(\mathscr{D},\mathscr{C})$ be two diagrams. Suppose that for each $i\in I$, the pair of functors
$$
\mathcal{D}_L(i)\colon\mathscr{C}\rightleftarrows\mathscr{D}\colon\mathcal{D}_R(i)
$$
is genuine adjoint. Then the functors
$$
\colim\mathcal{D}_L\colon \mathscr{C}\rightleftarrows \mathscr{D}\colon\lim\mathcal{D}_R
$$
exist, and are genuine adjoint.
\end{itemize}
}
\end{prop}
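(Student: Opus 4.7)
The plan is to prove (i) first by an explicit construction at the object level and then deduce (ii) as a formal consequence, using only the defining universal properties and the compatibility of $\Hom$-functors with (co)limits.

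For part (i), define a candidate functor $F\colon \mathscr{C}\to\mathscr{D}$ by the pointwise rule $F(X)=\lim_I(\mathcal{D}(i)(X))$, which exists because $\mathscr{D}$ is complete. A morphism $f\colon X\to Y$ in $\mathscr{C}$ induces a compatible family $\mathcal{D}(i)(X)\to\mathcal{D}(i)(Y)$ which, by the universal property of limits in $\mathscr{D}$, lifts uniquely to $F(f)\colon F(X)\to F(Y)$; functoriality follows from the uniqueness clause. For the universal property in $\mathscr{F}un(\mathscr{C},\mathscr{D})$, I would use the chain
\[
\Hom_{\mathscr{F}un(\mathscr{C},\mathscr{D})}(G,F)\;\simeq\;\Hom_{\mathscr{F}un(\mathscr{C},\mathscr{D})}\bigl(G,\,X\mapsto\lim_I\mathcal{D}(i)(X)\bigr),
\]
and check pointwise that a natural transformation into $F$ is the same data as a compatible system of natural transformations $G\to\mathcal{D}(i)$, using \eqref{botinok1} in $\mathscr{D}$. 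The colimit case is identical after passing through \eqref{3opp}--\eqref{limcolim}.

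For part (ii), there is nothing to construct beyond what (i) already gives: the functor $\colim\mathcal{D}_L$ exists because $\mathscr{D}$ is cocomplete, and $\lim\mathcal{D}_R$ exists because $\mathscr{C}$ is complete, and each is computed pointwise by \eqref{strekoza1}, \eqref{strekoza2}. The adjunction is then the following string of natural bijections, for $X\in\mathscr{C}$ and $Y\in\mathscr{D}$:
\[
\Hom_\mathscr{D}\bigl((\colim\mathcal{D}_L)(X),Y\bigr)
\;=\;\Hom_\mathscr{D}\bigl(\colim_{I^\opp}\mathcal{D}_L(i)(X),Y\bigr)
\;\simeq\;\lim_I\Hom_\mathscr{D}\bigl(\mathcal{D}_L(i)(X),Y\bigr),
\]
using that $\Hom_\mathscr{D}(-,Y)$ sends colimits to limits; then the pointwise adjunction $(\mathcal{D}_L(i),\mathcal{D}_R(i))$ gives
\[
\lim_I\Hom_\mathscr{D}\bigl(\mathcal{D}_L(i)(X),Y\bigr)\;\simeq\;\lim_I\Hom_\mathscr{C}\bigl(X,\mathcal{D}_R(i)(Y)\bigr);
\]
finally, $\Hom_\mathscr{C}(X,-)$ commutes with limits, yielding
\[
\lim_I\Hom_\mathscr{C}\bigl(X,\mathcal{D}_R(i)(Y)\bigr)\;\simeq\;\Hom_\mathscr{C}\bigl(X,(\lim\mathcal{D}_R)(Y)\bigr).
\]
What remains is to verify naturality in $X$ and $Y$; each isomorphism in the chain is natural because the three structural identities used (transformations in $\Hom$ by pre/postcomposition, the pointwise adjunction isomorphisms, and commutation of representable functors with limits) are themselves natural in both slots, so the composite is an isomorphism of bifunctors $\mathscr{C}^\opp\times\mathscr{D}\to\Sets$.

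I do not expect a genuine obstacle in either part: (i) is a standard pointwise construction whose only subtle point is to justify functoriality of the putative (co)limit via the uniqueness clause of the universal property, and (ii) is purely formal given (i). The only care required is bookkeeping with the $I^\opp$ in the domain of $\mathcal{D}_L$ (so that the isomorphism of bifunctors runs in the correct direction of naturality), which is handled by the conversion \eqref{limcolim} and the contravariance of $\Hom_\mathscr{D}(-,Y)$.
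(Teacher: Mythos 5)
Your proof is correct and follows essentially the same route as the paper: for (i) the paper likewise defines the pointwise (co)limit functor and verifies the universal property in $\mathscr{F}un(\mathscr{C},\mathscr{D})$ via the identifications \eqref{strekoza3}--\eqref{strekoza4} and Yoneda, and for (ii) it merely asserts the claim is ``a direct consequence of (i)''; your chain of bijections $\Hom_\mathscr{D}(\colim\mathcal{D}_L(X),Y)\simeq\lim_I\Hom_\mathscr{D}(\mathcal{D}_L(i)X,Y)\simeq\lim_I\Hom_\mathscr{C}(X,\mathcal{D}_R(i)Y)\simeq\Hom_\mathscr{C}(X,\lim\mathcal{D}_R(Y))$ is the standard way of making that assertion precise, so you actually supply more detail than the source. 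One point to make explicit: the middle isomorphism is a limit over $I$ of the individual adjunction bijections, so you need these bijections to commute with the transition maps of the diagrams $\mathcal{D}_L$, $\mathcal{D}_R$ (i.e.\ each arrow of $I$ must induce a morphism of adjunctions), not merely naturality in $X$ and $Y$; this compatibility is implicit in the hypothesis that the adjoint pairs assemble into diagrams, and it is exactly the condition the paper later builds into Definition \ref{adef3}, but it deserves a sentence in your argument.
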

Recall a proof, for completeness.

Denote by $\nlim$ and $\ncolim$ the functors, defined by \eqref{strekoza1} and \eqref{strekoza2}, correspondingly.
One directly shows that
\begin{equation}\label{strekoza3}
\Hom_{\mathscr{F}un(\mathscr{C},\mathscr{D})}(F, \nlim\mathcal{D})=
{\lim}_{i\in I}\Hom_{\mathscr{F}un(\mathscr{C},\mathscr{D})}(F, \mathcal{D}(i))
\end{equation}
and
\begin{equation}\label{strekoza4}
\Hom_{\mathscr{F}un(\mathscr{C},\mathscr{D})}(\ncolim\mathcal{D}, G)=
{\lim}_{i\in I}\Hom_{\mathscr{F}un(\mathscr{C},\mathscr{D})}(\mathcal{D}(i), G)
\end{equation}
Now the claim (i) follows from \eqref{botinok1}, \eqref{botinok2}, and the Yoneda lemma.

The claim (ii) is a direct consequence from the claim (i).

\qed

\begin{remark}{\rm
The (co)limits in the categories of functors {\it may exist} as well when the target category is {\it not} (co)complete. See [Kel], Section 3.3.
}
\end{remark}

Henceforth, we suppose that all necessary (co)limits in the target categories exist, and the (co)limits in the categories of functors can be computed point-wise.

\begin{defn}\label{adef3}{\rm
Let $I$ be an indexing category, and let
$\mathcal{D}_L\colon I\to \mathscr{F}un(\mathscr{C},\mathscr{D})^\opp$ and $\mathcal{D}_R\colon I\to \mathscr{F}un(\mathscr{D},\mathscr{C})$ be two diagrams. Suppose that for each fixed $i\in I$ the maps
\begin{equation}
\mathcal{D}_L(i)\colon \mathscr{C}\rightleftarrows\mathscr{D}\colon \mathcal{D}_R(i)
\end{equation}
is a weak right adjunction, for some
\begin{equation}
\epsilon(i)\colon \mathcal{D}_L(i)\circ \mathcal{D}_R(i)\to \Id_{\mathscr{D}},\ \ \eta(i)\colon\Id_\mathscr{C}\to\mathcal{D}_R(i)\circ\mathcal{D}_L(i)
\end{equation}
and for each morphism $f\colon i\to j$ in $I$, the maps of functors
\begin{equation}
(\mathcal{D}_L(j)\xrightarrow{f^*}\mathcal{D}_L(i),\ \ \mathcal{D}_R(i)\xrightarrow{f_*}\mathcal{D}_R(j))
\end{equation}
form {\it a morphism between weak right adjunctions}, see Definition \ref{adef2}. Then the diagrams $\mathcal{D}_L$ and $\mathcal{D}_R$ are called
{\it weak right adjoint diagrams}.
}
\end{defn}

We have:
\begin{lemma}\label{lemmaa}{\itshape
Suppose $\mathscr{C},\mathscr{D}$ are two categories, with $\mathscr{C}$ complete and $\mathscr{D}$ cocomplete. Let
$$
\mathcal{D}_L\colon \mathscr{C}\rightleftarrows \mathscr{D}\colon\mathcal{D}_R
$$
be weak right adjoint diagrams, see Definition \ref{adef3}. Then the functors
\begin{equation}
\colim\mathcal{D}_L\colon\mathscr{C}\rightleftarrows \mathscr{D}\colon \lim\mathcal{D}_R
\end{equation}
has a natural structure of a weak right adjunction.
}
\end{lemma}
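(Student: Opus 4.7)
The plan is to work with the $(\Theta, \Upsilon)$ formulation of weak right adjunction provided by Lemma \ref{twoweak}, rather than directly with the unit/counit pair $(\epsilon, \eta)$, since compatibility with morphisms in $I$ has already been packaged in terms of $(\Theta, \Upsilon)$ in Definition \ref{adef2}.

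First, I would use Proposition \ref{spravka}(i) together with the universal properties \eqref{botinok1}, \eqref{botinok2} to identify
\begin{equation*}
\Hom_\mathscr{D}((\colim \mathcal{D}_L)(X), Y) \;\simeq\; {\lim}_{i\in I}\,\Hom_\mathscr{D}(\mathcal{D}_L(i)(X), Y),
\end{equation*}
and
\begin{equation*}
\Hom_\mathscr{C}(X, (\lim \mathcal{D}_R)(Y)) \;\simeq\; {\lim}_{i\in I}\,\Hom_\mathscr{C}(X, \mathcal{D}_R(i)(Y)),
\end{equation*}
both as bifunctors $\mathscr{C}^\opp\times\mathscr{D}\to\Sets$. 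This reduces the problem to assembling morphisms of Hom-bifunctors index-wise.

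Next, for each $i\in I$, the weak right adjunction $(\mathcal{D}_L(i), \mathcal{D}_R(i), \epsilon(i), \eta(i))$ yields maps
\begin{equation*}
\Theta(i)\colon\Hom_\mathscr{D}(\mathcal{D}_L(i)X,Y)\to\Hom_\mathscr{C}(X,\mathcal{D}_R(i)Y),\quad
\Upsilon(i)\colon\Hom_\mathscr{C}(X,\mathcal{D}_R(i)Y)\to\Hom_\mathscr{D}(\mathcal{D}_L(i)X,Y),
\end{equation*}
with $\Theta(i)\circ\Upsilon(i)=\Id$ by Lemma \ref{twoweak}(ii). The defining property of a morphism of weak right adjunctions (Definition \ref{adef2}) is precisely that, for every $f\colon i\to j$ in $I$, the compatibility squares relating $(\Theta(i),\Upsilon(i))$ to $(\Theta(j),\Upsilon(j))$ through $f^*$ and $f_*$ commute. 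Therefore the collection $\{\Theta(i)\}$ forms a morphism between the two $I$-indexed diagrams of Hom-sets, and similarly for $\{\Upsilon(i)\}$. Passing to the limit over $I$ produces maps of bifunctors
\begin{equation*}
\Theta\colon\Hom_\mathscr{D}((\colim\mathcal{D}_L)X,Y)\to\Hom_\mathscr{C}(X,(\lim\mathcal{D}_R)Y),\quad
\Upsilon\colon\Hom_\mathscr{C}(X,(\lim\mathcal{D}_R)Y)\to\Hom_\mathscr{D}((\colim\mathcal{D}_L)X,Y).
\end{equation*}

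Finally, since limits are computed point-wise and $\Theta(i)\circ\Upsilon(i)=\Id$ for every $i$, the composition $\Theta\circ\Upsilon$ is the identity on the limit. By Lemma \ref{twoweak}(ii), this is equivalent to saying that the data translate, via \eqref{awaytodefine}, into a counit $\epsilon\colon(\colim\mathcal{D}_L)(\lim\mathcal{D}_R)\to\Id_\mathscr{D}$ and a unit $\eta\colon\Id_\mathscr{C}\to(\lim\mathcal{D}_R)(\colim\mathcal{D}_L)$ for which the composition \eqref{a2} is the identity on $\lim\mathcal{D}_R$. Concretely, $\epsilon$ and $\eta$ are the unique maps induced out of the $\epsilon(i)$ and $\eta(i)$ by the universal properties of $\colim\mathcal{D}_L$ and $\lim\mathcal{D}_R$, respectively; their compatibility with the structural maps of the diagrams is guaranteed by Definition \ref{adef2}. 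The main obstacle would be verifying this compatibility directly at the level of $(\epsilon,\eta)$ (cf.\ Remark \ref{sosna} for why this can be delicate), but routing the argument through the $(\Theta,\Upsilon)$ description entirely bypasses that difficulty, since the weak right adjunction condition there is the single identity $\Theta\circ\Upsilon=\Id$ which is preserved under limits term by term.
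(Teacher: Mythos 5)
Your proposal is correct and follows essentially the same route as the paper's own proof: both pass to the $(\Theta,\Upsilon)$ description of weak right adjunctions from Lemma \ref{twoweak}, use the compatibility squares of Definition \ref{adef2} to see that the $\Theta(i)$ and $\Upsilon(i)$ assemble into morphisms of $I$-indexed diagrams of Hom-sets, identify the limits with the Hom-bifunctors of $\colim\mathcal{D}_L$ and $\lim\mathcal{D}_R$ via Proposition \ref{spravka}, and observe that the identity $\Theta(i)\circ\Upsilon(i)=\Id$ is preserved by passing to the limit.
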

\begin{proof}
We use the description of a pair of (right) weak adjoint functors in terms of maps $\Theta$ and $\Upsilon$ (see \eqref{podrugomu}) satisfying \eqref{tylen1}.

We can rephrase the statement of Lemma, saying that there are diagrams of morphisms of bifunctors
\begin{equation}
\Theta_\mathcal{D}(i)\colon\quad \Hom_\mathscr{D}(\mathcal{D}_L(i)(X),Y)\to\Hom_\mathscr{C}(X,\mathcal{D}_R(i)(Y))
\end{equation}
and
\begin{equation}
\Upsilon_\mathcal{D}(i)\colon\quad\Hom_\mathscr{C}(X,\mathcal{D}_R(i)(Y))\to\Hom_\mathscr{D}(\mathcal{D}_L(i)(X),Y)
\end{equation}
such that
\begin{equation}
\Theta_\mathcal{D}(i)\circ \Upsilon_\mathcal{D}(i)=\Id\text{\qquad \rm{for any $i\in I$}}
\end{equation}
Now we can define
\begin{equation}
\lim\Theta_\mathcal{D}:={\lim}_I\Theta_\mathcal{D}(i)\colon\quad  \lim\Hom_\mathscr{D}(\mathcal{D}_L(i)(X),Y)\to\lim\Hom_\mathscr{C}(X,\mathcal{D}_R(i)(Y))
\end{equation}
and
\begin{equation}
\lim\Upsilon_\mathcal{D}:={\lim}_I\Upsilon_\mathcal{D}(i)\colon\quad \lim\Hom_\mathscr{C}(X,\mathcal{D}_R(i)(Y))\to\lim\Hom_\mathscr{D}(\mathcal{D}_L(i)(X),Y)
\end{equation}
By Proposition \ref{spravka}, the above limit morphisms are in fact
\begin{equation}
\lim\Theta_\mathcal{D}:\quad \Hom_\mathscr{D}(\colim\mathcal{D}_L(X),Y)\to\Hom_\mathscr{C}(X,\lim\mathcal{D}_R(Y))
\end{equation}
and
\begin{equation}
\lim\Upsilon_\mathcal{D}:\quad \Hom_\mathscr{C}(X,\lim\mathcal{D}_R(Y))\to\Hom_\mathscr{D}(\colim\mathcal{D}_L(X),Y)
\end{equation}
Clear the morphisms $\lim\Theta_\mathcal{D}$ and $\lim\Upsilon_\mathcal{D}$ are maps of bifunctors.
Therefore, they define, by Lemma \ref{twoweak}, {\it some canonical} maps of functors
$$
\lim\epsilon\colon \quad \colim\mathcal{D}_L\circ \lim\mathcal{D}_R\to\Id_\mathcal{D}
$$
and
$$
\lim\eta\colon\quad \lim\mathcal{D}_R\circ\colim\mathcal{D}_L\to\Id_\mathscr{C}
$$
It remains to prove that $(\lim\epsilon,\lim\eta)$ define a weak right adjunction on the functors
$$
\colim\mathcal{D}_L\colon\quad \mathscr{C}\rightleftarrows\mathscr{D}\quad\colon\lim\mathcal{D}_R
$$
By Lemma \ref{twoweak}, it is enough to prove that
\begin{equation}\label{zaichik}
\lim\Theta_\mathcal{D}\ \circ\ \lim\Upsilon_\mathcal{D}\ =\Id
\end{equation}
As for each $i\in I$ we have a weak right adjunction, one has
\begin{equation}\label{zaichik0}
\Theta_\mathcal{D}(i)\ \circ\ \Upsilon_\mathcal{D}(i)\ =\Id
\end{equation}
Then \eqref{zaichik} follows from \eqref{zaichik0} by passing to limits.
\end{proof}

Let $\mathcal{D}_L\colon\mathscr{C}\rightleftarrows \mathscr{D}\colon\mathcal{D}_R$ be weak right adjoint diagrams, indexed by a category $I$, see Definition \ref{adef3}. Suppose that each functor $\mathcal{D}_R(i)$ is endowed with a lax-monoidal structure $\ell(i)$, such that the diagram
\begin{equation}
\xymatrix{
\mathcal{D}_R(i)(X)\otimes \mathcal{D}_R(i)(Y)\ar[r]^{\qquad\ell(i)}\ar[d]_{f_*}&\mathcal{D}_R(i)(X\otimes Y)\ar[d]^{f_*}\\
\mathcal{D}_R(j)(X)\otimes \mathcal{D}_R(j)(Y)\ar[r]^{\qquad\ell(j)}&\mathcal{D}_R(j)(X\otimes Y)
}
\end{equation}
is commutative for any morphism $f\colon i\to j$ in $I$, and for any $X,Y$ in $\mathscr{D}$.
In this case we say that we are given {\it a diagram of lax-monoidal structures}.

Analogously, using $\mathcal{D}_L(i),\ i\in I$, and a colax-monoidal structure $c(i)$ on each $\mathcal{D}_L(i)$, we define {\it a diagram of colax-monoidal structures}.

Suppose we have weak right adjoint diagrams $\mathcal{D}_L\colon\mathscr{C}\rightleftarrows \mathscr{D}\colon\mathcal{D}_R$, and a diagram of lax-monoidal structures on $\mathcal{D}_R$. Then, for each $i\in I$, we define a colax-monoidal structure $c^\ell(i)$ on $\mathcal{D}_L(i)$, adjoint to the lax-monoidal structure $\ell(i)$ on $\mathcal{D}_R(i)$.

\begin{lemma}{\itshape
In the above set-up, the colax-monoidal structures $c^\ell(i)$, $i\in I$, on $\mathcal{D}_L(i)$, form a diagram of colax-monoidal structures.
}
\end{lemma}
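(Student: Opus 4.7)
Fix a morphism $f\colon i\to j$ in $I$, inducing $f^*\colon \mathcal{D}_L(j)\to \mathcal{D}_L(i)$ and $f_*\colon \mathcal{D}_R(i)\to \mathcal{D}_R(j)$. Abbreviate $L_k:=\mathcal{D}_L(k)$ and $R_k:=\mathcal{D}_R(k)$. What must be shown is that for every $X,Y\in\mathscr{C}$ the square
$$
\xymatrix@C=3em{
L_j(X\otimes Y)\ar[r]^{c^\ell(j)}\ar[d]_{f^*} & L_jX\otimes L_jY\ar[d]^{f^*\otimes f^*}\\
L_i(X\otimes Y)\ar[r]^{c^\ell(i)} & L_iX\otimes L_iY
}
$$
commutes, where by \eqref{lax2colax} each $c^\ell(k)$ factors as
$\epsilon(k)\circ L_k(\ell(k))\circ L_k(\eta(k)\otimes\eta(k))$.

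The first step is to extract from Definition \ref{adef2} two concrete naturality identities relating the adjunction maps for $i$ and $j$. Evaluating the commutative $\Theta$-square of Definition \ref{adef2} at $\mathrm{id}_{L_iX}\in\Hom(L_iX,L_iX)$ yields
$$
f_*(L_iX)\circ\eta(i)(X)\ =\ R_j(f^*(X))\circ\eta(j)(X)\colon\ X\to R_jL_iX,
$$
and evaluating the commutative $\Upsilon$-square at $\mathrm{id}_{R_iY}\in\Hom(R_iY,R_iY)$ yields
$$
\epsilon(i)(Y)\circ f^*(R_iY)\ =\ \epsilon(j)(Y)\circ L_j(f_*(Y))\colon\ L_jR_iY\to Y.
$$
In addition, the hypothesis that the $\ell(k)$ form a diagram of lax-monoidal structures supplies
$$
f_*(X\otimes Y)\circ\ell(i)(X,Y)\ =\ \ell(j)(X,Y)\circ(f_*(X)\otimes f_*(Y)).
$$

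The second step is to assemble these three identities into a single grid connecting the two paths in the target square. I would insert the intermediate objects $L_j(R_jL_iX\otimes R_jL_iY)$ and $L_jR_j(L_iX\otimes L_iY)$ and subdivide the square into three horizontal strips corresponding to the three factors of $c^\ell$. The top strip (application of $\eta\otimes\eta$) commutes by $L_j$ applied term-wise to the first identity of Step~1, the middle strip (application of $\ell$) commutes by $L_j$ applied to the lax-monoidality identity, and the bottom strip (application of $\epsilon$) commutes by the second identity of Step~1. Each strip also uses trivial naturality squares that come from the functoriality of $L_i$ and $L_j$ and from the fact that $f^*$ is a natural transformation of functors $\mathscr{C}\to\mathscr{D}$.

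\textbf{Expected main obstacle.} The sole delicate point is bridging the top and middle strips: the naturality of $\eta$ lives on the $R$-side and must be transported by $L_j$ through the tensor product to meet the lax-monoidality of $f_*$. This forces the insertion of an auxiliary column containing $L_j(R_jL_iX\otimes R_jL_iY)$ and the verification that the resulting $2\times 2$ cell commutes as a term-by-term tensor of two instances of the first identity of Step~1. Beyond this bookkeeping, the proof is a direct diagram chase and does not invoke the weak adjunction identity axiom at all — only the naturality content of Definition \ref{adef2} and the tautological functoriality of $L_i$ and $L_j$.
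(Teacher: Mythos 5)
Your proposal is correct. The paper offers no argument here at all (it simply declares the statement clear), so there is no "paper approach" to diverge from; your diagram chase is the standard verification one would write down. The two identities you extract from Definition \ref{adef2} by evaluating the $\Theta$-square at $\id_{L_iX}$ and the $\Upsilon$-square at $\id_{R_iY}$ are exactly the naturality content needed: pushing $f^*$ through the three factors of $c^\ell(i)$ using naturality of $f^*$, then trading $\epsilon(i)\circ f^*$ for $\epsilon(j)\circ L_j(f_*)$ at the bottom, the lax-monoidality square for $f_*$ in the middle, and $f_*\circ\eta(i)$ for $R_j(f^*)\circ\eta(j)$ at the top (followed by naturality of $\ell(j)$ and of $\epsilon(j)$) closes the square. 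Your observation that the weak-adjunction identity $\Theta\circ\Upsilon=\Id$ is never used is also accurate.
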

It is clear.
\qed

Suppose there is a diagram $\mathcal{D}\colon I\to\mathscr{F}un(\mathscr{D},\mathscr{C})$, and a diagram of lax-monoidal structures $\ell(i)$ on $\mathcal{D}(i)$.

We define the {\it limit lax-monoidal structure} ${\lim}_{i\in I}\ell(i)$ on the functor ${\lim}_{i\in I}\mathcal{D}(i)$, as follows.

For any $i\in I$, and any $X,Y\in \mathscr{C}$, there are universal maps
$$
(\lim \mathcal{D})(X)\to \mathcal{D}(i)(X)\quad\text{and}\quad (\lim\mathcal{D})(Y)\to\mathcal{D}(j)(Y)
$$
which define
\begin{equation}
(\lim\mathcal{D})(X)\otimes(\lim\mathcal{D})(Y)\to \mathcal{D}(i)(X)\otimes\mathcal{D}(j)(Y)
\end{equation}
which is compatible with the morphisms in $I\times I$. Then the universal property gives
\begin{equation}
\Xi_1(X,Y)\colon\ (\lim\mathcal{D})(X)\otimes(\lim\mathcal{D}(Y))\to {\lim}_{I\times I}(\mathcal{D}(i)(X)\otimes\mathcal{D}(j)(Y))
\end{equation}
Next, there is a map
\begin{equation}
\Xi_2(X,Y)\colon\ {\lim}_{I\times I}(\mathcal{D}(i)(X)\otimes\mathcal{D}(j)(Y))\to {\lim}_{I}(\mathcal{D}(i)(X)\otimes\mathcal{D}(i)(Y))
\end{equation}
The latter map is $\alpha^*$ (see \eqref{j2ilim}) for the diaginal functor $\alpha\colon I\to I\times I$.

Finally, we apply the lax-monoidal structure $\ell(i)$ for each $\mathcal{D}(i)$, and take the limit:
\begin{equation}
\Xi_3(X,Y)\colon\ {\lim}_{I}(\mathcal{D}(i)(X)\otimes\mathcal{D}(i)(Y))\xrightarrow{\ell(i)}{\lim}_I\mathcal{D}(i)(X\otimes Y)=\lim\mathcal{D}(X\otimes Y)
\end{equation}

Define the limit lax-monoidal structure $\lim\ell$ on $\lim\mathcal{D}$ as the composition
\begin{equation}\label{limlax}
\lim\ell=\Xi_3\circ\Xi_2\circ \Xi_1
\end{equation}

Analogously one defines the colimit of colax-monoidal structures.

One of our two most principal claims in this Appendix (along with Theorem \ref{keytheorem2bisbis}) is the following:
\begin{apptheorem}\label{keytheorem2bis}{\itshape
Suppose $\mathscr{C}$, $\mathscr{D}$ are categories, with $\mathscr{C}$ complete and $\mathscr{D}$ cocomplete. Let $I$ be an indexing category. Let $$\mathcal{D}_L\colon I\to\mathscr{F}un(\mathscr{C},\mathscr{D})^\opp,\ \
\mathcal{D}_R\colon I\to\mathscr{F}un(\mathcal{D},\mathcal{C})$$ are diagrams, such that
$$
\mathcal{D}_L\colon\mathcal{C}\rightleftarrows \mathcal{D}\colon\mathcal{D}_R
$$
is a weak right adjunction of diagrams, see Definition \ref{adef3}, {\bf with $\mathcal{D}_R(i)$ the constant functor (not depending on $i\in I$)}, which we denote by $\mathcal{D}_R$.  Suppose there is given a lax-monoidal structure $\ell$ on $\mathcal{D}_R$. Consider the natural weak right adjunction
\begin{equation}\label{limit}
\colim\mathcal{D}_L\colon\  \mathscr{C}\rightleftarrows\mathscr{D}\ \colon \lim\mathcal{D}_R=\mathcal{D}_R
\end{equation}
constructed in Lemma \ref{lemmaa}. {\bf Suppose, furthermore, that \eqref{limit} is a genuine adjunction.}

Then
the colimit colax-monoidal structure
$\colim_{i\in I}c^{\ell}(i)$ on the functor $\colim\mathcal{D}_L$, where each $c^\ell(i)$ is the canonical colax-monoidal structure induced by $\ell$, is isomorphic to the canonical colax-monoidal structure on $\colim\mathcal{D}_L$, induced by the limit lax-monoidal structure ${\lim}_{i\in I}\ell=\ell$ on the functor $\lim\mathcal{D}_R=\mathcal{D}_R$.
}
\end{apptheorem}
\begin{proof}
The proof is based on the following lemma.
\begin{lemma}\label{soroka}{\itshape
Let $\mathscr{C}$, $\mathscr{D}$ be monoidal categories, with $\mathscr{C}$ complete and $\mathscr{D}$ cocomplete, and let
$\mathcal{D}_L\colon I\to\mathscr{F}un(\mathscr{C},\mathscr{D})^\opp$ and $\mathcal{D}_R\colon I\to\mathscr{F}un(\mathscr{D},\mathscr{C})$ be diagrams, with $\mathcal{D}_R(i)=\mathcal{D}_R$ the constant functor. Suppose that for each $i$ the functors
$$
\mathcal{D}_L(i)\colon\ \mathscr{C}\rightleftarrows\mathscr{D}\ \colon\mathcal{D}_R(i)=\mathcal{D}_R
$$
are weak right adjoint, such that one has a diagram of weak right adjunctions (see Definition \ref{adef3}).
Suppose, furthermore, that there are given a lax-monoidal structure $\ell$ on $\mathcal{D}_R$, and a diagram of colax-monoidal structures $c(i)$ on $\mathcal{D}_L(i)$, such that for each $i\in I$ the diagram \eqref{krokodil} commutes for $(c(i),\ell)$.

Then the pair $(\colim c(i),\ell)$ of the colimit colax-monoidal structure $\colim c(i)$ on the functor $\colim\mathcal{D}_L(i)$ and of the lax-monoidal structure $\lim\ell=\ell$ on the functor $\lim\mathcal{D}_R=\mathcal{D}_R$, the diagram \eqref{krokodil} commutes as well.
}
\end{lemma}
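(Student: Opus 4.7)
The plan is to deduce the commutativity of the colimit square from the pointwise commutativity of \eqref{krokodil} for each $i\in I$, using the universal property of colimits. Fix objects $X,Y\in\mathscr{D}$, and denote by $\iota_i\colon\mathcal{D}_L(i)(Z)\to(\colim\mathcal{D}_L)(Z)$ the canonical map for any $Z\in\mathscr{C}$. The hypothesis of the lemma says that for every $i$ the diagram \eqref{krokodil} commutes for the triple $(c(i),\ell,\epsilon(i))$; moreover, by Definition \ref{adef3}, the whole diagram is natural in $i$ with respect to morphisms of the weak right adjunctions $\bigl(\mathcal{D}_L(i),\mathcal{D}_R\bigr)$. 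Hence, to check that the colimit square commutes, it suffices to verify that precomposition with $\iota_i$ yields equal morphisms out of $\mathcal{D}_L(i)(\mathcal{D}_R X\otimes\mathcal{D}_R Y)$ for every $i\in I$.

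First I would compute the $\ell$-path. Precomposing
$$
(\colim\mathcal{D}_L)(\mathcal{D}_R X\otimes \mathcal{D}_R Y)\xrightarrow{\ell_*}(\colim\mathcal{D}_L)\mathcal{D}_R(X\otimes Y)\xrightarrow{(\lim\epsilon)_*}X\otimes Y
$$
with $\iota_i$ and using naturality of $\iota_\bullet$ in $i$, this is the outer composition
$$
\mathcal{D}_L(i)(\mathcal{D}_R X\otimes\mathcal{D}_R Y)\xrightarrow{\ell_*}\mathcal{D}_L(i)\mathcal{D}_R(X\otimes Y)\xrightarrow{\iota_i}(\colim\mathcal{D}_L)\mathcal{D}_R(X\otimes Y)\xrightarrow{\lim\epsilon}X\otimes Y.
$$
By the construction of $\lim\epsilon$ in Lemma \ref{lemmaa}, the last two arrows compose to $\epsilon(i)$, so the path reduces to $\epsilon(i)_*\circ\ell_*$ applied at $(X,Y)$.

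Next I would compute the $c$-path. Precomposing
$$
(\colim\mathcal{D}_L)(\mathcal{D}_R X\otimes\mathcal{D}_R Y)\xrightarrow{\colim c}(\colim\mathcal{D}_L)(\mathcal{D}_R X)\otimes (\colim\mathcal{D}_L)(\mathcal{D}_R Y)\xrightarrow{\lim\epsilon\otimes\lim\epsilon}X\otimes Y
$$
with $\iota_i$ and using the defining universal property of the colimit colax-monoidal structure (the dual of \eqref{limlax}), one factors the first arrow as
$$
\mathcal{D}_L(i)(\mathcal{D}_R X\otimes\mathcal{D}_R Y)\xrightarrow{c(i)}\mathcal{D}_L(i)(\mathcal{D}_R X)\otimes\mathcal{D}_L(i)(\mathcal{D}_R Y)\xrightarrow{\iota_i\otimes\iota_i}(\colim\mathcal{D}_L)(\mathcal{D}_R X)\otimes(\colim\mathcal{D}_L)(\mathcal{D}_R Y).
$$
Composing with $\lim\epsilon\otimes\lim\epsilon$ and again invoking the identity $(\lim\epsilon)\circ\iota_i=\epsilon(i)$ on each tensor factor, the $c$-path reduces to $(\epsilon(i)\otimes\epsilon(i))\circ c(i)$ applied at $(X,Y)$.

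The two expressions obtained are equal by the assumed pointwise commutativity of \eqref{krokodil} for $(c(i),\ell,\epsilon(i))$, and this holds for every $i\in I$; hence by the universal property of $\colim_i\mathcal{D}_L(i)(\mathcal{D}_R X\otimes\mathcal{D}_R Y)$ the two compositions in the colimit diagram coincide. The main obstacle is not the diagram chase itself but the bookkeeping of the two identifications used above, namely $(\lim\epsilon)\circ\iota_i=\epsilon(i)$ and the compatibility between $\colim c$ and the components $c(i)$ (via $\iota_i\otimes\iota_i$); both are tautological once one unpacks the constructions of Lemma \ref{lemmaa} and the dual of \eqref{limlax}, but they must be invoked explicitly because tensor products in $\mathscr{D}$ need not commute with colimits and so one cannot simply move $\colim$ past $\otimes$ without going through these structure maps.
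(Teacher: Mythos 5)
Your proof is correct and follows essentially the same route as the paper: the paper splits the colimit instance of \eqref{krokodil} into two sub-diagrams, one identifying the colimit colax structure with its pointwise components (your compatibility of $\colim c$ with $c(i)$ via $\iota_i\otimes\iota_i$) and one that is the limit over $I$ of the pointwise commutative squares (your precomposition with $\iota_i$ and invocation of the hypothesis). The only cosmetic difference is that the paper rewrites $\colim_I\mathcal{D}_L$ as $\lim_{I^\opp}\mathcal{D}_L^\opp$ before chasing the diagram, whereas you work directly with the colimit cocone; the bookkeeping identities you isolate, $(\lim\epsilon)\circ\iota_i=\epsilon(i)$ and the factorization of $\colim c$ through $c(i)$, are exactly what the paper's two sub-diagrams encode.
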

\begin{proof}
First of all, we use \eqref{limcolim} and replace in \eqref{krokodil} all entries of $\colim_I\mathcal{D}_L$ by ${\lim}_{I^\opp}\mathcal{D}^\opp$.
According to that, we replace the notation for the colimit colax-monoidal structure $\colim_I c^\ell(i)$ by ${\lim}_{I^\opp}c^\ell(i)$.
\comment
Then the diagram \eqref{krokodil} for the pair of functors $\colim\mathcal{D}_L$ and $\lim\mathcal{D}_R$ into three subdiagrams, as follows:
\begin{equation}{\footnotesize
\xymatrix{
{\lim}_{I^\opp}\mathcal{D}_L^\opp({\lim}_I\mathcal{D}_RX\otimes {\lim}_I\mathcal{D}_RY)\ar[d]_{{\lim}_{I^\opp} c^\ell(i)}\ar[r]&{\lim}_{(i,j,k)\in I^\opp\times I\times I}
\mathcal{D}_L^\opp(i)(\mathcal{D}_R(j)X\otimes \mathcal{D}_R(k)Y)\ar[d]^{{\lim}_{I^\opp}(c^\ell(i))_*}\\
{\lim}_{I^\opp}\mathcal{D}_L^\opp({\lim}_I\mathcal{D}_R(X))\otimes {\lim}_{I^\opp}\mathcal{D}_L^\opp({\lim}_I\mathcal{D}_R(Y))\ar[r]&
{\lim}_{(i,j,k)\in I^\opp\times I\times I}\Bigl(\mathcal{D}_L^\opp(i)(\mathcal{D}_R(j)(X))\otimes\mathcal{D}_L^\opp(i)(\mathcal{D}_R(k)(Y))\Bigr)
}
}
\end{equation}
followed by
\begin{equation}{\footnotesize
\xymatrix{
{\lim}_{(i,j,k)\in I^\opp\times I\times I}
\mathcal{D}_L^\opp(i)(\mathcal{D}_R(j)X\otimes \mathcal{D}_R(k)Y)\ar[d]^{{\lim}_{I^\opp}(c^\ell(i))_*}\ar[r]\ar[d]&
{\lim}_{(i,j)\in I^\opp\times I}\mathcal{D}_L^\opp(i)(\mathcal{D}_R(j)(X)\otimes\mathcal{D}_R(j)(Y))\ar[d]\\
{\lim}_{(i,j,k)\in I^\opp\times I\times I}\Bigl(\mathcal{D}_L^\opp(i)(\mathcal{D}_R(j)(X))\otimes\mathcal{D}_L^\opp(i)(\mathcal{D}_R(k)(Y))\Bigr)\ar[r]&
{\lim}_{(i,j)\in I^\opp\times I}\Bigl(\mathcal{D}_L^\opp(i)(\mathcal{D}_R(j)(X))\otimes\mathcal{D}_L^\opp(i)(\mathcal{D}_R(j)(Y))\Bigr)
}
}
\end{equation}
followed by (wee replace everywhere ${\lim}_{I^\opp\times I}\mathscr{X}$ by $\colim_{I\times I^\opp}\mathscr{X}^\opp$ (we omit the overall $\opp$ in the target for simplicity):
\begin{equation}{\scriptsize
\xymatrix{
{\colim}_{(i,j)\in I\times I^\opp}\mathcal{D}_L(i)(\mathcal{D}_R^\opp(j)(X)\otimes\mathcal{D}_R^\opp(j)(Y))\ar[r]\ar[d]&{\colim}_{(i,j)\in I\times I^\opp}
\mathcal{D}_L(i)(\mathcal{D}_R^\opp(j)(X\otimes Y))\\
{\colim}_{(i,j)\in I\times I^\opp}\Bigl(\mathcal{D}_L(i)(\mathcal{D}_R^\opp(j)(X))\otimes\mathcal{D}_L(i)(\mathcal{D}_R^\opp(j)(Y))\Bigr)\ar[r]&
{\colim}_{(i,j)\in I\times I^\opp}\Bigl(\mathcal{D}_L(i)(\mathcal{D}_R^\opp(j)(X)))\otimes{\colim}_{(i,j)\in I\times I^\opp}\Bigl(\mathcal{D}_L(i)(\mathcal{D}_R^\opp(j)(Y))\Bigr)
}
}
\end{equation}
and, finally
\begin{equation}
\end{equation}
When the three above diagrams are composed in a row from the left to the right, one gets the diagram \eqref{krokodil}.
Therefore, it is sufficient to prove that each of three diagrams \eqref{krevetka1}-\eqref{krevetka3} commutes.
\endcomment
First of all, we use \eqref{limcolim} and replace in \eqref{krokodil} all entries of $\colim_I\mathcal{D}_L$ by ${\lim}_{I^\opp}\mathcal{D}^\opp$.
According to that, we replace the notation for the colimit colax-monoidal structure $\colim_I c^\ell(i)$ by ${\lim}_{I^\opp}c^\ell(i)$.

Within our proof, we divide the diagram \eqref{krokodil} into 2 commutative diagrams, \eqref{konfetka1} and \eqref{konfetka2} below, which implies that the overall diagram \eqref{krokodil} commutes as well. 
\begin{equation}\label{konfetka1}
{\tiny
\xymatrix{
({\lim}_{I^\opp}\mathcal{D}_L)(\mathcal{D}_R(X)\otimes\mathcal{D}_R(Y))\ar[d]_{\ell_*}\ar[rr]^{\simeq, \text{ by }\eqref{botinok1}}&&
{\lim}_{I^\opp}\Bigl(\mathcal{D}_L(i)(\mathcal{D}_R(X)\otimes\mathcal{D}_R(Y))\Bigr)\ar[d]^{(\ell)_*}\\
({\lim}_{I^\opp}\mathcal{D}_L)(\mathcal{D}_R(X))\otimes({\lim}_{I^\opp}\mathcal{D}_L)(\mathcal{D}_R(Y))\ar[r]&
{\lim}_{I^\opp\times I^\opp}\Bigl(\mathcal{D}_L(i)\mathcal{D}_R(X)\otimes\mathcal{D}_L(j)\mathcal{D}_R(Y)\Bigr)\ar[r]^{\Delta}&
{\lim}_{I^\opp}\Bigl(\mathcal{D}_L(i)\mathcal{D}_R(X)\otimes\mathcal{D}_L(i)\mathcal{D}_R(Y)\Bigr)
}
}
\end{equation}

\begin{equation}\label{konfetka2}
{\tiny
\xymatrix{
{\lim}_{I^\opp}\Bigl(\mathcal{D}_L(i)(\mathcal{D}_R(X)\otimes\mathcal{D}_R(Y))\Bigr)\ar[r]^{\ell_*}\ar[d]_{(c^\ell(i))_*}&
{\lim}_{I^\opp}\Bigl(\mathcal{D}_L(i)(\mathcal{D}_R(X\otimes Y)))\Bigr)\ar[r]&X\otimes Y\ar[d]^{\id}\\
{\lim}_{I^\opp}\Bigl(\mathcal{D}_L(i)\mathcal{D}_R(X)\otimes\mathcal{D}_L(i)\mathcal{D}_R(Y)\Bigr)\ar[rr]&&
X\otimes Y
}
}
\end{equation}
The commutativity of \eqref{konfetka1} is an exercise on the definition \eqref{limlax} of the limit lax structure (in its colax version), and is left to the reader.

The commutativity of \eqref{konfetka2} follows from the assumption that for each $i$ the diagram \eqref{krokodil} commutes, as \eqref{konfetka2} is the the limit of the commutative diagrams \eqref{krokodil}, and therefore is commutative by its own.

Lemma is proven.

\end{proof}
We continue to prove Theorem \ref{keytheorem2bis}.

By assumption, the pair of functors
\begin{equation}\label{treska}
\colim_I \mathcal{D}_L\colon\mathscr{C}\rightleftarrows\mathscr{D}\colon\mathcal{D}_R
\end{equation}
is {\it genuine} adjoint.

Furthermore, we have {\it two} colax-monoidal structures on $\colim\mathcal{D}_L$, namely the canonical one $c_1$, adjoint to the lax-monoidal structure $\lim \ell=\ell$ on $\lim \mathcal{D}_R=\mathcal{D}_R$, and the second one $c_2$ is $\colim_I c^\ell(i)$, the colimit of the canonical lax structures for each $i\in I$. The both pairs $(c_1,\ell)$ and $(c_2,\ell)$ make the diagram \eqref{krokodil} commutative, for adjoint pair \eqref{treska}. Indeed, the pair $(c_1,\ell)$ makes \eqref{krokodil} commutative by Lemma \ref{utka}, while the commutativity for $(c_2,\ell)$ follows from Lemma \ref{soroka}.

Apply Lemma \ref{utka} once again (for {\it genuine} adjoint pair $(\colim\mathcal{D}_L,\mathcal{D}_R)$), we conclude that the colax-monoidal structures $c_1$ and $c_2$ on the functor $\colim \mathcal{D}_L$ are isomorphic. We are done.

\end{proof}

\appsubsection{\sc Very weak adjoint pairs}\label{appveryweak}
For our main application to Theorem \ref{keytheorembis}, the assumptions of Theorem \ref{keytheorem2bis} are too strong.
Indeed, in the set-up of Theorem \ref{keytheorembis} one has {\it not} a weak right adjoint pair structure on the pair $(\mathcal{D}_L(i),\mathcal{D}_R)$ (where $i\in I$). In fact, the only what we have are maps of functors $\epsilon(i)\colon \mathcal{D}_L(i)\circ\mathcal{D}_R\to \Id_\mathscr{D}$, but the maps
$\eta(i)\colon \Id_\mathscr{C}\to\mathcal{D}_R\circ \mathcal{D}(i)$ may not exist. See Remark \\ref{sosna} above.

It turns out, fortunately, that we did {\it not} use the existence of the map $\eta(i)$ anywhere in the proof Theorem \ref{keytheorem2bis}.
The goal of this (last) Subsection is to formulate and to prove the corresponding Theorem (see Theorem \ref{keytheorem2bisbis} below) in its proper generality.

\begin{defn}\label{adefn_1}{\rm
Let $L\colon \mathscr{C}\to\mathscr{D}$ and $R\colon \mathscr{D}\to\mathscr{C}$ be functors. We call $(L,R)$ {\it a very weak right adjunction} if there is a morphism of functors
$$
\epsilon\colon LR\to \Id_{\mathscr{D}}
$$
The latter is equivalent to the existence of a map of bifunctors $\Upsilon\colon \Hom_\mathscr{D}(X,RY)\to\Hom_\mathscr{C}(LX,Y)$. 
}
\end{defn}
\begin{defn}\label{adefn_2}
{\rm
Let $L,R$ be a very weak right adjunction, and let a lax-monoidal structure $\varphi$ on $R$ and a colax-monoidal structure $c$ on $L$ are given.
We say that $(c,\varphi)$ are {\it weak right compatible}, if the diagram
\eqref{krokodil} commutes
for any $X,Y$ in $\mathscr{D}$.
}
\end{defn}
It is crucial to notice that the diagram \eqref{krokodil} is written only within $\epsilon\colon LR\to\Id_\mathscr{C}$, not $\eta\colon\Id_\mathscr{C}\to RL$.

A difference between the weak right adjunctions and very weak right adjunctions is that in the very weak case one can not define the {\it canonical} colax-monoidal structure on $L$ out of a lax-monoidal structure on $R$, as \eqref{lax2colax} is not a colax-monoidal structure anymore.

As a substitution, we have the ``intrinsic'' characterization of a pair of a colax-monoidal structure on $L$ and a lax-monoidal structure on $R$ by \eqref{krokodil}, which is the only one remains to use.

We now pass to (co)limits, our goal is to formulate and to prove a result, analogous to Theorem \ref{keytheorem2bis}, in our {\it very weak} generality.
Firstly we suitably adjust Definition \ref{adef3}.

\begin{defn}\label{adefn_3}{\rm
Let $I$ be an indexing category, and let
$\mathcal{D}_L\colon I\to \mathscr{F}un(\mathscr{C},\mathscr{D})^\opp$ and $\mathcal{D}_R\colon I\to \mathscr{F}un(\mathscr{D},\mathscr{C})$ be two diagrams. Suppose that for each fixed $i\in I$ the maps
\begin{equation}
\mathcal{D}_L(i)\colon \mathscr{C}\rightleftarrows\mathscr{D}\colon \mathscr{D}_R(i)
\end{equation}
form a {\it very weak right adjunction}, for some
\begin{equation}
\epsilon(i)\colon \mathcal{D}_L(i)\circ \mathcal{D}_R(i)\to \Id_{\mathscr{D}}
\end{equation}
or, equivalently, for some 
\begin{equation}
\Upsilon(i)\colon \Hom_\mathscr{C}(X,\mathcal{D}_R(i)Y)\to\Hom_\mathscr{D}(\mathcal{D}_L(i)X,Y)
\end{equation}
such that for each morphism $f\colon i\to j$ in $I$, 
the diagram 
\begin{equation}
\xymatrix{
\Hom_\mathscr{C}(X,\mathcal{D}_R(i)Y)\ar[r]^{\Upsilon(i)}\ar[d]_{f_*}&\Hom_\mathscr{D}(\mathcal{D}_L(i)X,Y)\ar[d]^{f_*}\\
\Hom_\mathscr{C}(X,\mathcal{D}_R(j)Y)\ar[r]^{\Upsilon(j)}&\Hom_\mathscr{D}(\mathcal{D}_L(j)X,Y)
}
\end{equation}
commutes, for any $X$ in $\mathscr{C}$ and $Y$ in $\mathscr{D}$.
.
Then the diagrams $\mathcal{D}_L$ and $\mathcal{D}_R$ are called
{\it very weak right adjoint diagrams}.
}
\end{defn}
Suppose very weak right adjoint diagrams
$$
\mathcal{D}_L(i)\colon\mathscr{C}\rightleftarrows\mathscr{D}\colon \mathcal{D}_R(i)
$$
indexed by a category $I$, are given (see Definition \ref{adefn_3}).

Consider the (co)limit pair of functors
$$
\colim \mathcal{D}_L\colon\mathscr{C}\rightleftarrows \mathscr{D}\colon\lim\mathcal{R}
$$
We have
\begin{lemma}{\itshape
Let $\mathcal{D}_L\colon\mathscr{C}\rightleftarrows\mathscr{D}\colon\mathcal{D}_R$ be very weak right adjoint diagrams, see Definition \ref{adefn_3}. Then the (co)limit functors
$$
\colim \mathcal{D}_L\colon\mathscr{C}\rightleftarrows \mathscr{D}\colon\lim\mathcal{R}
$$
form a very weak right adjoint pair of functors.
}
\end{lemma}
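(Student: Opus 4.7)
The plan is to construct the required map of functors $\epsilon\colon (\colim \mathcal{D}_L)\circ(\lim \mathcal{D}_R) \to \Id_{\mathscr{D}}$ by passing to limits on the $\Upsilon$-side, following the template of the proof of Lemma \ref{lemmaa} but discarding everything connected with $\eta$ and $\Theta$ (which were the pieces that required genuine weak adjunction rather than just very weak adjunction).

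First, I recall that by Definition \ref{adefn_3} the data we have is, for each $i\in I$, a morphism of bifunctors $\Upsilon(i)\colon \Hom_\mathscr{C}(X,\mathcal{D}_R(i)Y)\to\Hom_\mathscr{D}(\mathcal{D}_L(i)X,Y)$ compatible with all morphisms $f\colon i\to j$ in $I$ in the sense specified there. Thus $\{\Upsilon(i)\}_{i\in I}$ constitutes a morphism between two diagrams indexed by $I$ of bifunctors $\mathscr{C}^{\op}\times \mathscr{D}\to\mathsf{Sets}$. Taking limits over $I$ yields a morphism of bifunctors
\begin{equation*}
{\lim}_I\Upsilon(i)\colon\quad {\lim}_I\Hom_\mathscr{C}(X,\mathcal{D}_R(i)Y)\to {\lim}_I\Hom_\mathscr{D}(\mathcal{D}_L(i)X,Y).
\end{equation*}

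Next, I invoke Proposition \ref{spravka} together with the standard fact that $\Hom_\mathscr{C}(X,-)$ sends limits to limits and $\Hom_\mathscr{D}(-,Y)$ sends colimits to limits, together with the pointwise computation of (co)limits in functor categories. This identifies the source and target of ${\lim}_I\Upsilon(i)$:
\begin{equation*}
{\lim}_I\Hom_\mathscr{C}(X,\mathcal{D}_R(i)Y)=\Hom_\mathscr{C}(X,(\lim\mathcal{D}_R)(Y)),
\end{equation*}
\begin{equation*}
{\lim}_I\Hom_\mathscr{D}(\mathcal{D}_L(i)X,Y)=\Hom_\mathscr{D}((\colim\mathcal{D}_L)(X),Y).
\end{equation*}
Consequently, ${\lim}_I\Upsilon(i)$ is precisely a morphism of bifunctors
\begin{equation*}
\Upsilon\colon\quad \Hom_\mathscr{C}(X,(\lim\mathcal{D}_R)(Y))\to \Hom_\mathscr{D}((\colim\mathcal{D}_L)(X),Y),
\end{equation*}
which, by the equivalence recorded in Definition \ref{adefn_1}, is the same datum as a morphism of functors
\begin{equation*}
\epsilon\colon\quad (\colim\mathcal{D}_L)\circ (\lim\mathcal{D}_R)\to \Id_{\mathscr{D}}.
\end{equation*}
This provides the structure of a very weak right adjunction on the pair $(\colim\mathcal{D}_L,\lim\mathcal{D}_R)$, which is exactly the required claim.

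No obstacle really arises here: the whole point of the \emph{very weak} setup is that we need only the one-sided datum $\Upsilon$ (equivalently $\epsilon$), and limits of bifunctor morphisms exist as soon as the target category of sets has limits (which it does). The only care needed is the routine verification that the identifications of (co)limits with $\Hom$-sets are compatible with the diagram structure on $I$, so that ${\lim}_I\Upsilon(i)$ really is induced by a map of bifunctors between the limit objects; this is guaranteed by our blanket running assumption at the end of Appendix B that all necessary (co)limits in our categories exist and can be computed pointwise in functor categories.
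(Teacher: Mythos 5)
Your proof is correct and follows exactly the route the paper intends: the paper's own proof of this lemma is just the remark that one repeats the proof of Lemma \ref{lemmaa}, and your argument is precisely that proof restricted to the $\Upsilon$-side (take ${\lim}_I\Upsilon(i)$, identify source and target via Proposition \ref{spravka} and \eqref{botinok1}--\eqref{botinok2}), with the composition check $\lim\Theta\circ\lim\Upsilon=\Id$ correctly discarded since only $\epsilon$ (equivalently $\Upsilon$) is needed in the very weak setting.
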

The proof simply repeats the proof in weak adjoint case (see \ref{lemmaa} and its proof).

We define a {\it diagram of lax-monoidal structures} and a {\it diagram of colax-monoidal structures}, and their (co)limits, precisely as in
Section \ref{appweak} above.

\begin{apptheorem}\label{keytheorem2bisbis}{\itshape
Suppose $\mathscr{C}$, $\mathscr{D}$ are categories, with $\mathscr{C}$ complete and $\mathscr{D}$ cocomplete. Let $I$ be an indexing category. Let $$\mathcal{D}_L\colon I\to\mathscr{F}un(\mathscr{C},\mathscr{D})^\opp,\ \
\mathcal{D}_R\colon I\to\mathscr{F}un(\mathcal{D},\mathcal{C})$$ are diagrams, such that
$$
\mathcal{D}_L\colon\mathcal{C}\rightleftarrows \mathcal{D}\colon\mathcal{D}_R
$$
is a very weak right adjunction of diagrams, see Definition \ref{adef3}, {\bf with $\mathcal{D}_R(i)$ the constant functor} (not depending on $i\in I$), which we simply denote by $\mathcal{D}_R$.  Suppose there are given a lax-monoidal structure $\ell$ on $\mathcal{D}_R$, and colax-monoidal structures $c(i)$ on $\mathcal{D}_L(i)$, such that (1) for any $i\in I$, the diagram \eqref{krokodil} for $(c(i),\ell)$ on the functors $(\mathcal{D}_L(i),\mathcal{D}_R$ commutes, and (2) the colax-monoidal structures $c(i)$ form a diagram of colax-monoidal structures.
{\bf Suppose that the natural weak right adjunction}
\begin{equation}\label{limit}
\colim\mathcal{D}_L\colon\  \mathscr{C}\rightleftarrows\mathscr{D}\ \colon \lim\mathcal{D}_R=\mathcal{D}_R
\end{equation}
{\bf is a genuine adjunction.}

Then
the colimit colax-monoidal structure
$\colim_{i\in I}c(i)$ on the functor $\colim\mathcal{D}_L$, is isomorphic to the canonical colax-monoidal structure on $\colim\mathcal{D}_L$, induced by the limit lax-monoidal structure ${\lim}_{i\in I}\ell=\ell$ on the functor $\lim\mathcal{D}_R=\mathcal{D}_R$.
}
\end{apptheorem}
\begin{proof}
The proof repeats, with minor variations, the proof of Theorem \ref{keytheorem2bis}.
The main point is that Lemma \ref{soroka} still holds in the very weak context, with literally identical proof.
Then we use Lemma \ref{soroka}, but not for the canonical colax-monoidal structures $c^\ell(i)$ (which no longer make sense in the very weak context), but for the given in the data colax-monoidal structures $c(i)$ on $\mathcal{D}_L(i)$, and this this the only difference.
\end{proof}

\bigskip

\noindent {\sc Max-Planck Institut f\"{u}r Mathematik, Vivatsgasse 7, 53111 Bonn,\\
GERMANY}

\bigskip

\begin{tabbing}
\noindent {\em E-mail address\/}:\quad\= {\tt borya$\_$port@yahoo.com}\\
\noindent {\em Web\/}:\> {\tt http://borya.eu}
\end{tabbing}

\end{document}